\renewcommand{\Re}{\operatorname{Re}}
\newcommand{\BB}{\mathbb{B}}
\newcommand{\CC}{\mathbb{C}}
\newcommand{\dd}{\,\mathrm{d}}
\newcommand{\ee}{\mathrm{e}}
\newcommand{\ii}{\mathrm{i}}
\newcommand{\NN}{\mathbb{N}}
\newcommand{\pd}{\partial}
\newcommand{\RR}{\mathbb{R}}
\theoremstyle{definition}
\newtheorem{definition}{Definition}[section]
\theoremstyle{plain}
\newtheorem{lemma}{Lemma}[section]
\theoremstyle{plain}
\newtheorem{theorem}{Theorem}[section]
\theoremstyle{plain}
\newtheorem{proposition}{Proposition}[section]
\theoremstyle{remark}
\newtheorem{remark}{Remark}[section]
\numberwithin{equation}{section}
    \Crefname{equation}{Eq.}{Eqs.}
    \Crefname{figure}{Figure}{Figures}
\title{On stable self-similar blowup for corotational wave maps and equivariant Yang-Mills connections}
\author{Roland Donninger}
\address{University of Vienna, Faculty of Mathematics, Oskar-Morgenstern-Platz 1, 1090 Vienna, Austria}
\email{roland.donninger@univie.ac.at}
\author{Matthias Ostermann}
\address{University of Vienna, Faculty of Mathematics, Oskar-Morgenstern-Platz 1, 1090 Vienna, Austria}
\email{matthias.ostermann@univie.ac.at}
\thanks{This research was funded in whole or in part by the Austrian Science Fund (FWF) [10.55776/P34560]. M.~Ostermann acknowledges the support by the Vienna School of Mathematics (VSM)}
\begin{document}
\begin{abstract}
We consider corotational wave maps from Minkowski spacetime into the sphere and the equivariant Yang-Mills equation for all energy-supercritical dimensions. Both models have explicit self-similar finite time blowup solutions, which continue to exist even past the singularity. We prove the nonlinear asymptotic stability of these solutions in spacetime regions that approach the future light cone of the singularity. For this, we develop a general functional analytic framework in adapted similarity coordinates that allows to evolve the stable wave flow near a self-similar blowup solution in such spacetime regions.
\end{abstract}
\maketitle
\tableofcontents
\section{Introduction}
In this paper, we study two prototypical geometric wave equations on $(1+n)$-dimensional Minkowski spacetime $\RR^{1,n}$. As customary in this context, Einstein summation convention is employed, Greek indices $\mu,\nu$ take values in $\{0,1,\ldots,n \}$ and are raised and lowered with respect to the Minkowski metric $\eta = \operatorname{diag}(-1,1,\ldots,1)$.
\medskip\par
The one model we consider are \emph{wave maps} with domain manifold $\RR^{1,n}$ and target manifold the $n$-sphere $\mathbb{S}^{n} \subset \RR^{n+1}$, which is embedded as a Riemannian submanifold in Euclidean space $\RR^{n+1}$. These are maps $U : \RR^{1,n} \rightarrow \mathbb{S}^{n} \subset \RR^{n+1}$ that solve the \emph{wave maps equation}
\begin{equation}
\label{WaveMapsEquation}
\pd^{\mu} \pd_{\mu} U + \langle \pd^{\mu} U, \pd_{\mu} U \rangle U = 0 \,,
\end{equation}
where $\langle \,.\,, \,..\, \rangle$ denotes the Euclidean scalar product on $\RR^{n+1}$.
\medskip\par
The other model we consider are \emph{Yang-Mills connections} represented on the Lie algebra $\mathfrak{so}(n)$ of skew-symmetric $(n\times n)$-matrices. These are $\mathfrak{so}(n)$-valued connection $1$-forms $A_{\mu}: \RR^{1,n} \rightarrow \mathfrak{so}(n)$ with associated curvature $2$-form
\begin{equation*}
F_{\mu\nu} = \pd_{\mu} A_{\nu} - \pd_{\nu} A_{\mu} + [A_{\mu}, A_{\nu}]
\end{equation*}
that solve the \emph{Yang-Mills equation}
\begin{equation}
\label{YangMillsEquation}
\pd^{\mu} F_{\mu\nu} + [A^{\mu}, F_{\mu\nu} ] = 0 \,,
\end{equation}
where the Lie bracket $[ \,.\,, \,..\, ]$ is given by the commutator for $(n\times n)$-matrices.
\medskip\par
The wave maps equation \eqref{WaveMapsEquation} and Yang-Mills equation \eqref{YangMillsEquation} are nonlinear time evolution equations of wave type and as such, their solution theory is concerned with the respective Cauchy problem. A central question of the Cauchy problem is whether solutions exist for all time or develop singularities for given initial data. Properties of these equations, which are important for their analysis, are conservation laws, symmetries, and special solutions. In this respect, we first recall that the wave maps and Yang-Mills equation both appear as Euler-Lagrange equations derived from geometric action functionals with Lagrange densities
\begin{equation*}
\mathcal{L}_{\mathsf{WM}}[U] = \frac{1}{2} \langle \pd^{\mu} U, \pd_{\mu} U \rangle 
\qquad\text{and}\qquad
\mathcal{L}_{\mathsf{YM}}[A] = \frac{1}{4} \langle F_{\mu\nu}, F^{\mu\nu} \rangle \,,
\end{equation*}
where $\langle \,.\,, \,..\, \rangle$ denotes the Euclidean scalar product on $\RR^{n+1}$ and the Hilbert-Schmidt inner product for $(n\times n)$-matrices, respectively. The energy-momentum tensor is defined by its components
\begin{equation*}
\mathrm{T}_{\mathsf{WM}}[U]_{\mu\nu} = \langle \pd_{\mu} U, \pd_{\nu} U \rangle - \mathcal{L}_{\mathsf{WM}}[U] \eta_{\mu\nu}
\quad\text{and}\quad
\mathrm{T}_{\mathsf{YM}}[A]_{\mu\nu} = \langle F_{\mu}{}^{\alpha}, F_{\nu\alpha} \rangle - \mathcal{L}_{\mathsf{YM}}[A] \eta_{\mu\nu}
\end{equation*}
and is divergence-free along the flow. In particular, this implies a conservation law for the positive definite energies
\begin{align}
\label{WMEnergy}
E_{\mathsf{WM}}[U](t) &= \int_{\RR^{n}} \mathrm{T}_{\mathsf{WM}}[U]_{00}(t,\,.\,) = \frac{1}{2} \int_{\RR^{n}} \sum_{\mu=0}^{n} \langle \pd_{\mu} U(t,\,.\,), \pd_{\mu} U(t,\,.\,) \rangle
\intertext{and}
\label{YMEnergy}
E_{\mathsf{YM}}[A](t) &= \int_{\RR^{n}} \mathrm{T}_{\mathsf{YM}}[A]_{00}(t,\,.\,) = \frac{1}{2} \int_{\RR^{n}} \sum_{0 \leq \mu < \nu \leq n} \langle F_{\mu\nu}(t,\,.\,), F_{\mu\nu}(t,\,.\,) \rangle \,.
\end{align}
Moreover, the wave maps equation and Yang-Mills equation have a scaling symmetry. That is, if $U$ solves \Cref{WaveMapsEquation} then so does
\begin{equation}
\label{WMScaling}
U_{\lambda}
\qquad\text{given by}\qquad
U_{\lambda}(t,x) \coloneqq U \big( \tfrac{t}{\lambda}, \tfrac{x}{\lambda} \big)
\end{equation}
and if $A_{\mu}$ solve \Cref{YangMillsEquation}, then so do
\begin{equation}
\label{YMScaling}
{A_{\lambda}}_{\mu}
\qquad\text{given by}\qquad
{A_{\lambda}}_{\mu}(t,x) \coloneqq \lambda^{-1} A_{\mu} \big( \tfrac{t}{\lambda}, \tfrac{x}{\lambda} \big)
\end{equation}
for all $\lambda > 0$. It follows that the energies \eqref{WMEnergy}, \eqref{YMEnergy} exhibit the scaling
\begin{equation*}
E_{\mathsf{WM}}[U_{\lambda}](t) = \lambda^{n-2} E_{\mathsf{WM}}[U] \big( \tfrac{t}{\lambda} \big)
\qquad\text{and}\qquad
E_{\mathsf{YM}}[A_{\lambda}](t) = \lambda^{n-4} E_{\mathsf{YM}}[A] \big( \tfrac{t}{\lambda} \big)
\end{equation*}
for all $\lambda > 0$. Depending on the scaling behaviour of the energies in different space dimensions, the respective model is called
\begin{equation*}
\renewcommand{\arraystretch}{1.2}
\begin{array}{llcl}
\text{ \emph{energy-subcritical}} & \text{if } n = 1 \text{ for wave maps } & \text{and} & n \leq 3 \text{ for Yang-Mills}, \\
\text{ \emph{energy-critical}} & \text{if } n = 2 \text{ for wave maps } & \text{and} & n = 4 \text{ for Yang-Mills}, \\
\text{ \emph{energy-supercritical}} & \text{if } n \geq 3 \text{ for wave maps } & \text{and} & n \geq 5 \text{ for Yang-Mills}.
\end{array}
\end{equation*}
It is precisely the whole range of energy-supercritical dimensions that we are concerned with here. Following the scaling heuristics in this regime, the shrinking of large solutions to smaller scales is energetically favorable, and thereby the formation of a singularity is anticipated. An important role in the blowup dynamics is played by \emph{self-similar} solutions, which are by definition invariant under the respective scaling transformation \eqref{WMScaling}, \eqref{YMScaling}. Before we come to such explicitly known solutions, we mention that self-similar blowup is already observed within the class of \emph{corotational} or \emph{equivariant} solutions, see \cite[p.~109]{MR1674843} and \cite[p.~136]{MR3837560} for general definitions. A map $U: \RR^{1,n} \rightarrow \mathbb{S}^{n} \subset \mathbb{R}^{n+1}$ is called corotational if there is a function $\widetilde{\psi}: \RR \times [0,\infty) \rightarrow \RR$ such that
\begin{equation}
\label{CorotWM}
U(t,x) =
\begin{pmatrix}
|x|^{-1} \sin \big( |x| \widetilde{\psi}(t,|x|) \big) x \\
\cos \big( |x| \widetilde{\psi}(t,|x|) \big)
\end{pmatrix}
\,.
\end{equation}
Similarly, a connection $1$-form $A_{\mu}: \RR^{1,n} \rightarrow \mathfrak{so}(n)$ is called equivariant if there is a function $\widetilde{\psi}: \RR \times [0,\infty) \rightarrow \RR$ such that
\begin{equation}
\label{EquivarYM}
A_{\mu}{}^{ij}(t,x) = \widetilde{\psi}(t,|x|) ( x^{i} \delta_{\mu}{}^{j} - x^{j} \delta_{\mu}{}^{i} ) \,.
\end{equation}
An additional important symmetry for the Yang-Mills equation is the gauge symmetry. However, imposing the equivariant ansatz automatically fixes a gauge and eliminates this symmetry. The gauge symmetry is briefly explained in \Cref{SectionRelatedWork}, where we discuss related works on the well-posedness theory for both equations.
\subsection{Singularity formation and blowup stability}
\label{SubSecSing}
In any energy-supercritical dimension, the wave maps equation \eqref{WaveMapsEquation} and Yang-Mills equation \eqref{YangMillsEquation} have a finite-time blowup solution with self-similar profile given in closed form by
\begin{align}
\label{WaveMapsBlowup}
U_{T}^{\ast}(t,x) &= U^{\ast}(T-t,x)
&\text{where}&&
U^{\ast}(t,x) &=
\frac{1}{(n-2)t^{2} + |x|^{2}}
\begin{pmatrix}
2\sqrt{n-2} t x \\
(n-2)t^{2} - |x|^{2}
\end{pmatrix}
\,, \\
\label{YangMillsBlowup}
{A_{T}^{\ast}}_{\mu}(t,x) &= {A^{\ast}}_{\mu}(T-t,x)
&\text{where}&&
{A^{\ast}}_{\mu}{}^{ij}(t,x) &= \frac{x^{i}}{\alpha_{n} t^{2} + \beta_{n} |x|^{2}} \delta_{\mu}{}^{j} - \frac{x^{j}}{\alpha_{n} t^{2} + \beta_{n} |x|^{2}} \delta_{\mu}{}^{i} \,,
\end{align}
for any blowup time $T>0$, respectively, with coefficients
\begin{align}
\label{YangMillsCoefficients1}
\alpha_{n} &= \frac{1}{12} \frac{3(n-2)(n-4) + (n+2)\sqrt{3(n-2)(n-4)}}{n-1} \,, \\
\beta_{n} &= \frac{1}{4} \frac{3(n-2) - \sqrt{3(n-2)(n-4)}}{n-1} \,,
\end{align}
see \cite{MR3355819}. Note that the solutions \eqref{WaveMapsBlowup}, \eqref{YangMillsBlowup} are smooth in the whole domain away from the blowup event $(T,0) \in \RR^{1,n}$. Moreover, both solutions are corotational in the sense of \eqref{CorotWM}, \eqref{EquivarYM} with profile
\begin{equation}
\label{SelfSimilarSolutionsIntro}
\widetilde{\psi}^{\ast}(t,r) =
\renewcommand{\arraystretch}{1.5}
\left\{
\begin{array}{ll}
\displaystyle{
\frac{4}{r} \arctan\Big( \frac{r}{ \sqrt{n-2} t + \sqrt{(n-2)t^{2} + r^{2}} } \Big)
} & \text{for wave maps,} \\
\displaystyle{
\frac{1}{\alpha_{n} t^{2} + \beta_{n} r^{2}}
} & \text{for Yang-Mills.}
\end{array}
\right.
\end{equation}
The existence of a self-similar corotational wave map from $(1+3)$-dimensional Minkowski spacetime into the $3$-sphere was first proved by J.~Shatah \cite{MR933231}. The explicit solution corresponding to \eqref{WaveMapsBlowup} in case $n=3$ was later found by N.~Turok and D.~Spergel \cite{turok1990global}. The existence of self-similar solutions for the Yang-Mills equation in dimensions $5 \leq n \leq 8$ is due to T.~Cazenave, J.~Shatah and A.~S.~Tahvildar-Zadeh \cite{MR1622539}. For $n=5$, the explicit blowup solution \eqref{YangMillsBlowup} was found by P.~Bizo\'{n} in \cite{MR1923684}. In the lowest energy-supercritical dimensions, he also proved the existence of a countable family of smooth self-similar solutions \cite{MR1799875}, \cite{MR1923684}, with the ``ground state'' corresponding to the explicit solution from above. For wave maps, this existence result was extended to $3 \leq n \leq 6$ in \cite{MR3636308}. The explicit ground state solutions in all higher dimensions were found much more recently in \cite{MR3355819}. P.~Bizo\'{n} et al.~also performed numerical studies in \cite{MR1767966}, \cite{MR1877844}, \cite{Bizo__2005}, \cite{MR3355819} on singularity formation, where they observed that corotational wave maps and equivariant Yang-Mills connections with large data blow up in finite time and exhibit the same blowup as described by the profiles \eqref{SelfSimilarSolutionsIntro}. This led to the conjecture that these self-similar blowup solutions are universal in the sense that the generic large data Cauchy evolution blows up and converges to them.
\medskip\par
A first step to explain this phenomenon rigorously is to prove that the blowup solutions \eqref{WaveMapsBlowup}, \eqref{YangMillsBlowup} are stable under perturbations of their initial data. A stability theory has been initiated by the first author in \cite{MR2839272}, \cite{MR3278903} and focused on wave maps in dimension $n=3$ and Yang-Mills in dimension $n=5$. Nevertheless, a complete proof of stability hinged on the linear stability of the blowup profiles, which is equivalent to the difficult mode stability problem, see \cite{2023arXiv231012016D} for an introduction. This was solved in \cite{MR2412310}, \cite{MR2881965}, \cite{MR3538419} for wave maps and in \cite{MR3475668} for Yang-Mills. The underlying spectral methods were further developed by I.~Glogi\'{c} and applied to prove mode stability of the blowup profiles \eqref{SelfSimilarSolutionsIntro} in all higher dimensions \cite{MR3623242}, \cite{MR4469070}. Subsequently, the stability results were extended to all odd space dimensions as well, see \cite{MR3680948} and \cite{MR4469070}. Blowup stability for wave maps at optimal regularity in dimensions $n=3,4$ was proved by the first author and D.~Wallauch \cite{MR4640202}, \cite{2022arXiv221208374D}. We also mention the original works \cite{2023arXiv231007042C}, \cite{2024arXiv240815345M} on blowup for the related Skyrme model.
\medskip\par
Note, however, that the above results prove stability within a backward light cone and actually none of them uses that the explicit blowup solutions \eqref{WaveMapsBlowup}, \eqref{YangMillsBlowup} feature a natural continuation after the blowup time. This naturally points to the question whether these solutions are also stable in spacetime regions beyond light cones. To address this, ``hyperboloidal similarity coordinates'' have been devised in \cite{MR4338226} to prove the nonlinear stability of the wave maps blowup in dimension $n=3$ under small localized perturbations in spacetime regions that extend even beyond the time of blowup. An implementation of this novel stability theory to higher dimensions requires to evolve the wave flow near self-similar blowup in such curvilinear coordinate systems. This is technically challenging and has so far only been achieved in odd space dimensions and in radial symmetry, where it was implemented for the Yang-Mills blowup \cite{MR4661000}. The underlying functional setting also found application in a proof of co-dimension one radial stability of an explicit blowup solution for the quadratic wave equation \cite{MR4700297}. In a recent complementary approach, I.~Glogi\'{c} \cite{2022arXiv220706952G}, \cite{2023arXiv230510312G} showed in any dimension that the wave maps and Yang-Mills blowup is nonlinearly stable along the hyperplanes $\{t\} \times \RR^{d}$ up to $t = T$ in the whole region $(0,T) \times \RR^{n}$.
\subsection{Statement and discussion of the main results}
The main goal of this paper is to prove stability of the blowup mechanism described by the self-similar solutions $U_{T}^{\ast}$ and ${A_{T}^{\ast}}_{\mu}$ given in \Cref{WaveMapsBlowup,YangMillsBlowup} in spacetime regions beyond light cones that are of the form
\begin{equation*}
\Omega^{1,n}_{T}(\kappa) = \big\{ (t,x) \in \RR^{1,n} \mid 0 < t < T + \kappa |x| \big\}
\end{equation*}
for any fixed $-1 \leq \kappa < 1$ and some suitable parameter $T>0$. The key to access these regions is to utilize flexible coordinate systems.
\begin{definition}
\label{SimilarityCoordinatesFlat}
Let $n \in \NN$, $-1 \leq \kappa < 1$, $r > 0$, and $\varepsilon > 0$. Let $h \in C^{\infty}_{\mathrm{rad}}(\RR^{n})$ be such that
\begin{enumerate}[itemsep=1em,topsep=1em,label=(h\arabic*)]
\item\label{h1} \qquad $h(\xi) = -1$ for $|\xi| < r + \varepsilon$,
\item\label{h2} \qquad $|(\pd h)(\xi)| < 1$ for $\xi \in \RR^{n}$,
\item\label{h3} \qquad $(\pd^{2} h)(\xi)$ is positive semidefinite for $\xi \in \RR^{n}$,
\item\label{h4} \qquad there is an $R \geq r + \varepsilon$ with $h(\xi) = \kappa R$ for $|\xi| = R$.
\end{enumerate}
Let $T > 0$. We define \emph{similarity coordinates}
\begin{equation*}
{\chi\mathstrut}_{T} : \RR \times \RR^{n} \rightarrow \RR^{1,n} \,, \qquad {\chi\mathstrut}_{T}(\tau,\xi) = \big( T + T\ee^{-\tau} h(\xi), T\ee^{-\tau} \xi \big) \,,
\end{equation*}
in the image region
\begin{equation*}
\mathrm{X}^{1,n}_{T}(\kappa) \coloneqq {\chi\mathstrut}_{T} \big( (0,\infty) \times \BB^{n}_{R} \big) \,.
\end{equation*}
\end{definition}
Elementary geometric properties of similarity coordinates are studied in \Cref{AppendixSimilarityCoordinates} and a sketch is given in \Cref{FigHSC}. Comprehensive examples of such coordinate systems are provided in \Cref{GSCExemplary}.
\begin{figure}
\centering
\includegraphics[width=\textwidth]{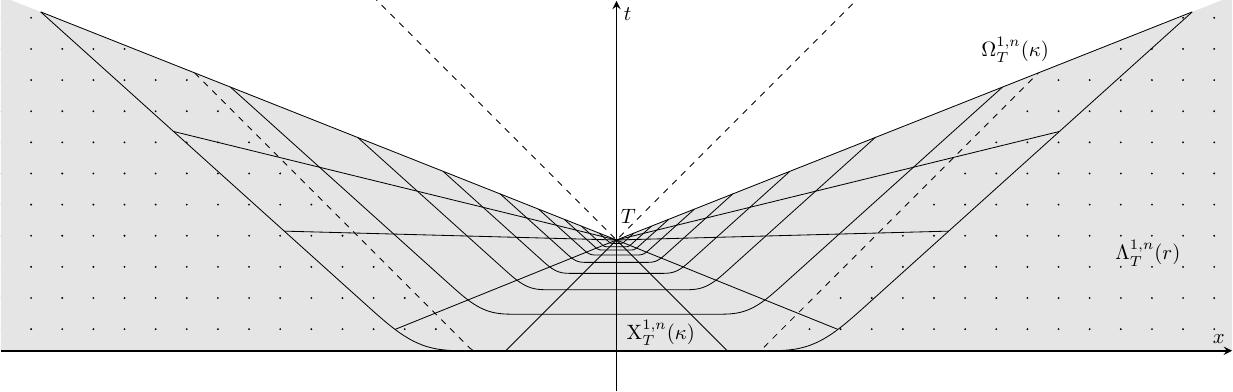}
\caption{Sketch of similarity coordinates centered around $(T,0) \in \RR^{1,n}$ for $n=1$. The gray-shaded part depicts $\Omega^{1,n}_{T}(\kappa)$, which exhausts the exterior of the future light cone of $(T,0) \in \RR^{1,n}$ as $\kappa \to 1$. The region covered by the coordinate grid is the image region $\mathrm{X}^{1,n}_{T}(\kappa)$. Here, the spacelike curves correspond to the hypersurfaces $\Sigma^{1,n}_{T}(t)$ whose flat part is determined by the parameter $r>0$ in condition \ref{h1}. The dotted region shows $\Lambda^{1,n}_{T}(\kappa)$.
}
\label{FigHSC}
\end{figure}
Let us comment on their central features for our stability analysis.
\begin{remark}
\label{RemarkImageRegion}
The function $h$ that appears in similarity coordinates is called a \emph{height function} and it determines the geometry of the coordinate system. The convexity condition \ref{h3} on the height function ensures that ${\chi\mathstrut}_{T} : \RR \times \RR^{n} \rightarrow \RR^{1,n}$ is a diffeomorphism onto its image, see \Cref{GSCLemma}. In particular, we get from \Cref{GSCImage} the explicit image region
\begin{equation*}
\mathrm{X}^{1,n}_{T}(\kappa) = \big\{ (t,x) \in \RR^{1,n} \mid T + T h(\tfrac{x}{T}) < t < T + \kappa |x| \big\} \,.
\end{equation*}
Thus, the parameter $-1 \leq \kappa < 1$ in condition \ref{h4} determines the spacetime region which is covered by a coordinate cylinder in similarity coordinates. For example,
\begin{enumerate}[itemsep=1em,topsep=1em]
\item[] $\kappa = -1$ yields a cover of the backward light cone of $(T,0) \in \RR^{1,n}$,
\item[] $\kappa = 0$ yields an image contained in $(0,T) \times \RR^{n}$,
\item[] $0 < \kappa < 1$ allows to reach the region beyond the blowup time $t=T$ and to approach the complement of the future light cone of $(T,0) \in \RR^{1,n}$.
\end{enumerate}
\end{remark}
\begin{remark}
\label{RemarkSHSF}
Similarity coordinates are adapted to the scaling symmetries of the respective nonlinear wave equations and are therefore well-suited for the study of self-similar solutions. Another important feature is that due to the bound \ref{h2} on the gradient of the height function, the level sets
\begin{equation*}
\Sigma^{1,n}_{T}(t) \coloneqq {\chi\mathstrut}_{T} \big( \{\tau(t)\} \times \BB^{n}_{R} \big) \,, \qquad \tau(t) = \log \frac{T}{T - t} \,,
\end{equation*}
through $(t,0) \in \RR^{1,n}$ for $0 \leq t < T$, are spacelike hypersurfaces that foliate the image region $\mathrm{X}^{1,n}_{T}(\kappa)$ according to \Cref{GeometryFoliation,GSCFoliation}.
\end{remark}
\begin{remark}
\label{RemarkFlatRegion}
The additional condition \ref{h1} that the height function is constant in $\BB^{n}_{r + \varepsilon}$ implies for $T \geq 1 - \frac{\varepsilon}{r + \varepsilon}$ that the region $\Omega^{1,n}_{T}(\kappa) \setminus \mathrm{X}^{1,n}_{T}(\kappa)$, which is not covered by the coordinate grid, is contained in
\begin{equation*}
\Lambda^{1,n}_{T}(r) = \{ (t,x) \in \Omega^{1,n}_{T}(\kappa) \mid 0 < t < |x| - r \} \,,
\end{equation*}
i.e. the exterior of the domain of influence of $\{0\} \times \BB^{n}_{r}$ in $\Omega^{1,n}_{T}(\kappa)$. We also get that the spacelike hypersurfaces from the previous remark contain a flat portion
\begin{equation*}
\{0\} \times \BB^{n}_{\frac{r(T-t)}{T}} \subset \Sigma^{1,n}_{T}(t) \,.
\end{equation*}
This allows us to collect initial data at $t=0$ directly along the initial hypersurface $\Sigma^{1,n}_{T}(0)$, which will come into play later in \Cref{PreparationData}.
\end{remark}
Now, the statements of the stability theorems for wave maps and Yang-Mills connections are as follows. Their content is discussed in the remarks below.
\begin{theorem}[Stable blowup for corotational wave maps]
\label{THMWM}
Let $n,k \in \NN$ with
\begin{equation*}
n \geq 3
\qquad\text{and}\qquad
k \geq \frac{n}{2} + 1 \,.
\end{equation*}
Let $-1 \leq \kappa < 1$ and $r > 0$ and fix a height function as in \Cref{SimilarityCoordinatesFlat}. Fix $0 < \varepsilon_{r} \leq \frac{2(n-2)}{(n-2) + r^{2}}$. Then, there exist constants $\delta^{\ast} > 0$, $M \geq 1$, $\omega > 0$ such that for all $0 < \delta \leq \delta^{\ast}$ and all smooth corotational data $(F,G) : \RR^{n} \rightarrow \mathrm{T}\mathbb{S}^{n} \subset \RR^{n+1} \times \RR^{n+1}$ of the form
\begin{equation*}
F(x) =
\begin{pmatrix}
|x|^{-1} \sin \big( |x| f(x) \big) x \\
\cos \big( |x| f(x) \big)
\end{pmatrix}
\,, \qquad
G(x) =
\begin{pmatrix}
\cos \big( |x| f(x) \big) g(x) x \\
-|x| \sin \big( |x| f(x) \big) g(x)
\end{pmatrix}
\,,
\end{equation*}
with $f,g \in C^{\infty}_{\mathrm{rad}}(\RR^{n})$ which satisfy
\begin{equation*}
\renewcommand{\arraystretch}{1.2}
\begin{array}{rcll}
(F,G) &=& \big( U^{\ast}_{1}(0,\,.\,) , \pd_{0} U^{\ast}_{1}(0,\,.\,) \big) & \text{in } \RR^{n} \setminus \BB^{n}_{r} \,, \\
F^{n+1} &\geq& -1 + \varepsilon_{r} & \text{in } \BB^{n}_{r} \,,
\end{array}
\end{equation*}
and
\begin{equation*}
\big\| \big(F,G\big) - \big( U^{\ast}_{1}(0,\,.\,) , \pd_{0} U^{\ast}_{1}(0,\,.\,) \big) \big\|_{H^{k}(\RR^{n})\times H^{k-1}(\RR^{n})} \leq \frac{\delta}{M} \,,
\end{equation*}
there exist a blowup time $T^{\ast} \in [1-\delta,1+\delta]$ and a unique smooth corotational wave map $U: \Omega^{1,n}_{T^{\ast}}(\kappa) \rightarrow \mathbb{S}^{n}$ to the Cauchy problem
\begin{align*}
\pd^{\mu} \pd_{\mu} U + \big\langle \pd^{\mu} U, \pd_{\mu} U \big\rangle U &=0
\phantom{G \quad }\sbox0{$0$}\hspace{-\the\wd0}
\text{in } \Omega^{1,n}_{T^{\ast}}(\kappa) \,, \\
U(0, \,.\,) = F \,, \quad \pd_{0} U(0, \,.\,) &= G \quad \text{in } \RR^{n} \,,
\end{align*}
which satisfies the stability estimate
\begin{equation*}
\big\| U - U^{\ast}_{T^{\ast}} \big\|_{H^{k}(\Sigma^{1,n}_{T^{\ast}}(t))} \lesssim \delta \Big( \frac{T^{\ast} - t}{T^{\ast}} \Big)^{\omega}
\end{equation*}
for all $0 \leq t < T^{\ast}$, as well as finite speed of propagation
\begin{equation*}
U \equiv U_{1}^{\ast} \quad \text{in } \Lambda^{1,n}_{T^{\ast}}(r) \,.
\end{equation*}
\end{theorem}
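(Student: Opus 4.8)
The plan is to reduce the corotational wave maps equation to a scalar radial semilinear wave equation, pass to the similarity coordinates of \Cref{SimilarityCoordinatesFlat}, linearise around the self-similar profile, develop semigroup and spectral theory for the resulting generator in these coordinates, and then close a Lyapunov--Perron fixed-point argument in which the blowup time $T^\ast$ plays the role of the modulation parameter that absorbs the one unstable mode. Concretely, inserting the ansatz \eqref{CorotWM} into \eqref{WaveMapsEquation} yields a single radial semilinear wave equation for the corotational unknown, with a smooth nonlinearity that is cubic at leading order (arising from $\tfrac12\sin(2\,\cdot\,) - (\cdot)$) and carries a coefficient $r^{-2}$, for which $\widetilde\psi^\ast$ from \eqref{SelfSimilarSolutionsIntro} gives the exact solution $U^\ast_T$. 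Fixing $T$ near $1$ as a free parameter, I would write the solution as $U^\ast_T$ plus a perturbation, apply $\chi_T$, and recast the equation as a first-order evolution system $\pd_\tau\Phi = \mathbf{L}\Phi + \mathbf{N}(\Phi)$ for $\Phi$ a pair consisting of the perturbation and a rescaled $\tau$-derivative, posed on a Hilbert space $\mathcal{H}^k$ of radial pairs modelled on $H^k(\BB^n_R)\times H^{k-1}(\BB^n_R)$ and adapted to the spacelike slices $\Sigma^{1,n}_T(\cdot)$ of \Cref{RemarkSHSF}. The restriction $k \geq \tfrac{n}{2}+1$ is there precisely so that $\mathcal{H}^k$ controls $C^1$-type norms and tames the $r^{-2}$ coefficient, making $\mathbf{N}$ locally Lipschitz with $\mathbf{N}(0)=0$ and $D\mathbf{N}(0)=0$.

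The analytic core is to prove that $\mathbf{L}$, with the boundary behaviour dictated by the geometry, generates a strongly continuous semigroup $(\mathbf{S}(\tau))_{\tau\geq 0}$ on $\mathcal{H}^k$. I would do this via Lumer--Phillips: the dissipativity bound $\Re\langle\mathbf{L}\Phi,\Phi\rangle_{\mathcal{H}^k} \leq \beta\|\Phi\|_{\mathcal{H}^k}^2$ should follow from a high-order energy identity on the curved slices $\Sigma^{1,n}_T(t)$, in which the boundary terms produced by integration by parts acquire a favourable sign thanks to the gradient bound \ref{h2} and the convexity \ref{h3} of the height function, while the fact that $\BB^n_R$ is a domain of influence (cf.\ \Cref{RemarkFlatRegion}) means no boundary condition is imposed at $|\xi|=R$. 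Next I would locate the spectrum of $\mathbf{L}$: apart from the eigenvalue $\lambda=1$, with one-dimensional eigenspace generated by the symmetry $T\mapsto T+a$ of the family $\{U^\ast_T\}$, the spectrum should lie in a half-plane $\Re\lambda \leq -\omega$ with $\omega>0$. This is exactly where the mode stability of the self-similar profile enters; it is available in all energy-supercritical dimensions \cite{MR2412310,MR2881965,MR3538419,MR3623242}, and the work here is to transfer it to the $\chi_T$-coordinates, i.e.\ to rule out spurious unstable spectrum introduced by the general height function, which I would do by comparing resolvents with those in a standard similarity coordinate system. With $\mathbf{P}$ the rank-one spectral projection onto the unstable eigenspace, this yields $\|\mathbf{S}(\tau)(1-\mathbf{P})\|_{\mathcal{H}^k\to\mathcal{H}^k} \lesssim \ee^{-\omega\tau}$.

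Given the linear theory, the nonlinear part is routine. The initial perturbation lies at distance $\lesssim\delta$ in $\mathcal{H}^k$, and I would look for $\Phi\in C([0,\infty),\mathcal{H}^k)$ with $\sup_{\tau\geq 0}\ee^{\omega'\tau}\|\Phi(\tau)\|_{\mathcal{H}^k}\lesssim\delta$, for some $0<\omega'<\omega$, solving the modified Duhamel equation in which the unstable component is cancelled by the correction $\mathbf{C}(\Phi)(\tau)=\ee^{\tau}\mathbf{P}\big(\Phi(0)+\int_0^\infty \ee^{-\sigma}\mathbf{P}\mathbf{N}(\Phi(\sigma))\dd\sigma\big)$. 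A contraction-mapping argument in this weighted space produces a unique such decaying $\Phi$. It then remains to make $\mathbf{C}(\Phi)$ vanish: this is a single scalar equation for $T^\ast$, which enters through the recentring of the coordinates and of the reference solution, and since the relevant scalar map is a small Lipschitz perturbation of $T^\ast\mapsto T^\ast-1$, it has a unique zero $T^\ast\in[1-\delta,1+\delta]$. For that $T^\ast$, $\Phi$ solves the genuine perturbation equation.

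Undoing the coordinate change, $U^\ast_{T^\ast}$ plus the decoded perturbation defines via \eqref{CorotWM} a corotational map $U$ on $\mathrm{X}^{1,n}_{T^\ast}(\kappa)$; here the sign condition $F^{n+1}\geq -1+\varepsilon_r$ in $\BB^n_r$, together with the bound on $\varepsilon_r$ (which is just compatibility with the value of $U^\ast_1(0,\cdot)$ on $\pd\BB^n_r$ forced by the matching condition), keeps the polar parametrisation of $\mathbb{S}^n$ nondegenerate near the axis so the reduction stays valid, while propagation of smoothness for the reduced equation gives that $U$ is smooth. The stability estimate follows at once from $\|\Phi(\tau)\|_{\mathcal{H}^k}\lesssim\delta\ee^{-\omega'\tau}$ and $\ee^{-\tau(t)}=(T^\ast-t)/T^\ast$. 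Finally, $U^\ast_1$ is itself a smooth solution whose Cauchy data agree with $(F,G)$ outside $\BB^n_r$, so finite speed of propagation for \eqref{WaveMapsEquation} forces $U\equiv U^\ast_1$ outside the domain of influence of $\{0\}\times\BB^n_r$, i.e.\ in $\Lambda^{1,n}_{T^\ast}(r)$ by \Cref{RemarkFlatRegion}; gluing $U^\ast_1$ onto $U$ there extends the solution to all of $\Omega^{1,n}_{T^\ast}(\kappa)$, and uniqueness follows from uniqueness for the reduced Cauchy problem in the relevant class. I expect the main obstacle to be the linear theory on the curved slices: proving the high-regularity Lumer--Phillips estimate uniformly over admissible height functions, controlling every commutator and boundary term using only \ref{h1}--\ref{h4}, and importing the mode stability input into these adapted coordinates.
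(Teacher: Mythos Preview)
Your overall strategy matches the paper: scalar reduction, linearise around $\psi^\ast_T$ in similarity coordinates, generate a semigroup via Lumer--Phillips with higher-order energies adapted to the curved slices (exploiting \ref{h2}--\ref{h3} for the boundary terms), import mode stability from \cite{MR3623242} to isolate the single unstable eigenvalue $\lambda=1$, run a Lyapunov--Perron argument, and fix $T^\ast$ to kill the correction term.

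The piece you are missing is the dimension shift. The corotational profile equation reads $-\widetilde\psi_{tt}+\widetilde\psi_{rr}+\frac{n+1}{r}\widetilde\psi_r+F=0$ with $F$ smooth, and the first-order radial term is the radial Laplacian in dimension $d=n+2$, not $n$. The paper therefore runs the entire scalar analysis (\Cref{THM}) on $\RR^{1,n+2}$ with function space $H^k_{\mathrm{rad}}(\BB^{n+2}_R)\times H^{k-1}_{\mathrm{rad}}(\BB^{n+2}_R)$; there is no $r^{-2}$ singularity to tame, and the threshold $k\geq\frac{n}{2}+1$ is exactly $k\geq\frac{d}{2}$, the borderline for the nonlinear local Lipschitz estimate (\Cref{LocLip}), not a $C^1$ embedding. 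This forces an additional step you omit: one must translate between the $H^k(\RR^n)$ norm of the corotational map $(F,G)$ and the $H^k(\RR^{n+2})$ norm of its scalar profile, both for the smallness of the data and for the final stability bound on $U-U^\ast_{T^\ast}$. The paper does this via a dedicated radial lemma (\Cref{CorotDataBound}) built on the equivalence $\sum_i\|x^i\widetilde f(|x|)\|_{H^k(\RR^n)}\simeq\|\widetilde f(|\cdot|)\|_{H^k(\RR^{n+2})}$ from \cite{2022arXiv220902286O} together with the identity $\widetilde f(|x|)x^i=\varphi(F^{n+1}(x))F^i(x)$, where $\varphi(z)=\arccos(z)/\sqrt{1-z^2}$. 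The hypothesis $F^{n+1}\geq-1+\varepsilon_r$ is used precisely here, to keep $\varphi$ and all its derivatives uniformly bounded so that this norm translation is continuous; that, rather than nondegeneracy of a chart, is its actual role.
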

\begin{theorem}[Stable blowup for equivariant Yang-Mills connections]
\label{THMYM}
Let $n, k \in \NN$ with
\begin{equation*}
n \geq 5
\qquad\text{and}\qquad
k \geq \frac{n+1}{2} \,.
\end{equation*}
Let $-1 \leq \kappa < 1$ and $r > 0$ and fix a height function as in \Cref{SimilarityCoordinatesFlat}. Then, there exist constants $\delta^{\ast} > 0$, $M \geq 1$, $\omega > 0$ such that for all $0 < \delta \leq \delta^{\ast}$ and all smooth equivariant perturbations $a_{\mu},b_{\mu}: \RR^{n} \rightarrow \mathfrak{so}(n)$ of the form
\begin{equation*}
a_{\mu}{}^{ij}(x) = f(x) ( x^{i} \delta_{\mu}{}^{j} - x^{j} \delta_{\mu}{}^{i}) \,, \qquad b_{\mu}{}^{ij}(x) = g(x) ( x^{i} \delta_{\mu}{}^{j} - x^{j} \delta_{\mu}{}^{i}) \,,
\end{equation*}
with $f,g \in C^{\infty}_{\mathrm{rad}}(\RR^{n})$ which satisfy
\begin{equation*}
\operatorname{supp}(a_{\mu},b_{\mu}) \subset \BB^{n}_{r}
\qquad\text{and}\qquad
\| (a,b) \|_{H^{k}(\RR^{n}) \times H^{k-1}(\RR^{n})} \leq \frac{\delta}{M}
\end{equation*}
there exist a blowup time $T^{\ast} \in [ 1 - \delta, 1 + \delta]$ and a unique smooth equivariant Yang-Mills connection $A_{\mu} : \Omega^{1,n}_{T^{\ast}}(\kappa) \rightarrow \mathfrak{so}(n)$ which solves the Cauchy problem for the hyperbolic Yang-Mills system
\begin{align*}
0 =\pd_{\mu} F^{\mu\nu} + [A_{\mu}, F^{\mu\nu}] \,, \quad
F_{\mu\nu} = \pd_{\mu} A_{\nu} - \pd_{\nu} A_{\mu} + [A_{\mu}, A_{\nu}] \,,
\qquad \text{in } \Omega^{1,n}_{T^{\ast}}(\kappa) \,,
\end{align*}
with data
\begin{equation*}
A_{\mu}(0, \,.\,) = {A_{1}^{\ast}}_{\mu}(0,\,.\,) + a_{\mu} \,,
\quad
\pd_{0} A_{\mu}(0, \,.\,) = \pd_{0} {A_{1}^{\ast}}_{\mu}(0,\,.\,) + b_{\mu} \qquad \text{in } \RR^{n} \,,
\end{equation*}
and satisfies the stability estimate
\begin{equation*}
\frac{\big\| A - A^{\ast}_{T^{\ast}} \big\|_{H^{k}(\Sigma^{1,n}_{T^{\ast}}(t))}}{\big\| A^{\ast}_{T^{\ast}} \big\|_{H^{k}(\Sigma^{1,n}_{T^{\ast}}(t))}} \lesssim \delta \Big( \frac{T^{\ast} - t}{T^{\ast}} \Big)^{\omega}
\end{equation*}
for all $0 \leq t < T^{\ast}$, as well as finite speed of propagation
\begin{equation*}
A_{\mu} \equiv {A_{1}^{\ast}}_{\mu}
\quad \text{in } \Lambda^{1,n}_{T^{\ast}}(r) \,.
\end{equation*}
\end{theorem}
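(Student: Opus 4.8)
The plan is to recast \Cref{THMYM} as a small-data Cauchy problem for a scalar semilinear radial wave equation and to solve it with a semigroup argument in the similarity coordinates of \Cref{SimilarityCoordinatesFlat}. \emph{Reduction.} First I would substitute the equivariant ansatz \eqref{EquivarYM} into the hyperbolic Yang-Mills system; since the ansatz fixes a gauge and is propagated by the flow, this collapses the system to one scalar equation for the radial profile $\widetilde\psi(t,r)$ of the schematic form $\pd_t^2\widetilde\psi - \pd_r^2\widetilde\psi - \tfrac{c_n}{r}\pd_r\widetilde\psi = \mathcal{N}_n(r,\widetilde\psi)$ with a polynomial nonlinearity and self-similar solution $\widetilde\psi_T^\ast$ from \eqref{SelfSimilarSolutionsIntro}. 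Passing to the coordinates $\chi_T$, which are adapted to the scaling \eqref{YMScaling}, renders the suitably rescaled profile independent of the new time $\tau$. Writing the solution as profile plus perturbation and recasting the result as a first-order system adapted to the spacelike foliation $\Sigma^{1,n}_T(t)$ produces an abstract evolution equation $\pd_\tau\Phi = \mathbf{L}\Phi + \mathbf{N}(\Phi)$ on the Hilbert space $\mathcal{H}^k = H^k(\BB^n_R)\times H^{k-1}(\BB^n_R)$, where $\mathbf{L} = \mathbf{L}_0 + \mathbf{L}'$ splits into the free-wave generator $\mathbf{L}_0$ and the potential $\mathbf{L}'$ obtained by linearizing at the profile, and $\mathbf{N}$ with $\mathbf{N}(0)=0$ collects the higher-order terms.

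\emph{Linear theory.} Next I would construct the linear flow. Using the geometry of \Cref{AppendixSimilarityCoordinates} --- \ref{h2} makes $\Sigma^{1,n}_T(t)$ spacelike and \ref{h3} makes $\chi_T$ a diffeomorphism onto its image (\Cref{GSCLemma}) --- a Lumer--Phillips/energy argument should show that $\mathbf{L}_0$ generates a strongly continuous semigroup on $\mathcal{H}^k$ with an explicit growth bound, and that $\mathbf{L}'$ is a compact perturbation, so that $\mathbf{L}$ likewise generates a semigroup $S(\tau)$. The decisive spectral input is the mode stability of $\widetilde\psi^\ast$ from \cite{MR3475668}, \cite{MR3623242}, \cite{MR4469070}: apart from the simple eigenvalue $\lambda = 1$, which is forced by the freedom in the blowup time $T$ and whose eigenfunction I would identify with $\pd_T$ of the profile in the new coordinates, the spectrum of $\mathbf{L}$ lies in a half-plane $\{\Re\lambda \leq -\omega\}$ for some $\omega>0$. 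Combined with resolvent bounds along vertical lines (a Gearhart--Prüss type argument), this should give the decay estimate $\| S(\tau)(1-\mathbf{P}) \|_{\mathcal{H}^k} \lesssim \ee^{-\omega\tau}$, where $\mathbf{P}$ is the rank-one spectral projection onto the unstable eigenspace and $S(\tau)\mathbf{P} = \ee^{\tau}\mathbf{P}$.

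\emph{Nonlinear step.} Because $k \geq \frac{n+1}{2} > \frac n2$, the space $\mathcal{H}^k$ is a Banach algebra, so $\mathbf{N}$ is locally Lipschitz near the origin; moreover, by \Cref{RemarkFlatRegion} together with finite speed of propagation the data hypotheses provide initial data $\Phi(0)$ along $\Sigma^{1,n}_{T}(0)$ with $\|\Phi(0)\|_{\mathcal{H}^k} \lesssim \delta/M$ of the required support. I would solve the Duhamel equation $\Phi(\tau) = S(\tau)\Phi(0) + \int_0^\tau S(\tau-\sigma)\mathbf{N}(\Phi(\sigma))\dd\sigma$ by a contraction in $\{\Phi\in C([0,\infty),\mathcal{H}^k):\sup_{\tau\geq0}\ee^{\omega\tau}\|\Phi(\tau)\|_{\mathcal{H}^k}\leq\delta\}$. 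The growing mode obstructs this directly, so I would subtract the standard correction: replace $\Phi(0)$ by $\Phi(0)-\mathbf{C}(\Phi)$ with $\mathbf{C}(\Phi)\in\operatorname{ran}\mathbf{P}$ chosen to annihilate the non-decaying part of the Duhamel integral, which yields a decaying fixed point for every admissible datum (this is essentially the content of \Cref{PreparationData}). Finally, varying $T$ near $1$ and using that $\pd_T$ of the profile spans $\operatorname{ran}\mathbf{P}$, a topological (Brouwer-type) argument selects $T^\ast\in[1-\delta,1+\delta]$ for which $\mathbf{C}$ vanishes, so the fixed point solves the genuine equation. Transporting $\Phi$ back through $\chi_{T^\ast}$ gives the solution on $\mathrm{X}^{1,n}_{T^\ast}(\kappa)$; since $\tau(t)=\log\frac{T^\ast}{T^\ast-t}$, the bound $\ee^{-\omega\tau}$ becomes exactly $\big(\tfrac{T^\ast-t}{T^\ast}\big)^\omega$ on $\Sigma^{1,n}_{T^\ast}(t)$, the normalization by $\|A_{T^\ast}^\ast\|$ in the statement absorbing the $\lambda^{-1}$ scaling of connections under \eqref{YMScaling}. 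On the uncovered region $\Lambda^{1,n}_{T^\ast}(r)$ --- which together with $\mathrm{X}^{1,n}_{T^\ast}(\kappa)$ exhausts $\Omega^{1,n}_{T^\ast}(\kappa)$ by \Cref{RemarkFlatRegion} --- finite speed of propagation from $\operatorname{supp}(a_\mu,b_\mu)\subset\BB^n_r$ forces $A_\mu\equiv{A_1^\ast}_\mu$, and smoothness and uniqueness follow from local well-posedness and persistence of regularity.

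\emph{Main obstacle.} The crux is the linear theory in these curvilinear frames: proving that $\mathbf{L}_0$ generates a semigroup with a sharp enough growth bound, and upgrading mode stability to the quantitative decay $\| S(\tau)(1-\mathbf{P})\|_{\mathcal{H}^k}\lesssim\ee^{-\omega\tau}$ in the high-order spaces $\mathcal{H}^k$ and on a foliation that interpolates between a flat piece (governed by \ref{h1}) and a curved, hyperboloidal-type piece reaching toward the light cone. This is precisely the ``general functional analytic framework in adapted similarity coordinates'' announced in the abstract, and the point where the convexity \ref{h3} and the gradient bound \ref{h2} on the height function are indispensable.
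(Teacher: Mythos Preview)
Your overall strategy matches the paper's closely: reduce the equivariant Yang--Mills system to a scalar radial semilinear wave equation, set up the problem in similarity coordinates, run the Lumer--Phillips/semigroup machinery for the free part, add the compact potential, invoke mode stability to isolate the single unstable eigenvalue $\lambda=1$, run a Lyapunov--Perron fixed-point argument with a correction term, and then select $T^\ast$ by a one-dimensional topological argument. This is exactly the architecture of the paper (Sections~2--4, with the free-wave theory in Appendix~B culminating in \Cref{TheGenerationTHM}).

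There is, however, one genuine technical slip. The equivariant reduction does not yield a radial wave equation on $\RR^{n}$: the scalar profile $\widetilde\psi$ satisfies a \emph{smooth} radial semilinear wave equation on $\RR^{n+2}$ (see \ref{F1} with $d=n+2$); this is the standard dimension shift that absorbs the singular $r^{-2}$ terms. Consequently the working Hilbert space is $\mathfrak{H}^{k}_{\mathrm{rad}}(\BB^{d}_{R})$ with $d=n+2$, not $H^{k}(\BB^{n}_{R})\times H^{k-1}(\BB^{n}_{R})$. Your algebra claim ``$k\geq\frac{n+1}{2}>\frac{n}{2}$, so $\mathcal H^{k}$ is a Banach algebra'' then fails at the endpoint: for odd $n$ one has $k=\frac{n+1}{2}=\frac{d-1}{2}<\frac{d}{2}$, and $H^{k}(\BB^{d}_{R})$ is \emph{not} an algebra. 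The paper handles precisely this case in \Cref{LocLip} by a direct H\"older/Sobolev argument tailored to the cubic Yang--Mills nonlinearity \eqref{NYM}, and similarly in \Cref{NonlinEstimates} for the regularity upgrade. Without this, your contraction step would not close at the stated minimal regularity.

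Two minor remarks: your reference to \Cref{PreparationData} for the Lyapunov--Perron correction is misplaced (that subsection treats the transport of data to the initial hypersurface; the correction is \Cref{StabilizedEvolution}); and the ``Brouwer-type'' selection of $T^\ast$ is in fact just the intermediate value theorem, since the problem is one-dimensional (see the proof of \Cref{GlobalMildSolution}). The paper also avoids Gearhart--Pr\"uss and instead cites a spectral mapping theorem for compactly perturbed semigroups \cite[Theorem~B.1]{MR4469070}, though your route would work as well.
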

\begin{remark}
\label{RemarkDiscussionContent}
\Cref{THMWM,THMYM} characterize for corotational wave maps and equivariant Yang-Mills connections in all energy-supercritical dimensions the dynamics for large data near the self-similar blowup solutions $U_{1}^{\ast}$ and ${A_{1}^{\ast}}_{\mu}$ in regions $\Omega^{1,n}_{T}(\kappa)$ beyond light cones and partially even past the blowup time $T$. The perturbations of the data at $t=0$ are assumed to be supported in a ball $\BB^{n}_{r}$ of arbitrary but fixed radius $r > 0$. Based on this radius, similarity coordinates are chosen and according to \Cref{RemarkImageRegion,RemarkSHSF,RemarkFlatRegion}, the spacetime domain is decomposed into
\begin{equation*}
\Omega^{1,n}_{T}(\kappa) = \mathrm{X}^{1,n}_{T}(\kappa) \cup \Lambda^{1,n}_{T}(r)
\qquad\text{with foliation}\qquad
\mathrm{X}^{1,n}_{T}(\kappa) = \bigcup_{t\in (0,T)} \Sigma^{1,n}_{T}(t) \,.
\end{equation*}
Then, in the image region $\mathrm{X}^{1,n}_{T}(\kappa)$, the solutions $U$, $A_{\mu}$ exhibit along the spacelike hypersurfaces $\Sigma^{1,n}_{T}(t)$ as $t \to T$ the asymptotic behaviour of the respective blowup solutions $U_{T}^{\ast}$, ${A_{T}^{\ast}}_{\mu}$. In the complementary region $\Lambda^{1,n}_{T}(r)$, the solutions are simply obtained from finite speed of propagation. In particular, the solutions are smooth in $\Omega^{1,n}_{T}(\kappa)$ and hence no other singularities than $(T,0) \in \RR^{1,n}$ form outside the backward light cone of this blowup point. 
\end{remark}
\begin{remark}
The convergence towards the blowup is quantified in Sobolev norms along the hypersurfaces, which are classically defined by
\begin{align*}
\| U \|_{H^{k}(\Sigma^{1,n}_{T}(t))} &\coloneqq \sum_{i=1}^{n} \| U^{i} \circ {\chi\mathstrut}_{T}(\tau(t),\,.\,) \|_{H^{k}(\BB^{n}_{R})}
&& \text{for } U : \RR^{1,n}\rightarrow \mathbb{S}^{n} \subset \RR^{n+1} \,, \\ 
\| A \|_{H^{k}(\Sigma^{1,n}_{T}(t))} &\coloneqq \sum_{i,j=1}^{n} \sum_{\mu=0}^{n} \| A_{\mu}{}^{ij} \circ {\chi\mathstrut}_{T}(\tau(t),\,.\,) \|_{H^{k}(\BB^{n}_{R})}
&& \text{for } A_{\mu} : \RR^{1,n}\rightarrow \mathfrak{so}(n) \,.
\end{align*}
Since no Sobolev norms above the scaling critical regularity $k = n/2$ for wave maps and $k = n/2 - 1$ for Yang-Mills is distinguished for the stability problem, we prove our results for all integer regularities for which we have an elementary local Lipschitz estimate of the effective nonlinearity in \Cref{LocLip}. In fact, due to the flexibility of the coordinate system, we manage to significantly improve the topology used in \cite{MR4338226}, \cite{MR4661000} in hyperboloidal similarity coordinates.
\end{remark}
\begin{remark}
\label{RemarkDiscussionMethod}
The previous works \cite{MR4338226}, \cite{MR4661000} on stability past the blowup were only accessible in odd space dimensions. The dimensional limitation was due to a technically challenging adaptation of the ``descent method'' for the wave equation, which is entirely avoided in this paper. Instead, our current functional analytic approach relies on energies and commuting operators for the wave operator. Not only is this available in all space dimensions, we are also able to control the free wave flow without any symmetry assumptions in similarity coordinates. We outline this in \Cref{SubSubSecNRG,SubSecEnTop}, the general theory is developed in \Cref{AppendixFreeWave} and concluded in \Cref{TheGenerationTHM}.
\end{remark}
\begin{remark}
Moreover, unlike in \cite{MR4338226}, \cite{MR4661000}, where the size of the support of the perturbations is dictated by a small parameter, here we can freely choose the support of the perturbations as large as we please.
\end{remark}
\begin{remark}
The condition $F^{n+1} \geq -1 + \varepsilon_{r}$ for the $(n+1)$-st component of the initial datum ensures that its image has positive distance from the north pole in $\mathbb{S}^{n}$. This is of technical nature and lets us prove radial lemmas for Sobolev norms of corotational maps, see \Cref{CorotDataBound}.
\end{remark}
\begin{remark}
We finally remark that the question whether the stable blowup in \Cref{THMWM,THMYM} can be meaningfully continued beyond the Cauchy horizon of the blowup event appears to be difficult and has to be addressed in another project.
\end{remark}
\subsection{Review of the theory for wave maps and Yang-Mills}
\label{SectionRelatedWork}
In the following, we present some of the main contributions to the rich theory of the wave maps and Yang-Mills equation. As nonlinear geometric wave equations, they share many interesting features and their solution theory is very much influenced by a great interplay of ideas, techniques and methods from the field of nonlinear dispersive equations. The details and cross references are contained in the literature, for an overview we also refer to the survey articles \cite{MR1901147}, \cite{MR2459308}.
\subsubsection{Results for the wave maps equation}
Let $(M,g)$ be a Riemannian manifold. A wave map $U: \RR^{1,n} \rightarrow M$ is defined as a critical point of the action functional
\begin{equation*}
S_{\mathsf{WM}}[U] = \frac{1}{2} \int_{\RR^{1,n}} \operatorname{tr}_{\eta}( U^{\ast} g ) \,.
\end{equation*}
Wave maps were originally introduced as a ``nonlinear sigma model'' in classical field theory \cite{MR140316} and appear in many more applications. In Minkowski coordinates on $\RR^{1,n}$ and local coordinates on $M$, we obtain the Euler-Lagrange equations
\begin{equation*}
\pd^{\mu} \pd_{\mu} \Psi^{i} + \Gamma^{i}{}_{jk}(\Psi) \pd^{\mu} \Psi^{j} \pd_{\mu} \Psi^{k} = 0 \,,
\end{equation*}
where $\Psi = (\Psi^{1}, \ldots,\Psi^{n})$ is the chart representation of the wave map $U$ and $\Gamma^{i}{}_{jk}$ are the Christoffel symbols of the metric $g$. This is the \emph{intrinsic} formulation of the wave maps equation. In case $M \subset \RR^{m}$ is a submanifold of Euclidean space with second fundamental form $A_{p} : \mathrm{T}_{p}M \times \mathrm{T}_{p}M \rightarrow \mathrm{T}_{p}M^{\perp}$ at $p \in M$, the \emph{extrinsic} formulation of the wave maps equation reads
\begin{equation*}
\pd^{\mu} \pd_{\mu} U + A_{U}(\pd^{\mu} U, \pd_{\mu} U) = 0 \,.
\end{equation*}
In the following qualitative overview, we oriented ourselves to a large extent on the modern exposition of the theory of wave maps in D.-A.~Geba's and M.~G.~Grillaki's textbook \cite{MR3585834}. Surveys on wave maps can be found in \cite{MR2488946}, \cite{MR2043751}.
\medskip\par
To begin with, the wave maps equation constitutes a system of semilinear wave equations for which classical energy methods yield local well-posedness for data in Sobolev spaces $H^{s}(\RR^{n}) \times H^{s-1}(\RR^{n})$ of high enough regularity $s > n/2 + 1$. Note that $s = n/2$ is the scaling critical regularity for homogeneous Sobolev spaces $\dot{H}^{s}(\RR^{n}) \times \dot{H}^{s-1}(\RR^{n})$. One of the earliest results on global existence and regularity for wave maps were given in $(1+1)$ dimensions by C.~H.~Gu \cite{MR596432} and J.~Ginibre and G.~Velo \cite{MR678488}. The Cauchy problem for $(1+1)$-dimensional wave maps was also studied by M.~Keel and T.~Tao \cite{MR1663216}, who proved local well-posedness for all data above scaling regularity $s > 1/2$ and global well-posedness for all data if $s \geq 1$, whereas at the scaling critical regularity $s=1/2$ well-posedness fails because of discontinuity of the data-to-solution map \cite{MR1759884}. Wave maps in $(1+2)$ dimensions were extensively studied in corotational symmetry by J.~Shatah and A.~S.~Tahvildar-Zadeh \cite{MR1168115}, \cite{MR1278351} for targets that satisfy a geodesic convexity condition, as well as in radial symmetry for parallelizable targets by D.~Christodoulou and A.~S.~Tahvildar-Zadeh \cite{MR1230285}, \cite{MR1223662} and later by M.~Struwe \cite{MR1985457}, \cite{MR1971037} for general targets. In those settings, global existence of smooth solutions was established for smooth data of any size and the asymptotic behavior of those solutions was described. The work \cite{MR1168115} also gives a local well-posedness result for corotational wave maps at optimal regularity for all $n \geq 2$. The corotational results on blowup from that time are commented on in \Cref{SubSecSing}. We refer to J.~Shatah's and M.~Struwe's textbook \cite{MR1674843} for further notes on early results.
\medskip\par
For the small data global well-posedness theory without symmetry restrictions, several advanced techniques were developed. Building on their works \cite{MR1231427}, \cite{MR1446618}, S.~Klainerman together with S.~Selberg \cite{MR1452172} and M.~Machedon \cite{MR1381973} exploited the ``null structure'' of the wave maps nonlinearity to prove local well-posedness in Sobolev spaces for $s > n/2$ in all dimensions $n \geq 2$, In this context, the difficulties that arise on the way from local to global well-posedness at optimal regularity are known as the ``division problem''. This was solved in the seminal works \cite{MR1641721}, \cite{MR1827277} by D.~Tataru and resulted in global well-posedness for small data in scaling critical homogeneous Besov spaces, also see \cite{MR3883339}. The result in Sobolev spaces has to be addressed in view of the ill-posedness result \cite{MR2020108} due to failure of uniform continuity of the solution operator in scaling critical spaces. Passing from Besov spaces to Sobolev spaces leads to what is called the ``summation problem'', which was solved in a weak global existence result in scaling critical homogeneous Sobolev space for small data wave maps with spherical target by T.~Tao in \cite{MR1820329} for $n \geq 4$ and in \cite{MR1869874} for $n=2,3$. The underlying techniques from harmonic analysis were further improved to cover more general target manifolds. Subsequently, global existence for small critical data was obtained for bounded parallelizable targets in dimensions $n\geq 5$ by S.~Klainerman and I.~Rodnianski \cite{MR1843256} and for compact Lie groups and symmetric spaces in dimensions $n \geq 4$ by A.~Nahmod, A.~Stefanov and K.~Uhlenbeck \cite{MR2016196}. The low-dimensional cases $n=2,3$ are considered to be more difficult and were treated by J.~Krieger \cite{MR1990880}, \cite{MR2094472} for two-dimensional targets. A systematically different approach was pursued by J.~Shatah and M.~Struwe \cite{MR1890048} in dimensions $n \geq 4$ for targets with bounded curvature. The most comprehensive small data global well-posedness and regularity result was given by D.~Tataru \cite{MR2130618} for target manifolds which admit a uniform isometric embedding into Euclidean space.
\medskip\par
This leads to the investigation of large data wave maps and the question whether they are global or blow up in finite time. In energy-supercritical dimensions, it has been known from the early theory of corotational wave maps that the Cauchy problem admits for several target geometries finite time blowup in the form of smooth self-similar solutions. In the energy-critical dimension $n=2$, it was known that a singularity can only form if the energy of the solution concentrates in a backward light cone at the origin. In particular, this scenario rules out self-similar blowup for energy-critical wave maps. M.~Struwe \cite{MR1990477} used this to show that if a corotational wave map develops a singularity, it converges up to a rescaling to a static wave map, i.e. a harmonic map. This indicates that the respective blowup mechanism for energy-critical and supercritical wave maps is substantially different. At that time, however, the existence of blowup solutions in $(1+2)$ dimensions had been observed only numerically for corotational wave maps into the $2$-sphere, see e.g. the work \cite{MR1862811} of P.~Bizo\'{n}, T.~Chmaj and Z.~Tabor. The first rigorous construction of blowup for energy-critical wave maps into the $2$-sphere is due to I.~Rodnianski and J.~Sterbenz \cite{MR2680419} for high equivariance classes. Their construction uses modulation of the harmonic ground state and provides a stable blowup mechanism. Parallel to this work, J.~Krieger, W.~Schlag and D.~Tataru \cite{MR2372807} constructed corotational wave maps into the $2$-sphere which blow up in finite time at a prescribed polynomial blowup rate, also see \cite{MR3359541}. Stability of the latter solutions for sufficiently small rates was established in corotational symmetry by J.~Krieger together with S.~Miao \cite{MR4065147} and recently together with W.~Schlag \cite{2020arXiv200908843K} for general smooth perturbations. A construction of stable blowup solutions for all equivariance classes along with a description of the asymptotics is given in the work \cite{MR2929728} of P.~Rapha\"{e}l and I.~Rodnianski.
\medskip\par
In $(1+2)$ dimensions, the energy provides a conserved quantity that controls the scaling critical Sobolev norm and thus makes a large data global well-posedness theory accessible. Major contributions to global existence, regularity and scattering for large data energy-critical wave maps are due to T.~Tao in a series of preprints \cite{2008arXiv0805.4666T}, \cite{2008arXiv0806.3592T}, \cite{2008arXiv0808.0368T}, \cite{2009arXiv0906.2833T}, \cite{2009arXiv0908.0776T} and J.~Krieger and W.~Schlag \cite{MR2895939} in cases where the target has negative sectional curvature. The latter employed the method of ``concentration compactness'' and obtained from this also asymptotic bounds and scattering. The large data theory for energy-critical wave maps with compact target was established by J.~Sterbenz and D.~Tataru \cite{MR2657817}, \cite{MR2657818}. Their result proves a dichotomy between global regularity and scattering on the one hand, and on the other hand a blowup scenario, where the solution converges up to the symmetries of Minkwoski spacetime to a nontrivial finite energy harmonic map. More precisely, in the case where the compact target manifold does not admit a nontrivial harmonic map with finite energy, the solution is global and scatters for all data in the energy space. In the other case, global regularity and scattering holds for all data with energy below the minimal energy among all nontrivial harmonic maps. This settles the \emph{threshold conjecture} for wave maps. A refined version of the threshold theorem for wave maps with topological degree zero can be found in A.~Lawrie's and S.-J.~Oh's work \cite{MR3465437}.
\medskip\par
A characterization of the global dynamics for $(1+2)$-dimensional wave maps into the $2$-sphere are subject to the \emph{soliton resolution}, which asserts that any wave map in the energy space decomposes asymptotically into a finite sum of solitons and a radiation term. This decomposition along a sequence of times was proved by R.~C\^{o}te \cite{MR3403756} in the corotational case and by H.~Jia and C.~Kenig \cite{MR3730929} for arbitrary equivariance classes. For energies that rule out multi soliton configurations, R.~C\^{o}te, C.~Kenig, A.~Lawrie and W.~Schlag \cite{MR3318089}, \cite{MR3318090} proved the resolution for continuous times. Examples where a single soliton configuration is attained are provided in the fundamental works on the construction of blowup. For the asymptotic configuration of two solitons, constructions and a full classification of the possible dynamics of equivariant wave maps at the energy threshold is provided in the works of J.~Jendrej and A.~Lawrie \cite{MR3842064}, \cite{MR4409881}, \cite{MR4589375}, \cite{MR4599919}, the corotational case is treated by C.~Rodriguez \cite{MR4353567}. They prove that solutions with twice the energy of the harmonic ground state either scatter in both time directions or converge to a two-bubble state in one time direction and scatter in the other. Recently, the soliton resolution conjecture has been proved for corotational wave maps by T.~Duyckaerts, C.~Kenig, Y.~Martel and F.~Merle \cite{MR4397184}, who used the ``channel of energy'' method, and independently by J.~Jendrej and A.~Lawrie \cite{2021arXiv210610738J} for all higher equivariance classes. Versions of the soliton resolution without symmetry restrictions are available in \cite{MR3878592} and \cite{MR3627409}. More detailed explanations, overviews and cross references on the soliton resolution conjecture can be found in the cited works.
\medskip\par
To close our discussion on wave maps, we come back to results in energy-supercritical dimensions $d \geq 3$. B.~Dodson and A.~Lawrie \cite{MR3401013} proved for wave maps into the $3$-sphere a scattering result in the scaling critical norm, which rules out the existence of type II blowup. The existence of finite time blowup in the form of self-similar solutions as well as their stability theory have already been discussed in \Cref{SubSecSing}. Further existence results of self-similar solutions are given in \cite{MR2314330} and \cite{MR2450171}, \cite{MR2494812}. Global solutions into the $3$-sphere with infinite scaling critical Sobolev norm have been constructed in the work \cite{MR3592527} by J.~Krieger and E.~Chiodaroli. For energy-supercritical wave maps, blowup is also possible for negatively curved target geometries \cite{MR1622539}. In this situation, the first author and I.~Glogi\'{c} \cite{MR3861895} have constructed stable self-similar blowup in dimensions $n \geq 8$. A new ``type II'' blowup mechanism has been discovered by T.~Ghoul, S.~Ibrahim and V.~T.~Nguyen \cite{MR3812220} for all $n \geq 7$, which demonstrates that blowup is not necessarily always self-similar. Lastly, we mention that the threshold between blowup and scattering in dimensions $n = 3,4,5,6$ is numerically studied in \cite{MR3636308} and conjectured to be given by the first excited state of the countable family of self-similar solutions.
\subsubsection{Results for the Yang-Mills equation}
\label{SubSubYM}
In their work \cite{MR65437}, C.~N.~Yang and R.~L.~Mills developed the foundations of a gauge theory for the strong nuclear interaction. The mathematical theory is implemented as a non-Abelian gauge theory on principal fiber bundles over spacetime and can be viewed as a nonlinear generalization of Maxwell's theory of the electromagnetic interaction. The geometric setup on principal fiber bundles is treated e.g. in the textbook \cite{MR3837560}.
\medskip\par
As common, we give a more pragmatic setup here. Let $G$ be a matrix group and $\mathfrak{g}$ be its Lie algebra with Lie bracket $[A,B] = AB - BA $. We also assume that there is an inner product $\langle \,.\,, \,.. \rangle$ on $\mathfrak{g}$ which is invariant under the adjoint action of $G$ by conjugation. For $\mu = 0,1,\ldots,n$, let $A_{\mu} : \RR^{1,n} \rightarrow \mathfrak{g}$ be Lie algebra-valued connection $1$-forms and define by
\begin{equation*}
\operatorname{D}_{\mu} S = \pd_{\mu} S + [A_{\mu},S]
\end{equation*}
a metric-compatible connection for sections $S: \RR^{1,n} \rightarrow \mathfrak{g}$ over the vector bundle $\RR^{1,n} \times \mathfrak{g}$. The local curvature $2$-form $F_{\mu\nu} : \RR^{1,n} \rightarrow \mathfrak{g}$ associated to this connection is defined by
\begin{equation*}
\operatorname{D}_{\mu} \operatorname{D}_{\nu} S - \operatorname{D}_{\nu} \operatorname{D}_{\mu} S = [F_{\mu\nu}, S]
\end{equation*}
and given by
\begin{equation*}
F_{\mu\nu} = \pd_{\mu} A_{\nu} - \pd_{\nu} A_{\mu} - [A_{\mu},A_{\nu}] \,.
\end{equation*}
We say that $A_{\mu} : \RR^{1,n} \rightarrow \mathfrak{g}$ is a Yang-Mills connection if it is a critical point of the action functional
\begin{equation*}
S_{\mathsf{YM}}[A] = \frac{1}{2} \int_{\RR^{1,n}} \langle F_{\mu\nu}, F^{\mu\nu} \rangle \,.
\end{equation*}
The associated Euler-Lagrange equations are the Yang-Mills equation
\begin{align*}
0 &= \operatorname{D}^{\mu} F_{\mu\nu} \\&=
\pd^{\mu} \pd_{\mu} A_{\nu} - \pd_{\nu} ( \pd^{\mu} A_{\mu} ) + \pd^{\mu} [A_{\mu}, A_{\nu}] + [A^{\mu}, \pd_{\mu} A_{\nu}] - [A^{\mu}, \pd_{\nu} A_{\mu}] + [ A^{\mu}, [A_{\mu},A_{\nu}] ]
\end{align*}
and they constitute a system of scaling invariant nonlinear wave equations. This system possesses a gauge symmetry. That is, let $O : \RR^{1,n} \rightarrow G$ and consider the gauge transformation
\begin{equation*}
A_{\mu} \mapsto \overline{A}_{\mu} \coloneqq O A_{\mu} O^{-1} - \pd_{\mu} O O^{-1} \,.
\end{equation*}
The curvature $2$-form associated to the connection $1$-forms $\overline{A}_{\mu}$ is then given by $\overline{F}_{\mu\nu} = O F_{\mu\nu} O^{-1}$. Since the inner product on $\mathfrak{g}$ is invariant under conjugation, the Yang-Mills action functional is invariant under gauge transformations. Thus, if $A_{\mu}$ is a Yang-Mills connection then $\overline{A}_{\mu}$ is again a Yang-Mills connection, which also follows from the computation $\overline{\operatorname{D}}^{\mu} \overline{F}_{\mu\nu} = O ( \operatorname{D}^{\mu} F_{\mu\nu} ) O^{-1}$. This implies for the Cauchy problem that certain constraint equations on $A_{\mu}$ have to be imposed to make it well-posed. Examples of gauges that have been used in the study of the Yang-Mills equation are the
\begin{align*}
\renewcommand{\arraystretch}{1.2}
\begin{array}{lr}
\text{temporal gauge} & A_{0} = 0 \,, \\
\text{Coulomb gauge} & \pd^{i} A_{i} = 0 \,, \\
\text{Lorenz gauge} & \pd^{\mu} A_{\mu} = 0 \,,
\end{array}
\end{align*}
see the references for more details. We also note that $s = n/2 - 1$ is the scaling critical Sobolev index.
\medskip\par
An early local well-posedness result in Sobolev spaces for Yang-Mills connections in $(1+3)$ dimensions in the temporal gauge was obtained by I.~Segal \cite{MR546505}. This local existence result was extended to all space dimensions for the more general Yang-Mills-Higgs equation by J.~Ginibre and G.~Velo in the temporal gauge \cite{MR638511} and Lorenz gauge \cite{MR653018}. For $n=1,2$ they also obtained global solutions. The classical global well-posedness and regularity result for Yang-Mills-Higgs in $(1+3)$-dimensions was established by D.~M.~Eardley and V.~Moncrief \cite{MR649158}, \cite{MR649159} in the temporal gauge.
\medskip\par
In $(1+3)$ dimensions, local and global well-posedness for data with finite energy is due to S.~Klainerman and M.~Machedon \cite{MR1338675}. Their approach combined a temporal gauge and a local Coloumb gauge to reveal a null structure in the equations. Low regularity local well-posedness was first proved in \cite{MR1964470} in the temporal gauge for small data and in \cite{MR3519539}, \cite{MR3385623} in the Lorenz gauge. Low regularity global well-posedness in the temporal gauge is considered in the recent work \cite{2022arXiv220801503G}. Finite energy global well-posedness of the Yang-Mills system in $(1+3)$-dimensions was revisited in a new approach via the Yang-Mills heat flow by S.-J.~Oh \cite{MR3190112}, \cite{MR3357182}.
\medskip\par
In $(1+4)$ dimensions, S.~Klainerman and D.~Tataru \cite{MR1626261} proved small data local well-posedness in Sobolev spaces strictly above scaling for a model equation. The global problem was treated by J.~Sterbenz \cite{MR2325100} in Besov spaces. Small data global well-posedness in even dimensions $n \geq 6$ for scaling critical gauge covariant Sobolev norms was proved by J.~Krieger and J.~Sterbenz in \cite{MR3087010}. Based on progress for the related Maxwell-Klein-Gordon equation, small energy global well-posedness for the Yang-Mills equation in $(1+4)$ dimensions in the Coulomb gauge was solved by J.~Krieger and D.~Tataru \cite{MR3664812}. The works \cite{MR3907955}, \cite{MR4113787} of S.-J.~Oh and D.~Tataru on the threshold conjecture and dichotomy theorem also contain local well-posedness at optimal regularity in the temporal gauge in any dimension $n \geq 4$.
\medskip\par
In the energy-critical dimension $n=4$, static energy minimizing solutions to the Yang-Mills equation exist. They are called ``instanton'' and play a key role in the description of singularity formation. Numerics on blowup were carried out in the equivariant case by P.~Bizo\'{n} et al.~in \cite{MR1877844}, \cite{MR1923684}, \cite{MR2069700}. They observed that singularities form via a collapse to the rescaled profile of the instanton and conjectured that large solutions exhibit this universal blowup behavior. The rigorous construction of equivariant blowup solutions is due to J.~Krieger, W.~Schlag and D.~Tataru \cite{MR2522426}. Stable equivariant blowup has been constructed by P.~Rapha\"{e}l and I.~Rodnianski in \cite{MR2929728}.
\medskip\par
The large data theory without symmetry restrictions in the energy-critical dimension $n=4$ has been significantly advanced in the line of works \cite{MR4518477}, \cite{MR4113787}, \cite{MR3907955}, \cite{MR4298746} by S.-J.~Oh and D.~Tataru. Their technique is based on the Yang-Mills heat flow and constructs a global ``caloric'' gauge. The first main result of their work is a proof of the threshold conjecture, which asserts global well-posedness and scattering for data with energy below twice the instanton. This was shown before in the equivariant case in \cite{MR2443303}. The second part of their work establishes a dichotomy theorem, which states that local solutions either extend globally and scatter, or bubble off a soliton. An introduction and overview of the precise results is given in the article \cite{MR3923343}.
\subsection{Notation}
Throughout this paper, we employ Einstein summation convention.
\subsubsection{Sets and relations}
The sets of natural, real and complex numbers are denoted by $\NN$, $\RR$, $\CC$, respectively. The $d$-dimensional Euclidean space is denoted by $\RR^{d}$ and $| \,.\, |$ is the Euclidean norm. The open ball in $\RR^{d}$ of radius $R>0$ is given by $\BB^{d}_{R} = \{ x \in \RR^{d} \mid |x| < R \}$. The $(1+d)$-dimensional Minkowski spacetime is denoted by $\RR^{1,d}$ and $\eta = \operatorname{diag}(-1,1,\ldots,1)$ is the Minkowski metric. For $\omega \in \RR$, the set $\mathbb{H}_{\omega} = \{ z \in \CC \mid \Re(z) > \omega \}$ is an open right-half plane and $\overline{\mathbb{H}}_{\omega} = \{ z \in \CC \mid \Re(z) \geq \omega \}$ is a closed right-half plane in the complex plane.
\medskip\par
If $a_{\lambda}, b_{\lambda} \geq 0$ are nonnegative real numbers which are indexed by a parameter $\lambda \in \Lambda$, we define a relation $a_{\lambda} \lesssim b_{\lambda}$ if there exists a constant $C > 0$ such that the inequality $a_{\lambda} \leq C b_{\lambda}$ holds uniformly for all $\lambda \in \Lambda$. As usual, 
\begin{equation*}
a_{\lambda} \gtrsim b_{\lambda} \quad :\Leftrightarrow \quad b_{\lambda} \lesssim a_{\lambda}
\qquad\text{and}\qquad
a_{\lambda} \simeq b_{\lambda} \quad :\Leftrightarrow \quad
a_{\lambda} \lesssim b_{\lambda} \, \text{ and } \, a_{\lambda} \gtrsim b_{\lambda} \,.
\end{equation*}
\subsubsection{Derivatives and function spaces} The Jacobian matrix of a map $f: \RR^{m} \rightarrow \RR^{n}$ is denoted by $\pd f$ and has entries $(\pd f)^{j}{}_{i} = \pd_{i} f^{j}$ for $i=1,\ldots,m$ and $j=1,\ldots,n$. In particular, for $f:\RR^{d}\rightarrow\RR$, we have
\begin{equation*}
\text{the gradient } \pd f \,, \qquad
\text{the Hessian } \pd^{2} f \,, \qquad
\text{the Laplacian } \Delta f = \pd^{i} \pd_{i} f \,.
\end{equation*}
For a function $u:\RR^{1,d} \rightarrow \RR$ on Minkowski spacetime, the partial derivatives are denoted by $\pd_{\mu} u$ for $\mu = 0,1,\ldots,d$, where $(\pd_{0} u)(t,x) = \pd_{t} u(t,x)$. We frequently consider the space $C^{\infty} ( \overline{\BB^{d}_{R}} )$ which consists of all smooth functions whose partial derivatives of all orders are bounded. The corresponding subspace of radially symmetric functions is given by
\begin{equation*}
C^{\infty}_{\mathrm{rad}}(\overline{\BB^{d}_{R}}) = \{ f \in C^{\infty} ( \overline{\BB^{d}_{R}} ) \mid f \text{ is radially symmetric} \}
\end{equation*}
and the space of smooth even functions on the line is given by
\begin{equation*}
C^{\infty}_{\mathrm{ev}}([0,\infty)) = \{ f\in C^{\infty}([0,\infty)) \mid \forall\,n\in\NN: f^{(2n-1)}(0) = 0 \} \,.
\end{equation*}
We recall from \cite{2022arXiv220902286O} that for any $f \in C^{\infty}_{\mathrm{rad}}(\overline{\BB^{d}_{R}})$ there is a radial profile $\widetilde{f} \in C^{\infty}_{\mathrm{ev}}([0,R])$ such that $f(x) = \widetilde{f}(|x|)$. For integers $k\in \NN_{0}$, we define the Sobolev norm for smooth functions $f \in C^{\infty} ( \overline{\BB^{d}_{R}} )$ by
\begin{equation*}
\| f \|_{H^{k}(\BB^{d}_{R})} = \Big( \sum_{0\leq|\alpha|\leq k} \|\pd^{\alpha} f \|_{L^{2}(\BB^{d}_{R})}^{2} \Big)^{\frac{1}{2}} \,.
\end{equation*}
The space $C^{\infty} ( \overline{\BB^{d}_{R}} )$ endowed with the Sobolev norm $\| \,.\, \|_{H^{k}(\BB^{d}_{R})}$ is a normed space whose completion is the Sobolev space $H^{k}(\BB^{d}_{R})$. The respective radial subspace $H^{k}_{\mathrm{rad}}(\BB^{d}_{R})$ is obtained analogously.
\subsubsection{Operators}
For Hilbert spaces $H_{1}$, $H_{2}$, we denote their product Hilbert space by $\mathfrak{H} = H_{1} \times H_{2}$ and use boldface notation for elements
\begin{equation*}
\mathbf{f} =
\begin{bmatrix}
f_{1} \\
f_{2}
\end{bmatrix}
\in \mathfrak{H} \,,
\qquad\text{where}\qquad
f_{1} \in H_{1} \,, \quad
f_{2} \in H_{2} \,.
\end{equation*}
Linear operators on product spaces are also denoted in boldface notation $\mathbf{L} : \mathfrak{D}(\mathbf{L}) \subset \mathfrak{H} \rightarrow \mathfrak{H}$, where the domain $\mathfrak{D}(\mathbf{L})$ is a linear subspace of $\mathfrak{H}$. The space of bounded linear operators on $\mathfrak{H}$ is denoted by $\mathfrak{L}(\mathfrak{H})$ and endowed with the operator norm. If $\mathbf{L} : \mathfrak{D}(\mathbf{L}) \subset \mathfrak{H} \rightarrow \mathfrak{H}$ is a closed linear operator, its resolvent set and spectrum are given by
\begin{align*}
\varrho(\mathbf{L}) &= \{ z \in \mathbb{C} \mid z\mathbf{I} - \mathbf{L} : \mathfrak{D}(\mathbf{L}) \subset \mathfrak{H} \rightarrow \mathfrak{H} \text{ is a bijection} \} \,, \\
\sigma(\mathbf{L}) &= \CC \setminus \varrho(\mathbf{L}) \,,
\end{align*}
respectively. By the closed graph theorem, $\mathbf{R}_{\mathbf{L}}(z) \coloneqq (z\mathbf{I} - \mathbf{L})^{-1}$ is a bounded linear operator on $\mathfrak{H}$ for all $z \in \varrho(\mathbf{L})$, which defines the resolvent map $\mathbf{R}_{\mathbf{L}} : \varrho(\mathbf{L}) \rightarrow \mathfrak{L}(\mathfrak{H})$.
\section{The stability problem in corotational symmetry}
\label{SecStabProblem}
Here, we describe the symmetry-reduced blowup stability problem for the wave maps equation \eqref{WaveMapsEquation} and the Yang-Mills equation \eqref{YangMillsEquation}. Under the corotational ansatz \eqref{CorotWM}, \eqref{EquivarYM}, the respective models in $\RR^{1,n}$ reduce to a radial semilinear wave equation
\begin{equation}
\label{RadialSemilinearWaveEquation}
\big( -\pd_{t}^{2} + \Delta_{x} \big) \psi(t,x) + F\big( x, \psi(t,x) \big) = 0
\end{equation}
in $\RR^{1,d}$, where $d = n+2$ and $\psi: \RR^{1,d} \rightarrow \RR$ is a radial function in the spatial variable. The main properties of the nonlinearity are captured as follows.
\begin{enumerate}[itemsep=1em,topsep=1em,label=(F\arabic*)]
\item\label{F1} The map $F: \RR^{d} \times \RR \rightarrow \RR$ is given by
\begin{equation}
\label{WaveMapsYangMillsNonlinearity}
F ( x, z ) =
\renewcommand{\arraystretch}{1.5}
\left\{
\begin{array}{ll}
\displaystyle{
- \frac{d-3}{2} \frac{\sin\big( 2 |x| z \big) - 2 |x| z}{|x|^{3}}
} & \text{for wave maps,}
\\
\big( d-4 \big) \big( 3 z^{2} - |x|^{2} z^{3} \big) & \text{for Yang-Mills,}
\end{array}
\right.
\end{equation}
and is smooth and radially symmetric on $\RR^{d}$.
\item\label{F2} The scaling relation
\begin{equation*}
F \big( x, \lambda^{-s} z \big) = \lambda^{-s-2} F \big( \tfrac{x}{\lambda},z \big)
\end{equation*}
holds for all $\lambda>0$ and all $(x,z) \in \RR^{d} \times \RR$, where
\begin{equation*}
s = 1 \quad \text{for wave maps}
\qquad\text{and}\qquad
s = 2 \quad \text{for Yang-Mills.}
\end{equation*}
This implies that the semilinear wave equation \eqref{RadialSemilinearWaveEquation} is invariant under the scaling transformation
\begin{equation*}
\psi \mapsto \psi^{\lambda} \,, \qquad \text{where} \qquad \psi^{\lambda}(t,x) = \lambda^{-s} \psi\big( \tfrac{t}{\lambda}, \tfrac{x}{\lambda} \big)
\end{equation*}
for all $\lambda>0$.
\item\label{F3} For all energy-supercritical dimensions $d \in \NN$, i.e.
\begin{equation*}
\frac{d}{2} - s - 1 > 0 \,,
\end{equation*}
the semilinear wave equation \eqref{RadialSemilinearWaveEquation} admits the solution
\begin{align}
\nonumber
\psi^{\ast}(t,x) &\coloneqq \widetilde{\psi}^{\ast}(t,|x|) \\\label{SelfSimilarSolutions}&\coloneqq
\renewcommand{\arraystretch}{1.5}
\left\{
\begin{array}{ll}
\displaystyle{
\frac{4}{|x|} \arctan\Big( \frac{|x|}{ \sqrt{d-4} t + \sqrt{(d-4)t^{2} + |x|^{2}} } \Big)
} & \text{for wave maps,} \\
\displaystyle{
\frac{1}{\alpha_{d-2} t^{2} + \beta_{d-2} |x|^{2}}
} & \text{for Yang-Mills,}
\end{array}
\right.
\end{align}
in $\RR^{1,d} \setminus \{(0,0)\}$. The respective solution is smooth away from the origin, radially symmetric and self-similar, i.e. invariant under the scaling transformation in \ref{F2}. By time translation invariance of \Cref{RadialSemilinearWaveEquation}, the profile $\psi^{\ast}$ gives rise to a family of finite-time blowup solutions
\begin{equation}
\label{FiniteTimeBlowup}
\psi^{\ast}_{T}(t,x) \coloneqq \widetilde{\psi}^{\ast}_{T}(t,|x|) \coloneqq \psi^{\ast}(T-t,x) \,, \qquad T \in \RR \,.
\end{equation}
\end{enumerate}
The stable Cauchy evolution for \Cref{RadialSemilinearWaveEquation} near the self-similar blowup profile in regions beyond light cones is the content of the following theorem, for a discussion see \Cref{RemarkDiscussionContent,RemarkDiscussionMethod}.
\begin{theorem}
\label{THM}
Let $d,k\in\NN$ with
\begin{equation*}
k \geq \frac{d}{2} > 2 \quad \text{for wave maps}
\qquad\text{and}\qquad
k \geq \frac{d-1}{2} > \frac{5}{2} \quad \text{for Yang-Mills}.
\end{equation*}
Let $-1 \leq \kappa < 1$ and consider the Cauchy problem
\begin{equation*}
\label{StabilityProblem}
\renewcommand{\arraystretch}{1.2}
\left\{
\begin{array}{rcll}
0 &=& \big( -\pd_{t}^{2} + \Delta_{x} \big) \psi(t,x) + F \big( x, \psi(t,x) \big) & \text{in } \Omega^{1,d}_{T}(\kappa) \,, \\
\psi(0,x) &=& \psi^{\ast}_{1}(0,x) + f(x) & \text{in } \RR^{d} \,, \\
(\pd_{0} \psi)(0,x) &=& (\pd_{0}\psi^{\ast}_{1})(0,x) + g(x) & \text{in } \RR^{d} \,,
\end{array}
\right.
\end{equation*}
for the nonlinearity $F: \RR \times \RR^{d} \rightarrow \RR$ from \ref{F1} and the respective blowup profile $\psi^{\ast}$ from \ref{F3} in the spacetime region
\begin{equation*}
\Omega^{1,d}_{T}(\kappa) = \big\{ (t,x) \in \RR^{1,d} \mid 0 < t < T + \kappa |x| \big\} \,.
\end{equation*}
Fix $r > 0$, pick similarity coordinates as in \Cref{SimilarityCoordinatesFlat} and recall from \Cref{RemarkImageRegion,RemarkSHSF,RemarkFlatRegion}
\begin{equation*}
\Omega^{1,d}_{T}(\kappa) = \mathrm{X}^{1,d}_{T}(\kappa) \cup \Lambda^{1,d}_{T}(r)
\qquad\text{with}\qquad
\mathrm{X}^{1,d}_{T}(\kappa) = \bigcup_{t \in (0,T)} \Sigma^{1,d}_{T}(t) \,.
\end{equation*}
Then, there exist constants $\delta^{\ast} > 0$, $C^{\ast} \geq 1$, $\omega > 0$ such that for all $0 < \delta \leq \delta^{\ast}$, $C \geq C^{\ast}$, and all initial data $f,g \in C^{\infty}_{\mathrm{rad}}(\RR^{d})$ with
\begin{equation*}
\operatorname{supp} (f,g) \subset \BB^{d}_{r}
\qquad\text{and}\qquad
\| (f,g) \|_{H^{k}(\RR^{d}) \times H^{k-1}(\RR^{d})} \leq \frac{\delta}{C^{2}} \,,
\end{equation*}
there exist a $T^{\ast} \in [1-\frac{\delta}{C},1+\frac{\delta}{C}]$ and a unique solution $\psi \in C^{\infty} \big( \Omega^{1,d}_{T^{\ast}}(\kappa) \big)$ to the Cauchy problem which satisfies the stability estimates
\begin{align*}
\frac{\big\| \psi - \psi^{\ast}_{T^{\ast}} \big\|_{H^{k}( \Sigma^{1,d}_{T^{\ast}}(t) )}}{\big\| \psi^{\ast}_{T^{\ast}} \big\|_{H^{k}( \Sigma^{1,d}_{T^{\ast}}(t) )}} &\lesssim \delta \Big( \frac{T^{\ast}-t}{T^{\ast}} \Big)^{\omega} \,, \\
\frac{\big\| \pd_{0} \psi - \pd_{0}\psi^{\ast}_{T^{\ast}} \big\|_{H^{k-1}( \Sigma^{1,d}_{T^{\ast}}(t) )}}{\big\| \pd_{0}\psi^{\ast}_{T^{\ast}} \big\|_{H^{k-1}(\Sigma^{1,d}_{T^{\ast}}(t))}} &\lesssim \delta \Big( \frac{T^{\ast}-t}{T^{\ast}} \Big)^{\omega} \,,
\end{align*}
for all $0 \leq t < T^{\ast}$. If $k > \frac{d}{2}$, then additionally
\begin{equation*}
\sup_{(t',x') \in \Sigma^{1,d}_{T^{\ast}}(t)} \frac{|\psi(t',x') - \psi^{\ast}_{T^{\ast}}(t',x')|}{|\psi^{\ast}_{T^{\ast}}(t',x')|} \lesssim \delta \Big( \frac{T^{\ast}-t}{T^{\ast}} \Big)^{\omega}
\end{equation*}
for all $0\leq t < T^{\ast}$. Furthermore, the solution $\psi$ is radially symmetric in the spatial variable and there holds finite speed of propagation
\begin{equation*}
\psi \equiv \psi^{\ast}_{1} \quad \text{in } \Lambda^{1,d}_{T^{\ast}}(r) \,.
\end{equation*}
\end{theorem}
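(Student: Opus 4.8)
The plan is to transform the Cauchy problem into the similarity coordinates $\chi_{T^{\ast}}$ of \Cref{SimilarityCoordinatesFlat} centred at the a priori unknown blowup point $(T^{\ast},0)$, and to solve the resulting equation for the perturbation $\psi-\psi^{\ast}_{T^{\ast}}$ of the self-similar profile by a Lyapunov--Perron fixed point argument. \textbf{Step 1 (reduction to an abstract evolution equation).} I would substitute $\psi=\psi^{\ast}_{T}+{}$(perturbation) into \Cref{RadialSemilinearWaveEquation} and push the equation forward to the coordinate cylinder $(0,\infty)\times\BB^{d}_{R}$ via $\chi_{T}$. Since $\psi^{\ast}$ is self-similar in the sense of \ref{F3} and the coordinates are adapted to the scaling \ref{F2} (see \Cref{RemarkSHSF}), the rescaled profile is $\tau$-independent; introducing the first-order variable $\mathbf{u}(\tau)=(u_{1}(\tau),u_{2}(\tau))$ made up of the rescaled perturbation and a weighted time derivative, the problem becomes
\begin{equation*}
\pd_{\tau}\mathbf{u}(\tau)=\mathbf{L}\mathbf{u}(\tau)+\mathbf{N}\big(\mathbf{u}(\tau)\big)\,,\qquad \mathbf{u}(0)=\mathbf{U}(\mathbf{v},T)\,,
\end{equation*}
on the product Sobolev space $\mathfrak{H}=H^{k}(\BB^{d}_{R})\times H^{k-1}(\BB^{d}_{R})$, where $\mathbf{L}=\mathbf{L}_{0}+\mathbf{L}'$ with $\mathbf{L}_{0}$ the generator of the free wave evolution in similarity coordinates supplied by \Cref{TheGenerationTHM}, $\mathbf{L}'$ the relatively compact zeroth-order terms obtained from linearising $F(x,\,\cdot\,)$ at $\psi^{\ast}$, and $\mathbf{N}$ the genuinely nonlinear (at least quadratic) remainder, for which \Cref{LocLip} provides a local Lipschitz bound on balls of $\mathfrak{H}$ at exactly the admissible regularities $k$. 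The Cauchy data at $t=0$ are converted into the initial vector $\mathbf{U}(\mathbf{v},T)$ on the hypersurface $\Sigma^{1,d}_{T}(0)$ using that the latter has a flat central portion of radius $r$ by \Cref{RemarkFlatRegion} (see also \Cref{PreparationData}), and $\mathbf{U}(\mathbf{v},T)$ depends smoothly on $\mathbf{v}=(f,g)$ and on $T$.

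\textbf{Step 2 (spectral analysis and the unstable mode).} Next I would analyse $\sigma(\mathbf{L})$. Because $\mathbf{L}'$ is relatively compact, $\mathbf{L}$ generates a strongly continuous semigroup $\mathbf{S}(\tau)$ whose essential growth bound is that of the free flow from \Cref{TheGenerationTHM}; combining this with the mode stability statements quoted in \Cref{SubSecSing}, which are available in all energy-supercritical dimensions, shows that apart from a simple eigenvalue $\lambda=1$ forced by the time-translation symmetry of \Cref{RadialSemilinearWaveEquation}, with eigenfunction $\mathbf{g}$ obtained by differentiating $\psi^{\ast}_{T}$ in $T$, the spectrum of $\mathbf{L}$ lies in a half-plane $\{\Re z\leq-\omega\}$ for some $\omega>0$. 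Consequently there is a rank-one spectral projection $\mathbf{P}$ commuting with $\mathbf{S}(\tau)$ such that $\mathbf{S}(\tau)\mathbf{P}=\ee^{\tau}\mathbf{P}$ and $\|\mathbf{S}(\tau)(\mathbf{I}-\mathbf{P})\|_{\mathfrak{L}(\mathfrak{H})}\lesssim\ee^{-\omega\tau}$ for all $\tau\geq0$.

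\textbf{Step 3 (fixed point, selection of $T^{\ast}$, and return to spacetime).} I would then solve the Duhamel equation with the instability projected out,
\begin{equation*}
\mathbf{u}(\tau)=\mathbf{S}(\tau)\mathbf{U}(\mathbf{v},T)+\int_{0}^{\tau}\mathbf{S}(\tau-\sigma)\mathbf{N}\big(\mathbf{u}(\sigma)\big)\dd\sigma-\mathbf{C}(\mathbf{u},\mathbf{v},T)\,,
\end{equation*}
in the Banach space $X_{\delta}=\{\mathbf{u}\in C([0,\infty),\mathfrak{H}):\sup_{\tau\geq0}\ee^{\omega\tau}\|\mathbf{u}(\tau)\|_{\mathfrak{H}}\leq\delta\}$, where
\begin{equation*}
\mathbf{C}(\mathbf{u},\mathbf{v},T)=\ee^{\tau}\mathbf{P}\Big(\mathbf{U}(\mathbf{v},T)+\int_{0}^{\infty}\ee^{-\sigma}\mathbf{N}\big(\mathbf{u}(\sigma)\big)\dd\sigma\Big)
\end{equation*}
exactly cancels the growing part of the evolution. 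For $\delta$ small and $\|\mathbf{v}\|\leq\delta/C^{2}$, the local Lipschitz bound of \Cref{LocLip} together with the semigroup estimates make the right-hand side a contraction on $X_{\delta}$, yielding a unique fixed point $\mathbf{u}=\mathbf{u}_{\mathbf{v},T}$ with $\|\mathbf{u}_{\mathbf{v},T}(\tau)\|_{\mathfrak{H}}\leq\delta\ee^{-\omega\tau}$; since $\pd_{T}\mathbf{U}(\mathbf{v},T)$ has nonvanishing component along $\operatorname{ran}\mathbf{P}$ (transversality of the symmetry mode), the scalar equation $\mathbf{C}(\mathbf{u}_{\mathbf{v},T},\mathbf{v},T)=0$ can be solved for $T=T^{\ast}(\mathbf{v})\in[1-\tfrac{\delta}{C},1+\tfrac{\delta}{C}]$ by a one-dimensional intermediate value argument, and for this $T^{\ast}$ the function $\mathbf{u}_{\mathbf{v},T^{\ast}}$ genuinely solves $\pd_{\tau}\mathbf{u}=\mathbf{L}\mathbf{u}+\mathbf{N}(\mathbf{u})$ with the prescribed data. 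Undoing the coordinate change, $\mathbf{u}_{\mathbf{v},T^{\ast}}$ produces a solution $\psi$ of the original equation on the image region $\mathrm{X}^{1,d}_{T^{\ast}}(\kappa)$, smooth by running the argument at every admissible $k$ and using Sobolev embedding together with the equation, and unique as a fixed point; in the complement $\Lambda^{1,d}_{T^{\ast}}(r)$, which lies outside the domain of influence of $\BB^{d}_{r}$ by \Cref{RemarkFlatRegion}, finite speed of propagation gives $\psi\equiv\psi^{\ast}_{1}$, and the pieces patch to a solution on all of $\Omega^{1,d}_{T^{\ast}}(\kappa)$. Finally, restricting $\|\mathbf{u}_{\mathbf{v},T^{\ast}}(\tau)\|_{\mathfrak{H}}\leq\delta\ee^{-\omega\tau}$ to $\tau=\tau(t)$ and noting $\ee^{-\tau(t)}=\tfrac{T^{\ast}-t}{T^{\ast}}$, the scaling prefactor relating $\psi-\psi^{\ast}_{T^{\ast}}$ to $u_{1}$ is the same one relating $\psi^{\ast}_{T^{\ast}}$ to the $\tau$-independent rescaled profile, so the ratios in the statement are $\lesssim\|\mathbf{u}_{\mathbf{v},T^{\ast}}(\tau(t))\|_{\mathfrak{H}}\lesssim\delta(\tfrac{T^{\ast}-t}{T^{\ast}})^{\omega}$, and for $k>d/2$ the pointwise bound follows in addition from $H^{k}\hookrightarrow L^{\infty}$ and the fact that the rescaled profile is bounded away from zero on $\BB^{d}_{R}$.

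\textbf{Main obstacle.} The abstract scheme is by now standard; the technical weight sits in its two inputs in the flexible coordinate system, namely the generation and sharp growth bound for the free wave semigroup in arbitrary dimension and without symmetry (\Cref{TheGenerationTHM}, obtained via energies and commuting operators) and the local Lipschitz estimate \Cref{LocLip} for the effective nonlinearity at the regularities $k$ in the statement, while the spectral step rests on mode stability being available in all energy-supercritical dimensions. Beyond these, the genuinely delicate points are the simultaneous $T$-dependence of both the coordinate system and the initial vector $\mathbf{U}(\mathbf{v},T)$, and the verification that the $\lambda=1$ eigenfunction is transversal to $\operatorname{ran}(\mathbf{I}-\mathbf{P})$ within the family $\mathbf{U}(\mathbf{v},\,\cdot\,)$, which is precisely what permits fixing the blowup time $T^{\ast}$.
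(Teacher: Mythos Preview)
Your proposal is correct and follows essentially the same approach as the paper: the reformulation as an abstract evolution equation on $\mathfrak{H}^{k}_{\mathrm{rad}}(\BB^{d}_{R})$ via \Cref{NonlinearTransitionRelation}, the spectral decomposition of the linearized flow (\Cref{LinearizedEvolution}) with the single unstable eigenvalue $1$ coming from time translation, the Lyapunov--Perron fixed point with correction term (\Cref{StabilizedEvolution}), the selection of $T^{\ast}$ by an intermediate value argument exploiting the expansion of $\mathbf{U}(\bm{f},T)$ around the symmetry mode (\Cref{InitialDataExpansion,GlobalMildSolution}), and the extension to $\Omega^{1,d}_{T^{\ast}}(\kappa)$ by finite speed of propagation (\Cref{FiniteSpeedOfPropagation}). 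One small clarification: the abstract operator $\mathbf{L}$, semigroup $\mathbf{S}$, and nonlinearity $\mathbf{N}$ are all $T$-independent (the $T$-dependence is absorbed entirely into the initial datum $\mathbf{U}(\bm{f},T)$ and the final pullback $\chi_{T^{\ast}}$), so the ``simultaneous $T$-dependence of the coordinate system'' you flag as delicate does not actually enter the fixed-point argument.
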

\subsection{Formulation in similarity coordinates and outline of the paper}
\label{Outline}
We give an outline of our approach to prove \Cref{THM}. For any fixed $T > 0$, we reformulate the radial semilinear wave equation \eqref{RadialSemilinearWaveEquation} around the one-parameter family of self-similar blowup solutions $\psi^{\ast}_{T}$ by introducing a radially symmetric perturbation $u$ through
\begin{equation*}
\psi = \psi^{\ast}_{T} + u \,.
\end{equation*}
We denote $F'(x,z) \coloneqq \pd_{z} F(x,z)$ and get with
Taylor's theorem
\begin{equation*}
F \big( x, \psi^{\ast}_{T}(t,x) + u(t,x) \big) = F \big( x, \psi^{\ast}_{T}(t,x) \big) + \mathcal{V}_{T}(t,x) u(t,x) + \mathcal{N}_{T}(u)(t,x) \,,
\end{equation*}
where
\begin{align}
\label{SelfSimilarPotential}
\mathcal{V}_{T}(t,x) &= F'\big( x, \psi^{\ast}(T-t,x) \big) \,, \\
\label{NonlinearRemainder}
\mathcal{N}_{T}(u)(t,x) &= F \big( x, \psi^{\ast}_{T}(t,x) + u(t,x) \big) - F \big( x, \psi^{\ast}_{T}(t,x) \big) - F'\big( x, \psi^{\ast}_{T}(t,x) \big) u(t,x) \,,
\end{align}
is a self-similar potential and a corresponding nonlinear remainder, respectively. Then, \Cref{RadialSemilinearWaveEquation} takes the form
\begin{equation}
\label{DecompositionSLW}
0 = \big( -\pd_{t}^{2} + \Delta_{x} + \mathcal{V}_{T}(t,x) \big) u(t,x) + \mathcal{N}_{T}(u)(t,x) \,.
\end{equation}
In order to analyze this equation in regions beyond the blowup event, we employ the geometric setup of similarity coordinates from \Cref{AppendixSimilarityCoordinates}.
\begin{definition}
\label{SimilarityCoordinates}
Let $h \in C^{\infty}_{\mathrm{rad}}(\RR^{d})$ such that
\begin{equation*}
h(0) < 0 \,, \qquad
|(\pd h)(\xi)| < 1 \,, \qquad
(\pd^{2} h)(\xi) \text{ is positive semidefinite} \,,
\end{equation*}
for all $\xi \in \RR^{d}$. Let $R,T>0$. We define \emph{similarity coordinates}
\begin{equation*}
{\chi\mathstrut}_{T} : \RR \times \RR^{d} \rightarrow \RR^{1,d} \,, \qquad {\chi\mathstrut}_{T}(\tau, \xi) = \big( T + T \ee^{-\tau} h(\xi), T \ee^{-\tau} \xi \big) \,,
\end{equation*}
in the image region
\begin{equation*}
\mathrm{X}^{1,d}_{T,R} \coloneqq {\chi\mathstrut}_{T}\big( (0,\infty)\times \BB^{d}_{R} \big) \,.
\end{equation*}
\end{definition}
Next, we formulate \Cref{DecompositionSLW} as an autonomous first-order system in similarity coordinates. Therefore, we introduce
\begin{equation}
\label{EvoVar}
u_{1}(\tau,\xi) = (T\ee^{-\tau})^{s} ( u \circ{ \chi\mathstrut}_{T} )(\tau,\xi) \,, \qquad
u_{2}(\tau,\xi) = (T\ee^{-\tau})^{s+1} ( (\pd_{0} u) \circ{\chi\mathstrut}_{T} )(\tau,\xi) \,,
\end{equation}
with scaling parameters
\begin{equation*}
s=1 \quad \text{for wave maps} \qquad\text{and}\qquad s=2 \quad \text{for Yang-Mills}
\end{equation*}
and observe the identities
\begin{align}
\label{FreeEvoEq1}
\pd_{\tau} u_{1}(\tau,\xi) &= - s u_{1}(\tau,\xi) - \xi^{i} \pd_{i} u_{1}(\tau,\xi)
+ c(\xi) u_{2}(\tau,\xi) \,, \\
\label{FreeEvoEq2}
\pd_{\tau} u_{2}(\tau,\xi) &= \frac{c(\xi)}{w(\xi)} \pd^{i} \pd_{i} u_{1} (\tau,\xi)
- \Big( s + 1 + \frac{c(\xi)}{w(\xi)} \pd^{i} \pd_{i} h(\xi) \Big) u_{2}(\tau,\xi)
\\\nonumber&\indent- \Big( \xi^{i} + 2 \frac{c(\xi)}{w(\xi)} \pd^{i} h(\xi) \Big) \pd_{i} u_{2}(\tau,\xi)
- (T\ee^{-\tau})^{s+2} \frac{c(\xi)}{w(\xi)} ( \Box u ) \circ {\chi\mathstrut}_{T}(\tau,\xi) \,,
\end{align}
where the coefficients are specified as follows.
\begin{definition}
\label{hcw}
Let $h \in C^{\infty}_{\mathrm{rad}}(\RR^{d})$ be as in \Cref{SimilarityCoordinates}. We define $c,w \in C^{\infty}_{\mathrm{rad}}(\RR^{d})$ by
\begin{equation*}
c(\xi) = \xi^{i} (\pd_{i} h)(\xi) - h(\xi)
\qquad\text{and}\qquad
w(\xi) = 1-(\pd^{i} h)(\xi)(\pd_{i} h)(\xi) \,.
\end{equation*}
\end{definition}
The right-hand side of \Cref{FreeEvoEq1,FreeEvoEq2} governs the free dynamics of \Cref{DecompositionSLW} in similarity coordinates and is implemented in the following map.
\begin{definition}
\label{L0}
For $\mathbf{f} \in C^{\infty}_{\mathrm{rad}}(\overline{\BB^{d}_{R}})^{2}$, we define
\begin{equation*}
\mathbf{L}_{0} \mathbf{f} =
\renewcommand{\arraystretch}{1.5}
\begin{bmatrix}
- s f_{1} - \xi^{i} \pd_{i} f_{1}
+ c f_{2} \\
\displaystyle{
\frac{c}{w} \pd^{i} \pd_{i} f_{1}
- \Big( s + 1 + \frac{c}{w} \pd^{i} \pd_{i} h \Big) f_{2}
- \Big( \xi^{i} + 2 \frac{c}{w} \pd^{i} h \Big) \pd_{i} f_{2}
}
\end{bmatrix}
\in C^{\infty}_{\mathrm{rad}}(\overline{\BB^{d}_{R}})^{2} \,.
\end{equation*}
\end{definition}
The potential $\mathcal{V}_{T}$ from \Cref{SelfSimilarPotential} and nonlinear remainder $\mathcal{N}_{T}$ from \Cref{NonlinearRemainder} are encoded correspondingly.
\begin{definition}
\label{PotentialNonlinearity}
Let $V \in C^{\infty}_{\mathrm{rad}}(\overline{\BB^{d}_{R}})$ and $N : C^{\infty}_{\mathrm{rad}}(\overline{\BB^{d}_{R}}) \rightarrow C^{\infty}_{\mathrm{rad}}(\overline{\BB^{d}_{R}})$ be given by
\begin{align*}
V(\xi) &= F' \big( \xi, \psi^{\ast}(-h(\xi),\xi) \big) \,, \\
N(f)(\xi) &= F \big( \xi, \psi^{\ast}(-h(\xi),\xi) + f(\xi) \big) - F'\big( \xi, \psi^{\ast}(-h(\xi),\xi) \big) f(\xi) - F \big( \xi, \psi^{\ast}(-h(\xi),\xi) \big) \,.
\end{align*}
For $\mathbf{f} \in C^{\infty}_{\mathrm{rad}}(\overline{\BB^{d}_{R}})^{2}$, we define
\begin{align*}
\mathbf{L}_{V}' \mathbf{f}
&=
\begin{bmatrix}
0 \\
\displaystyle{
\frac{c}{w} V f_{1}
}
\end{bmatrix}
\in C^{\infty}_{\mathrm{rad}}(\overline{\BB^{d}_{R}})^{2} \,, \\
\mathbf{N}(\mathbf{f}) &=
\begin{bmatrix}
0 \\
\displaystyle{
\frac{c}{w} N(f_{1})
}
\end{bmatrix}
\in C^{\infty}_{\mathrm{rad}}(\overline{\BB^{d}_{R}})^{2} \,.
\end{align*}
\end{definition}
These maps provide for \Cref{DecompositionSLW} a transition relation to a formulation as first-order nonlinear time evolution equation.
\begin{lemma}
\label{NonlinearTransitionRelation}
Let $u\in C^{\infty}(\overline{\Omega}^{1,d}_{T,R})$ be radial and define $\mathbf{u}
\in C^{\infty}(\overline{ (0,\infty) \times \BB^{d}_{R} })^{2}$ with components
\begin{equation*}
u_{1}(\tau,\xi) = (T\ee^{-\tau})^{s} (u\circ{\chi\mathstrut}_{T})(\tau,\xi) \,, \qquad
u_{2}(\tau,\xi) = (T\ee^{-\tau})^{s+1} ( (\pd_{0} u) \circ{\chi\mathstrut}_{T} ) (\tau,\xi) \,.
\end{equation*}
Then
\begin{equation*}
\pd_{\tau}
\mathbf{u}(\tau,\,.\,)
=
( \mathbf{L}_{0} + \mathbf{L}_{V}' )
\mathbf{u}(\tau,\,.\,)
+
\mathbf{N}\big(
\mathbf{u}(\tau,\,.\,) \big)
-
\begin{bmatrix}
0 \\
\displaystyle{
(T\ee^{-\tau})^{s+2} \frac{c}{w} \big( (\Box u + \mathcal{V}_{T} u + \mathcal{N}_{T}(u)) \circ {\chi\mathstrut}_{T} \big)(\tau,\,.\,)
}
\end{bmatrix}
\,.
\end{equation*}
\end{lemma}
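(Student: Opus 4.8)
The plan is to obtain the identity by a direct computation combining the already-recorded free-flow identities \eqref{FreeEvoEq1}--\eqref{FreeEvoEq2} with the scaling structure of the nonlinearity from property~\ref{F2} and the self-similarity of $\psi^{\ast}$ from property~\ref{F3}; no further analytic input is needed. The smoothness and radial symmetry of $\mathbf{u}$ on $\overline{(0,\infty)\times\BB^{d}_{R}}$ follow from radial symmetry of $u$ and of $h$ together with the fact that $\chi_{T}$ is a smooth diffeomorphism onto its image, as established in \Cref{AppendixSimilarityCoordinates}.

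First I would observe, by comparing with \Cref{L0} and \Cref{hcw}, that the right-hand sides of \eqref{FreeEvoEq1}--\eqref{FreeEvoEq2} evaluated at $\mathbf{f}=\mathbf{u}(\tau,\,.\,)$ coincide with $\mathbf{L}_{0}\mathbf{u}(\tau,\,.\,)$ except for the single contribution $-(T\ee^{-\tau})^{s+2}\tfrac{c}{w}((\Box u)\circ\chi_{T})(\tau,\,.\,)$ in the second component; that is, $\pd_{\tau}\mathbf{u}(\tau,\,.\,)$ equals $\mathbf{L}_{0}\mathbf{u}(\tau,\,.\,)$ minus the vector whose first entry vanishes and whose second entry is $(T\ee^{-\tau})^{s+2}\tfrac{c}{w}((\Box u)\circ\chi_{T})(\tau,\,.\,)$. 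Inside this composition I then write $\Box u=(\Box u+\mathcal{V}_{T}u+\mathcal{N}_{T}(u))-\mathcal{V}_{T}u-\mathcal{N}_{T}(u)$. The first group reproduces exactly the inhomogeneous term displayed in the lemma, so it remains to verify that the two leftover contributions $(T\ee^{-\tau})^{s+2}\tfrac{c}{w}(\mathcal{V}_{T}u)\circ\chi_{T}$ and $(T\ee^{-\tau})^{s+2}\tfrac{c}{w}(\mathcal{N}_{T}(u))\circ\chi_{T}$ equal $\tfrac{c}{w}Vu_{1}$ and $\tfrac{c}{w}N(u_{1})$, i.e. the second components of $\mathbf{L}_{V}'\mathbf{u}$ and $\mathbf{N}(\mathbf{u})$ from \Cref{PotentialNonlinearity}.

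For these two scaling identities I would use that along $\chi_{T}(\tau,\xi)=(T+T\ee^{-\tau}h(\xi),T\ee^{-\tau}\xi)$ one has $T-t=-T\ee^{-\tau}h(\xi)$ and $x=T\ee^{-\tau}\xi$, so, writing $\lambda=T\ee^{-\tau}$, the self-similarity of $\psi^{\ast}$ in property~\ref{F3} gives $(\psi^{\ast}_{T}\circ\chi_{T})(\tau,\xi)=\psi^{\ast}(-\lambda h(\xi),\lambda\xi)=\lambda^{-s}\psi^{\ast}(-h(\xi),\xi)$, while $(u\circ\chi_{T})(\tau,\xi)=\lambda^{-s}u_{1}(\tau,\xi)$ by \eqref{EvoVar}. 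Differentiating the scaling relation of property~\ref{F2} in $z$ yields $F'(x,\lambda^{-s}z)=\lambda^{-2}F'(\tfrac{x}{\lambda},z)$; applied with $x=\lambda\xi$ to $\mathcal{V}_{T}(t,x)=F'(x,\psi^{\ast}(T-t,x))$ from \eqref{SelfSimilarPotential} this gives $(\mathcal{V}_{T}\circ\chi_{T})(\tau,\xi)=\lambda^{-2}V(\xi)$, whence $(T\ee^{-\tau})^{s+2}\tfrac{c}{w}(\mathcal{V}_{T}u)\circ\chi_{T}=\lambda^{s+2}\lambda^{-2}\lambda^{-s}\tfrac{c}{w}Vu_{1}=\tfrac{c}{w}Vu_{1}$. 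For the nonlinear remainder I substitute the same two expressions into \eqref{NonlinearRemainder} and apply property~\ref{F2} and its $z$-derivative to each of its three summands; since all of them then carry the common factor $\lambda^{-s-2}$, one finds $(\mathcal{N}_{T}(u)\circ\chi_{T})(\tau,\xi)=\lambda^{-s-2}N(u_{1}(\tau,\xi))$ with $N$ as in \Cref{PotentialNonlinearity}, and multiplication by $(T\ee^{-\tau})^{s+2}\tfrac{c}{w}$ produces $\tfrac{c}{w}N(u_{1})$. Collecting all contributions yields the asserted equation.

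Since \eqref{FreeEvoEq1}--\eqref{FreeEvoEq2} are taken as given, there is no conceptual obstacle; the only care needed is the bookkeeping of the powers of $T\ee^{-\tau}$ and the precise form of the differentiated scaling law for $F'$, which is exactly what forces the weights $s$ and $s+1$ in the definition \eqref{EvoVar} of $u_{1}$ and $u_{2}$. The genuinely computational part of the reduction lies upstream, in establishing \eqref{FreeEvoEq1}--\eqref{FreeEvoEq2} by the chain rule for $\pd_{\tau}$ and $\pd_{i}$ through $\chi_{T}$, packaged via the coefficients $c$ and $w$ of \Cref{hcw}; that step is, however, independent of the nonlinearity and already recorded in the excerpt.
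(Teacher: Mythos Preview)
Your argument is correct and mirrors the paper's proof: both derive the identity by combining the free-flow relations \eqref{FreeEvoEq1}--\eqref{FreeEvoEq2} (equivalently, \Cref{TransitionRelation}) with the scaling identities $(T\ee^{-\tau})^{s+2}(\mathcal{V}_{T}\circ\chi_{T})=(T\ee^{-\tau})^{s}V$ and $(T\ee^{-\tau})^{s+2}(\mathcal{N}_{T}(u)\circ\chi_{T})=N(u_{1})$, which follow from the differentiated scaling law $F'(x,\lambda^{-s}z)=\lambda^{-2}F'(\tfrac{x}{\lambda},z)$ and the self-similarity of $\psi^{\ast}$. Your presentation is simply more explicit about the bookkeeping of the $T\ee^{-\tau}$ powers.
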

\begin{proof}
The scaling relation \ref{F2} of the nonlinearity implies
\begin{equation*}
F' \big( x, \lambda^{-s} z \big) = \lambda^{-2} F' \big( \tfrac{x}{\lambda},z \big) \,.
\end{equation*}
So we obtain together with self-similarity of the blowup solution in \ref{F3} the scaling properties
\begin{align*}
\big( T\ee^{-\tau} \big)^{s+2} \big( \mathcal{V}_{T} \circ {\chi\mathstrut}_{T} \big)(\tau,\xi) &= \big( T\ee^{-\tau} \big)^{s} V(\xi) \,, \\
\big( T\ee^{-\tau} \big)^{s+2} \big( \mathcal{N}_{T}(u) \circ {\chi\mathstrut}_{T} \big) (\tau,\xi) &= N ( u_{1}(\tau,\,.\,) )(\xi) \,,
\end{align*}
for the potential $\mathcal{V}_{T}$ from \Cref{SelfSimilarPotential} and the nonlinear remainder $\mathcal{N}_{T}$ from \Cref{NonlinearRemainder}, respectively. An application of the identities \eqref{FreeEvoEq1}, \eqref{FreeEvoEq2} yields the transition relation, the underlying computation is carried out in \Cref{TransitionRelation}.
\end{proof}
So in similarity coordinates, our stability problem for the corotational wave maps and equivariant Yang-Mills equation is posed as the abstract nonlinear evolution equation
\begin{equation}
\label{AbstractNonlinearEvolutionEquation}
\pd_{\tau} \mathbf{u}(\tau) = \mathbf{L} \mathbf{u}(\tau) + \mathbf{N} ( \mathbf{u}(\tau) ) \,,
\end{equation}
where the dynamics are comprised of
\begin{equation*}
\renewcommand{\arraystretch}{1.2}
\begin{array}{ll}
\text{a linearized part} & \mathbf{L} \coloneqq \mathbf{L}_{0} + \mathbf{L}_{V}' \,, \\
\text{a nonlinear part} & \mathbf{N} \,.
\end{array}
\end{equation*}
We give a qualitative overview of the further proceeding and contents of this paper.
\subsubsection{Remarks on the analysis of the free wave flow}
\label{SubSubSecNRG}
The linearized part is arranged as a perturbation of $\mathbf{L}_{0}$. Therefore, it is feasible to base the functional analytic framework for the stability problem on the free wave flow in similarity coordinates. For this, we employ the theory of strongly continuous semigroups, particularly the Lumer-Phillips generation theorem. This requires us to realize the map $\mathbf{L}_{0}$ as a densely defined dissipative operator on a Hilbert space. However, semigroup theory alone does not provide a compatible Hilbert space structure. Nevertheless, it is instructive to note that the dissipative estimate in the assumptions of the Lumer-Phillips theorem dictates the exponential growth estimate for the semigroup in its conclusion. This observation suggests that energy estimates for the wave operator in similarity coordinates can inform a proper choice of inner products that are tailored to the dissipative properties of $\mathbf{L}_{0}$. In \Cref{SubSecEnTop}, we manage to construct an energy that exhibits such estimates. With these ideas, a general theory of the wave evolution in similarity coordinates is then developed in \Cref{AppendixFreeWave}. The main result in \Cref{TheGenerationTHM} gives the generation of the free wave flow in similarity coordinates together with sharp growth bounds.
\subsubsection{Remarks on the analysis of the linearized wave flow}
\label{SubSubSecLinearization}
In \Cref{SecLinWaveFlow}, we turn to the well-posedness of the linearized equation associated to the abstract Cauchy problem. Based on \Cref{TheGenerationTHM} and perturbation theory for semigroups, we infer in \Cref{LinearFlow} directly that $\mathbf{L}$ generates in any radial Sobolev space $\mathfrak{H}^{k}_{\mathrm{rad}}(\BB^{d}_{R}) \coloneqq H^{k}_{\mathrm{rad}}(\BB^{d}_{R}) \times H^{k-1}_{\mathrm{rad}}(\BB^{d}_{R})$ a strongly continuous semigroup $\mathbf{S}: [0,\infty) \rightarrow \mathfrak{L}(\mathfrak{H}^{k}_{\mathrm{rad}}(\BB^{d}_{R}))$ which propagates this flow. Ultimately, it is the stable part of this semigroup which accounts for the stability of the family of self-similar blowup solutions. Appropriate notions of stability are provided in terms of the growth bound of a semigroup, which is tied to spectral properties of its generator via spectral mapping theorems. Accordingly, we are led to characterize the unstable spectrum of $\mathbf{L}$. By exploiting coordinate invariance of the linearized operator $\Box + \mathcal{V}_{T}$, we draw an equivalence of this spectral problem to the classical \emph{mode stability problem}. Fortunately, this difficult nonself-adjoint spectral problem has been solved for the respective blowup solutions in the works \cite{MR3623242}, \cite{MR4469070} in the corotational case. With this, we prove that the unstable spectrum of $\mathbf{L}$ consists precisely of the eigenvalue $1 \in \sigma( \mathbf{L} )$, which is induced by time translation invariance of \Cref{RadialSemilinearWaveEquation}. Moreover, this eigenvalue is isolated and simple with associated Riesz projection $\mathbf{P}_{1}$. In \Cref{LinearizedEvolution}, we pin down the unstable part of the linearized wave evolution exactly to the one-dimensional eigenspace of the eigenvalue $1 \in \sigma( \mathbf{L} )$ by showing
\begin{equation*}
\mathbf{S}(\tau) = \mathbf{S}(\tau) ( \mathbf{I} - \mathbf{P}_{1} ) + \ee^{\tau} \mathbf{P}_{1} \,, \qquad \| \mathbf{S}(\tau) ( \mathbf{I} - \mathbf{P}_{1} ) \|_{\mathfrak{L}(\mathfrak{H}^{k}(\BB^{d}_{R}))} \lesssim \ee^{-\omega\tau} \,,
\end{equation*}
for all $\tau \geq 0$. This gives a complete dynamical description of the linearized wave flow around the self-similar blowup solution in spacetime regions even beyond the blowup event.
\subsubsection{Remarks on the analysis of the nonlinear wave flow}
In \Cref{SecNonlin}, we prove mapping properties of the nonlinearity $\mathbf{N}$ in order to give a mild formulation of the abstract nonlinear evolution equation \eqref{AbstractNonlinearEvolutionEquation} with Duhamel's principle, i.e.
\begin{equation*}
\mathbf{u}(\tau) = \mathbf{S}(\tau) \mathbf{u}(0) + \int_{0}^{\tau} \mathbf{S}(\tau-\tau') \mathbf{N} ( \mathbf{u}(\tau') ) \dd\tau' \,.
\end{equation*}
Via a Lyapunov-Perron-type argument, we carry the decaying part of the linear flow over to the nonlinear flow for modified initial data. Note that by definition of the variable $\mathbf{u} = (u_{1}, u_{2})$, the initial datum $\mathbf{u}(0)$ is obtained from an evaluation process along the initial hypersurface $\Sigma^{1,d}_{T}(0)$. Owed to property \ref{h1} of the height function, we have $\{ 0 \} \times \BB^{d}_{r} \subset \Sigma^{1,d}_{T}(0)$. Hence the spacetime region, where we perturb the blowup solution $\psi^{\ast}_{1}$, is contained in the initial hypersurface. Consequently, we obtain together with finite speed of propagation for semilinear wave equations the correct data for the abstract stability problem. In this way, the blowup time $T>0$ enters our analysis through the initial datum. By adjusting this blowup time appropriately, we construct from the modified solution a stable mild solution to the original Cauchy problem in the image region of similarity coordinates. Mapping properties of the semigroup $\mathbf{S}$ allow us to upgrade mild solutions to jointly smooth classical solutions of \Cref{AbstractNonlinearEvolutionEquation}. In \Cref{SecProofMainResults}, an application of this stability theory and the radial lemmas for corotational Sobolev norms proved in \cite{2022arXiv220902286O} ultimately yield \Cref{THMWM,THMYM}.
\subsection{Energy topologies}
\label{SubSecEnTop}
To derive an energy as described in \Cref{SubSubSecNRG}, let us fix $R,T > 0$ and recall from \Cref{GeometryFoliation,GSCFoliation} that the image region $\mathrm{X}^{1,d}_{T,R}$ of similarity coordinates ${\chi\mathstrut}_{T} : \RR \times \RR^{d} \rightarrow \RR^{1,d}$ introduced in \Cref{SimilarityCoordinates} is foliated by the spacelike hypersurfaces
\begin{equation*}
\Sigma^{1,d}_{T,R}(\tau) \coloneqq {\chi\mathstrut}_{T} \big( \{\tau\}\times\BB^{d}_{R} \big) \,, \qquad \tau > 0 \,,
\end{equation*}
and the components of the future-directed timelike unit normal vector field along $\Sigma^{1,d}_{T,R}(\tau)$ are given by
\begin{equation}
\label{FDUNVF}
N^{0} \circ {\chi\mathstrut}_{T}(\tau,\,.\,) = \frac{1}{\sqrt{w}} \,, \qquad
N^{i} \circ {\chi\mathstrut}_{T}(\tau,\,.\,) = \frac{\pd^{i}h}{\sqrt{w}} \,.
\end{equation}
The integral along $\Sigma^{1,d}_{T,R}(\tau)$ of a continuous and bounded function $\varphi: \Sigma^{1,d}_{T,R}(\tau) \rightarrow \RR$ is well-defined by
\begin{equation}
\label{IntegralHypersurface}
\int_{\Sigma^{1,d}_{T,R}(\tau)} \varphi \coloneqq \int_{\BB^{d}_{R}} \big( \varphi\circ{\chi\mathstrut}_{T}(\tau,\,.\,) \big) \sqrt{\det{\gamma\mathstrut}_{T}(\tau)} \,,
\end{equation}
where $\det{\gamma\mathstrut}_{T}(\tau) = \big( T\ee^{-\tau} \big)^{2d} w$ is the determinant of the first fundamental form of $\Sigma^{1,d}_{T,R}(\tau)$. With this, we define the \emph{energy} along the hypersurface $\Sigma^{1,d}_{T,R}(\tau)$ for real-valued maps $u \in C^{\infty}\big( \overline{\Omega}^{1,d}_{T,R} \big)$ as
\begin{equation*}
E_{0}[u](\tau) = \int_{\Sigma^{1,d}_{T,R}(\tau)} \mathrm{T}_{0\mu} N^{\mu} \,,
\end{equation*}
where
\begin{equation}
\label{NRGMOM}
\mathrm{T}_{\mu\nu} = (\pd_{\mu} u) (\pd_{\nu} u) - \frac{1}{2} (\pd^{\alpha} u) (\pd_{\alpha} u) \eta_{\mu\nu}
\end{equation}
are the components of the energy-momentum tensor associated to the wave operator of $u \in C^{\infty}\big( \overline{\Omega}^{1,d}_{T,R} \big)$ in standard Minkowski coordinates, see \cite[p.~147]{MR4182428}. In terms of the rescaled evolution variables
\begin{equation*}
u_{1}(\tau,\xi) = (T\ee^{-\tau})^{s} ( u \circ{ \chi\mathstrut}_{T} )(\tau,\xi) \,, \qquad
u_{2}(\tau,\xi) = (T\ee^{-\tau})^{s+1} ( (\pd_{0} u) \circ{\chi\mathstrut}_{T} )(\tau,\xi) \,,
\end{equation*}
we find
\begin{equation}
\label{NRG}
E_{0}[u](\tau) = (T\ee^{-\tau})^{d-2s-2} \frac{1}{2} \int_{\BB^{d}_{R}} \Big( |\pd u_{1}(\tau,\,.\,)|^{2} + |u_{2}(\tau,\,.\,)|^{2} w \Big) \,.
\end{equation}
We abbreviate $u_{1} \equiv u_{1}(\tau,\,.\,)$, $u_{2} \equiv u_{2}(\tau,\,.\,)$ and ${\chi\mathstrut}_{T} \equiv {\chi\mathstrut}_{T}(\tau,\,.\,)$ as well as $\xi$ for the identity function on $\BB^{d}_{R}$, recall from \Cref{FreeEvoEq1,FreeEvoEq2} the identities
\begin{align*}
\pd_{\tau} u_{1} &= - s u_{1} - \xi^{i} \pd_{i} u_{1}
+ c u_{2} \,, \\
\pd_{\tau} u_{2} &= \frac{c}{w} \pd^{i} \pd_{i} u_{1}
- \Big( s + 1 + \frac{c}{w} \pd^{i} \pd_{i} h \Big) u_{2}
- \Big( \xi^{i} + 2 \frac{c}{w} \pd^{i} h \Big) \pd_{i} u_{2} - (T\ee^{-\tau})^{s+2} \frac{c}{w} (\Box u) \circ {\chi\mathstrut}_{T}	 \,,
\end{align*}
and compute
\begin{align}
\label{DerivativeEnergy}
\frac{1}{2} \pd_{\tau} \Big( |\pd u_{1}|^{2} + |u_{2}|^{2} w \Big) &= \frac{1}{2} (d-2s+2) \Big( |\pd u_{1}|^{2} + |u_{2}|^{2} w \Big) \\\nonumber&\indent- (T\ee^{-\tau})^{s+2} \big( (\Box u) \circ {\chi\mathstrut}_{T} \big) (cu_{2}) \\\nonumber&\indent+
\pd_{i} \Big( - \frac{1}{2} \xi^{i} \big( |\pd u_{1}|^{2} + |u_{2}|^{2} w \big) - (\pd^{i} h) c |u_{2}|^{2} + (\pd^{i} u_{1}) ( c u_{2} ) \Big) \,.
\end{align}
Integration by parts yields
\begin{align*}
\frac{\mathrm{d}}{\mathrm{d}\tau} E_{0}[u](\tau) &= \frac{(T\ee^{-\tau})^{d-2s-2}}{R} \int_{\pd\BB^{d}_{R}} (\xi^{i} \pd_{i} u_{1}) (c u_{2}) - \frac{(T\ee^{-\tau})^{d-2s-2}}{R} \int_{\pd\BB^{d}_{R}} \frac{1}{2} \Big( |\xi|^{2} |\pd u_{1}|^{2} + |cu_{2}|^{2} \Big) \\&\indent+
(T\ee^{-\tau})^{d-2s-2} \frac{\widetilde{h}(R)^{2} - R^{2}}{2R} \int_{\pd\BB^{d}_{R}} |u_{2}|^{2} -
(T\ee^{-\tau})^{d-s} \int_{\BB^{d}_{R}} \big( (\Box u) \circ {\chi\mathstrut}_{T} \big) \big( c u_{2} \big) \,.
\end{align*}
By the Cauchy-Schwarz inequality, the sum of the boundary integrals in the first line is nonpositive and by \Cref{GSCLightCone} we have $\widetilde{h}(\rho)^{2} - \rho^{2} \leq 0$ for all $\rho\geq R_{0}$. Thus, if we fix $R\geq R_{0}$ then
\begin{equation*}
\Box u = 0 \quad \text{in } \mathrm{X}^{1,d}_{T,R}
\qquad\text{implies}\qquad
E_{0}[u](\tau) \leq E_{0}[u](0) \,.
\end{equation*}
However, the energy $E_{0}[u](\tau)$ along a hypersurface does not constitute a norm for $u\in C^{\infty}(\overline{\Omega}^{1,d}_{T,R})$. It turns out that adding to the energy \eqref{NRG} for some fixed $0 < \varepsilon_{1} \leq \frac{d}{2} - 1$ the boundary term
\begin{equation*}
E_{\pd}[u](\tau) = (T\ee^{-\tau})^{d-2s-2} \frac{\varepsilon_{1}}{R} \int_{\pd\BB^{d}_{R}} |u_{1}|^{2}
\end{equation*}
achieves control on a Sobolev norm without spoiling the previous energy estimate. Indeed, put
\begin{equation}
\label{NRGNorm}
E[u](\tau) \coloneqq E_{0}[u](\tau) + E_{\pd}[u](\tau) \,.
\end{equation}
Then, the trace lemma \cite[Lemma 2.1]{MR4778061} yields
\begin{equation*}
E[u](\tau) \simeq \ee^{ -2\big( \frac{d}{2} - s - 1 \big) \tau} \big\| \big( u_{1}(\tau,\,.\,), u_{2}(\tau,\,.\,) \big) \big\|_{H^{1}(\BB^{d}_{R}) \times L^{2}(\BB^{d}_{R})}^{2}
\end{equation*}
for all $u\in C^{\infty}\big( \overline{\Omega}^{1,d}_{T,R} \big)$ and all $\tau \geq 0$. Moreover, we find for the derivative of the boundary term
\begin{equation*}
\frac{\mathrm{d}}{\mathrm{d}\tau} E_{\pd}[u](\tau) = -2\Big( \frac{d}{2} - 1 \Big) E_{\pd}[u](\tau) +
(T\ee^{-\tau})^{d-2s-2} \frac{2 \varepsilon_{1}}{R} \int_{\pd\BB^{d}_{R}} \Big( - (\xi^{i} \pd_{i} u_{1}) u_{1} + c u_{2} u_{1} \Big) \,,
\end{equation*}
so that altogether
\begin{align*}
\frac{\mathrm{d}}{\mathrm{d}\tau} E[u](\tau) &= -2\Big( \frac{d}{2} - 1 \Big) E_{\pd}[u](\tau) \\&\indent+
\frac{(T\ee^{-\tau})^{d-2s-2}}{R} \int_{\pd\BB^{d}_{R}} \Big( (\xi^{i} \pd_{i} u_{1}) (c u_{2}) - (\xi^{i} \pd_{i} u_{1}) (2 \varepsilon_{1} u_{1}) + (c u_{2}) (2 \varepsilon_{1} u_{1}) \Big) \\&\indent- \frac{(T\ee^{-\tau})^{d-2s-2}}{2R} \int_{\pd\BB^{d}_{R}} \Big( |\xi|^{2} |\pd u_{1}|^{2} + |cu_{2}|^{2} \Big) \\&\indent+
(T\ee^{-\tau})^{d-2s-2} \frac{\widetilde{h}(R)^{2} - R^{2}}{2R} \int_{\pd\BB^{d}_{R}} |u_{2}|^{2} -
(T\ee^{-\tau})^{d-s} \int_{\BB^{d}_{R}} \big( (\Box u) \circ {\chi\mathstrut}_{T} \big) \big( c u_{2} \big) \,.
\end{align*}
In the second line above
\begin{equation*}
(\xi^{i} \pd_{i} u_{1}) (c u_{2}) - (\xi^{i} \pd_{i} u_{1}) (2 \varepsilon_{1} u_{1}) + (c u_{2}) (2 \varepsilon_{1} u_{1}) \leq \frac{1}{2} \Big( |\xi^{i} \pd_{i} u_{1}|^{2} + |c u_{2}|^{2} + |2 \varepsilon_{1} u_{1}|^{2} \Big) \,,
\end{equation*}
and now $R\geq R_{0}$ implies
\begin{equation*}
\frac{\mathrm{d}}{\mathrm{d}\tau} E[u](\tau) \leq -2\Big( \frac{d}{2} - 1 - \varepsilon_{1} \Big) E_{\pd}[u](\tau) +
(T\ee^{-\tau})^{d-s} \int_{\BB^{d}_{R}} \big( (\Box u) \circ {\chi\mathstrut}_{T} \big) \big( c u_{2} \big) \,.
\end{equation*}
So, if $\Box u = 0$ in $\mathrm{X}^{1,d}_{T,R}$, we conclude the estimate
\begin{equation}
\label{NormEstimate}
\big\| \big( u_{1}(\tau,\,.\,), u_{2}(\tau,\,.\,) \big) \big\|_{H^{1}(\BB^{d}_{R}) \times L^{2}(\BB^{d}_{R})} \lesssim \ee^{ \big( \frac{d}{2} - s - 1 \big) \tau} \big\| \big( u_{1}(0,\,.\,), u_{2}(0,\,.\,) \big) \big\|_{H^{1}(\BB^{d}_{R}) \times L^{2}(\BB^{d}_{R})} 
\end{equation}
for all $\tau \geq 0$. However, because the stability problem in \Cref{THM} is energy-supercritical, i.e. $\frac{d}{2} - s - 1 > 0$, we need an exponential growth estimate better than in \eqref{NormEstimate} for a viable perturbation theory around self-similar blowup. To achieve this, scaling effects suggest to pass on to higher Sobolev norms. More concretely, the idea is to come up with differential operators that satisfy commutation relations with the wave operator in similarity coordinates and use them to construct higher energies which control higher-order Sobolev norms and exhibit improved growth bounds. These constructions are delicate and therefore carried out in detail in the \Cref{SubSecCommDiffOp,SubSecIP}.
\section{Existence and stability of the linearized flow in similarity coordinates}
\label{SecLinWaveFlow}
In this section, we prove the existence of the linearized corotational wave maps and equivariant Yang-Mills flow in similarity coordinates and determine its stability.
\subsection{Generation of the linearized flow}
As explained in \Cref{Outline}, the linear theory is built on the theory of the free wave flow in similarity coordinates from \Cref{AppendixFreeWave}. We begin with introducing the underlying function spaces.
\begin{definition}
\label{FunctionSpaces}
Let $d,k\in\NN$ and $R>0$. We define the Hilbert space
\begin{equation*}
\mathfrak{H}^{k}_{\mathrm{rad}}(\BB^{d}_{R}) = H^{k}_{\mathrm{rad}}(\BB^{d}_{R}) \times H^{k-1}_{\mathrm{rad}}(\BB^{d}_{R}) \,.
\end{equation*}
\end{definition}
In the following, we implicitly fix $h \in C^{\infty}_{\mathrm{rad}}(\RR^{d})$ as in \Cref{SimilarityCoordinates} and let $R_{0} > 0$ be determined as in \Cref{GSCLightCone}. We also tacitly impose case distinctions through the parameters
\begin{equation*}
s=1 \quad \text{for wave maps} \qquad\text{and}\qquad s=2 \quad \text{for Yang-Mills} \,.
\end{equation*}
With this, we obtain from our generation theorem \ref{TheGenerationTHM} immediately a semigroup and generator for the free dynamics.
\begin{proposition}
\label{SGRadialWaveFlow}
Let $d,k\in\NN$ and $R \geq R_{0}$. Then, the linear map $\mathbf{L}_{0} : C^{\infty}_{\mathrm{rad}}(\overline{\BB^{d}_{R}})^{2} \rightarrow C^{\infty}_{\mathrm{rad}}(\overline{\BB^{d}_{R}})^{2}$ given in \Cref{L0} has an extension to a densely defined closed operator $\mathbf{L}_{0} : \mathfrak{D}(\mathbf{L}_{0}) \subset \mathfrak{H}^{k}_{\mathrm{rad}}(\BB^{d}_{R}) \rightarrow \mathfrak{H}^{k}_{\mathrm{rad}}(\BB^{d}_{R})$
which is the generator of a strongly continuous semigroup $\mathbf{S}_{0}: [0,\infty) \rightarrow \mathfrak{L}( \mathfrak{H}^{k}_{\mathrm{rad}}(\BB^{d}_{R}))$ with the property that for any $0 < \varepsilon < \frac{1}{2}$ there exists a constant $M_{d,k,R,h,\varepsilon} \geq 1$ such that the growth estimate
\begin{equation*}
\| \mathbf{S}_{0}(\tau) \mathbf{f} \|_{ \mathfrak{H}^{k}(\BB^{d}_{R}) } \leq M_{d,k,R,h,\varepsilon} \ee^{\omega_{d,k,s,\varepsilon} \tau} \| \mathbf{f} \|_{ \mathfrak{H}^{k}(\BB^{d}_{R}) } \,, \qquad
\omega_{d,k,s,\varepsilon} = \max\Big\{ \frac{d}{2} - s - k, -s + \varepsilon \Big\} \,,
\end{equation*}
holds for all $\mathbf{f} \in \mathfrak{H}^{k}_{\mathrm{rad}}(\BB^{d}_{R})$ and all $\tau\geq 0$.
\end{proposition}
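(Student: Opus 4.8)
The plan is to deduce the statement directly from the general free-flow generation theorem \Cref{TheGenerationTHM}, whose proof — resting on the energy \eqref{NRGNorm} and the higher-order commuting differential operators and energies constructed in \Cref{SubSecCommDiffOp,SubSecIP} — carries essentially all of the analytic weight; what remains here is only the passage to the radial sector. First I would record that the map $\mathbf{L}_{0}$ of \Cref{L0} sends $C^{\infty}_{\mathrm{rad}}(\overline{\BB^{d}_{R}})^{2}$ into itself: this is immediate from \Cref{L0,hcw}, since the height function $h$, and hence the coefficients $c$ and $w$, are radial while $\xi^{i}\pd_{i}$ and $\pd^{i}\pd_{i}$ preserve radial symmetry. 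Thus $\mathbf{L}_{0}$ is well defined as a densely defined operator on $\mathfrak{H}^{k}_{\mathrm{rad}}(\BB^{d}_{R})$ with domain $C^{\infty}_{\mathrm{rad}}(\overline{\BB^{d}_{R}})^{2}$.

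Next I would invoke \Cref{TheGenerationTHM} for the fixed height function $h$ and any radius $R \geq R_{0}$: it provides a closed extension of $\mathbf{L}_{0}$ on the full space $\mathfrak{H}^{k}(\BB^{d}_{R})$ generating a strongly continuous semigroup $\mathbf{S}_{0}$ that obeys, for each $0 < \varepsilon < \tfrac{1}{2}$, the growth bound with exponent $\omega_{d,k,s,\varepsilon} = \max\{\tfrac{d}{2}-s-k,\,-s+\varepsilon\}$. To descend to radial functions, note that the coefficients of $\mathbf{L}_{0}$ are radial, so $\mathbf{L}_{0}$ commutes with the canonical unitary $O(d)$-action on $\mathfrak{H}^{k}(\BB^{d}_{R})$, hence so do its resolvent and $\mathbf{S}_{0}(\tau)$ for every $\tau \geq 0$. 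Averaging this action over Haar measure yields the orthogonal projection onto $\mathfrak{H}^{k}_{\mathrm{rad}}(\BB^{d}_{R})$, which therefore commutes with the semigroup; consequently $\mathbf{S}_{0}$ restricts to a strongly continuous semigroup on the closed invariant subspace $\mathfrak{H}^{k}_{\mathrm{rad}}(\BB^{d}_{R})$, with the same constant $M_{d,k,R,h,\varepsilon}$ and the same exponent $\omega_{d,k,s,\varepsilon}$.

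It then remains to match this restricted semigroup with the operator of \Cref{L0}. Its generator is, by the standard invariant-subspace principle, the part of the closed operator $\mathbf{L}_{0}$ in $\mathfrak{H}^{k}_{\mathrm{rad}}(\BB^{d}_{R})$ — a densely defined closed operator — and $C^{\infty}_{\mathrm{rad}}(\overline{\BB^{d}_{R}})^{2}$ is a core for it by the usual criterion, being a dense subspace of the domain that is $\mathbf{S}_{0}(\tau)$-invariant; invariance follows from persistence of regularity for the free flow (\Cref{AppendixFreeWave}) together with the preservation of radiality just noted. The growth estimate for $\mathbf{S}_{0}$ on $\mathfrak{H}^{k}_{\mathrm{rad}}(\BB^{d}_{R})$ is then exactly the one inherited in the previous paragraph. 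I expect no serious obstacle at this stage: the hard work — in particular constructing the commuting operators and higher energies that improve the crude exponent $\tfrac{d}{2}-s-1$ of \eqref{NormEstimate} to $\max\{\tfrac{d}{2}-s-k,\,-s+\varepsilon\}$, which is precisely what a supercritical perturbation theory around self-similar blowup requires — is already done inside \Cref{TheGenerationTHM} and the preparatory \Cref{AppendixFreeWave}.
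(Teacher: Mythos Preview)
Your approach is essentially the paper's: invoke \Cref{TheGenerationTHM} and pass to the radial sector. One small slip: \Cref{TheGenerationTHM} is stated for $\mathbf{L}_{\chi}$ with exponent $\max\{\tfrac{d}{2}-k,\varepsilon\}$, not for $\mathbf{L}_{0}$; you need to observe $\mathbf{L}_{0}=\mathbf{L}_{\chi}-s\mathbf{I}$ on $C^{\infty}_{\mathrm{rad}}(\overline{\BB^{d}_{R}})^{2}$ and take $\mathbf{S}_{0}(\tau)=\ee^{-s\tau}\mathbf{S}_{\chi}{\mathord{\restriction}}(\tau)$, which is precisely what shifts the exponent to $\omega_{d,k,s,\varepsilon}$. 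For the radial restriction, the paper simply quotes item~\ref{itemrad} of \Cref{TheGenerationTHM} (established there via the Post--Widder formula and the radiality preservation of the resolvent from \Cref{DenseRange}); your $O(d)$-averaging argument is a perfectly valid alternative route to the same conclusion.
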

\begin{proof}
Let $\overline{\mathbf{L}_{\chi} {\mathord{\restriction}}} :
\mathfrak{D}(\overline{\mathbf{L}_{\chi} {\mathord{\restriction}}}) \subset \mathfrak{H}^{k}_{\mathrm{rad}}(\BB^{d}_{R}) \rightarrow \mathfrak{H}^{k}_{\mathrm{rad}}(\BB^{d}_{R})$ be the generator of the semigroup $\mathbf{S} {\mathord{\restriction}} : [0,\infty) \rightarrow \mathfrak{L}(\mathfrak{H}^{k}_{\mathrm{rad}}(\BB^{d}_{R}))$ from \cref{itemrad} of \Cref{TheGenerationTHM}. Then, the operator
\begin{equation*}
\mathbf{L}_{0} \coloneqq \overline{\mathbf{L}_{\chi} {\mathord{\restriction}}} - s \mathbf{I} \,, \qquad
\mathfrak{D}(\mathbf{L}_{0}) = \mathfrak{D}(\overline{\mathbf{L}_{\chi} {\mathord{\restriction}}}) \,,
\end{equation*}
is the generator of the rescaled semigroup given by $\mathbf{S}_{0}(\tau) \coloneqq \ee^{-s \tau} \mathbf{S} {\mathord{\restriction}}(\tau)$ which exhibits the asserted growth estimate. This operator coincides on $C^{\infty}_{\mathrm{rad}}(\overline{\BB^{d}_{R}})^{2}$ with the map from \Cref{L0}.
\end{proof}
The potential which arises from the linearization around the self-similar blowup solution is realized as a compact linear operator in radial Sobolev spaces.
\begin{lemma}
\label{BoundedCompactPotential}
Let $d,k\in \NN$ with $\frac{d}{2} - s > 1$ and $R > 0$. Then, the linear map $\mathbf{L}_{V}': C^{\infty}_{\mathrm{rad}}(\overline{\BB^{d}_{R}})^{2} \rightarrow C^{\infty}_{\mathrm{rad}}(\overline{\BB^{d}_{R}})^{2}$ given in \Cref{PotentialNonlinearity} has a unique extension to a bounded linear operator $\mathbf{L}_{V}' \in \mathfrak{L} ( \mathfrak{H}^{k}_{\mathrm{rad}}(\BB^{d}_{R}) )$ and this operator is compact.
\end{lemma}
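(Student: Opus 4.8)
The plan is to recognise $\mathbf{L}_{V}'$ as multiplication by a single fixed smooth radial function, composed with the natural inclusion $H^{k}(\BB^{d}_{R}) \hookrightarrow H^{k-1}(\BB^{d}_{R})$, and then to read off compactness from the Rellich--Kondrachov theorem. Concretely, abbreviate $\widetilde{V} \coloneqq \tfrac{c}{w} V$, so that the map of \Cref{PotentialNonlinearity} is $\mathbf{f} = (f_{1},f_{2}) \mapsto (0, \widetilde{V} f_{1})$.

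\emph{Step 1: the multiplier is smooth up to the boundary.} I would first check that $\widetilde{V} \in C^{\infty}_{\mathrm{rad}}(\overline{\BB^{d}_{R}})$. For $w = 1 - (\pd^{i} h)(\pd_{i} h)$ the pointwise bound $|\pd h| < 1$ from \Cref{SimilarityCoordinates} gives $w \geq 1 - \max_{\overline{\BB^{d}_{R}}} |\pd h|^{2} > 0$, hence $1/w$ and therefore $c/w = (\xi^{i}\pd_{i}h - h)/w$ are smooth, with all derivatives bounded on the compact set $\overline{\BB^{d}_{R}}$. For $V(\xi) = F'(\xi, \psi^{\ast}(-h(\xi),\xi))$ the hypothesis $\tfrac{d}{2} - s > 1$ is exactly the energy-supercriticality of \ref{F3}, under which the closed-form profile $\psi^{\ast}$ is available; moreover $F' = \pd_{z}F$ is smooth on $\RR^{d}\times\RR$ by \ref{F1}, and $\psi^{\ast}$ is smooth on $\RR^{1,d}$ away from spacetime points with vanishing spatial part and non-positive time (away from just the origin in the Yang-Mills case). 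Since $h(0) < 0$, the curve $\xi \mapsto (-h(\xi),\xi)$ has positive time component $-h(0)$ at $\xi = 0$ and nonzero spatial component for $\xi \neq 0$, so it never meets that set; thus $\xi\mapsto\psi^{\ast}(-h(\xi),\xi)$, and hence $V$, is smooth on $\RR^{d}$, which gives $\widetilde{V} \in C^{\infty}_{\mathrm{rad}}(\overline{\BB^{d}_{R}})$.

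\emph{Step 2: bounded extension and uniqueness.} Multiplication by $\widetilde{V}$ is bounded on $H^{k}(\BB^{d}_{R})$ with norm controlled by $\|\widetilde{V}\|_{W^{k,\infty}(\BB^{d}_{R})}$ via the Leibniz rule, and it preserves radiality; composing with the continuous inclusion $H^{k}(\BB^{d}_{R}) \hookrightarrow H^{k-1}(\BB^{d}_{R})$ shows that $f_{1}\mapsto\widetilde{V}f_{1}$ is bounded from $H^{k}_{\mathrm{rad}}(\BB^{d}_{R})$ to $H^{k-1}_{\mathrm{rad}}(\BB^{d}_{R})$. Hence $\mathbf{f}\mapsto(0,\widetilde{V}f_{1})$ extends to a bounded operator on $\mathfrak{H}^{k}_{\mathrm{rad}}(\BB^{d}_{R})$, and this extension is unique since $C^{\infty}_{\mathrm{rad}}(\overline{\BB^{d}_{R}})^{2}$ is dense in $\mathfrak{H}^{k}_{\mathrm{rad}}(\BB^{d}_{R})$ by definition of the latter as a completion.

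\emph{Step 3: compactness.} As $\BB^{d}_{R}$ is a bounded Lipschitz domain, the inclusion $H^{k}(\BB^{d}_{R})\hookrightarrow H^{k-1}(\BB^{d}_{R})$ is compact by Rellich--Kondrachov, and it remains compact on the closed subspace $H^{k}_{\mathrm{rad}}(\BB^{d}_{R})$, because an $H^{k-1}$-limit of radial functions is radial. Therefore $f_{1}\mapsto\widetilde{V}f_{1}$ is compact from $H^{k}_{\mathrm{rad}}(\BB^{d}_{R})$ to $H^{k-1}_{\mathrm{rad}}(\BB^{d}_{R})$, being bounded multiplication on $H^{k}_{\mathrm{rad}}$ followed by this compact inclusion. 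Equivalently, a bounded sequence $(\mathbf{f}_{n})$ in $\mathfrak{H}^{k}_{\mathrm{rad}}(\BB^{d}_{R})$ has first components bounded in $H^{k}$, so $(\widetilde{V}f_{1,n})$ is bounded in $H^{k}$ and admits an $H^{k-1}$-convergent subsequence, whence $(\mathbf{L}_{V}'\mathbf{f}_{n}) = ((0,\widetilde{V}f_{1,n}))$ converges along a subsequence in $\mathfrak{H}^{k}_{\mathrm{rad}}(\BB^{d}_{R})$. This yields $\mathbf{L}_{V}' \in \mathfrak{L}(\mathfrak{H}^{k}_{\mathrm{rad}}(\BB^{d}_{R}))$ compact. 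Steps 2 and 3 are routine; the only point needing care is Step 1, namely checking that substituting the blowup profile $\psi^{\ast}$ --- which is genuinely singular on part of the ray $\{x = 0,\ t \leq 0\}$ --- into the curve $(-h(\xi),\xi)$ still produces a function smooth up to $\xi = 0$, and this is exactly where the sign condition $h(0)<0$ and the gradient bound $|\pd h|<1$ (keeping $w$ away from zero) are used.
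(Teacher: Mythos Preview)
Your proof is correct and follows exactly the same approach as the paper: boundedness via multiplication by a smooth function on a dense subspace, and compactness via the Rellich--Kondrachov embedding $H^{k}_{\mathrm{rad}}(\BB^{d}_{R}) \hookrightarrow H^{k-1}_{\mathrm{rad}}(\BB^{d}_{R})$. The paper's proof is terse and refers elsewhere for details, whereas you spell out Step~1 (smoothness of $\widetilde{V} = \tfrac{c}{w}V$ and the role of $h(0)<0$ in avoiding the singularity of $\psi^{\ast}$) explicitly --- a point the paper takes for granted.
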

\begin{proof}
The map $\mathbf{L}_{V}'$ from \Cref{PotentialNonlinearity} represents a densely defined bounded linear operator on $\mathfrak{H}^{k}_{\mathrm{rad}}(\BB^{d}_{R})$. Hence, there is a unique extension $\mathbf{L}_{V}' \in \mathfrak{L} ( \mathfrak{H}^{k}_{\mathrm{rad}}(\BB^{d}_{R}) )$. Compactness of the extended operator follows from compactness of the embedding $H^{k}_{\mathrm{rad}}(\BB^{d}_{R}) \hookrightarrow H^{k-1}_{\mathrm{rad}}(\BB^{d}_{R})$, see e.g. \cite[proof of Lemma 3.1]{MR4778061} for more details.
\end{proof}
The generator of the linearized dynamics is implemented in this functional analytic setting as follows.
\begin{definition}
\label{GeneratorLinearizedWave}
Let $d,k\in\NN$ with $\frac{d}{2} - s > 1$ and $R \geq R_{0}$. We define the linear operator $\mathbf{L}: \mathfrak{D}(\mathbf{L}) \subset \mathfrak{H}^{k}_{\mathrm{rad}}(\BB^{d}_{R}) \rightarrow \mathfrak{H}^{k}_{\mathrm{rad}}(\BB^{d}_{R})$ by
\begin{equation*}
\renewcommand{\arraystretch}{1.5}
\mathbf{L} = \mathbf{L}_{0} + \mathbf{L}_{V}' \,, \qquad \mathfrak{D}(\mathbf{L}) = \mathfrak{D}(\mathbf{L}_{0}) \,.
\end{equation*}
\end{definition}
This setup allows for a short proof of the existence of the linearized flow in similarity coordinates.
\begin{proposition}
\label{LinearFlow}
Let $d,k\in\NN$ with $\frac{d}{2} - s > 1$ and $R \geq R_{0}$. Then, the operator $\mathbf{L}: \mathfrak{D}(\mathbf{L}) \subset \mathfrak{H}^{k}_{\mathrm{rad}}(\BB^{d}_{R}) \rightarrow \mathfrak{H}^{k}_{\mathrm{rad}}(\BB^{d}_{R})$ is the generator of a strongly continuous semigroup $\mathbf{S}: [0,\infty) \rightarrow \mathfrak{L}( \mathfrak{H}^{k}_{\mathrm{rad}}(\BB^{d}_{R}) )$ with the following properties.
\begin{enumerate}[itemsep=1em,topsep=1em]
\item For any $0 < \varepsilon < \frac{1}{2}$ and for the constants $M_{d,k,R,h,\varepsilon} \geq 1$ and $\omega_{d,k,s,\varepsilon}$ from \Cref{SGRadialWaveFlow}, the estimate
\begin{equation*}
\| \mathbf{S}(\tau) \mathbf{f} \|_{ \mathfrak{H}^{k}(\BB^{d}_{R}) } \leq M_{d,k,R,h,\varepsilon} \ee^{\big( \omega_{d,k,s,\varepsilon} + M_{d,k,R,h,\varepsilon} \| \mathbf{L}_{V}' \|_{\mathfrak{L} ( \mathfrak{H}^{k}(\BB^{d}_{R}) )} \big) \tau} \| \mathbf{f} \|_{ \mathfrak{H}^{k}(\BB^{d}_{R}) }
\end{equation*}
holds for all $\mathbf{f} \in \mathfrak{H}^{k}_{\mathrm{rad}}(\BB^{d}_{R})$ and all $\tau\geq 0$.
\item For any $0 < \varepsilon < \frac{1}{2}$, the set $\sigma(\mathbf{L}) \cap \overline{\mathbb{H}}_{\omega_{d,k,s,\varepsilon}}$ consists of finitely many isolated eigenvalues of $\mathbf{L}$ with finite algebraic multiplicity.
\end{enumerate}
\end{proposition}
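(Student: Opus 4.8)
The plan is to treat $\mathbf{L}$ as a perturbation of the free generator $\mathbf{L}_{0}$ and to use two facts already at hand: the semigroup $\mathbf{S}_{0}$ with its sharp growth bound from \Cref{SGRadialWaveFlow}, and that the potential term $\mathbf{L}_{V}'$ is not merely bounded but \emph{compact} by \Cref{BoundedCompactPotential}. Part~(1) is then soft. Since $\mathbf{L}_{V}' \in \mathfrak{L}(\mathfrak{H}^{k}_{\mathrm{rad}}(\BB^{d}_{R}))$, the bounded perturbation theorem for strongly continuous semigroups (see, e.g., the textbook of Engel and Nagel) gives that $\mathbf{L} = \mathbf{L}_{0} + \mathbf{L}_{V}'$ with $\mathfrak{D}(\mathbf{L}) = \mathfrak{D}(\mathbf{L}_{0})$ generates a strongly continuous semigroup $\mathbf{S}$, represented by the Dyson--Phillips series $\mathbf{S}(\tau) = \sum_{n \geq 0}\mathbf{S}_{n}(\tau)$ with $\mathbf{S}_{n+1}(\tau) = \int_{0}^{\tau}\mathbf{S}_{0}(\tau-\sigma)\mathbf{L}_{V}'\mathbf{S}_{n}(\sigma)\dd\sigma$. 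Inserting the bound $\|\mathbf{S}_{0}(\tau)\| \leq M_{d,k,R,h,\varepsilon}\ee^{\omega_{d,k,s,\varepsilon}\tau}$ and iterating yields $\|\mathbf{S}_{n}(\tau)\| \leq M_{d,k,R,h,\varepsilon}^{n+1}\|\mathbf{L}_{V}'\|^{n}\tfrac{\tau^{n}}{n!}\ee^{\omega_{d,k,s,\varepsilon}\tau}$, and summing the series produces exactly the growth estimate claimed in~(1).

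For part~(2), fix $0 < \varepsilon < \tfrac12$. On $\{\Re z > \omega_{d,k,s,\varepsilon}\}$ one has the factorization $z\mathbf{I} - \mathbf{L} = (z\mathbf{I} - \mathbf{L}_{0})(\mathbf{I} - \mathbf{R}_{\mathbf{L}_{0}}(z)\mathbf{L}_{V}')$, where $z \mapsto \mathbf{R}_{\mathbf{L}_{0}}(z)\mathbf{L}_{V}'$ is analytic with values in the compact operators (a product with the compact operator $\mathbf{L}_{V}'$). The analytic Fredholm theorem then gives that $\sigma(\mathbf{L}) \cap \{\Re z > \omega_{d,k,s,\varepsilon}\}$ is discrete, that each of its points is an isolated eigenvalue of $\mathbf{L}$ — an eigenvector being produced from $\ker(\mathbf{I} - \mathbf{R}_{\mathbf{L}_{0}}(z_{0})\mathbf{L}_{V}')$, which lies in the range of $\mathbf{R}_{\mathbf{L}_{0}}(z_{0})$, that is, in $\mathfrak{D}(\mathbf{L}_{0}) = \mathfrak{D}(\mathbf{L})$ — and that the associated Riesz projection is finite rank, i.e.\ the algebraic multiplicity is finite. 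To upgrade ``discrete'' to ``finite'' I would show $\|\mathbf{R}_{\mathbf{L}_{0}}(z)\mathbf{L}_{V}'\|_{\mathfrak{L}(\mathfrak{H}^{k}(\BB^{d}_{R}))} \to 0$ as $|z| \to \infty$ along a half-plane $\{\Re z \geq \omega'\}$ with $\omega' > \omega_{d,k,s,\varepsilon}$: from $\mathbf{R}_{\mathbf{L}_{0}}(z)\mathbf{f} = \tfrac1z\mathbf{f} + \tfrac1z\mathbf{R}_{\mathbf{L}_{0}}(z)\mathbf{L}_{0}\mathbf{f}$ on $\mathfrak{D}(\mathbf{L}_{0})$ one gets $\mathbf{R}_{\mathbf{L}_{0}}(z) \to 0$ strongly, and combining this with the uniform bound $\|\mathbf{R}_{\mathbf{L}_{0}}(z)\| \leq M_{d,k,R,h,\varepsilon}(\omega' - \omega_{d,k,s,\varepsilon})^{-1}$ and the compactness of $\mathbf{L}_{V}'$ forces operator-norm convergence to $0$. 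Hence $\sigma(\mathbf{L}) \cap \{\Re z \geq \omega'\}$ is bounded and discrete, thus finite, with every point an isolated eigenvalue of finite algebraic multiplicity.

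It remains to descend from $\{\Re z \geq \omega'\}$ with $\omega' > \omega_{d,k,s,\varepsilon}$ to the closed half-plane $\overline{\mathbb{H}}_{\omega_{d,k,s,\varepsilon}}$. In all parameter ranges of \Cref{THM} but one, the map $\varepsilon'' \mapsto \omega_{d,k,s,\varepsilon''} = \max\{\tfrac{d}{2} - s - k,\, -s + \varepsilon''\}$ is strictly increasing near $\varepsilon$, so one simply reruns the previous step with $\omega_{d,k,s,\varepsilon'}$ ($\varepsilon' \in (0,\varepsilon)$) playing the role of $\omega_{d,k,s,\varepsilon}$ and $\omega' = \omega_{d,k,s,\varepsilon}$. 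In the single borderline case — Yang--Mills with $d$ odd and $k = \tfrac{d-1}{2}$, where $\omega_{d,k,s,\varepsilon} = \tfrac{d}{2} - s - k$ does not move with $\varepsilon$ — one instead appeals to the sharp spectral information behind \Cref{TheGenerationTHM}, namely that the essential spectrum of $\mathbf{L}_{0}$ lies strictly to the left of the line $\Re z = \omega_{d,k,s,\varepsilon}$, so that the analytic Fredholm factorization applies already on the closed half-plane. The main obstacle is exactly this finiteness on the \emph{closed} half-plane: the compact-perturbation argument only controls $\sigma(\mathbf{L})$ in the open half-plane and a priori leaves open accumulation as $\Im z \to \pm\infty$ or on the boundary line, and it is precisely the decay of $\|\mathbf{R}_{\mathbf{L}_{0}}(z)\mathbf{L}_{V}'\|$ at infinity together with the sharp location of the essential spectrum from \Cref{TheGenerationTHM} that rules both of these out.
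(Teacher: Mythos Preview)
Your treatment of part~(1) matches the paper's exactly: invoke the bounded perturbation theorem \cite[p.~158]{MR1721989}. The Dyson--Phillips computation you spell out is just the standard derivation of the constant in that theorem.

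For part~(2) the paper takes a shorter route than you do: it cites \cite[Theorem~B.1]{MR4469070}, a black-box result stating precisely that a compact perturbation of a semigroup generator with growth bound $\omega_0$ has, in any closed half-plane $\overline{\mathbb{H}}_{\omega_0}$, only finitely many eigenvalues of finite algebraic multiplicity. Your analytic Fredholm argument is essentially a sketch of the proof of that theorem, and on the \emph{open} half-plane $\{\Re z > \omega_{d,k,s,\varepsilon}\}$ it is correct: the factorization, the compactness of $\mathbf{R}_{\mathbf{L}_0}(z)\mathbf{L}_V'$, and the operator-norm decay at infinity (strong resolvent convergence composed with a compact operator) all go through, and the descent by taking $\varepsilon' < \varepsilon$ is clean whenever $\omega_{d,k,s,\varepsilon}$ genuinely moves with $\varepsilon$.

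The gap is in your borderline case $k \leq \tfrac{d-1}{2}$, where $\omega_{d,k,s,\varepsilon} = \tfrac{d}{2}-s-k$ is constant in $\varepsilon$. There you appeal to ``the essential spectrum of $\mathbf{L}_0$ lies strictly to the left of the line $\Re z = \omega_{d,k,s,\varepsilon}$,'' but nothing in \Cref{TheGenerationTHM} or elsewhere in the paper establishes this, and for free wave generators in similarity coordinates the spectrum of $\mathbf{L}_0$ is typically the full closed half-plane $\{\Re z \leq \omega_0\}$, so the boundary line is \emph{in} $\sigma(\mathbf{L}_0)$ and your factorization $(z\mathbf{I}-\mathbf{L}_0)(\mathbf{I}-\mathbf{R}_{\mathbf{L}_0}(z)\mathbf{L}_V')$ is unavailable there. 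This is exactly why the paper defers to the cited theorem rather than reproving it. That said, the gap is harmless for the sequel: \Cref{ModeCriterion} and everything downstream only ever use $\omega$ strictly above $\max\{\tfrac{d}{2}-s-k,\,-s\}$, a regime your argument covers completely.
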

\begin{proof}
By \Cref{BoundedCompactPotential}, the operator $\mathbf{L} = \mathbf{L}_{0} + \mathbf{L}_{V}'$ is a bounded perturbation of the generator of the semigroup from \Cref{SGRadialWaveFlow}. Hence, the Bounded Perturbation Theorem \cite[p.~158]{MR1721989} yields the first item. Since $\mathbf{L}_{V}'$ is compact, we can apply the spectral mapping theorem \cite[Theorem B.1]{MR4469070} and infer the second item of the proposition.
\end{proof}
\subsection{Spectral analysis of the generator}
The growth estimate in \Cref{LinearFlow} contains no information about the stable and unstable dynamics in the linearized evolution. Together with the spectral mapping theorem \cite[Theorem B.1]{MR4469070}, however, the second item states that any growth additional to the one described in \Cref{SGRadialWaveFlow} is caused by finitely many eigenvalues in a right half-plane. So the next goal is to characterize this part of the spectrum of the generator. For this, we prepare some technical lemmas for which we recall from \Cref{AssumptionsAppendix} graphical similarity coordinates
\begin{equation*}
\chi : \RR \times \RR^{d} \rightarrow \RR^{1,d} \,, \qquad \chi(\tau,\xi) = \big( \ee^{-\tau} h(\xi), \ee^{-\tau} \xi \big) \,,
\end{equation*}
and from \Cref{GSCLaplaceBeltrami} the Laplace-Beltrami operator
\begin{equation*}
(\Box_{\chi} v) (\tau,\xi) =
\ee^{2\tau} \Big( c^{00}(\xi) \pd_{\tau}^{2} + c^{i0}(\xi)\pd_{\xi^{i}}\pd_{\tau} + c^{ij}(\xi)\pd_{\xi^{i}}\pd_{\xi^{j}} + c^0(\xi)\pd_{\tau} + c^{i}(\xi)\pd_{\xi^{i}} \Big) v(\tau,\xi) \,.
\end{equation*}
The next lemma links the generator of the linearized flow to the wave equation with self-similar potential
\begin{equation*}
\mathcal{V}(t,x) \coloneqq \mathcal{V}_{0}(t,x) = F'\big(x,\psi^{\ast}(-t,x)\big)
\end{equation*}
via a convenient transition relation.
\begin{lemma}
\label{LinearTransitionRelation}
Let $v\in C^{\infty}(\overline{\RR \times \BB^{d}_{R}})$ be radial and define $\mathbf{u}
\in C^{\infty}(\overline{\RR \times \BB^{d}_{R}})^{2}$ with components
\begin{equation*}
u_{1}(\tau,\xi) = \ee^{-s\tau} v(\tau,\xi) \,, \qquad
u_{2}(\tau,\xi) = \ee^{-s\tau} \frac{1}{c(\xi)} ( \pd_{\tau} + \xi^{i} \pd_{\xi^{i}}) v(\tau,\xi) \,.
\end{equation*}
Then
\begin{equation*}
\pd_{\tau}
\mathbf{u}(\tau,\,.\,)
=
\mathbf{L}
\mathbf{u}(\tau,\,.\,)
-
\begin{bmatrix}
0 \\
\displaystyle{
\ee^{-(s+2)\tau} \frac{c}{w} \big( (\Box_{\chi} + \mathcal{V} \circ \chi)v \big)(\tau,\,.\,)
}
\end{bmatrix}
\,.
\end{equation*}
\end{lemma}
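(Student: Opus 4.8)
The plan is to deduce this identity as a specialization of the free-flow coordinate identities \eqref{FreeEvoEq1} and \eqref{FreeEvoEq2}, whose pointwise proof is carried out in \Cref{TransitionRelation}, to the graphical similarity coordinates $\chi$ (that is, the case $T=1$), together with a short scaling computation which shows that the potential term of $\mathbf{L}_{V}'$ is exactly the $\tau$-rescaling of the spacetime potential $\mathcal{V}$. So the lemma will essentially be a corollary of \Cref{TransitionRelation} once the bookkeeping is sorted out.

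First I would realize $\mathbf{u}$ as the rescaled evolution variables attached to a spacetime function. Since $h$ is convex, $\chi : \RR \times \RR^{d} \to \RR^{1,d}$ is a diffeomorphism onto its image by \Cref{GSCLemma}, so $u \coloneqq v \circ \chi^{-1}$ is a well-defined smooth function and $v = u \circ \chi$. A chain-rule computation using $c = \xi^{i}\pd_{i}h - h$ gives $(\pd_{\tau} + \xi^{i}\pd_{\xi^{i}})v = \ee^{-\tau} c\,(\pd_{0}u)\circ\chi$, whence $u_{1} = \ee^{-s\tau}(u\circ\chi)$ and $u_{2} = \ee^{-(s+1)\tau}(\pd_{0}u)\circ\chi$; these are precisely the variables from \eqref{EvoVar} for this $u$ at $T=1$. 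Since \eqref{FreeEvoEq1}, \eqref{FreeEvoEq2} are pointwise identities valid for any smooth $u$ on the image of $\chi$, and $(\Box u)\circ\chi = \Box_{\chi}v$ by the defining property of $\Box_{\chi}$ from \Cref{GSCLaplaceBeltrami}, they immediately give $\pd_{\tau}u_{1} = -su_{1} - \xi^{i}\pd_{i}u_{1} + cu_{2}$, which by \Cref{L0} is the first component of $\mathbf{L}_{0}\mathbf{u}$, hence of $\mathbf{L}\mathbf{u}$; this is already the first component of the claim. They also give
\begin{equation*}
\pd_{\tau}u_{2} = \frac{c}{w}\pd^{i}\pd_{i}u_{1} - \Big( s + 1 + \frac{c}{w}\pd^{i}\pd_{i}h \Big) u_{2} - \Big( \xi^{i} + 2\frac{c}{w}\pd^{i}h \Big)\pd_{i}u_{2} - \ee^{-(s+2)\tau}\frac{c}{w}\Box_{\chi}v \,,
\end{equation*}
whose first three terms are, again by \Cref{L0}, the second component of $\mathbf{L}_{0}\mathbf{u}$.

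It then remains to absorb the potential. By \Cref{PotentialNonlinearity} the second component of $\mathbf{L}_{V}'\mathbf{u}$ is $\tfrac{c}{w}Vu_{1}$ with $V(\xi) = F'(\xi, \psi^{\ast}(-h(\xi),\xi))$, so the second component of the claim follows once I check $\tfrac{c}{w}Vu_{1} = \ee^{-(s+2)\tau}\tfrac{c}{w}(\mathcal{V}\circ\chi)v$, i.e., using $u_{1} = \ee^{-s\tau}v$, the scaling identity $(\mathcal{V}\circ\chi)(\tau,\xi) = \ee^{2\tau}V(\xi)$. This is the only genuine computation: exactly as in the proof of \Cref{NonlinearTransitionRelation}, property \ref{F2} yields $F'(x, \lambda^{-s}z) = \lambda^{-2}F'(\tfrac{x}{\lambda},z)$ and the self-similarity in \ref{F3} yields $\psi^{\ast}(\lambda^{-1}t, \lambda^{-1}x) = \lambda^{s}\psi^{\ast}(t,x)$; inserting $\lambda = \ee^{\tau}$ into $\mathcal{V}(t,x) = F'(x, \psi^{\ast}(-t,x))$ evaluated at $\chi(\tau,\xi) = (\ee^{-\tau}h(\xi), \ee^{-\tau}\xi)$ produces $\mathcal{V}(\chi(\tau,\xi)) = \ee^{2\tau}V(\xi)$. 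I expect no serious obstacle: the only points demanding care are the clean matching of $\mathbf{u}$ with \eqref{EvoVar} via the chain rule and this last scaling identity, which is precisely what lets the potential term of $\mathbf{L}$ and the $\mathcal{V}$-term of the inhomogeneity consolidate into the single expression $\tfrac{c}{w}(\Box_{\chi} + \mathcal{V}\circ\chi)v$.
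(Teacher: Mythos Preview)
Your proposal is correct and follows essentially the same route as the paper: both establish the scaling identity $(\mathcal{V}\circ\chi)(\tau,\xi) = \ee^{2\tau}V(\xi)$ from \ref{F2} and \ref{F3}, and then reduce the remaining assertion to the free-flow transition relation \Cref{TransitionRelation}. Your version is simply more explicit about constructing $u = v\circ\chi^{-1}$ and matching $\mathbf{u}$ with the rescaled variables of \eqref{EvoVar}, whereas the paper compresses this into a one-line appeal to \Cref{TransitionRelation}.
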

\begin{proof}
By the scaling property \ref{F2} of the nonlinearity, we have
\begin{equation*}
\ee^{-(s+2)\tau} ( \mathcal{V} \circ \chi )(\tau,\xi) = \ee^{-s\tau} V(\xi) \,.
\end{equation*}
Now, the transition relation follows from \Cref{TransitionRelation}.
\end{proof}
We give a criterion for the existence of spectral points in a right half-plane in terms of smooth \emph{mode solutions} to the wave equation with self-similar potential in similarity coordinates.
\begin{proposition}
\label{ModeCriterion}
Let $d,k \in \mathbb{N}$ with $1 < \frac{d}{2} - s < k$ and $R \geq R_{0}$. Let $\mathbf{L}: \mathfrak{D}(\mathbf{L}) \subset \mathfrak{H}^{k}_{\mathrm{rad}}(\BB^{d}_{R}) \rightarrow \mathfrak{H}^{k}_{\mathrm{rad}}(\BB^{d}_{R})$ be the operator from \Cref{GeneratorLinearizedWave}. Then, for any fixed $0 > \omega > \max\big\{ \frac{d}{2} - s - k, -s \big\}$ we have $\lambda \in \sigma(\mathbf{L}) \cap \overline{\mathbb{H}}_{\omega}$ if and only if there exists an $f_{\lambda} \in C^{\infty}_{\mathrm{rad}}(\overline{\BB^{d}_{R}})$ such that
\begin{equation*}
\renewcommand{\arraystretch}{1.5}
\left\{
\begin{array}{rcll}
\big( \Box_{\chi} + \mathcal{V} \circ \chi \big) v_{\lambda} &=& 0 & \text{in } \RR\times\BB^{d}_{R} \,, \\
v_{\lambda}(\tau,\xi) &=& \displaystyle{
\ee^{(\lambda+s)\tau} f_{\lambda}(\xi) \,.
} &
\end{array}
\right.
\end{equation*}
Moreover, every $\lambda \in \sigma(\mathbf{L}) \cap \overline{\mathbb{H}}_{\omega}$ is an isolated eigenvalue of $\mathbf{L}$ with finite algebraic multiplicity and with geometric eigenspace
\begin{equation*}
\ker(\lambda\mathbf{I}-\mathbf{L}) = \langle \mathbf{f}_{\lambda} \rangle
\qquad\text{spanned by}\qquad
\mathbf{f}_{\lambda} =
\begin{bmatrix}
f_{\lambda} \\
\displaystyle{
\frac{\lambda+s}{c} f_{\lambda} + \frac{1}{c} \xi^{i} \pd_{i} f_{\lambda}
}
\end{bmatrix}
\in C^{\infty}_{\mathrm{rad}}(\overline{\BB^{d}_{R}})^{2} \,.
\end{equation*}
\end{proposition}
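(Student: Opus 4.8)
The plan is to read off the statements about isolation and finite algebraic multiplicity from \Cref{LinearFlow}, and to obtain the mode criterion together with the description of the eigenspace by transporting the eigenvalue equation for $\mathbf{L}$ through the transition relation \Cref{LinearTransitionRelation}.

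First I would fix the spectral geometry. Since $0>\omega>\max\{\tfrac{d}{2}-s-k,\,-s\}$, one can choose $0<\varepsilon<\tfrac12$ with $-s+\varepsilon<\omega$, so that $\overline{\mathbb{H}}_{\omega}\subset\overline{\mathbb{H}}_{\omega_{d,k,s,\varepsilon}}$; \Cref{LinearFlow} then shows that $\sigma(\mathbf{L})\cap\overline{\mathbb{H}}_{\omega}$ consists of finitely many isolated eigenvalues of $\mathbf{L}$ with finite algebraic multiplicity, which already gives the ``moreover'' part apart from the geometric eigenspace. In addition, the growth estimate of \Cref{SGRadialWaveFlow} forces the spectral bound of $\mathbf{L}_0$ on $\mathfrak{H}^{k}_{\mathrm{rad}}(\BB^{d}_{R})$ (and, with the same argument, on every $\mathfrak{H}^{m}_{\mathrm{rad}}(\BB^{d}_{R})$ with $m\geq k$) to be at most $\max\{\tfrac{d}{2}-s-k,\,-s\}<\omega$, hence $\overline{\mathbb{H}}_{\omega}\subset\varrho(\mathbf{L}_0)$ in each of these spaces.

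For the direction ``$\Leftarrow$'', given a nontrivial $f_{\lambda}\in C^{\infty}_{\mathrm{rad}}(\overline{\BB^{d}_{R}})$ such that $v_{\lambda}(\tau,\xi)=\ee^{(\lambda+s)\tau}f_{\lambda}(\xi)$ solves $(\Box_{\chi}+\mathcal{V}\circ\chi)v_{\lambda}=0$, I would insert $v_{\lambda}$ into \Cref{LinearTransitionRelation}: a direct computation identifies the associated pair as $\mathbf{u}(\tau,\,.\,)=\ee^{\lambda\tau}\mathbf{f}_{\lambda}$ with $\mathbf{f}_{\lambda}$ exactly the vector in the statement, and the $\Box_{\chi}$-term drops out, leaving $\pd_{\tau}\mathbf{u}(\tau,\,.\,)=\mathbf{L}\,\mathbf{u}(\tau,\,.\,)$; evaluation at $\tau=0$ yields $\mathbf{L}\mathbf{f}_{\lambda}=\lambda\mathbf{f}_{\lambda}$. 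Since $c$ is smooth and strictly positive on $\overline{\BB^{d}_{R}}$, as recorded in \Cref{AppendixSimilarityCoordinates}, we have $\mathbf{f}_{\lambda}\in C^{\infty}_{\mathrm{rad}}(\overline{\BB^{d}_{R}})^{2}\subset\mathfrak{D}(\mathbf{L}_0)=\mathfrak{D}(\mathbf{L})$, and $\mathbf{f}_{\lambda}\neq0$ because $f_{\lambda}\neq0$; hence $\lambda\in\sigma(\mathbf{L})$. For the direction ``$\Rightarrow$'', let $\lambda\in\sigma(\mathbf{L})\cap\overline{\mathbb{H}}_{\omega}$; by the first step it is an eigenvalue, so there is $\mathbf{g}=(g_1,g_2)\neq0$ in $\mathfrak{D}(\mathbf{L})$ with $\mathbf{L}\mathbf{g}=\lambda\mathbf{g}$. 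Using $\lambda\in\varrho(\mathbf{L}_0)$ this reads $\mathbf{g}=\mathbf{R}_{\mathbf{L}_0}(\lambda)\mathbf{L}_{V}'\mathbf{g}$, and since $\mathbf{L}_{V}'\mathbf{g}=(0,\tfrac{c}{w}Vg_1)$ has a vanishing first slot and a smooth coefficient multiplying $g_1$, it lies in $\mathfrak{H}^{k+1}_{\mathrm{rad}}(\BB^{d}_{R})$ whenever $\mathbf{g}\in\mathfrak{H}^{k}_{\mathrm{rad}}(\BB^{d}_{R})$. Feeding this into the resolvent of $\mathbf{L}_0$ on $\mathfrak{H}^{k+1}_{\mathrm{rad}}(\BB^{d}_{R})$ and invoking a routine consistency argument for the resolvents of $\mathbf{L}_0$ along the scale $\mathfrak{H}^{m}_{\mathrm{rad}}(\BB^{d}_{R})$, $m\geq k$ (all of which exist by \Cref{SGRadialWaveFlow}, equivalently \Cref{TheGenerationTHM}), I would iterate to get $\mathbf{g}\in\bigcap_{m\geq k}\mathfrak{H}^{m}_{\mathrm{rad}}(\BB^{d}_{R})\subset C^{\infty}_{\mathrm{rad}}(\overline{\BB^{d}_{R}})^{2}$ by Sobolev embedding on the ball. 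The first component of $\mathbf{L}\mathbf{g}=\lambda\mathbf{g}$ is $cg_2=(\lambda+s)g_1+\xi^{i}\pd_{i}g_1$, so $g_2$ is determined by $g_1$; in particular $g_1\neq0$ and $\mathbf{g}=\mathbf{f}_{\lambda}$ with $f_{\lambda}:=g_1$. Setting $v_{\lambda}(\tau,\xi):=\ee^{(\lambda+s)\tau}f_{\lambda}(\xi)$, the pair attached to $v_{\lambda}$ by \Cref{LinearTransitionRelation} is precisely $\ee^{\lambda\tau}\mathbf{g}$, which satisfies $\pd_{\tau}\mathbf{u}=\mathbf{L}\mathbf{u}$; comparing with \Cref{LinearTransitionRelation} and using positivity of $c,w$ forces $(\Box_{\chi}+\mathcal{V}\circ\chi)v_{\lambda}=0$, as desired.

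It remains to identify $\ker(\lambda\mathbf{I}-\mathbf{L})$. By the previous paragraph every element of this kernel is of the form $\mathbf{f}_{\lambda}$ for a smooth radial $f_{\lambda}$ whose radial profile solves the second-order linear ODE obtained by inserting $\ee^{(\lambda+s)\tau}f_{\lambda}$ into $(\Box_{\chi}+\mathcal{V}\circ\chi)v_{\lambda}=0$ and restricting to radial functions, using the explicit form of $\Box_{\chi}$ from \Cref{GSCLaplaceBeltrami}. This ODE is regular on $(0,R]$ and has a regular singular point at $\xi=0$ whose leading part is a nonzero multiple of the radial Laplacian, so its indicial exponents are $0$ and $2-d$; since $d\geq3$ in the range of \Cref{THM}, only the exponent $0$ yields a solution extending smoothly to the origin, whence the space of solutions smooth on $[0,R]$ is one-dimensional. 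Therefore $\dim\ker(\lambda\mathbf{I}-\mathbf{L})\leq1$, and it equals $1$ because $\lambda$ is an eigenvalue; picking $f_{\lambda}$ to be any nonzero such profile gives $\ker(\lambda\mathbf{I}-\mathbf{L})=\langle\mathbf{f}_{\lambda}\rangle$. The one genuinely analytic step is the elliptic bootstrap promoting an eigenfunction from $\mathfrak{H}^{k}_{\mathrm{rad}}(\BB^{d}_{R})$ to $C^{\infty}_{\mathrm{rad}}(\overline{\BB^{d}_{R}})^{2}$; its most delicate aspect, regularity up to the outer boundary $|\xi|=R$ where the geometry of similarity coordinates is essential, is, however, already encoded in \Cref{SGRadialWaveFlow}/\Cref{TheGenerationTHM}, so at this point it only costs the resolvent-consistency argument, and the sole remaining subtlety is the classical Frobenius bookkeeping at $\xi=0$ underlying the one-dimensionality of the eigenspaces.
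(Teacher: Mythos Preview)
Your argument is correct and, for the ``$\Rightarrow$'' direction, takes a genuinely different route from the paper. The paper proceeds by direct Frobenius analysis: it approximates the eigenfunction by smooth elements, derives the radial ODE, passes via a transition diffeomorphism to standard similarity coordinates (height function $\equiv -1$ near the origin) so that the coefficients become analytic, then reduces to a hypergeometric-type equation whose Frobenius indices at both $z=0$ (the origin) and $z=1$ (the light cone $R_0$) are computed case by case and matched against the $H^k$-integrability of the eigenfunction. Your bootstrap via the identity $\mathbf{g}=\mathbf{R}_{\mathbf{L}_0}(\lambda)\mathbf{L}_V'\mathbf{g}$, exploiting that $\mathbf{L}_V'$ gains one regularity index thanks to its vanishing first component, is cleaner and bypasses the light-cone Frobenius bookkeeping entirely. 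What the paper's approach buys is self-containment at the fixed regularity $k$ without needing consistency of the resolvents of $\mathbf{L}_0$ across the Sobolev scale; your approach trades that for a shorter argument that leans on \Cref{TheGenerationTHM} holding at every level $m\geq k$.

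One correction: the radial ODE is \emph{not} regular on $(0,R]$. The leading coefficient $a_2(\rho)=-(\rho^2-\widetilde{h}(\rho)^2)/(\rho\widetilde{h}'(\rho)-\widetilde{h}(\rho))^2$ vanishes at $\rho=R_0$ by \Cref{GSCLightCone}, so there is a second regular singular point at the light cone $R_0\in(0,R]$. This does not, however, damage your one-dimensionality argument: once smoothness on all of $[0,R]$ has been established by the bootstrap, the constraint at $\rho=0$ alone (indicial exponents $0$ and $2-d$, with $d\geq 5$) already restricts the space of smooth solutions on $[0,R]$ to dimension at most one, regardless of what happens at $R_0$.
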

\begin{proof}
If $\lambda \in \sigma(\mathbf{L}) \cap \overline{\mathbb{H}}_{\omega}$ then according to \Cref{LinearFlow}, there exists $\mathbf{f}_{\lambda} \in \mathfrak{D}(\mathbf{L}) \setminus \{ \mathbf{0} \}$ such that
\begin{equation}
\label{SpecEq}
( \lambda\mathbf{I} - \mathbf{L} ) \mathbf{f}_{\lambda} = \mathbf{0} \,.
\end{equation}
In a first step, we reduce this spectral equation to a differential equation. By definition of $\mathbf{L}$, there is a sequence of functions $\mathbf{f}_{\lambda,n} \in C^{\infty}_{\mathrm{rad}}(\overline{\BB^{d}_{R}})^{2}$ for $n\in \mathbb{N}$ such that
\begin{equation}
\label{ClosureImplication}
\lim_{n \to \infty} \| \mathbf{f}_{\lambda,n} - \mathbf{f}_{\lambda} \|_{\mathfrak{H}^{k}(\BB^{d}_{R})} = 0
\qquad\text{and}\qquad
\lim_{n \to \infty} \| ( \lambda\mathbf{I} - \mathbf{L} ) \mathbf{f}_{\lambda,n} \|_{\mathfrak{H}^{k}(\BB^{d}_{R})} = 0 \,.
\end{equation}
Now, we put $\mathbf{F}_{\lambda,n} \coloneqq ( \lambda\mathbf{I} - \mathbf{L} ) \mathbf{f}_{\lambda,n} \in C^{\infty}_{\mathrm{rad}}(\overline{\BB^{d}_{R}})^{2}$ and
\begin{equation}
\label{FlambdaComponents}
f_{\lambda,n,1} \coloneqq [\mathbf{f}_{\lambda,n}]_{1} \,, \qquad F_{\lambda,n,1} \coloneqq [\mathbf{F}_{\lambda,n}]_{1}\,, \qquad F_{\lambda,n,2} \coloneqq [\mathbf{F}_{\lambda,n}]_{2} \,.
\end{equation}
Then
\begin{equation*}
\mathbf{f}_{\lambda,n} =
\begin{bmatrix}
f_{\lambda,n,1} \\
\displaystyle{
\frac{\lambda+s}{c} f_{\lambda,n,1} + \frac{1}{c} \xi^{i} \pd_{i} f_{\lambda,n,1} - \frac{1}{c} F_{\lambda,n,1}
}
\end{bmatrix}
\end{equation*}
and we infer from \Cref{Range} the identity
\begin{equation}
\label{ApproximateMode}
\big( \Box_{\chi} + \mathcal{V} \circ \chi \big) v_{\lambda,n} = G_{\lambda,n} \quad \text{in } \RR\times\BB^{d}_{R} \,,
\end{equation}
where
\begin{equation*}
v_{\lambda,n}(\tau,\xi) = \ee^{(\lambda+s)\tau} f_{\lambda,n,1}(\xi) \,, \qquad G_{\lambda,n}(\tau,\xi) = \ee^{(\lambda+s+2)\tau} F_{\lambda,n}(\xi) \,,
\end{equation*}
and
\begin{align*}
F_{\lambda,n}(\xi) &= - \Big( (\lambda+s+1)\frac{w(\xi)}{c(\xi)^{2}} + \Big( \delta^{ij} - \frac{w(\xi)}{c(\xi)^{2}} \xi^{i}\xi^{j} - 2\frac{(\pd^{i} h)(\xi)\xi^{j}}{c(\xi)} \Big) \frac{(\pd_{i}\pd_{j} h)(\xi)}{c(\xi)} \Big) F_{\lambda,n,1}(\xi) \\&\indent-
\Big( \frac{w(\xi)}{c(\xi)^{2}}\xi^{i} + 2\frac{(\pd^{i} h)(\xi)}{c(\xi)} \Big) (\pd_{i} F_{\lambda,n,1})(\xi) - \frac{w(\xi)}{c(\xi)} F_{\lambda,n,2}(\xi) \Big) \,.
\end{align*}
In terms of the radial profile $\widetilde{f}_{\lambda,n,1} \in C^{\infty}_{\mathrm{ev}}([0,R])$ of $f_{\lambda,n,1} \in C^{\infty}_{\mathrm{rad}}(\overline{\BB^{d}_{R}})$, \Cref{ApproximateMode} reads
\begin{equation}
\label{ApproximateModeRadial}
a_{2}(\rho) \widetilde{f}_{\lambda,n,1}''(\rho) + a_{1}(\rho;\lambda+s,d) \widetilde{f}_{\lambda,n,1}'(\rho) + \big( a_{0}(\rho;\lambda+s,d) + \widetilde{V}(\rho) \big) \widetilde{f}_{\lambda,n,1}(\rho) = \widetilde{F}_{\lambda,n}(\rho)
\end{equation}
in $(0,R)$, where the coefficients $a_{2},a_{1},a_{0}$ are given by
\begin{align}
\label{a2Coeff}
a_{2}(\rho) &= - \frac{\rho^{2} - \widetilde{h}(\rho)^{2}}{\big( \rho \widetilde{h}'(\rho) - \widetilde{h}(\rho) \big)^{2}} \,, \\
\label{a1Coeff}
a_{1}(\rho;\lambda,d) &= 2\Big( \frac{d-3}{2} - \lambda \Big) \frac{\rho - \widetilde{h}(\rho)\widetilde{h}'(\rho)}{\big( \rho \widetilde{h}'(\rho) - \widetilde{h}(\rho) \big)^{2}} + \Big( \frac{d-1}{\rho} - \frac{\rho \widetilde{h}''(\rho)}{\rho \widetilde{h}'(\rho) - \widetilde{h}(\rho)} \Big) a_{2}(\rho) \,, \\
\label{a0Coeff}
a_{0}(\rho;\lambda,d) &= - \frac{\lambda (\lambda+1) (1-\widetilde{h}'(\rho)^{2})}{\big( \rho \widetilde{h}'(\rho) - \widetilde{h}(\rho) \big)^{2}} - \frac{\lambda(d-1)}{\rho \widetilde{h}'(\rho) - \widetilde{h}(\rho)}\frac{\widetilde{h}'(\rho)}{\rho} - \frac{\lambda \widetilde{h}''(\rho)}{\rho \widetilde{h}'(\rho)-\widetilde{h}(\rho)} a_{2}(\rho) \,,
\end{align}
the radial profile $\widetilde{F}_{\lambda,n} \in C^{\infty}_{\mathrm{ev}}([0,R])$ of the inhomogeneity $F_{\lambda,n} \in C^{\infty}_{\mathrm{rad}}(\overline{\BB^{d}_{R}})$ is given by
\begin{align*}
\widetilde{F}_{\lambda,n}(\rho) &= - \Big( (\lambda+s+1) \frac{\widetilde{w}(\rho)}{\widetilde{c}(\rho)^{2}} + \frac{d-1}{\widetilde{c}(\rho)} \frac{\widetilde{h}'(\rho)}{\rho} + \frac{\widetilde{h}(\rho)^{2} - \rho^{2}}{\widetilde{c}(\rho)^{3}} \widetilde{h}''(\rho) \Big) \widetilde{F}_{\lambda,n,1}(\rho) \\&\indent-\Big( 2\frac{\rho - \widetilde{h}(\rho)\widetilde{h}'(\rho)}{\widetilde{c}(\rho)^{2}} - \frac{\widetilde{w}(\rho)}{\widetilde{c}(\rho)^{2}} \rho \Big) \widetilde{F}'_{\lambda,n,1}(\rho) - \frac{\widetilde{w}(\rho)}{\widetilde{c}(\rho)} \widetilde{F}_{\lambda,n,2}(\rho) \,,
\end{align*}
and $\widetilde{V} \in C^{\infty}_{\mathrm{ev}}([0,R])$ is the radial profile of the potential $V \in C^{\infty}_{\mathrm{rad}}(\overline{\BB^{d}_{R}})$ from \Cref{PotentialNonlinearity}.
We use radial Sobolev embedding to get a nonzero function $\widetilde{f}_{\lambda}: (0,R] \rightarrow \CC$ with
\begin{equation}
\label{RadialSobolevImplication}
\widetilde{f}_{\lambda} \in C^{k-1}([\delta,R]) \,, \qquad
\widetilde{f}_{\lambda}^{(k-1)} \text{ is absolutely continuous on $[\delta,R]$,} \qquad
\widetilde{f}_{\lambda}^{(k)} \in L^{2}(\delta,R) \,,
\end{equation}
for any $0 < \delta < R$, such that
\begin{equation}
\label{ModeRadRep}
f_{\lambda} \coloneqq [\mathbf{f}_{\lambda}]_{1} = \widetilde{f}_{\lambda}(|\,.\,|) \,.
\end{equation}
Moreover, \eqref{ClosureImplication} and \eqref{RadialSobolevImplication} imply that $\widetilde{f}_{\lambda,n,1},\widetilde{f}_{\lambda,n,1}'$ and $\widetilde{F}_{\lambda,n}$ converge uniformly on $[\delta,R]$ to $\widetilde{f}_{\lambda}, \widetilde{f}_{\lambda}'$ and $0$, respectively. So we conclude from the weak formulation of \Cref{ApproximateModeRadial} that
\begin{equation}
\label{ModeLimitRadial}
a_{2}(\rho) \widetilde{f}_{\lambda}''(\rho) + a_{1}(\rho;\lambda+s,d) \widetilde{f}_{\lambda}'(\rho) + \big( a_{0}(\rho;\lambda+s,d) + \widetilde{V}(\rho) \big) \widetilde{f}_{\lambda}(\rho) = 0
\end{equation}
a.e. in $(0,R)$. In a second step, we investigate the regularity of this solution and show that $f_{\lambda}$ belongs to $C^{\infty}_{\mathrm{rad}}(\overline{\BB^{d}_{R}})$. The coefficients in the differential equation \eqref{ModeLimitRadial} are smooth in $(0,R)$ but introduce the regular singular points $\{0,R_{0}\}$, where $R_{0}>0$ is from \Cref{GSCLightCone}. Thus, $\widetilde{f}_{\lambda} \in C^{\infty} \big( (0,R_{0}) \cup (R_{0},R] \big)$. To conclude that the radial profile $\widetilde{f}_{\lambda}$ belongs to $C^{\infty}_{\mathrm{ev}}([0,R])$, we apply the Frobenius method \cite[p.~119 f., Theorem 4.5]{MR2961944}. Note, however, that the coefficients in \Cref{ModeLimitRadial} are not necessarily analytic. Therefore, we consider graphical similarity coordinates $\overline{\chi}: \RR \times \RR^{d} \rightarrow \RR^{1,d}$ with a height function $\overline{h} \in C^{\infty}_{\mathrm{rad}}(\RR^{d})$ satisfying $\overline{h} \equiv - 1$ in $\overline{\BB^{d}_{1}}$, see \Cref{GSCExemplary}, and a transition diffeomorphism
\begin{equation*}
\chi^{-1} \circ \overline{\chi} : \RR \times \BB^{d}_{\overline{R}} \rightarrow \RR \times \BB^{d}_{R} \,, \qquad (\overline{\tau}, \overline{\xi}) \mapsto \big( \overline{\tau} - \log h_{+}(\overline{\xi}), h_{+}(\overline{\xi})^{-1} \overline{\xi} \big) \,,
\end{equation*}
as in \Cref{TransitionDiffeo}. Then
\begin{equation}
\label{TransformationSC}
\overline{g}_{\lambda}(\overline{\rho}) = \frac{1}{\widetilde{h}_{+}(\overline{\rho})^{\lambda+s}} \widetilde{f}_{\lambda} \Big( \frac{\overline{\rho}}{\widetilde{h}_{+}(\overline{\rho})} \Big)
\end{equation}
defines a function $\overline{g}_{\lambda} \in C^{\infty} \big( (0,1)\cup(1,\overline{R}) \big)$ and since all derivatives of the transition diffeomorphism are bounded on compact domains, we infer from \eqref{RadialSobolevImplication} that
\begin{equation*}
\overline{g}_{\lambda} \in C^{k-1}([\overline{\delta},\overline{R}]) \,, \qquad
\overline{g}_{\lambda}^{(k-1)} \text{ is absolutely continuous on $[\overline{\delta}, \overline{R}]$,} \qquad
\overline{g}_{\lambda}^{(k)} \in L^{2}(\overline{\delta},\overline{R}) \,,
\end{equation*}
for any $0 < \overline{\delta} < \overline{R}$. It follows from \Cref{ModeLimitRadial} and \Cref{ModeInvariance} that
\begin{align}
\nonumber
(1-\overline{\rho}^{2}) \overline{g}_{\lambda}''(\overline{\rho}) + \Big( 2\Big( \frac{d-3}{2} - \lambda - s \Big) \overline{\rho} &+ (d-1) \frac{1-\overline{\rho}^{2}}{\overline{\rho}} \Big) \overline{g}_{\lambda}'(\overline{\rho}) \\\label{SpectralODE}&+ \Big( \overline{V}(\overline{\rho}) - (\lambda+s)(\lambda+s+1) \Big) \overline{g}_{\lambda}(\overline{\rho}) = 0
\end{align}
in $(0,1)$, where $\overline{V} \in C^{\infty}_{\mathrm{ev}}([0,1])$ is the radial representative of the potential in \Cref{PotentialNonlinearity} defined for the height function $\overline{h} \in C^{\infty}_{\mathrm{rad}}(\RR^{d})$. Furthermore, set
\begin{equation}
\label{TransformationEven}
w_{\lambda}(\overline{\rho}^{2}) = \overline{g}_{\lambda}(\overline{\rho}) \,.
\end{equation}
\Cref{SpectralODE} implies for $w_{\lambda}$ the differential equation
\begin{equation}
\label{HypGeomPot}
z(1-z) w_{\lambda}''(z) + (c - (a+b+1)z) w_{\lambda}'(z) - (ab - W(z)) w_{\lambda}(z) = 0
\end{equation}
in $(0,1)$, where
\begin{equation*}
a = \frac{\lambda+s}{2} \,, \qquad b = \frac{\lambda+s+1}{2} \,, \qquad c = \frac{d}{2} \,,
\end{equation*}
and $W \in C^{\infty}([0,1])$ is determined by $ W(\overline{\rho}^{2}) = \overline{V}(\overline{\rho})$. \Cref{HypGeomPot} is a hypergeometric differential equation with a smooth potential and has Frobenius indices
\begin{equation*} 
\Big\{ 0, -\frac{d-2}{2} \Big\} \quad \text{at } z = 0
\qquad\text{and}\qquad
\Big\{ 0, \frac{d}{2} - s - \frac{1}{2} - \lambda \Big\} \quad \text{at } z = 1 \,.
\end{equation*}
With this, we analyse the regularity of the solution $w_{\lambda}$ at $z=1$ first and distinguish three cases, where each time $\varphi_{1},\varphi_{2}$ denote generic analytic functions near $z=1$ with $\varphi_{1}(1) = \varphi_{2}(1) = 1$.
\begin{itemize}[wide,itemsep=1em,topsep=1em]
\item\textit{Case $m \coloneqq \frac{d}{2} - s - \frac{1}{2} - \lambda \in \NN_{0}$.} A fundamental system of solutions near $z=1$ is given by
\begin{equation*}
\phi_{1}(z) = (1-z)^{m} \varphi_{1}(z) \,, \qquad \phi_{2}(z) = \varphi_{2}(z) + C \log (1-z) \phi_{1}(z) \,.
\end{equation*}
\begin{itemize}[itemsep=1em,topsep=1em]
\item If $C\neq 0$, then $\phi_{2}^{(k)}(z)$ is not square integrable near $z=1$ since
\begin{equation*}
\Re(m - k) = \frac{d}{2} - s - k - \Re(\lambda) - \frac{1}{2} < - \frac{1}{2} \,.
\end{equation*}
But since $\overline{g}_{\lambda}^{(k)}$ is square integrable near $z=1$, also $w_{\lambda}^{(k)}$ is square integrable near $z=1$. Therefore, $w_{\lambda}$ is a multiple of $\phi_{1}$ and analytic near $z=1$.
\item If $C=0$, then any linear combination of $\phi_{1},\phi_{2}$ is analytic near $z=1$ and hence also $w_{\lambda}$.
\end{itemize}
\item\textit{Case $m \coloneqq \frac{d}{2} - s - \frac{1}{2} - \lambda \in -\NN$.} A fundamental system of solutions is given by
\begin{equation*}
\phi_{1}(z) = \varphi_{1}(z) \,, \qquad \phi_{2}(z) = (1-z)^{m} \varphi_{2}(z) + C \log (1-z) \phi_{1}(z) \,.
\end{equation*}
In this case, $\phi_{2}$ fails to be continuous at $z=1$ and hence $w_{\lambda}$ is a multiple of the analytic solution $\phi_{1}$ near $z=1$.
\item\textit{Case $m \coloneqq \frac{d}{2} - s - \frac{1}{2} - \lambda \notin \mathbb{Z}$.} A fundamental system of solutions is given by
\begin{equation*}
\phi_{1}(z) = (1-z)^{m} \varphi_{1}(z) \,, \qquad \phi_{2}(z) = \varphi_{2}(z) \,.
\end{equation*}
Then $\phi_{1}^{(k)}(z)$ is not square integrable near $z=1$ and thus $w_{\lambda}$ is a multiple of the analytic solution $\phi_{2}$ near $z=1$.
\end{itemize}
This proves that $w_{\lambda}$ is analytic near $z=1$. Near $z=0$, a fundamental system of solutions is given by
\begin{equation*}
\phi_{1}(z) = \varphi_{1}(z) \,, \qquad \phi_{2}(z) = z^{-\frac{d-2}{2}} \varphi_{2}(z) + C \log(z) \phi_{1}(z) \,,
\end{equation*}
for some $C\in\CC$ and functions $\varphi_{1},\varphi_{2}$ which are analytic near $z=0$ with $\varphi_{1}(0) = \varphi_{2}(0) = 1$. Now, $\frac{d}{2} - s - 1 > 0$ and so $\phi_{2}$ is not square integrable near $z=0$. But $f_{\lambda} \in L^{2}(\BB^{d}_{R})$ and
\begin{equation*}
\| f_{\lambda} \|_{L^{2}(\BB^{d}_{R_{0}})} \simeq \big\| (\,.\,)^{\frac{d-1}{2}} \widetilde{f}_{\lambda} \big\|_{L^{2}(0,R_{0})} \simeq \big\| (\,.\,)^{\frac{d-1}{2}} \overline{g}_{\lambda} \big\|_{L^{2}(0,1)} \simeq \big\| (\,.\,)^{\frac{d-2}{4}} w_{\lambda} \big\|_{L^{2}(0,1)} \,,
\end{equation*}
so $w_{\lambda}$ is a multiple of the analytic fundamental solution near $z=0$. In particular, the solution $w_{\lambda}$ is analytic in a neighbourhood of $[0,1]$ and any further analytic solution of \Cref{HypGeomPot} in $[0,1]$ is a multiple of this solution. We conclude from this and the transformations \eqref{ModeRadRep}, \eqref{TransformationSC}, \eqref{TransformationEven} that $f_{\lambda} \in C^{\infty}_{\mathrm{rad}}(\overline{\BB^{d}_{R}})$ and get from \Cref{SpecEq} and \Cref{Range} as above
\begin{equation*}
\big( \Box_{\chi} + \mathcal{V} \circ \chi \big) v_{\lambda} = 0 \qquad \text{in } \RR\times\BB^{d}_{R} \,,
\end{equation*}
where
\begin{equation*}
v_{\lambda}(\tau,\xi) = \ee^{(\lambda+s)\tau} f_{\lambda}(\xi) \,.
\end{equation*}
Conversely, if $f_{\lambda} \in C^{\infty}_{\mathrm{rad}}(\overline{\BB^{d}_{R}})$ defines a mode solution $v_{\lambda} \in C^{\infty}(\overline{\RR\times\BB^{d}_{R}})$ as in the proposition, put
\begin{align*}
u_{\lambda,1}(\tau,\xi) &= \ee^{-s\tau} v_{\lambda}(\tau,\xi) = \ee^{\lambda\tau} f_{\lambda}(\xi) \,, \\
u_{\lambda,2}(\tau,\xi) &= \ee^{-s\tau} \frac{1}{c(\xi)} \big( \pd_{\tau} + \xi^{i} \pd_{\xi^{i}} \big) v_{\lambda}(\tau,\xi) = \ee^{\lambda\tau} \Big( \frac{\lambda+s}{c(\xi)} f_{\lambda}(\xi) + \frac{1}{c(\xi)} \xi^{i} \pd_{i} f_{\lambda}(\xi) \Big) \,.
\end{align*}
Then, \Cref{LinearTransitionRelation} implies for $\mathbf{u}_{\lambda} = ( u_{\lambda,1}, u_{\lambda,2} ) \in C^{\infty}\big(\overline{\RR\times\BB^{d}_{R}}\big)^{2}$ the equation $\pd_{\tau} \mathbf{u}_{\lambda} (\tau,\,.\,) = \mathbf{L} \mathbf{u}_{\lambda} (\tau,\,.\,)$, which is equivalent to $(\lambda\mathbf{I} - \mathbf{L} ) \mathbf{f}_{\lambda} = \mathbf{0}$.
\end{proof}
Time translation invariance of \Cref{RadialSemilinearWaveEquation} induces an unstable eigenvalue of the generator.
\begin{lemma}
\label{SymmetryMode}
Let $\psi^{\ast}$ be the blowup profile given in \Cref{SelfSimilarSolutions}. Let $f_{1}^{\ast} \in C^{\infty}_{\mathrm{rad}}(\overline{\BB^{d}_{R}})$ and $\mathbf{f}_{1}^{\ast} \in C^{\infty}_{\mathrm{rad}}(\overline{\BB^{d}_{R}})^{2}$ be given by
\begin{equation*}
f_{1}^{\ast}(\xi) = (\pd_{0}\psi^{\ast})(-h(\xi),\xi)
\qquad\text{and}\qquad
\mathbf{f}_{1}^{\ast}(\xi) =
\begin{bmatrix}
f_{1}^{\ast}(\xi) \\
\displaystyle{
\frac{s+1}{c(\xi)} f_{1}^{\ast}(\xi) + \frac{1}{c(\xi)} \xi^{i} \pd_{\xi^{i}} f_{1}^{\ast}(\xi)
}
\end{bmatrix}
\,,
\end{equation*}
respectively. Then
\begin{equation*}
(\mathbf{I} - \mathbf{L}) \mathbf{f}_{1}^{\ast} = \mathbf{0} \,.
\end{equation*}
\end{lemma}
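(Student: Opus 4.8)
The plan is to recognise $\mathbf{f}_{1}^{\ast}$ as the image, under the transition relation of \Cref{LinearTransitionRelation}, of the mode solution to the linearized wave equation that is produced by time translation invariance. Set $p \coloneqq \pd_{0}\psi^{\ast}$ and
\[
w(t,x) \coloneqq p(-t,x) = (\pd_{0}\psi^{\ast})(-t,x) \,,
\]
which is smooth on $\RR^{1,d}\setminus\{(0,0)\}$ by \ref{F3}. Since $h(0)<0$ in \Cref{SimilarityCoordinates}, the point $(-h(\xi),\xi)$ is never the origin, so $f_{1}^{\ast}(\xi) = w(h(\xi),\xi)$ is a well-defined element of $C^{\infty}_{\mathrm{rad}}(\overline{\BB^{d}_{R}})$; moreover $c$ is nonvanishing (it is nondecreasing in $|\xi|$ by convexity of $h$ and $c(0) = -h(0)>0$), so indeed $\mathbf{f}_{1}^{\ast}\in C^{\infty}_{\mathrm{rad}}(\overline{\BB^{d}_{R}})^{2}\subset\mathfrak{D}(\mathbf{L})$, on which subspace $\mathbf{L}$ acts as the differential operator of \Cref{L0,PotentialNonlinearity}.

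First I would check that $w$ solves the linearized profile equation. Differentiating $\Box\psi^{\ast}+F(\,\cdot\,,\psi^{\ast})=0$ from \ref{F3} in $t$ gives $\Box p + F'(\,\cdot\,,\psi^{\ast})\,p = 0$; substituting $t\mapsto -t$ and recalling $\mathcal{V}(t,x)=F'(x,\psi^{\ast}(-t,x))$ then yields $(\Box+\mathcal{V})w = 0$ wherever $w$ is defined, in particular on the image of $\chi$.

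Next I would identify $v\coloneqq w\circ\chi$ explicitly. By \ref{F3} the profile $\psi^{\ast}$ is homogeneous of degree $-s$, hence $p=\pd_{0}\psi^{\ast}$, and therefore $w$, are homogeneous of degree $-(s+1)$. Evaluating with dilation factor $\ee^{-\tau}$ gives
\[
v(\tau,\xi) = w\big(\ee^{-\tau}h(\xi),\ee^{-\tau}\xi\big) = \ee^{(s+1)\tau}\,w(h(\xi),\xi) = \ee^{(s+1)\tau} f_{1}^{\ast}(\xi) \,.
\]
Since $\Box_{\chi}$ is the d'Alembertian expressed in the coordinates $\chi$ (see \Cref{GSCLaplaceBeltrami}), the equation for $w$ transforms into $(\Box_{\chi}+\mathcal{V}\circ\chi)v = \big((\Box+\mathcal{V})w\big)\circ\chi = 0$.

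Finally I would feed $v$ into \Cref{LinearTransitionRelation}. Its transition relation produces $\mathbf{u}$ with $u_{1} = \ee^{-s\tau}v = \ee^{\tau} f_{1}^{\ast}$ and $u_{2} = \ee^{-s\tau}\tfrac{1}{c}(\pd_{\tau}+\xi^{i}\pd_{\xi^{i}})v = \ee^{\tau}\big(\tfrac{s+1}{c}f_{1}^{\ast}+\tfrac{1}{c}\xi^{i}\pd_{i}f_{1}^{\ast}\big)$, that is, $\mathbf{u}(\tau,\,.\,) = \ee^{\tau}\mathbf{f}_{1}^{\ast}$; and because $(\Box_{\chi}+\mathcal{V}\circ\chi)v = 0$ the inhomogeneous term drops out, leaving $\pd_{\tau}\mathbf{u}(\tau,\,.\,) = \mathbf{L}\mathbf{u}(\tau,\,.\,)$. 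Evaluating at $\tau=0$ then gives $\mathbf{f}_{1}^{\ast} = \mathbf{L}\mathbf{f}_{1}^{\ast}$, i.e.\ $(\mathbf{I}-\mathbf{L})\mathbf{f}_{1}^{\ast}=\mathbf{0}$. The calculations are routine; the only point to watch is that the homogeneity degree of $w$ is $-(s+1)$, one more than that of $\psi^{\ast}$, which is exactly what makes the prefactor $\ee^{(s+1)\tau}$ appear and is matched by the shift by $s$ built into \Cref{LinearTransitionRelation}. (Alternatively, the identity follows from the eigenspace description in \Cref{ModeCriterion} applied with $\lambda=1$, once the mode solution $v$ above has been produced.)
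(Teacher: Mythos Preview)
Your argument is correct and follows essentially the same route as the paper: both produce the mode solution $v(\tau,\xi)=\ee^{(s+1)\tau}f_{1}^{\ast}(\xi)$ by differentiating the profile equation along the time-translation symmetry (the paper via $\pd_{T'}\psi^{\ast}_{T'}|_{T'=0}$, you via $\pd_{t}\psi^{\ast}$ followed by $t\mapsto -t$, which is the same function) and then invoke the transition relation to conclude $(\mathbf{I}-\mathbf{L})\mathbf{f}_{1}^{\ast}=\mathbf{0}$. The paper cites \Cref{ModeCriterion} for the last step, whose converse direction is exactly the application of \Cref{LinearTransitionRelation} that you spell out.
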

\begin{proof}
By \ref{F3}, for any $T' \in \RR$ and $(t,x) \in \RR^{1,d} \setminus \{ (T',0) \}$, the blowup solution $\psi^{\ast}_{T'}(t,x) = \psi^{\ast}(T'-t,x)$ solves
\begin{equation*}
\big( - \pd_{t}^{2} + \Delta_{x} \big) \psi^{\ast}_{T'}(t,x) + F\big( x, \psi^{\ast}_{T'}(t,x) \big) = 0 \,.
\end{equation*}
Differentiating this equation with respect to the parameter yields
\begin{equation*}
\big( - \pd_{t}^{2} + \Delta_{x} + \mathcal{V}_{T'}(t,x) \big) \pd_{T'} \psi^{\ast}_{T'}(t,x) = 0 \,.
\end{equation*}
After evaluating this equation at $T'=0$ and transforming it to graphical similarity coordinates, we find that
\begin{equation*}
v_{1}(\tau,\xi) \coloneqq \big( (\left.\pd_{T'}\psi^{\ast}_{T'})\right|_{T'=0} \circ \chi \big) (\tau,\xi) = \ee^{(s+1)\tau} f_{1}^{\ast}(\xi)
\end{equation*}
solves
\begin{equation*}
\big( \Box_{\chi} + \mathcal{V} \circ \chi \big) v_{1} = 0 \,.
\end{equation*}
Now the assertion of the lemma follows from \Cref{ModeCriterion}.
\end{proof}
\subsection{Characterization of the unstable spectrum}
\label{SecModeStability}
So far, the analysis of the linearized wave equation has only relied on general theory within our functional analytic framework, without taking the explicit form of the self-similar potential
\begin{equation}
\label{SelfSimilarPotentialExplicit}
\mathcal{V}(t,x) =
{
\renewcommand{\arraystretch}{2}
\left\{
\begin{array}{ll}
\displaystyle{
\frac{8(d-3)(d-4)t^{2}}{( (d-4)t^{2} + |x|^{2} )^{2}}
} & \text{for wave maps,} \\
\displaystyle{
3 (d-4) \frac{2 \alpha_{d-2}t^{2} + (2 \beta_{d-2}-1)|x|^{2}}{(\alpha_{d-2}t^{2} + \beta_{d-2} |x|^{2})^{2}}
}
& \text{for Yang-Mills,}
\end{array}
\right.
}
\end{equation}
into account, despite of it carrying sensible information about the precise location of unstable eigenvalues. This comes into play now, when we show that the symmetry-induced eigenvalue $\lambda=1$ from \Cref{SymmetryMode} is the only unstable eigenvalue of the generator. The core of this difficult nonself-adjoint spectral problem is reduced to the classical \emph{mode stability problem} for the corotational wave maps and Yang-Mills equation. The mode stability problem is essentially detached from the functional setting and the choice of similarity coordinates and it has been solved in \cite{MR3623242} for wave maps and in \cite{MR3475668}, \cite{MR4469070} for Yang-Mills, also see the survey article \cite{2023arXiv231012016D} for a self-contained introduction.
\begin{proposition}
\label{ModeStability}
Consider the assumptions of \Cref{ModeCriterion}. Then, there exists an $\omega < 0$ such that
\begin{equation*}
\sigma(\mathbf{L})\cap \mathbb{H}_{\omega} = \{ 1 \} \,.
\end{equation*}
\end{proposition}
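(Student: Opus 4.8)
The plan is to reduce the assertion to the classical \emph{mode stability} result for the self-similar profiles, to which the preparatory work in \Cref{ModeCriterion} has been tailored. Fix $\varepsilon \in (0,\tfrac12)$ and abbreviate $\omega_{0} := \omega_{d,k,s,\varepsilon} = \max\{\tfrac d2 - s - k,\, -s + \varepsilon\}$. Since the standing hypotheses give $k > \tfrac d2 - s$, we have $\omega_{0} < 0$, so \Cref{LinearFlow} guarantees that $\sigma(\mathbf{L}) \cap \overline{\mathbb{H}}_{\omega_{0}}$ is a \emph{finite} set of isolated eigenvalues of finite algebraic multiplicity, and it contains $1$ by \Cref{SymmetryMode}. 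Hence it suffices to show $\sigma(\mathbf{L}) \cap \overline{\mathbb{H}}_{0} = \{1\}$: the remaining, finitely many eigenvalues in $\overline{\mathbb{H}}_{\omega_{0}}$ then all have strictly negative real part, and taking $\omega < 0$ to be the maximum of $\omega_{0}$ and of those real parts (or any negative number if there are none) yields $\sigma(\mathbf{L}) \cap \mathbb{H}_{\omega} = \{1\}$.

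To establish $\sigma(\mathbf{L}) \cap \overline{\mathbb{H}}_{0} = \{1\}$, the inclusion $\supseteq$ is \Cref{SymmetryMode}. For $\subseteq$, let $\lambda \in \sigma(\mathbf{L})$ with $\Re\lambda \ge 0$ and fix any $\omega' \in (\max\{\tfrac d2 - s - k,\, -s\},\, 0)$, which is a nonempty interval. Then $\lambda \in \sigma(\mathbf{L}) \cap \overline{\mathbb{H}}_{\omega'}$, so \Cref{ModeCriterion} supplies a nonzero $f_\lambda \in C^\infty_{\mathrm{rad}}(\overline{\BB^{d}_{R}})$ for which $v_\lambda(\tau,\xi) = \ee^{(\lambda+s)\tau} f_\lambda(\xi)$ solves $(\Box_\chi + \mathcal{V}\circ\chi)v_\lambda = 0$ in $\RR \times \BB^{d}_{R}$. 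I would then run the reductions already performed inside the proof of \Cref{ModeCriterion}: pass to graphical similarity coordinates with the analytic height function $\overline h \equiv -1$ via the transformation \eqref{TransformationSC}, and substitute \eqref{TransformationEven}. This converts the mode into a solution $w_\lambda$, analytic on a neighbourhood of $[0,1]$, of the hypergeometric differential equation with potential \eqref{HypGeomPot}, with parameters $a = \tfrac{\lambda+s}{2}$, $b = \tfrac{\lambda+s+1}{2}$, $c = \tfrac d2$, and with $W$ the smooth potential determined by the radial profile of the explicit self-similar potential \eqref{SelfSimilarPotentialExplicit}. This is precisely the eigenvalue equation analysed in the mode stability works \cite{MR3623242} for corotational wave maps and \cite{MR3475668}, \cite{MR4469070} for equivariant Yang-Mills: in every energy-supercritical dimension it admits no solution analytic across $[0,1]$ with $\Re\lambda \ge 0$ other than for $\lambda = 1$. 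Therefore $\lambda = 1$, and the proof is complete.

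The step I expect to be the main obstacle is not computational but the identification itself: one must verify that the coordinate-independent normal form \eqref{HypGeomPot} extracted in \Cref{ModeCriterion} --- in particular its potential $W$, obtained from \eqref{SelfSimilarPotentialExplicit} after the substitutions above --- agrees with the hypergeometric operator whose unstable modes were classified in \cite{MR3623242}, \cite{MR3475668}, \cite{MR4469070}. Once this matching is secured, the hard nonself-adjoint spectral analysis is imported wholesale, and the passage from ``no unstable mode in the closed right half-plane'' to the strictly negative threshold $\omega$ demanded here is just the elementary finiteness argument of the first paragraph.
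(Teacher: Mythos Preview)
Your proposal is correct and follows essentially the same route as the paper: invoke \Cref{ModeCriterion} to obtain a smooth mode, transform to standard similarity coordinates with $\overline{h}\equiv -1$, and then quote the mode stability results \cite{MR3623242}, \cite{MR4469070}; the passage to a strictly negative $\omega$ via finiteness from \Cref{LinearFlow} is the same mechanism the paper uses (phrased there as closedness of $\sigma(\mathbf{L})$).

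One point worth noting, since you flagged the identification step as the main obstacle: the paper does \emph{not} match against the hypergeometric form \eqref{HypGeomPot} from \Cref{ModeCriterion}. Instead it applies the transformation
\[
\overline{g}_{\lambda}(\overline{\rho}) = \frac{\overline{\rho}^{s}}{\widetilde{h}_{+}(\overline{\rho})^{\lambda+s}}\,\widetilde{f}_{\lambda}\!\Big(\frac{\overline{\rho}}{\widetilde{h}_{+}(\overline{\rho})}\Big),
\]
which differs from \eqref{TransformationSC} by the extra factor $\overline{\rho}^{s}$, and lands directly on the generalized eigenvalue equation \eqref{GenEvEq} with the centrifugal term $-s(d-s-2)/\overline{\rho}^{2}$. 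That equation, not \eqref{HypGeomPot}, is literally \cite[Eq.~(2.7)]{MR3623242} resp.\ \cite[Eq.~(3.14)]{MR4469070}, so no further matching is needed. Your route via \eqref{HypGeomPot} is equivalent (the two are related by the substitution $\overline{g}_{\lambda}\mapsto \overline{\rho}^{-s}\overline{g}_{\lambda}$ followed by \eqref{TransformationEven}), but would require you to undo \eqref{TransformationEven} and insert the $\overline{\rho}^{s}$ factor before comparing with the literature.
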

\begin{proof}
Assume that $\lambda\in\sigma(\mathbf{L})$ with $\Re(\lambda) \geq 0$. By \Cref{ModeCriterion}, there exists a nonzero $\widetilde{f}_{\lambda} \in C^{\infty}_{\mathrm{ev}}([0,R])$ such that
\begin{equation*}
a_{2}(\rho) \widetilde{f}_{\lambda}''(\rho) + a_{1}(\rho;\lambda+s,d) \widetilde{f}_{\lambda}'(\rho) + \big( a_{0}(\rho;\lambda+s,d) + \widetilde{V}(\rho) \big) \widetilde{f}_{\lambda}(\rho) = 0
\end{equation*}
in $(0,R)$ with coefficients given in \Cref{a2Coeff,a1Coeff,a0Coeff}. To transform this differential equation, we employ the transition diffeomorphism
\begin{equation*}
\chi^{-1} \circ \overline{\chi} : \RR \times \BB^{d}_{\overline{R}} \rightarrow \RR \times \BB^{d}_{R} \,, \qquad (\overline{\tau}, \overline{\xi}) \mapsto \big( \overline{\tau} - \log h_{+}(\overline{\xi}), h_{+}(\overline{\xi})^{-1} \overline{\xi} \big) \,,
\end{equation*}
to similarity coordinates with a height function $\overline{h} \in C^{\infty}_{\mathrm{rad}}(\RR^{d})$ satisfying $\overline{h} \equiv -1$ in $\overline{\BB^{d}_{1}}$, see \Cref{TransitionDiffeo,GSCExemplary}. We switch to the variable
\begin{equation*}
\overline{g}_{\lambda}(\overline{\rho}) = \frac{\overline{\rho}^{s}}{\widetilde{h}_{+}(\overline{\rho})^{\lambda+s}} \widetilde{f}_{\lambda} \Big( \frac{\overline{\rho}}{\widetilde{h}_{+}(\overline{\rho})} \Big) \,,
\end{equation*}
and infer from \Cref{ModeInvariance} the generalized eigenvalue equation
\begin{align}
\nonumber
(1-\overline{\rho}^{2}) \overline{g}_{\lambda}''(\overline{\rho}) + \Big( \frac{d-2s-1}{\overline{\rho}} - 2( \lambda + 1 ) \overline{\rho} \Big) \overline{g}_{\lambda}'(\overline{\rho}) &- \lambda ( \lambda + 1 ) \overline{g}_{\lambda}(\overline{\rho}) \\\label{GenEvEq}&+
\Big( \overline{V}(\overline{\rho}) - \frac{s(d-s-2)}{\overline{\rho}^{2}} \Big) \overline{g}_{\lambda}(\overline{\rho}) = 0
\end{align}
in $(0,1)$, where $\overline{V} \in C^{\infty}_{\mathrm{ev}}([0,1])$ given by
\begin{equation*}
\overline{V}(\overline{\rho}) =
{
\renewcommand{\arraystretch}{2}
\left\{
\begin{array}{ll}
\displaystyle{
\frac{8(d-3)(d-4)}{( (d-4) + \overline{\rho}^{2} )^{2}}
} & \text{for wave maps,} \\
\displaystyle{
3 (d-4) \frac{2 \alpha_{d-2} + (2\beta_{d-2}-1)\overline{\rho}^{2}}{(\alpha_{d-2} + \beta_{d-2} \overline{\rho}^{2})^{2}}
}
& \text{for Yang-Mills,}
\end{array}
\right.
}
\end{equation*}
is the radial representative of the potential $F' \big( \,.\,, \psi^{\ast}(1,\,.\,) \big) \in C^{\infty}_{\mathrm{rad}}(\overline{\BB^{d}_{1}})$ in \Cref{SelfSimilarPotentialExplicit}. That is, $\overline{g}_{\lambda} \in C^{\infty}([0,1])$ poses for $\Re(\lambda) \geq 0$ a smooth solution to \cite[Eq. (2.7)]{MR3623242} for wave maps and \cite[Eq. (3.14)]{MR4469070} for Yang-Mills, respectively. However, according to \cite[Theorem 2.2]{MR3623242} and \cite[claim in the proof of Proposition 3.2]{MR4469070}, there exists no such smooth solution to \Cref{GenEvEq} unless $\lambda = 1$. Since $1 \in \sigma(\mathbf{L})$ is an eigenvalue by \Cref{SymmetryMode} and $\sigma(\mathbf{L}) \subseteq \CC$ is a closed subset, the assertion of the proposition follows.
\end{proof}
Next, we determine the roles of the unstable eigenvalue and symmetry mode in the linearized wave evolution. For this, we employ the Riesz projection from spectral theory.
\begin{definition}
\label{RieszProjection}
Consider the assumptions of \Cref{ModeCriterion}. We define the \emph{Riesz projection} $\mathbf{P}_{1} \in \mathfrak{L}( \mathfrak{H}^{k}_{\mathrm{rad}}(\BB^{d}_{R}) )$ associated to the isolated eigenvalue $1 \in \sigma(\mathbf{L})$ by
\begin{equation*}
\mathbf{P}_{1} = \frac{1}{2\pi\ii} \int_{\pd\mathbb{D}(1)} \mathbf{R}_{\mathbf{L}}(z) \dd z \,.
\end{equation*}
\end{definition}
This operator is well-defined and a bounded linear projection that commutes with the generator. Hence, it yields a decomposition of the Hilbert space into two closed complementary invariant subspaces for the generator. We also mention that this decomposition separates the unstable eigenvalue from the spectrum, see \cite[p.~178, Theorem 6.17]{MR1335452} for the general theory. In our setting, this leads to the following explicit subspace.
\begin{proposition}
\label{AlgMult}
Consider the assumptions of \Cref{ModeCriterion}. Then, $1 \in \sigma(\mathbf{L})$ is a simple eigenvalue whose geometric and algebraic eigenspaces are both spanned by the symmetry mode $\mathbf{f}_{1}^{\ast}$ from \Cref{SymmetryMode}, that is
\begin{equation*}
\operatorname{ran}(\mathbf{P}_{1}) = \ker(\mathbf{I} - \mathbf{L}) = \langle \mathbf{f}_{1}^{\ast} \rangle \,.
\end{equation*}
\end{proposition}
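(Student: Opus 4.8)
The plan is to establish the two equalities $\operatorname{ran}(\mathbf{P}_1) = \ker(\mathbf{I} - \mathbf{L}) = \langle \mathbf{f}_1^\ast \rangle$ by first showing that the geometric eigenspace of $1$ is one-dimensional, and then upgrading this to the statement that the algebraic eigenspace (equivalently $\operatorname{ran}(\mathbf{P}_1)$) coincides with it, i.e. that $1$ is a simple eigenvalue with no Jordan block. The one-dimensionality of $\ker(\mathbf{I}-\mathbf{L})$ is essentially already in hand: by \Cref{ModeCriterion} applied with $\lambda = 1$ (which lies in the relevant right half-plane by \Cref{ModeStability}), we have $\ker(\mathbf{I}-\mathbf{L}) = \langle \mathbf{f}_1 \rangle$ spanned by the mode generated by a function $f_1 \in C^\infty_{\mathrm{rad}}(\overline{\BB^d_R})$ solving the mode equation. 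By \Cref{SymmetryMode}, the explicit function $f_1^\ast(\xi) = (\pd_0 \psi^\ast)(-h(\xi),\xi)$ produces such a mode, so $\mathbf{f}_1^\ast \in \ker(\mathbf{I}-\mathbf{L})$ and is nonzero; hence $\ker(\mathbf{I}-\mathbf{L}) = \langle \mathbf{f}_1^\ast \rangle$.

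The substantive step is to rule out a nontrivial generalized eigenvector, i.e. to show $\ker((\mathbf{I}-\mathbf{L})^2) = \ker(\mathbf{I}-\mathbf{L})$, which by the standard spectral decomposition (\cite[p.~178, Theorem 6.17]{MR1335452}) together with the fact that $1$ is an isolated eigenvalue of finite algebraic multiplicity (\Cref{LinearFlow}, \Cref{ModeCriterion}) gives $\operatorname{ran}(\mathbf{P}_1) = \ker(\mathbf{I}-\mathbf{L})$. So suppose $\mathbf{g} \in \mathfrak{D}(\mathbf{L})$ satisfies $(\mathbf{I}-\mathbf{L})\mathbf{g} = c\,\mathbf{f}_1^\ast$ for some $c \in \CC$; I must show $c = 0$. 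The idea is to run the same reduction-to-an-ODE machinery as in the proof of \Cref{ModeCriterion}: approximating $\mathbf{g}$ by smooth functions and passing to radial profiles and then to graphical similarity coordinates with the flat height function $\overline h \equiv -1$ on $\overline{\BB^d_1}$, one obtains for the first component $g_1$ (after the substitutions \eqref{TransformationSC}, \eqref{TransformationEven}) an inhomogeneous hypergeometric equation whose inhomogeneity is built from $f_1^\ast$, i.e. from the known analytic mode solution $w_1$ at $\lambda = 1$. One must then show that this inhomogeneous equation has no solution with the regularity forced by $\mathbf{g} \in \mathfrak{H}^k_{\mathrm{rad}}(\BB^d_R)$ unless $c = 0$: the Frobenius analysis at $z=1$ (Frobenius indices $\{0, \tfrac d2 - s - \tfrac12 - 1\}$) shows that forcing by the resonant solution at an exceptional index typically introduces a logarithmic term whose $k$-th derivative fails to be square integrable near $z=1$, exactly as in the three-case analysis in the proof of \Cref{ModeCriterion}. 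Since the algebraic multiplicity of $1$ is finite, once $\ker((\mathbf{I}-\mathbf{L})^2) = \ker(\mathbf{I}-\mathbf{L})$ is established the chain stabilizes and $\operatorname{ran}(\mathbf{P}_1) = \ker(\mathbf{I}-\mathbf{L})$ follows.

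The main obstacle I anticipate is precisely this last ODE argument: one needs to track carefully how the inhomogeneity $c\,\mathbf{f}_1^\ast$ transforms under the chain of substitutions, verify that the resulting inhomogeneous hypergeometric equation is the ``resonant'' case at the relevant Frobenius index, and then argue that a particular solution necessarily carries a $(1-z)^{m}\log(1-z)$ or $\log(1-z)$ contribution incompatible with $g_1 \in H^k$. An alternative, cleaner route that avoids repeating the Frobenius bookkeeping is to use a solvability/Fredholm argument: since $\mathbf{L}_V'$ is compact (\Cref{BoundedCompactPotential}), $\mathbf{I}-\mathbf{L} = (\mathbf{I}-\mathbf{L}_0) - \mathbf{L}_V'$ is Fredholm of index zero on a suitable domain, and the existence of a Jordan chain is obstructed by a nonvanishing pairing of $\mathbf{f}_1^\ast$ with an eigenfunction of the adjoint $\mathbf{L}^\ast$ at $\bar 1 = 1$; computing this pairing reduces again to an explicit one-dimensional integral against the known symmetry mode, which one can show is nonzero. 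Either way, the decisive input is the explicit structure of the symmetry mode $f_1^\ast$ and the fact that mode stability (\Cref{ModeStability}) leaves $\lambda = 1$ as the unique unstable value, so no other eigenvalue can contribute to $\operatorname{ran}(\mathbf{P}_1)$.
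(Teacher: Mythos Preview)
Your overall strategy matches the paper's: show $\ker(\mathbf{I}-\mathbf{L}) = \langle \mathbf{f}_1^\ast \rangle$ via \Cref{ModeCriterion} and \Cref{SymmetryMode}, then assume a generalized eigenvector $(\mathbf{I}-\mathbf{L})\mathbf{f} = \mathbf{f}_1^\ast$ exists and derive a contradiction by reducing to an inhomogeneous radial ODE whose solutions violate the $H^k$ regularity of $\mathbf{f}$.

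The technical execution, however, differs in an important way. You propose to push all the way to the hypergeometric equation and run a Frobenius analysis at $z=1$ to detect a forced logarithm. The paper instead stays at the level of the second-order ODE \eqref{InhomODE} in the coordinates with $\overline h \equiv -1$ and uses variation of constants with the explicit fundamental system $\phi_1 = \overline{g}_1^\ast$, $\phi_2 = \overline{g}_1^\ast I$ built from the known symmetry mode. The crucial technical point you do not anticipate is that the inhomogeneity, after the transition diffeomorphism, splits as $\overline F = \overline F^\ast + \overline F^h$, where $\overline F^\ast$ is exactly the right-hand side one would get in standard similarity coordinates and $\overline F^h$ carries the entire dependence on the height function. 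The paper shows by an explicit integration by parts that the piece $\overline g^h$ of the particular solution generated by $\overline F^h$ equals $-\overline{g}_1^\ast \log\bigl(\widetilde h_+/\widetilde h_+(0)\bigr)$, which is smooth on $[0,1]$. This isolates the remaining piece $g$ as precisely the generalized-eigenvector problem in \emph{standard} similarity coordinates, for which the contradiction $g^{(k)}\notin L^2((\tfrac12,1))$ is already established in the literature (\cite[Proposition A.2]{MR3680948}, \cite[proof of Proposition 6.1]{2022arXiv220706952G} for wave maps; \cite[proof of Proposition 3.4]{MR4469070} for Yang--Mills). So the paper does not redo any Frobenius bookkeeping here; it reduces to cited results.

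Your alternative Fredholm/adjoint route is not what the paper does. It is plausible in principle, but ``computing this pairing reduces to an explicit one-dimensional integral which one can show is nonzero'' is exactly the step that requires work comparable to the ODE analysis above, and you have not indicated how to identify the adjoint eigenfunction or evaluate the pairing. In short: the structural plan is correct, but the decisive computation that makes the argument go through in general similarity coordinates --- the $\overline F^h$ splitting and its explicit integration --- is missing from your proposal.
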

\begin{proof}
We have shown in \Cref{SymmetryMode} and \Cref{ModeCriterion} that $\ker(\mathbf{I} - \mathbf{L}) = \langle \mathbf{f}_{1}^{\ast} \rangle$. Spectral theory of linear operators yields the inclusion $\ker(\mathbf{I} - \mathbf{L}) \subseteq \operatorname{ran}(\mathbf{P}_{1})$ as well that the operator $\mathbf{I} - \mathbf{L}\mathord{\restriction}_{\operatorname{ran}(\mathbf{P}_{1})}$ is nilpotent, see e.g. \cite[proof of Lemma 3.4]{MR4778061} for details. If $\mathbf{I} - \mathbf{L}\mathord{\restriction}_{\operatorname{ran}(\mathbf{P}_{1})} = \mathbf{0}$ then $\big( \mathbf{I} - \mathbf{L} \big) \mathbf{P} \mathbf{f} = \mathbf{0}$ for all $\mathbf{f} \in \mathfrak{H}^{k}_{\mathrm{rad}}(\BB^{d}_{R})$ and therefore the other inclusion $\operatorname{ran}(\mathbf{P}_{1}) \subseteq \ker(\mathbf{I} - \mathbf{L})$ follows.
\medskip\par
Now, let us assume that $\mathbf{I} - \mathbf{L}\mathord{\restriction}_{\operatorname{ran}(\mathbf{P}_{1})} \neq \mathbf{0}$. Then there exists an $\mathbf{f} \in \operatorname{ran}( \mathbf{P}_{1} ) \subset \mathfrak{H}^{k}_{\mathrm{rad}}(\BB^{d}_{R})$ such that
\begin{equation}
\label{NilpotentGtr1}
\big( \mathbf{I} - \mathbf{L} \big) \mathbf{f} = \mathbf{f}_{1}^{\ast} \,.
\end{equation}
Radial Sobolev embedding yields that there is a nonzero function $\widetilde{f}_{1}: (0,R] \rightarrow \CC$ with
\begin{equation*}
\widetilde{f}_{1} \in C^{k-1}([\delta,R]) \,, \qquad
\widetilde{f}_{1}^{(k-1)} \text{ is absolutely continuous on } [\delta,R] \,, \qquad
\widetilde{f}_{1}^{(k)} \in L^{2}(\delta,R) \,,
\end{equation*}
for any $0 < \delta < R$, such that $[\mathbf{f}]_{1} = \widetilde{f}_{1}(|\,.\,|)$. As in the proof of \Cref{ModeCriterion}, we conclude from this and \Cref{NilpotentGtr1} the differential equation
\begin{equation}
\label{ModeInhom}
a_{2}(\rho) \widetilde{f}_{1}''(\rho) + a_{1}(\rho;s+1,d) \widetilde{f}_{1}'(\rho) + \big( a_{0}(\rho;s+1,d) + \widetilde{V}(\rho) \big) \widetilde{f}_{1}(\rho) = \widetilde{F}(\rho)
\end{equation}
a.e. in $(0,R)$, where the coefficients are given in \Cref{a2Coeff,a1Coeff,a0Coeff} and
\begin{equation*}
\widetilde{F}(\rho) = - \Big( (2s+3) \frac{\widetilde{w}(\rho)}{\widetilde{c}(\rho)^{2}} + \frac{d-1}{\widetilde{c}(\rho)} \frac{\widetilde{h}'(\rho)}{\rho} + \frac{\widetilde{h}(\rho)^{2} - \rho^{2}}{\widetilde{c}(\rho)^{3}} \widetilde{h}''(\rho) \Big) \widetilde{f}_{1}^{\ast}(\rho) - 2 \frac{\rho - \widetilde{h}(\rho)\widetilde{h}'(\rho)}{\widetilde{c}(\rho)^{2}} \widetilde{f}_{1}^{*'}(\rho)
\end{equation*}
and $\widetilde{f}_{1}^{\ast} \in C^{\infty}_{\mathrm{ev}}([0,R])$ is the radial representative of the symmetry mode $f_{1}^{\ast} \in C^{\infty}_{\mathrm{rad}}(\overline{\BB^{d}_{R}})$ from \Cref{SymmetryMode}. As previously, we consider similarity coordinates ${\overline{\chi}\mathstrut}_{T}: \RR \times \RR^{d} \rightarrow \RR^{1,d}$ with height function $\overline{h} \in C^{\infty}_{\mathrm{rad}}(\RR^{d})$ satisfying $\overline{h} \equiv - 1$ in $\overline{\BB^{d}_{1}}$ and the transition diffeomorphism
\begin{equation*}
\chi^{-1} \circ \overline{\chi} : \RR \times \BB^{d}_{\overline{R}} \rightarrow \RR \times \BB^{d}_{R} \,, \qquad ( \overline{\tau}, \overline{\xi} ) \mapsto \big( \overline{\tau} - \log h_{+}(\overline{\xi}), h_{+}(\overline{\xi})^{-1} \overline{\xi} \big) \,,
\end{equation*}
see \Cref{TransitionDiffeo,GSCExemplary}. For $\overline{\rho} \in (0,1)$, we set
\begin{equation}
\label{TransformationInhomLinWave}
\overline{g}(\overline{\rho}) = \frac{1}{\widetilde{h}_{+}(\overline{\rho})^{s+1}} \widetilde{f}_{1} \Big( \frac{\overline{\rho}}{\widetilde{h}_{+}(\overline{\rho})} \Big) \,, \qquad \overline{F}(\overline{\rho}) = \frac{1}{\widetilde{h}_{+}(\overline{\rho})^{s+3}} \widetilde{F} \Big( \frac{\overline{\rho}}{\widetilde{h}_{+}(\overline{\rho})} \Big) \,.
\end{equation}
It follows that $\overline{g} \in C^{\infty}((0,1)) \cap H^{k}((\frac{1}{2},1))$. To compute the inhomogeneity $\overline{F}$, we note the transformation
\begin{equation*}
\widetilde{h} \big( \widetilde{h}_{+}(\overline{\rho})^{-1} \overline{\rho} \big) = \frac{-1}{\widetilde{h}_{+}(\overline{\rho})}
\end{equation*}
and derive the identities
\begin{align*}
\widetilde{h}' \Big( \frac{\overline{\rho}}{\widetilde{h}_{+}(\overline{\rho})} \Big) &= -
\frac{H(\overline{\rho})}{\overline{\rho} H(\overline{\rho}) - 1 } \,, &
\widetilde{h}'' \Big( \frac{\overline{\rho}}{\widetilde{h}_{+}(\overline{\rho})} \Big) &= - \widetilde{h}_{+}(\overline{\rho}) \frac{H'( \overline{\rho} ) + H( \overline{\rho} )^{2}}{( \overline{\rho} H( \overline{\rho} ) - 1 )^{3}} \,, \\
\widetilde{c} \Big( \frac{\overline{\rho}}{\widetilde{h}_{+}(\overline{\rho})} \Big) &= -\frac{1}{\widetilde{h}_{+}(\overline{\rho})} \frac{1}{\overline{\rho} H(\overline{\rho}) - 1} \,, &
\widetilde{w} \Big( \frac{\overline{\rho}}{\widetilde{h}_{+}(\overline{\rho})} \Big) &= \frac{-(1-\overline{\rho}^{2})H(\overline{\rho})^{2} - 2 \overline{\rho} H(\overline{\rho}) + 1 }{( \overline{\rho} H(\overline{\rho}) - 1 )^{2}} \,,
\end{align*}
where
\begin{equation*}
H(\overline{\rho}) \coloneqq \frac{\widetilde{h}_{+}'(\overline{\rho})}{\widetilde{h}_{+}(\overline{\rho})} \,.
\end{equation*}
Furthermore, we recall that the symmetry mode from \Cref{SymmetryMode} is given in the respective similarity coordinates by
\begin{equation*}
f_{1}^{\ast}(\xi) = \ee^{-(s+1)\tau} \big( (\left.\pd_{T'}\psi^{\ast}_{T'})\right|_{T'=0} \circ \chi \big) (\tau,\xi) \,, \qquad
\overline{f}_{1}^{\ast}(\overline{\xi}) = \ee^{-(s+1)\overline{\tau}} \big( (\left.\pd_{T'}\psi^{\ast}_{T'})\right|_{T'=0} \circ \overline{\chi} \big) (\overline{\tau},\overline{\xi}) \,,
\end{equation*}
so the respective radial representatives $\widetilde{f}_{1}^{\ast}$, $\overline{g}_{1}^{\ast}$ are related via
\begin{align*}
\widetilde{f}_{1}^{\ast} \Big( \frac{\overline{\rho}}{\widetilde{h}_{+}(\overline{\rho})} \Big) &= \widetilde{h}_{+}(\overline{\rho})^{s+1} \overline{g}_{1}^{\ast}(\overline{\rho}) \,, \\
\widetilde{f}_{1}^{*'} \Big( \frac{\overline{\rho}}{\widetilde{h}_{+}(\overline{\rho})} \Big) &= - \frac{\widetilde{h}_{+}(\overline{\rho})^{s+2}}{\overline{\rho} H(\overline{\rho}) - 1} \Big( (s+1) H(\overline{\rho}) \overline{g}_{1}^{\ast}(\overline{\rho}) + \overline{g}_{1}^{\ast'}(\overline{\rho}) \Big) \,.
\end{align*}
With this at hand, a computation yields $\overline{F} = \overline{F}^{\ast} + \overline{F}^{h}$ for $\overline{F}^{\ast}, \overline{F}^{h} \in C^{\infty}_{\mathrm{ev}}([0,1])$ given by
\begin{align*}
\overline{F}^{\ast}(\overline{\rho}) &= - (2s+3) \overline{g}_{1}^{\ast}(\overline{\rho}) - 2 \overline{\rho} \, \overline{g}_{1}^{\ast'}(\overline{\rho}) \,, \\
\overline{F}^{h}(\overline{\rho}) &= - \frac{1-\overline{\rho}^{2}}{\overline{g}_{1}^{\ast}(\overline{\rho})} \frac{\big( 1 - \overline{\rho}^{2} \big)^{\frac{d-5}{2}-s}}{\overline{\rho}^{d-1}} \pd_{\overline{\rho}} \Big( \frac{ \overline{\rho}^{d-1} }{ \big( 1 - \overline{\rho}^{2} \big)^{\frac{d-5}{2}-s} } H(\overline{\rho}) \overline{g}_{1}^{\ast}(\overline{\rho})^{2} \Big) \,.
\end{align*}
This implies together with \Cref{ModeInvariance} that $\overline{g}$ solves the inhomogeneous differential equation
\begin{align}
\nonumber
(1-\overline{\rho}^{2}) \overline{g}''(\overline{\rho}) + \Big( 2\Big( \frac{d-5}{2} - s \Big) \overline{\rho} &+ (d-1) \frac{1-\overline{\rho}^{2}}{\overline{\rho}} \Big) \overline{g}'(\overline{\rho}) \\\label{InhomODE}&+ \Big( \overline{V}(\overline{\rho}) - (s+1)(s+2) \Big) \overline{g}(\overline{\rho}) = \overline{F}(\overline{\rho})
\end{align}
in $(0,1)$. By \Cref{SymmetryMode} and \Cref{ModeCriterion}, the function $\overline{g}_{1}^{\ast} \in C^{\infty}_{\mathrm{ev}}([0,1])$ is a solution to the homogeneous version of \Cref{InhomODE} and therefore, a fundamental system of solutions is given by
\begin{align*}
\phi_{1} &= \overline{g}_{1}^{\ast} \,, \\
\phi_{2} &= \overline{g}_{1}^{\ast} I \,,
\quad\text{where}\quad
I'(\overline{\rho}) = \frac{\big( 1 - \overline{\rho}^{2} \big)^{\frac{d-5}{2}-s}}{\overline{\rho}^{d-1}} \frac{1}{\overline{g}_{1}^{\ast}(\overline{\rho})^{2}} \,,
\end{align*}
with Wronski determinant
\begin{equation*}
W(\overline{\rho}) = W(\phi_{1},\phi_{2})(\overline{\rho}) = \overline{g}_{1}^{\ast}(\overline{\rho})^{2} I'(\overline{\rho}) = \frac{\big( 1 - \overline{\rho}^{2} \big)^{\frac{d-5}{2}-s}}{\overline{\rho}^{d-1}} \,.
\end{equation*}
The variations of constants formula yields
\begin{align*}
\overline{g}(\overline{\rho}) &=
\alpha_{1} \phi_{1}(\overline{\rho}) + \alpha_{2} \phi_{2}(\overline{\rho}) - \phi_{1}(\overline{\rho}) \int_{0}^{\overline{\rho}} \frac{\phi_{2}(z)}{W(z)} \frac{\overline{F}(z)}{1-z^{2}} \dd z + \phi_{2}(\overline{\rho}) \int_{0}^{\overline{\rho}} \frac{\phi_{1}(z)}{W(z)} \frac{\overline{F}(z)}{1-z^{2}} \dd z \\&=
g(\overline{\rho}) + \overline{g}^{h}(\overline{\rho})
\end{align*}
for $\overline{\rho} \in (0,1)$ and some constants $\alpha_{1},\alpha_{2} \in \CC$, where we have split the solution into a term given by
\begin{equation*}
g(\overline{\rho}) = \alpha_{1} \phi_{1}(\overline{\rho}) + \alpha_{2} \phi_{2}(\overline{\rho}) - \phi_{1}(\overline{\rho}) \int_{0}^{\overline{\rho}} \frac{\phi_{2}(z)}{W(z)} \frac{\overline{F}^{\ast}(z)}{1-z^{2}} \dd z + \phi_{2}(\overline{\rho}) \int_{0}^{\overline{\rho}} \frac{\phi_{1}(z)}{W(z)} \frac{\overline{F}^{\ast}(z)}{1-z^{2}} \dd z
\end{equation*}
and another term given by
\begin{equation*}
\overline{g}^{h}(\overline{\rho}) =
- \phi_{1}(\overline{\rho}) \int_{0}^{\overline{\rho}} \frac{\phi_{2}(z)}{W(z)} \frac{\overline{F}^{h}(z)}{1-z^{2}} \dd z + \phi_{2}(\overline{\rho}) \int_{0}^{\overline{\rho}} \frac{\phi_{1}(z)}{W(z)} \frac{\overline{F}^{h}(z)}{1-z^{2}} \dd z =
- \overline{g}_{1}^{\ast}(\overline{\rho}) \log \frac{\widetilde{h}_{+}(\overline{\rho})}{\widetilde{h}_{+}(0)} \,,
\end{equation*}
as follows from integration by parts. Note that $\overline{g}^{h} \in C^{\infty}_{\mathrm{ev}}([0,1])$ and that $g \in C^{\infty}((0,1))$ is the general solution to the version of \Cref{ModeInhom} in $(0,1)$ in standard similarity coordinates with $\overline{h} \equiv -1$. In particular, we conclude from \cite[Proposition A.2]{MR3680948}, \cite[proof of Proposition 6.1]{2022arXiv220706952G} for wave maps and \cite[proof of Proposition 3.4]{MR4469070} for Yang-Mills that $g^{(k)} \notin L^{2}((\frac{1}{2},1))$. However, this contradicts $f_{1} \in H^{k}(\BB^{d}_{R})$ via the relation \eqref{TransformationInhomLinWave} and so the assumption $\mathbf{I} - \mathbf{L}\mathord{\restriction}_{\operatorname{ran}(\mathbf{P}_{1})} \neq \mathbf{0}$ does not hold.
\end{proof}
\subsection{Co-dimension one stable subspace for the linearized wave flow}
As a consequence, we get that the linearized wave evolution in similarity coordinates grows exponentially on the one-dimensional subspace spanned by the symmetry mode, whereas on the complementary subspace it decays exponentially. This completely characterizes the global dynamics of the linearized flow.
\begin{proposition}
\label{LinearizedEvolution}
Let $d,k \in \mathbb{N}$ with $1 < \frac{d}{2} - s < k$ and $R \geq R_{0}$. Let $\mathbf{S}: [0,\infty) \rightarrow \mathfrak{L}( \mathfrak{H}^{k}_{\mathrm{rad}}(\BB^{d}_{R}) )$ be the semigroup from \Cref{LinearFlow} and $\mathbf{P}_{1} \in \mathfrak{L}( \mathfrak{H}^{k}_{\mathrm{rad}}(\BB^{d}_{R}) )$ be the spectral projection from \Cref{RieszProjection}. Then
\begin{equation*}
\mathbf{S}(\tau) \mathbf{P}_{1} - \mathbf{P}_{1} \mathbf{S}(\tau) = \mathbf{0} \,, \qquad \mathbf{S}(\tau) \mathbf{P}_{1} = \ee^{\tau} \mathbf{P}_{1} \,,
\end{equation*}
and there exist constants $\omega > 0$ and $M \geq 1$ such that
\begin{equation*}
\| \mathbf{S}(\tau) ( \mathbf{I} - \mathbf{P}_{1} ) \mathbf{f} \|_{\mathfrak{H}^{k}(\BB^{d}_{R})} \leq M \ee^{-\omega \tau} \| ( \mathbf{I} - \mathbf{P}_{1} ) \mathbf{f} \|_{\mathfrak{H}^{k}(\BB^{d}_{R})}
\end{equation*}
for all $\mathbf{f} \in \mathfrak{H}^{k}_{\mathrm{rad}}(\BB^{d}_{R})$ and all $\tau \geq 0$.
\end{proposition}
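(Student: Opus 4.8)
The plan is to work with the spectral decomposition $\mathfrak{H}^{k}_{\mathrm{rad}}(\BB^{d}_{R}) = \operatorname{ran}(\mathbf{P}_{1}) \oplus \ker(\mathbf{P}_{1})$ induced by the Riesz projection: on the one-dimensional summand $\operatorname{ran}(\mathbf{P}_{1})$ the flow is computed directly, and on the complement one reads off exponential decay from a spectral mapping theorem. To fix the relevant gap, I would first pick any $0 < \varepsilon < \tfrac12$; since the standing assumption $1 < \tfrac{d}{2}-s < k$ gives $\tfrac{d}{2}-s-k<0$ and $s \geq 1 > \varepsilon$ gives $-s+\varepsilon<0$, the free-flow exponent $\omega_{d,k,s,\varepsilon} = \max\{\tfrac{d}{2}-s-k,\,-s+\varepsilon\}$ from \Cref{SGRadialWaveFlow} is negative. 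By \Cref{ModeStability} there is a $\vartheta < 0$ with $\sigma(\mathbf{L}) \cap \mathbb{H}_{\vartheta} = \{1\}$. Then I would fix $\omega' \in \big( \max\{\vartheta,\omega_{d,k,s,\varepsilon}\},\, 0 \big)$ and set $\omega := -\omega' > 0$; since $\omega' > \vartheta$ we get $\sigma(\mathbf{L}) \cap \overline{\mathbb{H}}_{\omega'} = \{1\}$, so the spectral projection belonging to the part of $\sigma(\mathbf{L})$ inside $\overline{\mathbb{H}}_{\omega'}$ is precisely the operator $\mathbf{P}_{1}$ from \Cref{RieszProjection}.

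The commutation identity is soft: $\mathbf{S}(\tau)$ commutes with every resolvent $\mathbf{R}_{\mathbf{L}}(z)$, so pulling $\mathbf{S}(\tau)$ through the contour integral defining $\mathbf{P}_{1}$ gives $\mathbf{S}(\tau)\mathbf{P}_{1} = \mathbf{P}_{1}\mathbf{S}(\tau)$; see the general theory in \cite{MR1335452}. In particular $\operatorname{ran}(\mathbf{P}_{1})$ is $\mathbf{S}(\tau)$-invariant and, by \Cref{AlgMult}, coincides with $\ker(\mathbf{I}-\mathbf{L}) = \langle \mathbf{f}_{1}^{\ast} \rangle$. Hence the restricted semigroup $\mathbf{S}(\tau)\mathord{\restriction}_{\operatorname{ran}(\mathbf{P}_{1})}$ has bounded generator $\mathbf{L}\mathord{\restriction}_{\operatorname{ran}(\mathbf{P}_{1})} = \mathbf{I}\mathord{\restriction}_{\operatorname{ran}(\mathbf{P}_{1})}$, and on a finite-dimensional space this forces $\mathbf{S}(\tau)\mathord{\restriction}_{\operatorname{ran}(\mathbf{P}_{1})} = \ee^{\tau}\mathbf{I}\mathord{\restriction}_{\operatorname{ran}(\mathbf{P}_{1})}$, i.e. $\mathbf{S}(\tau)\mathbf{P}_{1} = \ee^{\tau}\mathbf{P}_{1}$.

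For the decay on $\ker(\mathbf{P}_{1})$ I would invoke the spectral mapping theorem \cite[Theorem B.1]{MR4469070} underlying \Cref{LinearFlow}. Its hypotheses hold: $\mathbf{L} = \mathbf{L}_{0} + \mathbf{L}_{V}'$, the free semigroup $\mathbf{S}_{0}$ has growth bound at most $\omega_{d,k,s,\varepsilon} < \omega'$ by \Cref{SGRadialWaveFlow}, and $\mathbf{L}_{V}'$ is compact by \Cref{BoundedCompactPotential}. Since $\sigma(\mathbf{L}) \cap \overline{\mathbb{H}}_{\omega'} = \{1\}$ with associated spectral projection $\mathbf{P}_{1}$, the theorem yields a constant $M \geq 1$ with $\| \mathbf{S}(\tau)(\mathbf{I}-\mathbf{P}_{1}) \|_{\mathfrak{L}(\mathfrak{H}^{k}(\BB^{d}_{R}))} \leq M \ee^{\omega'\tau} = M \ee^{-\omega\tau}$ for all $\tau \geq 0$ (shrinking $\omega$ by an arbitrarily small amount should the cited statement carry an $\varepsilon$-loss in the exponent); applying this to $(\mathbf{I}-\mathbf{P}_{1})\mathbf{f}$ and using $(\mathbf{I}-\mathbf{P}_{1})^{2} = \mathbf{I}-\mathbf{P}_{1}$ gives the claimed bound.

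The main obstacle is exactly this last step: knowing only that $\sigma\big(\mathbf{L}\mathord{\restriction}_{\ker(\mathbf{P}_{1})}\big) \subseteq \{\Re z \leq \vartheta < 0\}$ does not by itself imply exponential decay of the semigroup, and the gap is bridged only by using that $\mathbf{L}_{V}'$ is a \emph{compact} perturbation of the free generator, so that the essential growth bound of $\mathbf{S}$ is inherited from the free wave flow in similarity coordinates — which is negative precisely because the Sobolev index was chosen with $k > \tfrac{d}{2}-s$. Everything else (the contour definition of $\mathbf{P}_{1}$, its commutation with $\mathbf{S}$, and the one-dimensionality of its range from \Cref{AlgMult}) is routine.
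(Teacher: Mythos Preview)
Your proposal is correct and follows essentially the same route as the paper: commutation of $\mathbf{S}(\tau)$ with $\mathbf{P}_{1}$ via the resolvent, the explicit action on the one-dimensional range using \Cref{AlgMult}, and then the decay estimate on $\ker(\mathbf{P}_{1})$ via the compact-perturbation spectral mapping theorem \cite[Theorem B.1]{MR4469070} combined with \Cref{ModeStability}. The paper's proof is simply a terser version of yours; your additional bookkeeping of the exponents $\omega_{d,k,s,\varepsilon}$ and $\vartheta$ and your closing remark that compactness of $\mathbf{L}_{V}'$ is what bridges the gap between spectral information and semigroup decay are correct and make the logic more explicit.
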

\begin{proof}
Since $\mathbf{S}(\tau)$ is a bounded linear operator which commutes with the generator $\mathbf{L}$, it also commutes with the spectral projection $\mathbf{P}_{1}$. Using this and \Cref{AlgMult}, we find the differential equation $\pd_{\tau} \mathbf{S}(\tau) \mathbf{P}_{1} \mathbf{f} = \mathbf{S}(\tau) \mathbf{P}_{1} \mathbf{f}$ and obtain the unique solution $\mathbf{S}(\tau) \mathbf{P}_{1} \mathbf{f} = \ee^{\tau} \mathbf{P}_{1} \mathbf{f}$. By \Cref{BoundedCompactPotential}, $\mathbf{L} = \mathbf{L}_{0} + \mathbf{L}_{V}'$ is a compact perturbation of the generator of the semigroup from \Cref{SGRadialWaveFlow} with $\sup \big\{ \Re (\lambda) \in \CC \mid \lambda \in \sigma(\mathbf{L}) \setminus \{ 1 \} \big\} < 0$ according to \Cref{ModeStability}. Now, the stability estimate on the co-dimension one subspace follows directly from \cite[Theorem B.1]{MR4469070}.
\end{proof}
\section{Nonlinear stability analysis}
\label{SecNonlin}
In this section, we analyze the full nonlinear stability problem in similarity coordinates.
\subsection{Estimates for the nonlinearity}
We incorporate the nonlinear remainder from \Cref{PotentialNonlinearity} as a locally Lipschitz continuous map in the functional setting of the linear theory.
\begin{lemma}
\label{LocLip}
Let $d,k\in\NN$ and $R>0$ with
\begin{equation*}
k \geq \frac{d}{2} > 2 \text{ for wave maps}
\qquad\text{and}\qquad
k \geq \frac{d-1}{2} > \frac{5}{2} \text{ for Yang-Mills.}
\end{equation*}
Then, the map $\mathbf{N}: C^{\infty}_{\mathrm{rad}}(\overline{\BB^{d}_{R}})^{2} \rightarrow C^{\infty}_{\mathrm{rad}}(\overline{\BB^{d}_{R}})^{2}$ given in \Cref{PotentialNonlinearity} has a unique extension to a map $\mathbf{N}: \mathfrak{H}^{k}_{\mathrm{rad}}(\BB^{d}_{R}) \rightarrow \mathfrak{H}^{k}_{\mathrm{rad}}(\BB^{d}_{R})$ with the property that for any $M > 0$ the estimate
\begin{equation*}
\| \mathbf{N}(\mathbf{f}) - \mathbf{N}(\mathbf{g}) \|_{\mathfrak{H}^{k}(\BB^{d}_{R})} \lesssim \big( \| \mathbf{f} \|_{\mathfrak{H}^{k}(\BB^{d}_{R})} + \| \mathbf{g} \|_{\mathfrak{H}^{k}(\BB^{d}_{R})} \big) \| \mathbf{f} - \mathbf{g} \|_{\mathfrak{H}^{k}(\BB^{d}_{R})}
\end{equation*}
holds for all $\mathbf{f},\mathbf{g} \in \mathfrak{H}^{k}_{\mathrm{rad}}(\BB^{d}_{R})$ with $\| \mathbf{f} \|_{\mathfrak{H}^{k}(\BB^{d}_{R})}, \| \mathbf{g} \|_{\mathfrak{H}^{k}(\BB^{d}_{R})} \leq M$.
\end{lemma}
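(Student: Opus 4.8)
\emph{Reduction to a scalar estimate.}
The plan is to first strip away everything but the scalar substitution operator. By \Cref{PotentialNonlinearity} we have $\mathbf N(\mathbf f) = \big( 0, \tfrac{c}{w} N(f_1) \big)$, which depends only on $f_1$, and $\tfrac{c}{w} \in C^{\infty}_{\mathrm{rad}}(\overline{\BB^{d}_{R}})$ because $w = 1 - |\pd h|^{2} > 0$ by the gradient bound on $h$ and $c = \xi^{i}\pd_{i}h - h$ is smooth; multiplication by a fixed $C^{\infty}(\overline{\BB^{d}_{R}})$ function being bounded on $H^{k-1}(\BB^{d}_{R})$, it suffices to prove
\begin{equation*}
\| N(f) - N(g) \|_{H^{k-1}(\BB^{d}_{R})} \lesssim \big( \| f \|_{H^{k}(\BB^{d}_{R})} + \| g \|_{H^{k}(\BB^{d}_{R})} \big) \| f - g \|_{H^{k}(\BB^{d}_{R})}
\end{equation*}
for $f, g \in C^{\infty}_{\mathrm{rad}}(\overline{\BB^{d}_{R}})$ with $H^{k}$-norm at most $M$. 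Since $\mathbf N(\mathbf 0) = \mathbf 0$, density of $C^{\infty}_{\mathrm{rad}}(\overline{\BB^{d}_{R}})^{2}$ then yields the unique, locally Lipschitz extension to $\mathfrak H^{k}_{\mathrm{rad}}(\BB^{d}_{R})$.

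\emph{Algebraic structure of the nonlinearity.}
Next I would extract the precise form of $N$. First, $p(\xi) \coloneqq \psi^{\ast}(-h(\xi),\xi)$ defines an element of $C^{\infty}_{\mathrm{rad}}(\overline{\BB^{d}_{R}})$: near $\xi = 0$ the argument $-h(\xi)$ stays near $-h(0) > 0$, where $\psi^{\ast}$ is a smooth function of $(t,|x|^{2})$; for $\xi \neq 0$ the point $(-h(\xi),\xi)$ never meets the singular locus of $\psi^{\ast}$, which is contained in $\{x=0\}$. Hence $G(\xi,z) \coloneqq F(\xi, p(\xi)+z) - F(\xi,p(\xi)) - F'(\xi,p(\xi))z$ is smooth on $\overline{\BB^{d}_{R}} \times \RR$, radial in $\xi$, vanishes to second order in $z$, and $N(f) = G(\,\cdot\,,f)$. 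For Yang--Mills $F(\xi,z) = (d-4)(3z^{2} - |\xi|^{2}z^{3})$ is cubic in $z$, so $N(f) = A_{2}f^{2} + A_{3}f^{3}$ with $A_{2},A_{3} \in C^{\infty}_{\mathrm{rad}}(\overline{\BB^{d}_{R}})$. For wave maps $F(\xi,\,\cdot\,)$ is entire, and the explicit formula gives $\pd_{z}^{j}F(\xi,z) = \pm 2^{j-1}(d-3)|\xi|^{j-3}\{\sin\text{ or }\cos\}(2|\xi|z)$, which is uniformly bounded in $z$ for $j \geq 3$; a degree-$3$ Taylor expansion of $G(\xi,\,\cdot\,)$ with integral remainder thus yields
\begin{equation*}
N(f) = A_{2}f^{2} + A_{3}f^{3} + f^{4}\,R(\,\cdot\,,f), \qquad A_{2},A_{3} \in C^{\infty}_{\mathrm{rad}}(\overline{\BB^{d}_{R}}),
\end{equation*}
where $R \in C^{\infty}(\overline{\BB^{d}_{R}}\times\RR)$ is bounded and, because each $\xi$-derivative of $\sin(2|\xi|z)$ produces only one extra power of $z$, obeys $|\pd_{\xi}^{\alpha}\pd_{z}^{\beta}R(\xi,z)| \lesssim_{\alpha,\beta} (1+|z|)^{|\alpha|}$ uniformly for $\xi \in \overline{\BB^{d}_{R}}$. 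In all cases $N$ is a low-degree polynomial in $f$ with smooth radial coefficients, plus (wave maps only) a tame quartic remainder.

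\emph{Multilinear estimates and the crux.}
Telescoping $f^{i} - g^{i} = (f-g)\sum_{l=0}^{i-1}f^{l}g^{i-1-l}$ and, for the quartic term, $f^{4}R(\,\cdot\,,f) - g^{4}R(\,\cdot\,,g) = (f^{4}-g^{4})R(\,\cdot\,,f) + g^{4}(f-g)\int_{0}^{1}(\pd_{z}R)(\,\cdot\,,g+t(f-g))\,\dd t$, the claim reduces to bounds of the form
\begin{equation*}
\big\| v_{1}\cdots v_{m}\,\Theta(\,\cdot\,,f) \big\|_{H^{k-1}(\BB^{d}_{R})} \lesssim_{M} \| v_{1} \|_{H^{k}(\BB^{d}_{R})} \cdots \| v_{m} \|_{H^{k}(\BB^{d}_{R})}, \qquad m \geq 2,
\end{equation*}
where each $v_{j} \in \{f,g,f-g\}$, with $f-g$ always among them, and $\Theta$ is either a fixed smooth function of $\xi$ or the kernel $R$ subject to the growth bound above; the factor $f-g$ together with one further factor of $f$ or $g$ produces the asserted gain $\|f\|_{H^{k}}+\|g\|_{H^{k}}$. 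For $k > \tfrac{d}{2}$ this is routine: $H^{k}(\BB^{d}_{R})$ is a Banach algebra embedded in $C(\overline{\BB^{d}_{R}})$, the composition $\Theta(\,\cdot\,,f)$ is handled by a standard Moser estimate on $\{|z| \leq \|f\|_{L^{\infty}}\}$, and $H^{k}\hookrightarrow H^{k-1}$ closes the argument; for Yang--Mills, where only $m \in \{2,3\}$ occur and $\Theta$ is smooth in $\xi$, this succeeds throughout the admissible range. The delicate case is the critical regularity $k = \tfrac{d}{2}$, which occurs for even-dimensional wave maps: then $H^{d/2}(\BB^{d}_{R}) \not\hookrightarrow L^{\infty}$ and no ready-made composition lemma applies. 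I would instead expand $\nabla^{k-1}(v_{1}\cdots v_{m}\Theta(\,\cdot\,,f))$ via the Leibniz and Fa\`{a} di Bruno formulas, estimate each resulting product in $L^{2}$ by H\"{o}lder's inequality, and bound the factors using the subcritical embeddings $H^{d/2-a}(\BB^{d}_{R})\hookrightarrow L^{d/a}$ for $1 \leq a \leq k-1$, the endpoint embedding $H^{d/2}(\BB^{d}_{R})\hookrightarrow L^{q}$ for every finite $q$ (with polynomially growing constant), and the fact that the composition factors arising obey $\lesssim (1+|f|)^{k-1} \in L^{q}$ for all $q < \infty$; since only $k-1 = \tfrac{d}{2}-1$ derivatives are distributed, the total reciprocal-exponent budget is $\tfrac12 - \tfrac1d < \tfrac12$, leaving just enough room to absorb the zeroth-order factors.

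\emph{Expected main obstacle.}
The only genuinely nontrivial point is this borderline bookkeeping at $k = \tfrac{d}{2}$ for wave maps: the absence of an $L^{\infty}$ bound on $f$ forces the Leibniz/Fa\`{a} di Bruno expansion with careful H\"{o}lder balancing against the subcritical Sobolev embeddings, and one must keep track of the polynomial-in-$z$ growth of the derivatives of the remainder kernel $R$. Everything away from the critical exponent — in particular the entire Yang--Mills case, being polynomial in $f$ — is standard nonlinear Sobolev calculus.
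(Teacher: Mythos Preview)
Your outline is correct and lands on the same Hölder--Sobolev bookkeeping as the paper, but the decomposition you use for the wave-maps nonlinearity is genuinely different. The paper does \emph{not} Taylor-expand $N(f)$ as a cubic polynomial plus a quartic remainder $f^{4}R(\,\cdot\,,f)$; instead it invokes the double-integral representation
\[
N(f)-N(g)=\int_{0}^{1}\!\int_{0}^{1} (f-g)\,\bigl(sf+(1-s)g\bigr)\,F''_{\mathsf{WM}}\bigl(\xi,\,p(\xi)+t(sf+(1-s)g)\bigr)\,\dd t\,\dd s,
\]
so that only \emph{two} variable factors (rather than up to five, as in your $g^{4}(f-g)$ term) multiply the composition. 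After Leibniz and Fa\`a di Bruno the paper then uses explicit pointwise bounds $|\pd_{x}^{\alpha}\pd_{z}^{\ell}F''_{\mathsf{WM}}|\lesssim\sum_{j}|x|^{j-|\alpha|+\ell-1}|z|^{j}$ in place of your remainder-kernel growth estimate. Both routes reduce to the same exponent accounting at $k=d/2$ (derivative budget $\tfrac{d}{2}-1$ leaves Hölder slack $\tfrac{1}{d}$ to absorb zeroth-order factors via $H^{d/2}\hookrightarrow L^{q}$), so your approach works; the paper's representation is simply more economical in the number of factors to track.

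For Yang--Mills at the borderline $k=\tfrac{d-1}{2}$ (odd $d$) you write that the polynomial case ``succeeds throughout the admissible range'' without saying how. The paper treats this explicitly: for the trilinear piece one chooses $p_{i}=2d/(1+2|\alpha_{i}|)$, checks $\sum\tfrac{1}{p_{i}}\le\tfrac{1}{2}$, and closes with the subcritical embedding $H^{(d-1)/2-|\alpha_i|}\hookrightarrow L^{p_i}$. This is exactly the degenerate (composition-free) instance of your Leibniz--Hölder scheme, so nothing is missing from your argument, only from your exposition.
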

\begin{proof}
We have
\begin{align*}
\| \mathbf{N}(\mathbf{f}) - \mathbf{N}(\mathbf{g}) \|_{\mathfrak{H}^{k}(\BB^{d}_{R})} &= \Big\| \frac{c}{w} \big( N(f_{1}) - N(g_{1}) \big) \Big\|_{H^{k-1}(\BB^{d}_{R})} \\&\lesssim \| N(f_{1}) - N(g_{1}) \|_{H^{k-1}(\BB^{d}_{R})}
\end{align*}
for all $\mathbf{f}, \mathbf{g} \in C^{\infty} ( \overline{\BB^{d}_{R}} )^{2}$. We proceed by splitting the proof into three cases by treating the Yang-Mills nonlinearity for $\frac{d-1}{2} \leq k_{d} \leq \frac{d}{2}$, the wave maps nonlinearity for $k_{d} = \frac{d}{2}$ and both nonlinearities for $k > \frac{d}{2}$.
\begin{enumerate}[wide,itemsep=1em,topsep=1em]
\item[$\bullet$ \textit{Yang-Mills in case $\frac{d-1}{2} \leq k_{d} \leq \frac{d}{2}$.}] The Yang-Mills nonlinearity reads explicitly
\begin{equation}
\label{NYM}
N_{\mathsf{YM}}(f)(\xi) = \big( d-4 \big) \big( 3 \big( 1 - |\xi|^{2} \psi^{\ast}(-h(\xi),\xi) \big) f(\xi)^{2} - |\xi|^{2} f(\xi)^{3} \big) \,.
\end{equation}
So, it suffices to control the Sobolev norm of the product of three functions. The product rule and H\"{o}lder's inequality for exponents $\frac{1}{p_{1}} + \frac{1}{p_{2}} + \frac{1}{p_{3}} \leq \frac{1}{2}$ show that
\begin{align*}
\| f_{1} f_{2} f_{3} \|_{H^{k_{d}-1}(\BB^{d}_{R})} &\lesssim \sum_{0 \leq |\alpha_{1}| + |\alpha_{2}| + |\alpha_{3}| \leq k_{d} - 1} \big\| \big( \pd^{\alpha_{1}} f_{1} \big) \big( \pd^{\alpha_{2}} f_{2} \big) \big( \pd^{\alpha_{3}} f_{3} \big) \big\|_{L^{2}(\BB^{d}_{R})} \\&\lesssim
\sum_{0 \leq |\alpha_{1}| + |\alpha_{2}| + |\alpha_{3}| \leq k_{d} - 1}
\big\| \pd^{\alpha_{1}} f_{1} \big\|_{L^{p_{1}}(\BB^{d}_{R})}
\big\| \pd^{\alpha_{2}} f_{2} \big\|_{L^{p_{2}}(\BB^{d}_{R})}
\big\| \pd^{\alpha_{3}} f_{3} \big\|_{L^{p_{3}}(\BB^{d}_{R})}
\end{align*}
for all $f_{1},f_{2},f_{3} \in C^{\infty} ( \overline{\BB^{d}_{R}} )$. If $d\in\NN$ is odd, then $k_{d} = \frac{d-1}{2}$ and we choose exponents
\begin{equation*}
p_{1} = \frac{2d}{d - 2( k_{d} - |\alpha_{1}| )} \,, \qquad
p_{2} = \frac{2d}{d - 2( k_{d} - |\alpha_{2}| )} \,, \qquad
p_{3} = \frac{2d}{d - 2( k_{d} - |\alpha_{3}| )} \,,
\end{equation*}
which ensures
\begin{equation*}
\frac{1}{p_{1}} + \frac{1}{p_{2}} + \frac{1}{p_{3}} = \frac{1 + 2|\alpha_{1}|}{2d} + \frac{1 + 2|\alpha_{2}|}{2d} + \frac{1 + 2|\alpha_{3}|}{2d} \leq
\frac{3 + 2( k_{d} - 1 )}{2d} = \frac{1}{2} \,.
\end{equation*}
If $d\in\NN$ is even, then $k_{d} = \frac{d}{2}$ and we choose the H\"{o}lder exponents as above if $\alpha_{i} \neq 0$. If $\alpha_{i} = 0$, we set $p_{i} = p$ for a suitable $p > 2$ such that $\frac{1}{p_{1}} + \frac{1}{p_{2}} + \frac{1}{p_{3}} = \frac{1}{2}$. In both cases, the Sobolev embeddings
\begin{equation}
\label{SobolevEmbedding}
H^{k}(\BB^{d}_{R}) \hookrightarrow L^{q}(\BB^{d}_{R})
\quad\text{for } 2 \leq q \text{ if } k = \frac{d}{2}
\quad\text{and}\quad
2 \leq q \leq \frac{2d}{d-2k} \text{ if } 0 \leq k < \frac{d}{2}
\end{equation}
give
\begin{equation*}
\| f_{1} f_{2} f_{3} \|_{H^{k_{d}-1}(\BB^{d}_{R})} \lesssim \| f_{1} \|_{H^{k_{d}}(\BB^{d}_{R})} \| f_{2} \|_{H^{k_{d}}(\BB^{d}_{R})} \| f_{3} \|_{H^{k_{d}}(\BB^{d}_{R})}
\end{equation*}
for all $f_{1},f_{2},f_{3} \in C^{\infty} ( \overline{\BB^{d}_{R}} )$, which yields the desired local Lipschitz bound for the Yang-Mills nonlinearity.
\item[$\bullet$ \textit{Wave maps in case $k_{d} = \frac{d}{2}$.}] By \cite[Lemma 4.1]{MR4778061}, we have
\begin{equation}
\label{WMNonlinearityRepresentation}
N_{\mathsf{WM}}(f)(\xi) - N_{\mathsf{WM}}(g)(\xi) =
\int_{0}^{1} \int_{0}^{1} A(\xi) B(\xi;s) F_{\mathsf{WM}}'' \big( \xi, C(\xi;s,t) \big) \dd t \dd s \,,
\end{equation}
where we denote
\begin{align*}
A(\xi) &\coloneqq f(\xi) - g(\xi) \,, &
C(\xi;s,t) &\coloneqq \psi^{\ast}(-h(\xi),\xi) + t \big( s f(\xi) + (1 - s ) g(\xi) \big) \,, \\
B(\xi;s) &\coloneqq s f(\xi) + (1 - s ) g(\xi) \,, &
F_{\mathsf{WM}}''(x,z) &\coloneqq \pd_{z}^{2} F_{\mathsf{WM}}(x,z) = 2(d-3) \frac{\sin\big( 2 |x| z \big)}{|x|} \,,
\end{align*}
for $f,g \in C^{\infty} ( \overline{\BB^{d}_{R}} )$. So,
\begin{align}
\begin{split}
\label{2ndComponentEstimateWM}
&
\| N_{\mathsf{WM}}(f) - N_{\mathsf{WM}}(g) \|_{H^{k_{d}-1}(\BB^{d}_{R})} \lesssim \int_{0}^{1} \int_{0}^{1} \Big\| A B(\,.\,;s) F_{\mathsf{WM}}'' \big( \,.\,, C(\,.\,;s,t) \big) \Big\|_{H^{k_{d}-1}(\BB^{d}_{R})} \dd t \dd s \\
&\indent\lesssim
\int_{0}^{1} \int_{0}^{1}
\sum_{0 \leq |\alpha| + |\beta| + |\gamma| \leq k_{d} - 1}
\Big\| \big( \pd^{\alpha} A \big) \big( \pd^{\beta} B(\,.\,;s) \big) \big( \pd^{\gamma} F_{\mathsf{WM}}'' \big( \,.\,, C(\,.\,;s,t) \big) \big) \Big\|_{L^{2}(\BB^{d}_{R})} \dd t \dd s
\end{split}
\end{align}
for all $f,g \in C^{\infty} ( \overline{\BB^{d}_{R}} )$. The chain rule yields
\begin{align}
\begin{split}
\label{ChainRuleEstimateWM}
&
\Big| \pd_{\xi}^{\gamma} F_{\mathsf{WM}}'' \big( \xi, C(\xi;s,t) \big) \Big| \lesssim \Big| (\pd^{(\gamma,0)}F_{\mathsf{WM}}'') \big( \xi, C(\xi;s,t) \big) \Big| \\&\indent\indent+
\sum_{\ell=1}^{|\gamma|} \sum_{0 \leq |\alpha| \leq |\gamma| - \ell} \Big| (\pd^{(\alpha,\ell)}F_{\mathsf{WM}}'') \big( \xi, C(\xi;s,t) \big) \Big| \sum_{
\substack{
|\gamma_{1}| + \ldots + |\gamma_{\ell}| = |\gamma| - |\alpha| \\
\gamma_{1}, \ldots, \gamma_{\ell} \neq 0
}
} \prod_{i=1}^{\ell} \Big| \pd^{\gamma_{i}}_{\xi} C(\xi;s,t) \Big|
\end{split}
\end{align}
and so we continue with pointwise bounds on derivatives of the wave maps nonlinearity. We have for any multi-index $\alpha \in \NN_{0}^{d}$ the bounds
\begin{align*}
| F_{\mathsf{WM}}(x,z) | &\lesssim \frac{|z|^{3}}{1 + |xz|^{2}} \,,
&
| \pd_{x}^{\alpha} F_{\mathsf{WM}}(x,z) | &\lesssim \frac{|z|^{3+|\alpha|}}{1 + |xz|^{3}} \,, \\
| \pd_{z} F_{\mathsf{WM}}(x,z) | &\lesssim \frac{|z|^{2}}{1 + |xz|^{2}} \,, &
| \pd_{x}^{\alpha} \pd_{z} F_{\mathsf{WM}}(x,z) | &\lesssim \frac{|z|^{2+|\alpha|}}{1 + |xz|^{2}} \,, \\
| \pd_{z}^{2} F_{\mathsf{WM}}(x,z) | &\lesssim \frac{|z|}{1 + |xz|} \,,
&
| \pd_{x}^{\alpha} \pd_{z}^{2} F_{\mathsf{WM}}(x,z) | &\lesssim \frac{|z|^{1+|\alpha|}}{1 + |xz|} \,,
\end{align*}
for all $(x,z) \in \RR^{d} \times \RR$. Furthermore,
\begin{equation*}
\pd_{z} F_{\mathsf{WM}}''(x,z) = 4(d-3) \cos \big( |2 z x| \big) \eqqcolon \varphi(zx) \,,
\end{equation*}
where $\varphi \in C^{\infty}_{\mathrm{rad}}(\RR^{d}) \cap C^{\infty}_{\mathrm{b}}(\RR^{d})$ by \cite[Lemma 2.3]{2022arXiv220902286O}. Thus, we have for any multi-index $\alpha \in \NN_{0}^{d}$ and for any $\ell\in \NN$
\begin{align*}
\pd_{x}^{\alpha} \pd_{z} F_{\mathsf{WM}}''(x,z) &= z^{|\alpha|} (\pd^{\alpha} \varphi)(zx) \,, \\
\pd_{z}^{\ell} (\pd^{\alpha} \varphi)(zx) &= \sum_{i_{1},\ldots,i_{\ell} = 1}^{d} x^{i_{1}} \ldots x^{i_{\ell}} \big( \pd_{i_{1}} \ldots \pd_{i_{\ell}} \pd^{\alpha} \varphi \big)(zx) \,,
\end{align*}
which implies the pointwise bounds
\begin{equation}
\label{pdxalphapdzellF''}
\big| \pd_{z}^{\ell} F_{\mathsf{WM}}''(x.z) \big| \lesssim |x|^{\ell-1} \,, \qquad
\big| \pd_{x}^{\alpha} \pd_{z}^{\ell} F_{\mathsf{WM}}''(x,z) \big| \lesssim \sum_{j = \max\{ 0, |\alpha| - \ell \}}^{|\alpha|} |x|^{j - |\alpha| + \ell - 1} |z|^{j} \,,
\end{equation}
for all $(x,z) \in \RR^{d} \times \RR$. Hence, according to the estimates \eqref{2ndComponentEstimateWM}, \eqref{ChainRuleEstimateWM}, \eqref{pdxalphapdzellF''}, it suffices to prove bounds for the terms
\begin{align}
\label{Term1}
\Big\| \big| \pd^{\alpha} A \big| \big| \pd^{\beta} B(\,.\,;s) \big| \big| C(\,.\,;s,t) \big|^{m} \prod_{i=1}^{\ell} \big| \pd^{\gamma_{i}} C(\,.\,;s,t) \big| \Big\|_{L^{2}(\BB^{d}_{R})} \,, \\
\label{Term2}
\Big\| \big| \pd^{\alpha} A \big| \big| \pd^{\beta} B(\,.\,;s) \big| \big| C(\,.\,;s,t) \big|^{1 + |\gamma|} \Big\|_{L^{2}(\BB^{d}_{R})} \,,
\end{align}
for any $\alpha, \beta, \gamma, \gamma_{i} \in \NN_{0}^{d}$ with $\gamma_{i} \neq 0$, $1 \leq \ell \leq k_{d} - 1$, $0 \leq m \leq k_{d} - 1 - \ell$, and
\begin{equation}
\label{MultiIndexNumerology}
\ell \leq |\alpha| + |\beta| + \sum_{i=1}^{\ell} |\gamma_{i}| \leq k_{d} - 1
\qquad \text{resp.} \qquad
0 \leq |\alpha| + |\beta| + |\gamma| \leq k_{d} - 1 \,.
\end{equation}
For the term in \eqref{Term1}, we use H\"{o}lder's inequality with exponents
\begin{equation}
\label{HoelderExponents}
\frac{1}{2} = \frac{1}{p_{\alpha}} + \frac{1}{p_{\beta}} + \frac{m}{p} + \sum_{i=1}^{\ell} \frac{1}{p_{\gamma_{i}}}
\end{equation}
and get
\begin{align}
\begin{split}
\label{HoelderBound}
&
\Big\| \big| \pd^{\alpha} A \big| \big| \pd^{\beta} B(\,.\,;s) \big| \big| C(\,.\,;s,t) \big|^{m} \prod_{i=1}^{\ell} \big| \pd^{\gamma_{i}} C(\,.\,;s,t) \big| \Big\|_{L^{2}(\BB^{d}_{R})} \\&\indent\leq
\big\| \pd^{\alpha} A \big\|_{L^{p_{\alpha}}(\BB^{d}_{R})}
\big\| \pd^{\beta} B(\,.\,;s) \big\|_{L^{p_{\beta}}(\BB^{d}_{R})}
\big\| C(\,.\,;s,t) \big\|_{L^{p}(\BB^{d}_{R})}^{m}
\prod_{i=1}^{\ell} \big\| \pd^{\gamma_{i}} C(\,.\,;s,t) \big\|_{L^{p_{\gamma_{i}}}(\BB^{d}_{R})} \,.
\end{split}
\end{align}
As $\gamma_{i} \neq 0$, we choose
\begin{equation*}
p_{\gamma_{i}} = \frac{2d}{d - 2(k_{d}-|\gamma_{i}|)} = \frac{d}{|\gamma_{i}|}
\end{equation*}
and if $\alpha,\beta\neq 0$
\begin{equation*}
p_{\alpha} = \frac{2d}{d - 2(k_{d}-|\alpha|)} = \frac{d}{|\alpha|} \,, \qquad
p_{\beta} = \frac{2d}{d - 2(k_{d}-|\beta|)} = \frac{d}{|\beta|} \,,
\end{equation*}
or else $p_{\alpha} = p$ resp. $p_{\beta} = p$. Thus, the conditions in \eqref{MultiIndexNumerology} yield
\begin{equation*}
\frac{1}{p_{\alpha}} + \frac{1}{p_{\beta}} + \sum_{i=1}^{\ell} \frac{1}{p_{\gamma_{i}}} \leq \frac{1}{d} \Big( |\alpha| + |\beta| + \sum_{i=1}^{d} |\gamma_{i}| \Big) \leq \frac{1}{2} - \frac{1}{d} \,,
\end{equation*}
and so we can choose $p>2$ such that \Cref{HoelderExponents} is satisfied. The exponents for the term in \eqref{Term2} are chosen similarly. Then, the Sobolev embeddings \eqref{SobolevEmbedding} applied in \eqref{HoelderBound} and analogously in \eqref{Term2} imply the desired local Lipschitz bound.
\item[$\bullet$ \textit{Wave maps and Yang-Mills in case $k > \frac{d}{2}$.}] In this case, the proof is based on
\begin{align}
\begin{split}
\label{LocLipx}
\| \mathbf{N}(\mathbf{f}) - \mathbf{N}(\mathbf{g}) \|_{\mathfrak{H}^{k}(\BB^{d}_{R})} &\leq \| \mathbf{N}(\mathbf{f}) - \mathbf{N}(\mathbf{g}) \|_{\mathfrak{H}^{k+1}(\BB^{d}_{R})} \\&\lesssim
\| N(f_{1}) - N(g_{1}) \|_{H^{k}(\BB^{d}_{R})}
\end{split}
\end{align}
and the algebra property of the Sobolev embedding $H^{k}(\BB^{d}_{R}) \hookrightarrow C(\overline{\BB^{d}_{R}})$. For wave maps, we use the representation \eqref{WMNonlinearityRepresentation} and H\"{o}lder and Sobolev estimates as in \cite[p.~221 f.]{MR2918544} to get
\begin{align}
\begin{split}
\label{LocLipWM}
&
\| N_{\mathsf{WM}}(f_{1}) - N_{\mathsf{WM}}(g_{1}) \|_{H^{k}(\BB^{d}_{R})} \\&\indent\lesssim
\| f_{1} - g_{1} \|_{H^{k}(\BB^{d}_{R})} \big( \| f_{1} \|_{H^{k}(\BB^{d}_{R})} + \| g_{1} \|_{H^{k}(\BB^{d}_{R})} \big) \int_{0}^{1} \int_{0}^{1} \big\| F_{\mathsf{WM}}'' \big( \,.\,, C(\,.\,;s,t) \big) \big\|_{H^{k}(\BB^{d}_{R})} \dd t \dd s \\&\indent\lesssim
\| f_{1} - g_{1} \|_{H^{k}(\BB^{d}_{R})} \big( \| f_{1} \|_{H^{k}(\BB^{d}_{R})} + \| g_{1} \|_{H^{k}(\BB^{d}_{R})} \big)
\end{split}
\end{align}
for all $\mathbf{f},\mathbf{g} \in C^{\infty} ( \overline{\BB^{d}_{R}} )^{2}$ with $\| \mathbf{f} \|_{\mathfrak{H}^{k}(\BB^{d}_{R})}, \| \mathbf{g} \|_{\mathfrak{H}^{k}(\BB^{d}_{R})} \leq M$. The local Lipschitz estimate for the Yang-Mills nonlinearity follows from
\begin{align}
\begin{split}
\label{LocLipYM}
\| N_{\mathsf{YM}}(f_{1}) &- N_{\mathsf{YM}}(g_{1}) \|_{H^{k}(\BB^{d}_{R})} \lesssim \| f_{1}^{2} - g_{1}^{2} \|_{H^{k}(\BB^{d}_{R})} + \| f_{1}^{3} - g_{1}^{3} \|_{H^{k}(\BB^{d}_{R})} \\&\indent\lesssim
\| f_{1} - g_{1} \|_{H^{k}(\BB^{d}_{R})} \big( \| f_{1} \|_{H^{k}(\BB^{d}_{R})} + \| g_{1} \|_{H^{k}(\BB^{d}_{R})} \big) \\&\indent\indent+
\| f_{1} - g_{1} \|_{H^{k}(\BB^{d}_{R})} \big( \| f_{1} \|_{H^{k}(\BB^{d}_{R})}^{2} + \| f_{1} \|_{H^{k}(\BB^{d}_{R})} \| g_{1} \|_{H^{k}(\BB^{d}_{R})} + \| g_{1} \|_{H^{k}(\BB^{d}_{R})}^{2} \big) \,.
\end{split}
\end{align}
\end{enumerate}
The rest of the lemma is a continuous extension argument.
\end{proof}
We also prove another estimate for the nonlinearity that allows to upgrade the regularity of solutions later.
\begin{lemma}
\label{NonlinEstimates}
Let $R>0$ and $d,k_{d}\in\NN$ with
\begin{equation*}
k_{d} = \frac{d}{2} > 2 \text{ for wave maps}
\qquad\text{and}\qquad
\frac{d}{2} \geq k_{d} \geq \frac{d-1}{2} > \frac{5}{2} \text{ for Yang-Mills.}
\end{equation*}
Then,
\begin{equation*}
\| \mathbf{N} (\mathbf{f}) \|_{\mathfrak{H}^{k_{d}+1}(\BB^{d}_{R})} \lesssim
\renewcommand{\arraystretch}{1.2}
\left\{
\begin{array}{ll}
\big( 1 + \| \mathbf{f} \|_{\mathfrak{H}^{k_{d}}(\BB^{d}_{R})}^{k_{d}+1} \big) \| \mathbf{f} \|_{\mathfrak{H}^{k_{d}}(\BB^{d}_{R})} \| \mathbf{f} \|_{\mathfrak{H}^{k_{d}+1}(\BB^{d}_{R})} & \text{for wave maps,} \\
\big( 1 + \| \mathbf{f} \|_{\mathfrak{H}^{k_{d}}(\BB^{d}_{R})} \big) \| \mathbf{f} \|_{\mathfrak{H}^{k_{d}}(\BB^{d}_{R})} \| \mathbf{f} \|_{\mathfrak{H}^{k_{d}+1}(\BB^{d}_{R})} & \text{for Yang-Mills,}
\end{array}
\right.
\end{equation*}
for all $\mathbf{f} \in \mathfrak{H}^{k_{d}+1}_{\mathrm{rad}}(\BB^{d}_{R})$. Moreover, if $k > \frac{d}{2}$, then $\mathbf{N} (\mathbf{f}) \in \mathfrak{H}^{k+1}_{\mathrm{rad}}(\BB^{d}_{R}) \subset \mathfrak{H}^{k}_{\mathrm{rad}}(\BB^{d}_{R})$ for all $\mathbf{f} \in \mathfrak{H}^{k}_{\mathrm{rad}}(\BB^{d}_{R})$.
\end{lemma}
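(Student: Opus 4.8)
The plan is to reduce the assertion to an estimate for $\|N(f_1)\|_{H^{k_d}(\BB^d_R)}$ and then to run a multilinear H\"{o}lder--Sobolev argument mirroring the proof of \Cref{LocLip}. Since $\mathfrak H^{k_d+1}_{\mathrm{rad}}(\BB^d_R)=H^{k_d+1}_{\mathrm{rad}}(\BB^d_R)\times H^{k_d}_{\mathrm{rad}}(\BB^d_R)$ and, by \Cref{PotentialNonlinearity}, the map $\mathbf N$ has vanishing first component, we have
\[
\|\mathbf N(\mathbf f)\|_{\mathfrak H^{k_d+1}(\BB^d_R)}=\Big\|\tfrac cw\,N(f_1)\Big\|_{H^{k_d}(\BB^d_R)}\lesssim\|N(f_1)\|_{H^{k_d}(\BB^d_R)},
\]
because $\tfrac cw\in C^\infty_{\mathrm{rad}}(\overline{\BB^d_R})$ is a fixed function (see \Cref{hcw}) and $k_d$ is an integer, so multiplication by it is bounded on $H^{k_d}(\BB^d_R)$. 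Note also that the ``moreover'' part is exactly the claim $N(f_1)\in H^k(\BB^d_R)$ whenever $f_1\in H^k(\BB^d_R)$ with $k>\tfrac d2$ (the gained regularity sits entirely in the trivial first component $0\in H^{k+1}$); as $H^k(\BB^d_R)$ is then a Banach algebra embedding into $C(\overline{\BB^d_R})$, this holds for the polynomial $N_{\mathsf{YM}}$ at once and for $N_{\mathsf{WM}}$ by the standard composition estimate, using that $F_{\mathsf{WM}}$ together with all its derivatives is smooth on $\RR^d\times\RR$ and that $\psi^\ast(-h(\,\cdot\,),\,\cdot\,)\in C^\infty_{\mathrm{rad}}(\overline{\BB^d_R})$.

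For the quantitative bound for Yang-Mills, recall from \eqref{NYM} that $N_{\mathsf{YM}}(f_1)$ is a cubic polynomial in $f_1$ with fixed coefficients in $C^\infty_{\mathrm{rad}}(\overline{\BB^d_R})$, so it suffices to bound $\|f_1^2\|_{H^{k_d}(\BB^d_R)}$ and $\|f_1^3\|_{H^{k_d}(\BB^d_R)}$. Expanding with the Leibniz rule, these are finite sums of terms $\prod_{i=1}^m\pd^{\alpha_i}f_1$ with $m\in\{2,3\}$ and $\sum_i|\alpha_i|\le k_d$; ordering $|\alpha_1|\ge\dots\ge|\alpha_m|$ and applying H\"{o}lder's inequality with $\tfrac12=\sum_i\tfrac1{p_i}$, one places the top factor $\pd^{\alpha_1}f_1$ in $L^{p_1}$ controlled by $H^{k_d+1}(\BB^d_R)$ and every lower factor $\pd^{\alpha_i}f_1$, $i\ge2$, in $L^{p_i}$ controlled by $H^{k_d}(\BB^d_R)$ via Sobolev embedding. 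A short accounting of the Sobolev exponents shows that admissible exponents $p_i\ge2$ exist precisely because $\tfrac{d-1}2\le k_d\le\tfrac d2$ (the tightest case, with equality, being the cubic term in odd dimensions, where none of the embeddings hits the forbidden endpoint $s=\tfrac d2$). This gives $\|f_1^2\|_{H^{k_d}}\lesssim\|f_1\|_{H^{k_d}}\|f_1\|_{H^{k_d+1}}$ and $\|f_1^3\|_{H^{k_d}}\lesssim\|f_1\|_{H^{k_d}}^2\|f_1\|_{H^{k_d+1}}$, hence the asserted bound for $N_{\mathsf{YM}}$.

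For wave maps, where $k_d=\tfrac d2$, the computation follows the wave-maps part of the proof of \Cref{LocLip} carried out one derivative higher, now with the extra unit of regularity $f_1\in H^{k_d+1}(\BB^d_R)$ available for the single highest-order factor. Writing $N_{\mathsf{WM}}(f_1)=N_{\mathsf{WM}}(f_1)-N_{\mathsf{WM}}(0)$ and using \eqref{WMNonlinearityRepresentation} with $g=0$,
\[
N_{\mathsf{WM}}(f_1)(\xi)=\int_0^1\!\!\int_0^1 s\,f_1(\xi)^2\,F_{\mathsf{WM}}''\big(\xi,\psi^\ast(-h(\xi),\xi)+ts\,f_1(\xi)\big)\dd t\dd s,
\]
whose integrand is smooth since $F_{\mathsf{WM}}''$ is smooth on $\RR^d\times\RR$. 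Differentiating under the integral and using the Leibniz and Fa\`{a} di Bruno formulas as in \eqref{2ndComponentEstimateWM}--\eqref{ChainRuleEstimateWM}, one writes $\pd^\alpha\big[f_1^2\,F_{\mathsf{WM}}''(\,\cdot\,,C)\big]$ with $|\alpha|\le k_d$ as a finite sum of products $\pd^{\alpha_1}f_1\,\pd^{\alpha_2}f_1\,(\pd_x^\beta\pd_z^\ell F_{\mathsf{WM}}'')(\,\cdot\,,C)\prod_{i=1}^\ell\pd^{\gamma_i}C$ with $|\alpha_1|+|\alpha_2|+|\beta|+\sum_i|\gamma_i|\le k_d$, $\gamma_i\ne0$, and $\pd^{\gamma_i}C=ts\,\pd^{\gamma_i}f_1+\pd^{\gamma_i}\big[\psi^\ast(-h(\,\cdot\,),\,\cdot\,)\big]$. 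Inserting the pointwise bounds \eqref{pdxalphapdzellF''} --- which on $\BB^d_R$ involve at worst an $|x|^{-1}$ singularity, harmless in $L^2(\BB^d_R)$ since $d\ge6$, times a polynomial of degree $\le|\beta|$ in $|f_1(\xi)|$ --- bounding the smooth $\psi^\ast$-pieces by constants, estimating each resulting product of at most $k_d$ derivatives of $f_1$ by H\"{o}lder and the Sobolev embeddings \eqref{SobolevEmbedding} exactly as in \eqref{Term1}--\eqref{HoelderBound} (again sending the single highest-order factor into $H^{k_d+1}(\BB^d_R)$ and the others into $H^{k_d}(\BB^d_R)$), and summing over the finitely many contributions of order $\le k_d$, one obtains a bound by $\big(1+\|f_1\|_{H^{k_d}}^{k_d+1}\big)\|f_1\|_{H^{k_d}}\|f_1\|_{H^{k_d+1}}$. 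The exponent $k_d+1$ comes from the two factors $f_1$ in the prefactor together with up to $k_d-1$ further factors of $f_1$ supplied by the products $\prod\pd^{\gamma_i}C$ and by the polynomial-in-$f_1$ bound on $\pd_x^\beta\pd_z^\ell F_{\mathsf{WM}}''$. A density argument finally transfers the estimate from $C^\infty_{\mathrm{rad}}(\overline{\BB^d_R})^2$ to all of $\mathfrak H^{k_d+1}_{\mathrm{rad}}(\BB^d_R)$.

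The main obstacle is the bookkeeping of Sobolev exponents in these multilinear estimates at the low regularity $k_d\in\{\tfrac{d-1}2,\tfrac d2\}$: one must verify that in every Leibniz / Fa\`{a} di Bruno term the single highest-order factor can always be absorbed by the one extra unit of regularity $H^{k_d+1}$ while every remaining factor stays within $H^{k_d}$, which is exactly the borderline case; for wave maps one must in addition track, through the chain-rule expansion and the pointwise bounds on the derivatives of $F_{\mathsf{WM}}''$, the precise polynomial degree in $\|f_1\|_{H^{k_d}}$ (the origin of the somewhat crude exponent $k_d+1$) and dispose of the mild $|x|^{-1}$-type singularities at the origin.
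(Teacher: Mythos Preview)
Your proposal is correct and follows essentially the same approach as the paper: reduce to $\|N(f_1)\|_{H^{k_d}}$, expand via Leibniz and the chain rule, invoke the pointwise bounds \eqref{pdxalphapdzellF''}, and close with H\"older--Sobolev estimates that send the single highest-order factor to $H^{k_d+1}$ and all remaining factors to $H^{k_d}$. The only cosmetic differences are that the paper uses the single-integral Taylor representation $N_{\mathsf{WM}}(f)=\int_0^1 f^2\,F''_{\mathsf{WM}}(\,\cdot\,,\psi^\ast+tf)(1-t)\,\dd t$ rather than the double integral from \eqref{WMNonlinearityRepresentation} with $g=0$, and that the paper reduces directly to the model terms $\big\||f|^m\prod_{i=1}^\ell|\pd^{\alpha_i}f|\big\|_{L^2}$ with $1\le\ell\le k_d$, $0\le m\le k_d-\ell$, $\sum_i|\alpha_i|\le k_d$, for which it records the explicit H\"older exponents; your remark about an $|x|^{-1}$ singularity is a slight over-worry, since $F''_{\mathsf{WM}}(x,z)=2(d-3)\,2z\operatorname{si}(2|x|z)$ and all its $(x,z)$-derivatives are in fact smooth on $\RR^d\times\RR$ (the $|x|^{-1}$ in \eqref{pdxalphapdzellF''} is merely a loose upper bound), but your handling of it is harmless in any case.
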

\begin{proof}
We have
\begin{equation*}
\| \mathbf{N}(\mathbf{f}) \|_{\mathfrak{H}^{k+1}(\BB^{d}_{R})} = \Big\| \frac{c}{w} N(f_{1}) \Big\|_{H^{k}(\BB^{d}_{R})} \lesssim \| N(f_{1}) \|_{H^{k}(\BB^{d}_{R})}
\end{equation*}
for all $\mathbf{f} \in C^{\infty} ( \overline{\BB^{d}_{R}} )^{2}$. We distinguish between the cases $\frac{d-1}{2} \leq k_{d} \leq \frac{d}{2}$ for the Yang-Mills nonlinearity and $k_{d} = \frac{d}{2}$ for the wave maps nonlinearity.
\begin{enumerate}[wide,itemsep=1em,topsep=1em]
\item[$\bullet$ \textit{Yang-Mills in case $\frac{d-1}{2} \leq k_{d} \leq \frac{d}{2}$.}] First, suppose that $d\in\NN$ is odd, so $k_{d} = \frac{d-1}{2}$. For the Yang-Mills nonlinearity \eqref{NYM}, we have
\begin{align*}
\| N_{\mathsf{YM}}(f) \|_{H^{k_{d}}(\BB^{d}_{R})} &\lesssim \| f^{2} \|_{H^{k_{d}}(\BB^{d}_{R})} + \| f^{3} \|_{H^{k_{d}}(\BB^{d}_{R})} \\&\lesssim \sum_{0 \leq |\alpha_{1}| + |\alpha_{2}| \leq k_{d}} \big\| \big( \pd^{\alpha_{1}} f \big) \big( \pd^{\alpha_{2}} f \big) \big\|_{L^{2}(\BB^{d}_{R})} \\&\indent+
\sum_{0 \leq |\alpha_{1}| + |\alpha_{2}| + |\alpha_{3}| \leq k_{d}} \big\| \big( \pd^{\alpha_{1}} f \big) \big( \pd^{\alpha_{2}} f \big) \big( \pd^{\alpha_{3}} f \big) \big\|_{L^{2}(\BB^{d}_{R})}
\end{align*}
for all $f \in C^{\infty} ( \overline{\BB^{d}_{R}} )$. To control the quadratic term, we fix multi-indices $\alpha_{1},\alpha_{2} \in \NN_{0}^{d}$ with $0 \leq |\alpha_{1}| + |\alpha_{2}| \leq k_{d}$ and $|\alpha_{1}| \geq |\alpha_{2}|$. We choose corresponding H\"{o}lder exponents
\begin{equation*}
p_{1} = \frac{2d}{d - 2( k_{d} + 1 - |\alpha_{1}|)} = \frac{2d}{2|\alpha_{1}| - 1} \,, \qquad
p_{2} = \frac{2d}{d - 2( k_{d} - |\alpha_{2}|)} = \frac{2d}{2|\alpha_{2}| + 1} \,.
\end{equation*}
This ensures $\frac{1}{p_{1}} + \frac{1}{p_{2}} \leq \frac{1}{2}$ and H\"{o}lder's inequality and Sobolev embedding give
\begin{align*}
\big\| \big( \pd^{\alpha_{1}} f \big) \big( \pd^{\alpha_{2}} f \big) \big\|_{L^{2}(\BB^{d}_{R})} &\leq \big\| \pd^{\alpha_{1}} f \big\|_{L^{p_{1}}(\BB^{d}_{R})} \big\| \pd^{\alpha_{2}} f \big\|_{L^{p_{2}}(\BB^{d}_{R})} \\&\lesssim
\| f \|_{H^{k_{d}}(\BB^{d}_{R})} \| f \|_{H^{k_{d}+1}(\BB^{d}_{R})}
\end{align*}
for all $f \in C^{\infty} ( \overline{\BB^{d}_{R}} )$. For the cubic term, we fix multi-indices $\alpha_{1},\alpha_{2},\alpha_{3} \in \NN_{0}^{d}$ with $0 \leq |\alpha_{1}| + |\alpha_{2}| + |\alpha_{3}| \leq k_{d}$ and $|\alpha_{1}| \geq |\alpha_{2}| \geq |\alpha_{3}|$. We choose corresponding H\"{o}lder exponents $p_{1},p_{2}$ as above and
\begin{equation*}
p_{3} = \frac{2d}{d - 2( k_{d} - |\alpha_{3}|)} = \frac{2d}{2|\alpha_{3}| + 1} \,.
\end{equation*}
As before, $\frac{1}{p_{1}} + \frac{1}{p_{2}} + \frac{1}{p_{3}} \leq \frac{1}{2}$ and H\"{o}lder's inequality and Sobolev embedding yield
\begin{align*}
\big\| \big( \pd^{\alpha_{1}} f \big) \big( \pd^{\alpha_{2}} f \big) \big( \pd^{\alpha_{3}} f \big) \big\|_{L^{2}(\BB^{d}_{R})} &\leq \big\| \pd^{\alpha_{1}} f \big\|_{L^{p_{1}}(\BB^{d}_{R})} \big\| \pd^{\alpha_{2}} f \big\|_{L^{p_{2}}(\BB^{d}_{R})} \big\| \pd^{\alpha_{3}} f \big\|_{L^{p_{3}}(\BB^{d}_{R})} \\&\lesssim
\| f \|_{H^{k_{d}}(\BB^{d}_{R})}^{2} \| f \|_{H^{k_{d}+1}(\BB^{d}_{R})}
\end{align*}
for all $f \in C^{\infty} ( \overline{\BB^{d}_{R}} )$. These estimates imply the result for the Yang-Mills nonlinearity. In case $k_{d} = \frac{d}{2}$, the result follows directly from the inequality \eqref{NonlinearEstimate} below.
\item[$\bullet$ \textit{Wave maps in case $k_d = \frac{d}{2}$.}] We use the representation
\begin{equation*}
N_{\mathsf{WM}}(f)(\xi) =
\int_{0}^{1} f(\xi)^{2} F_{\mathsf{WM}}'' \big( \xi, \psi^{\ast}(-h(\xi),\xi) + t f(\xi) \big) ( 1 - t ) \dd t
\end{equation*}
from \cite[Lemma 4.1]{MR4778061} and get
\begin{equation*}
\| N_{\mathsf{WM}}(f) \|_{H^{k_{d}}(\BB^{d}_{R})}
\lesssim
\int_{0}^{1}
\sum_{0 \leq |\alpha| + |\beta| + |\gamma| \leq k_{d}}
\Big\| \big( \pd^{\alpha} f \big) \big( \pd^{\beta} f \big) \big( \pd^{\gamma} F_{\mathsf{WM}}'' \big( \,.\,, C(\,.\,;t) \big) \big) \Big\|_{L^{2}(\BB^{d}_{R})} \dd t
\end{equation*}
for all $f \in C^{\infty} ( \overline{\BB^{d}_{R}} )$, where $C(\xi;t) = \psi^{\ast}(-h(\xi),\xi) + t f(\xi)$. We recall the pointwise estimates \eqref{pdxalphapdzellF''} for the wave maps nonlinearity and note that the asserted bound in the lemma follows from estimating terms of the form
\begin{equation*}
\Big\| \big| f \big|^{m} \prod_{i=1}^{\ell} \big| \pd^{\alpha_{i}} f \big| \Big\|_{L^{2}(\BB^{d}_{R})} \,,
\end{equation*}
for $\ell,m\in\NN$ and $\alpha_{i} \in \NN_{0}^{d}$ with $1 \leq \ell \leq k_{d}$, $0 \leq m \leq k_{d} - \ell$ and $0 \leq |\alpha_{1}| + \ldots + |\alpha_{\ell}| \leq k_{d}$. By H\"{o}lder's inequality for exponents with
\begin{equation}
\label{HoelderNumerology}
\frac{m}{p} + \sum_{i=1}^{\ell} \frac{1}{p_{i}} \leq \frac{1}{2} \,,
\end{equation}
we have
\begin{equation*}
\Big\| \big| f \big|^{m} \prod_{i=1}^{\ell} \big| \pd^{\alpha_{i}} f \big| \Big\|_{L^{2}(\BB^{d}_{R})} \leq
\big\| f \big\|_{L^{p}(\BB^{d}_{R})}^{m} \prod_{i=1}^{\ell} \big\| \pd^{\alpha_{i}} f \big\|_{L^{p_{i}}(\BB^{d}_{R})} \,.
\end{equation*}
We order the multi-indices so that $|\alpha_{\ell}| \geq \ldots \geq |\alpha_{1}|$. If $|\alpha_{\ell}| > 1$, set $p_{\ell} = d/(|\alpha_{\ell}|-1)$ and $p_{i} = d/|\alpha_{i}|$ for the remaining indices if $\alpha_{i} \neq 0$ and $p_{i} = p$ if $\alpha_{i} = 0$. Then, $p > 2$ can be chosen such that equality is attained in \eqref{HoelderNumerology}. If $|\alpha_{\ell}| = 1$, choose $p_{\ell} = p$ and the remaining indices as above. Then, the Sobolev embedding \eqref{SobolevEmbedding} yields
\begin{equation}
\label{NonlinearEstimate}
\Big\| \big| f \big|^{m} \prod_{i=1}^{\ell} \big| \pd^{\alpha_{i}} f \big| \Big\|_{L^{2}(\BB^{d}_{R})} \lesssim
\| f \|_{H^{k_{d}}(\BB^{d}_{R})}^{m+\ell-1} \| f \|_{H^{k_{d}+1}(\BB^{d}_{R})}
\end{equation}
for all $f \in C^{\infty} ( \overline{\BB^{d}_{R}} )$.
\end{enumerate}
The final statement of the lemma in case $k > \frac{d}{2}$ follows from the estimates \eqref{LocLipx}, \eqref{LocLipWM}, \eqref{LocLipYM} in the proof of \Cref{LocLip}.
\end{proof}
\subsection{Notion of solution}
\Cref{LocLip} gives control on the mapping properties of the nonlinearity in Sobolev spaces. In particular, Duhamel's principle can be meaningfully applied to the semigroup from \Cref{LinearFlow} which allows to adopt the notion of solution from \cite[p.~184, Definition 1.1]{MR710486}.
\begin{definition}
\label{NotionSolution}
Let $d,k\in\NN$ and $R \geq R_{0}$ with
\begin{equation*}
k \geq \frac{d}{2} > 2 \text{ for wave maps}
\qquad\text{and}\qquad
k \geq \frac{d-1}{2} > \frac{5}{2} \text{ for Yang-Mills.}
\end{equation*}
Let
\begin{enumerate}[itemsep=1em,topsep=1em]
\item[] $\mathbf{S}: [0,\infty) \rightarrow \mathfrak{L}( \mathfrak{H}^{k}_{\mathrm{rad}}(\BB^{d}_{R}) )$ be the semigroup from \Cref{LinearFlow},
\item[] $\mathbf{L}: \mathfrak{D}(\mathbf{L}) \subset \mathfrak{H}^{k}_{\mathrm{rad}}(\BB^{d}_{R}) \rightarrow \mathfrak{H}^{k}_{\mathrm{rad}}(\BB^{d}_{R})$ be its generator from \Cref{GeneratorLinearizedWave},
\item[] $\mathbf{N}: \mathfrak{H}^{k}_{\mathrm{rad}}(\BB^{d}_{R}) \rightarrow \mathfrak{H}^{k}_{\mathrm{rad}}(\BB^{d}_{R})$ be the nonlinearity from \Cref{LocLip}.
\end{enumerate}
Let $\mathbf{f} \in \mathfrak{H}^{k}_{\mathrm{rad}}(\BB^{d}_{R})$.
\medskip\\
A map $\mathbf{u} \in C \big( [0,\infty), \mathfrak{H}^{k}_{\mathrm{rad}}(\BB^{d}_{R}) \big) \cap C^{1} \big( (0,\infty), \mathfrak{H}^{k}_{\mathrm{rad}}(\BB^{d}_{R}) \big)$ which satisfies $\mathbf{u}(\tau) \in \mathfrak{D}(\mathbf{L})$ and
\begin{equation}
\label{AbstractCauchyProblem}
\renewcommand{\arraystretch}{1.2}
\left\{
\begin{array}{rcl}
\pd_{\tau} \mathbf{u}(\tau) &=& \mathbf{L} \mathbf{u}(\tau) + \mathbf{N} ( \mathbf{u}(\tau) ) \,, \\
\mathbf{u}(0) &=& \mathbf{f} \,,
\end{array}
\right.
\end{equation}
for all $\tau > 0$, is called a \emph{classical solution} to the abstract Cauchy problem.
\medskip\\
A map $\mathbf{u} \in C \big( [0,\infty), \mathfrak{H}^{k}_{\mathrm{rad}}(\BB^{d}_{R}) \big)$ which satisfies
\begin{equation*}
\mathbf{u}(\tau) = \mathbf{S}(\tau) \mathbf{f} + \int_{0}^{\tau} \mathbf{S}(\tau-\tau') \mathbf{N}( \mathbf{u}(\tau') ) \dd\tau'
\end{equation*}
for all $\tau > 0$, is called a \emph{mild solution} to the abstract Cauchy problem.
\end{definition}
We recall that the abstract Cauchy problem \eqref{AbstractCauchyProblem} is locally well-posed and obeys a blowup alternative.
\begin{proposition}
\label{LocalMildSolution}
Let $M > 0$. Then, there is a $t_{+} \in (0,\infty]$ such that for all $\mathbf{f} \in \mathfrak{H}^{k}_{\mathrm{rad}}(\BB^{d}_{R})$ with $\| \mathbf{f} \|_{\mathfrak{H}^{k}(\BB^{d}_{R})} \leq M$ there exists a unique local-in-time mild solution $\mathbf{u} \in C \big( [0,t_{+}), \mathfrak{H}^{k}_{\mathrm{rad}}(\BB^{d}_{R}) \big)$ in the sense of \Cref{NotionSolution} to the abstract Cauchy problem \eqref{AbstractCauchyProblem}. If $t_{+} < \infty$, then
\begin{equation*}
\limsup_{t \nearrow t_{+}} \| \mathbf{u}(\tau) \|_{\mathfrak{H}^{k}(\BB^{d}_{R})} = \infty \,.
\end{equation*}
\end{proposition}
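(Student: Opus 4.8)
The plan is to run the classical Banach fixed-point argument for semilinear evolution equations (essentially \cite[Ch.~6]{MR710486}) on the two inputs already at our disposal: the strongly continuous semigroup $\mathbf{S}$ from \Cref{LinearFlow}, together with a growth bound $\|\mathbf{S}(\tau)\|_{\mathfrak{L}(\mathfrak{H}^{k}(\BB^{d}_{R}))} \leq M_{0}\ee^{\omega_{0}\tau}$ for some $M_{0}\geq 1$, $\omega_{0}\in\RR$, and the local Lipschitz estimate for $\mathbf{N}$ from \Cref{LocLip} (which in particular gives $\mathbf{N}(\mathbf{0}) = \mathbf{0}$, hence $\|\mathbf{N}(\mathbf{u})\|_{\mathfrak{H}^{k}} \lesssim \|\mathbf{u}\|_{\mathfrak{H}^{k}}^{2}$). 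Fix $M>0$, set $K \coloneqq 2M_{0}M$, and consider on the complete metric space $\mathcal{X}_{\delta} = \{ \mathbf{u}\in C([0,\delta],\mathfrak{H}^{k}_{\mathrm{rad}}(\BB^{d}_{R})) : \sup_{\tau\in[0,\delta]}\|\mathbf{u}(\tau)\|_{\mathfrak{H}^{k}} \leq K \}$, with the sup-in-time distance, the Duhamel map
\begin{equation*}
(\mathbf{\Phi}_{\mathbf{f}}\mathbf{u})(\tau) = \mathbf{S}(\tau)\mathbf{f} + \int_{0}^{\tau}\mathbf{S}(\tau-\tau')\mathbf{N}(\mathbf{u}(\tau'))\dd\tau' \,.
\end{equation*}
Using the quadratic bound on $\mathbf{N}$ restricted to the ball of radius $K$ and the semigroup bound, one checks that for $\delta = \delta(M) > 0$ small enough, $\mathbf{\Phi}_{\mathbf{f}}$ maps $\mathcal{X}_{\delta}$ into itself and is a contraction there, \emph{uniformly} over all $\mathbf{f}$ with $\|\mathbf{f}\|_{\mathfrak{H}^{k}}\leq M$. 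Its unique fixed point is a mild solution on $[0,\delta]$, which is the desired local-in-time solution; uniqueness among solutions a priori bounded by $K$ is immediate, and unconditional uniqueness follows by the standard open–closed connectedness argument on the set of times where two mild solutions with the same datum agree.

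Next I would extend this to a maximal mild solution on an interval $[0,t_{+})$ with $t_{+} = t_{+}(\mathbf{f}) \geq \delta(M)$: iterating the local construction from $\mathbf{u}(\delta)$, $\mathbf{u}(2\delta),\dots$ as successive initial data and using uniqueness to glue, one obtains a solution on the maximal interval of existence, and the map $\mathbf{u}\in C([0,t_{+}),\mathfrak{H}^{k}_{\mathrm{rad}}(\BB^{d}_{R}))$ inherits uniqueness on all of $[0,t_{+})$. For the blowup alternative, suppose $t_{+}<\infty$ but $M' \coloneqq \limsup_{\tau\nearrow t_{+}}\|\mathbf{u}(\tau)\|_{\mathfrak{H}^{k}} < \infty$. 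Pick $\tau_{n}\nearrow t_{+}$ with $\|\mathbf{u}(\tau_{n})\|_{\mathfrak{H}^{k}}\leq M'+1 =: M''$. By the uniform short-time existence established above, the Cauchy problem with datum $\mathbf{u}(\tau_{n})$ admits a mild solution on an interval of length $\delta(M'')>0$, independent of $n$; choosing $n$ with $\tau_{n}+\delta(M'')>t_{+}$ and concatenating (legitimate by uniqueness) produces a mild solution past $t_{+}$, contradicting maximality. Hence $\limsup_{\tau\nearrow t_{+}}\|\mathbf{u}(\tau)\|_{\mathfrak{H}^{k}} = \infty$.

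I do not expect a genuine obstacle here; this is a textbook application once \Cref{LinearFlow} and \Cref{LocLip} are in hand. The only points requiring a little care are that $\mathbf{N}$ is merely \emph{locally} Lipschitz, which is handled by working inside balls and exploiting $\mathbf{N}(\mathbf{0})=\mathbf{0}$, and that the local existence time must be shown to depend only on the size of the data — this is exactly what the bookkeeping with the fixed-point radius $K = 2M_{0}M$ delivers, and it is precisely what the continuation argument underpinning the blowup alternative relies on.
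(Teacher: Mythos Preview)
Your proposal is correct and takes essentially the same approach as the paper: the paper's proof is simply a direct citation of \cite[p.~185~f., Theorem~1.4]{MR710486}, i.e.\ precisely the Pazy-type fixed-point/continuation argument you have spelled out, using the semigroup from \Cref{LinearFlow} and the local Lipschitz bound from \Cref{LocLip}. You have written out the details behind that citation; there is nothing to add.
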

\begin{proof}
See \cite[p.~185 f., Theorem 1.4]{MR710486}.
\end{proof}
We prove that for smooth initial data the mild solution is in fact a jointly smooth classical solution. This crucially uses mapping properties of the linearized flow in Sobolev spaces, which are inherited from our construction of the free wave flow in \Cref{TheGenerationTHM}.
\begin{proposition}
\label{SmoothClassicalSolution}
Suppose $\mathbf{u} \in C \big( [0,\infty), \mathfrak{H}^{k}_{\mathrm{rad}}(\BB^{d}_{R}) \big)$ is a mild solution to the abstract Cauchy problem in the sense of \Cref{NotionSolution} for some smooth initial datum $\mathbf{f} \in C^{\infty}_{\mathrm{rad}} ( \overline{ \BB^{d}_{R}} )^{2}$. Then $\mathbf{u} \in C^{\infty} \big( \overline{ (0,\infty) \times \BB^{d}_{R}} \big)^{2}$ and it is the unique classical solution to the abstract Cauchy problem.
\end{proposition}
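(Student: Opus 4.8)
The plan is to bootstrap regularity in $\tau$ and in $\xi$ separately and then combine them. First I would upgrade temporal regularity: since $\mathbf{f} \in C^\infty_{\mathrm{rad}}(\overline{\BB^d_R})^2 \subset \mathfrak{D}(\mathbf{L}^m)$ for every $m$, the general theory of strongly continuous semigroups (see e.g.\ \cite[p.~190, Corollary 4.2.5]{MR710486} or rather the smoothing argument for semilinear problems) shows that the mild solution $\mathbf{u}$ is actually a classical solution in the sense that $\mathbf{u} \in C([0,\infty),\mathfrak{D}(\mathbf{L})) \cap C^1([0,\infty),\mathfrak{H}^k_{\mathrm{rad}}(\BB^d_R))$ and satisfies $\pd_\tau \mathbf{u}(\tau) = \mathbf{L}\mathbf{u}(\tau) + \mathbf{N}(\mathbf{u}(\tau))$. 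To see this one uses that, by \Cref{LocLip}, the map $\tau \mapsto \mathbf{N}(\mathbf{u}(\tau))$ is continuous, hence $\mathbf{u}$ is a classical solution once $\mathbf{f}\in\mathfrak{D}(\mathbf{L})$; this is the content of \cite[p.~187, Theorem 1.5]{MR710486}. Iterating, because $\mathbf{f}\in\mathfrak{D}(\mathbf{L}^m)$ for all $m$ and $\mathbf{N}$ maps $\mathfrak{H}^{k}_{\mathrm{rad}}$ into $\mathfrak{H}^{k}_{\mathrm{rad}}$ (and in fact into $\mathfrak{H}^{k+1}_{\mathrm{rad}}$ when $k>d/2$, by \Cref{NonlinEstimates}), one obtains $\mathbf{u} \in C^m([0,\infty),\mathfrak{H}^k_{\mathrm{rad}}(\BB^d_R))$ for every $m$, so $\mathbf{u}$ is smooth in $\tau$ with values in $\mathfrak{H}^k_{\mathrm{rad}}(\BB^d_R)$.

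**Spatial regularity.** The next step is to gain spatial regularity. The key point is that the free semigroup $\mathbf{S}_0$ from \Cref{SGRadialWaveFlow} is generated by $\mathbf{L}_0$ in \emph{every} space $\mathfrak{H}^j_{\mathrm{rad}}(\BB^d_R)$ with $j\geq 1$, and likewise $\mathbf{S}$ from \Cref{LinearFlow} acts on each such scale; moreover the nonlinearity $\mathbf{N}$ improves regularity by one derivative in the supercritical range (\Cref{NonlinEstimates}). Thus I would run the following bootstrap: given that $\mathbf{u}(\tau) \in \mathfrak{H}^j_{\mathrm{rad}}(\BB^d_R)$ for all $\tau$ with locally bounded norm, the Duhamel representation $\mathbf{u}(\tau) = \mathbf{S}(\tau)\mathbf{f} + \int_0^\tau \mathbf{S}(\tau-\tau')\mathbf{N}(\mathbf{u}(\tau'))\dd\tau'$, the fact that $\mathbf{f}$ is smooth (so $\mathbf{S}(\tau)\mathbf{f}\in\mathfrak{H}^{j+1}_{\mathrm{rad}}$), and \Cref{NonlinEstimates} (so $\mathbf{N}(\mathbf{u}(\tau'))\in\mathfrak{H}^{j+1}_{\mathrm{rad}}$) together with boundedness of $\mathbf{S}(\tau)$ on $\mathfrak{H}^{j+1}_{\mathrm{rad}}$ show $\mathbf{u}(\tau)\in\mathfrak{H}^{j+1}_{\mathrm{rad}}(\BB^d_R)$ with locally bounded norm. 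Starting from $j=k$ (or from whatever base regularity is available, using the case $k=k_d$ first and the $+1$ gain) and iterating, one concludes $\mathbf{u}(\tau)\in\mathfrak{H}^j_{\mathrm{rad}}(\BB^d_R)$ for all $j\in\NN$, locally uniformly in $\tau$. By radial Sobolev embedding this gives $\mathbf{u}(\tau,\cdot)\in C^\infty_{\mathrm{rad}}(\overline{\BB^d_R})^2$ for each $\tau$, with all spatial derivatives locally bounded in $\tau$.

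**Joint smoothness and uniqueness.** Combining the two previous steps: $\mathbf{u}$ is smooth in $\tau$ with values in every $\mathfrak{H}^j_{\mathrm{rad}}(\BB^d_R)$, and for each $\tau$ it is smooth in $\xi$; a standard argument (differentiating the Duhamel formula repeatedly, or invoking that a function which is $C^\infty$ in $\tau$ into $C^\infty$ spatial spaces is jointly $C^\infty$) then yields $\mathbf{u}\in C^\infty(\overline{(0,\infty)\times\BB^d_R})^2$. More concretely, since $\pd_\tau^a \mathbf{u} \in C([0,\infty),\mathfrak{H}^j_{\mathrm{rad}}(\BB^d_R))$ for all $a$ and all $j$, and $\mathfrak{H}^j_{\mathrm{rad}}(\BB^d_R)\hookrightarrow C^{j-1}_{\mathrm{rad}}(\overline{\BB^d_R})\times C^{j-2}_{\mathrm{rad}}(\overline{\BB^d_R})$, every mixed derivative $\pd_\tau^a \pd_\xi^\beta \mathbf{u}$ exists and is continuous up to the boundary. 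Finally, uniqueness of the classical solution follows from uniqueness of the mild solution in \Cref{LocalMildSolution} (every classical solution is a mild solution, by integrating the differential equation against the semigroup).

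**Main obstacle.** The delicate point is the spatial bootstrap at the base regularity $k=k_d$ in the wave-maps case, where $\mathbf{N}$ is not a priori smoothing on $\mathfrak{H}^{k_d}_{\mathrm{rad}}$ — here one must use the quantitative estimate of \Cref{NonlinEstimates} relating the $\mathfrak{H}^{k_d+1}$-norm of $\mathbf{N}(\mathbf{f})$ to the $\mathfrak{H}^{k_d+1}$-norm of $\mathbf{f}$, which only closes a Grönwall-type argument once one knows the solution lies in $\mathfrak{H}^{k_d+1}_{\mathrm{rad}}$ in the first place. The resolution is to first establish, via the same Duhamel argument combined with \Cref{NonlinEstimates} and the fact that $\mathbf{S}(\tau)$ maps $C^\infty_{\mathrm{rad}}(\overline{\BB^d_R})^2$-data into $\mathfrak{H}^{k_d+1}_{\mathrm{rad}}$, that $\mathbf{u}(\tau)\in\mathfrak{H}^{k_d+1}_{\mathrm{rad}}(\BB^d_R)$ with a norm bound that is locally bounded on $[0,\infty)$ — using a continuity/continuation argument and the local Lipschitz bound of \Cref{LocLip} to rule out finite-time blowup of the higher norm on any compact interval — and only then iterate freely to all higher regularities, where the $+1$ smoothing of $\mathbf{N}$ is unconditional.
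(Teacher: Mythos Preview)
Your proposal is correct and follows essentially the same approach as the paper: bootstrap spatial regularity using the smoothing estimate of \Cref{NonlinEstimates} (with the base step at $k_d$ handled by the Gr\"onwall/continuation argument you describe in your last paragraph), then obtain temporal smoothness by differentiating the Duhamel formula, and conclude joint smoothness via Schwarz's theorem. The only point worth noting is that the paper reverses your order---spatial regularity is established first (via local well-posedness in $\mathfrak{H}^{k_d+1}_{\mathrm{rad}}$, the blowup alternative from \Cref{LocalMildSolution}, and Gr\"onwall, then a restriction property of the semigroup across Sobolev scales to reach all $k$), and only afterwards is temporal smoothness derived; this is cleaner, since your iteration to $C^m$ in $\tau$ in the first paragraph tacitly requires $\mathbf{N}(\mathbf{u}(\tau)) \in \mathfrak{D}(\mathbf{L})$ at each step, which is most transparently justified once $\mathbf{u}(\tau) \in C^\infty_{\mathrm{rad}}(\overline{\BB^d_R})^2$ is already in hand.
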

\begin{proof}
First, consider $k_{d} = \frac{d}{2}$ for wave maps and $\frac{d-1}{2} \leq k_{d} \leq \frac{d}{2}$ for Yang-Mills. Let $\mathbf{u} \in C \big( [0,\infty), \mathfrak{H}^{k_{d}}_{\mathrm{rad}}(\BB^{d}_{R}) \big)$ such that
\begin{equation*}
\mathbf{u}(\tau) = \mathbf{S}(\tau) \mathbf{f} + \int_{0}^{\tau} \mathbf{S}(\tau-\tau') \mathbf{N}( \mathbf{u}(\tau') ) \dd\tau'
\end{equation*}
for all $\tau \geq 0$. By uniqueness, $\mathbf{u}$ coincides with the local mild solution in $C \big( [0,t_{+}), \mathfrak{H}^{k_{d}+1}_{\mathrm{rad}}(\BB^{d}_{R}) \big)$ from \Cref{LocalMildSolution} on its maximal interval of existence. To show that this solution is actually a global one, we assume to the contrary that $t_{+} < \infty$. Then, since $\| \mathbf{u}(\tau) \|_{\mathfrak{H}^{k_{d}}(\BB^{d}_{R})}$ is uniformly bounded on $[0,t_{+}]$, we can exploit the nonlinear estimates from \Cref{NonlinEstimates} to obtain from the above fixed-point equation
\begin{align*}
\| \mathbf{u}(\tau) \|_{\mathfrak{H}^{k_{d}+1}(\BB^{d}_{R})} &\lesssim \| \mathbf{S}(\tau) \mathbf{f} \|_{\mathfrak{H}^{k_{d}+1}(\BB^{d}_{R})} + \int_{0}^{\tau} \| \mathbf{S}(\tau-\tau') \mathbf{N}( \mathbf{u}(\tau') ) \|_{\mathfrak{H}^{k_{d}+1}(\BB^{d}_{R})} \dd\tau' \\&\lesssim
1 + \int_{0}^{\tau} \| \mathbf{u}(\tau') \|_{\mathfrak{H}^{k_{d}+1}(\BB^{d}_{R})} \dd\tau'
\end{align*}
for all $\tau \in [0,t_{+})$. Thus, Gr\"{o}nwall's inequality shows that $\| \mathbf{u}(\tau) \|_{\mathfrak{H}^{k_{d}+1}(\BB^{d}_{R})}$ is uniformly bounded on the maximal interval of existence, which contradicts the blowup alternative in \Cref{LocalMildSolution}. Therefore, $\mathbf{u} \in C \big( [0,\infty), \mathfrak{H}^{k_{d}+1}_{\mathrm{rad}}(\BB^{d}_{R}) \big)$. Now, for any mild solution in $C \big( [0,\infty), \mathfrak{H}^{k}_{\mathrm{rad}}(\BB^{d}_{R}) \big)$ with $k > \frac{d}{2}$, we conclude from the fixed-point equation together with the restriction property \cite[Lemma C.1]{2022arXiv220706952G} satisfied by the semigroups $\mathbf{S}: [0,\infty) \rightarrow \mathfrak{L}( \mathfrak{H}^{k}_{\mathrm{rad}}(\BB^{d}_{R}) )$ from \Cref{LinearFlow} and the second part of \Cref{NonlinEstimates} that $\mathbf{u}(\tau) \in \mathfrak{H}^{k}_{\mathrm{rad}}(\BB^{d}_{R})$ for all $k > \frac{d}{2}$ and all $\tau \geq 0$. Now, Sobolev embedding yields $\mathbf{u}(\tau) \in C^{\infty}_{\mathrm{rad}} ( \overline{ \BB^{d}_{R}} )^{2}$ for all $\tau \geq 0$. To prove that this solution is classical and jointly smooth on $(0,\infty) \times \BB^{d}_{R}$, we first use \cite[p.~189, Theorem 1.6]{MR710486} to infer that the mild solution is classical in the sense of \Cref{NotionSolution}. So, the fixed-point equation can be differentiated with respect to $\tau$, which gives
\begin{equation*}
\pd_{\tau} \mathbf{u}(\tau) = \mathbf{S}(\tau) \mathbf{L} \mathbf{f} + \int_{0}^{\tau} \mathbf{S}(\tau-\tau') \mathbf{L} \mathbf{N}( \mathbf{u}(\tau') ) \dd\tau' + \mathbf{N}( \mathbf{u}(\tau) )
\end{equation*}
and so the restriction property \cite[Lemma C.1]{2022arXiv220706952G} and smoothness of $\mathbf{u}(\tau)$ give $\pd_{\tau} \mathbf{u}(\tau) \in C^{\infty}_{\mathrm{rad}}(\overline{\BB^{d}_{R}})$ for all $\tau \geq 0$. By induction, we conclude for any $\ell \in \NN$ that $\pd_{\tau}^{\ell} \mathbf{u}(\tau) \in C^{\infty} ( \overline{\BB^{d}_{R}} )$. An application of Schwarz's theorem \cite[p.~235, Theorem 9.41]{MR385023} yields $\mathbf{u} \in C^{\infty} \big( \overline{ (0,\infty) \times \BB^{d}_{R}} \big)^{2}$.
\end{proof}
\subsection{Stabilized evolution}
Our next goal is to prove the existence of global-in-time stable solutions to the abstract Cauchy problem \eqref{AbstractCauchyProblem}. For this, we incorporate the decay of the linearized wave flow on the stable subspace into a function space for our main fixed-point argument.
\begin{definition}
Let $\omega > 0$ be the growth constant from \Cref{LinearizedEvolution}. We define a Banach space $\big( \mathfrak{X}^{k}_{\mathrm{rad}}( \BB^{d}_{R}), \| \,.\, \|_{\mathfrak{X}^{k}( \BB^{d}_{R})} \big)$ by
\begin{align*}
\mathfrak{X}^{k}_{\mathrm{rad}}( \BB^{d}_{R}) &\coloneqq \big\{ \mathbf{u} \in C \big( [0,\infty), \mathfrak{H}^{k}_{\mathrm{rad}}(\BB^{d}_{R}) \big) \mid \| \mathbf{u}(\tau) \|_{\mathfrak{H}^{k}(\BB^{d}_{R})} \lesssim \ee^{-\omega\tau} \text{ for all } \tau \geq 0 \big\} \,, \\
\| \mathbf{u} \|_{\mathfrak{X}^{k}( \BB^{d}_{R})} &\coloneqq \sup_{\tau\in[0,\infty)} \Big( \ee^{\omega\tau} \| \mathbf{u}(\tau) \|_{\mathfrak{H}^{k}(\BB^{d}_{R})} \Big) \,.
\end{align*}
\end{definition}
In this setting, the Lyapunov-Perron method provides an approach for suppressing the linear instabilities in the nonlinear evolution by means of a correction term.
\begin{definition}
Let $\mathbf{P}_{1} \in \mathfrak{L}( \mathfrak{H}^{k}_{\mathrm{rad}}(\BB^{d}_{R}) )$ be the spectral projection from \Cref{RieszProjection}. We define
\begin{equation*}
\mathbf{C}: \mathfrak{H}^{k}_{\mathrm{rad}}(\BB^{d}_{R}) \times \mathfrak{X}^{k}_{\mathrm{rad}}(\BB^{d}_{R}) \rightarrow \mathfrak{H}^{k}_{\mathrm{rad}}(\BB^{d}_{R}) \,, \quad \mathbf{C} ( \mathbf{f} , \mathbf{u} ) = \mathbf{P}_{1} \mathbf{f} + \mathbf{P}_{1} \int_{0}^{\infty} \ee^{-\tau'} \mathbf{N}(\mathbf{u}(\tau')) \dd\tau' \,.
\end{equation*}
\end{definition}
Modifying the initial data by subtracting the correction term leads for small data to a well-posed global-in-time nonlinear evolution of exponentially stable solutions.
\begin{proposition}
\label{StabilizedEvolution}
Let $d,k\in\NN$ and $R \geq R_{0}$ be as in \Cref{NotionSolution}. Then, there are $0 < \delta_{0} < 1$ and $C_{0} > 1$ such that for all $0 < \delta \leq \delta_{0}$, $C \geq C_{0}$ and all $\mathbf{f} \in \mathfrak{H}^{k}_{\mathrm{rad}}(\BB^{d}_{R})$ with $\| \mathbf{f} \|_{\mathfrak{H}^{k}(\BB^{d}_{R})} \leq \frac{\delta}{C}$ there is a unique $\mathbf{u}_{\mathbf{f}} \in \mathfrak{X}^{k}_{\mathrm{rad}}(\BB^{d}_{R})$ with $\| \mathbf{u}_{\mathbf{f}} \|_{\mathfrak{X}^{k}(\BB^{d}_{R})} \leq \delta$ and
\begin{equation*}
\mathbf{u}_{\mathbf{f}}(\tau) = \mathbf{S}(\tau) ( \mathbf{f} - \mathbf{C}(\mathbf{f}, \mathbf{u}_{\mathbf{f}} ) ) + \int_{0}^{\tau} \mathbf{S}(\tau-\tau') \mathbf{N}( \mathbf{u}_{\mathbf{f}}(\tau') ) \dd\tau'
\end{equation*}
for all $\tau \geq 0$. Moreover, we have
\begin{equation*}
\| \mathbf{u}_{\mathbf{f}} - \mathbf{u}_{\mathbf{g}} \|_{\mathfrak{X}^{k}(\BB^{d}_{R})} \lesssim \| \mathbf{f} - \mathbf{g} \|_{\mathfrak{H}^{k}(\BB^{d}_{R})}
\end{equation*}
for all $\mathbf{f},\mathbf{g} \in \mathfrak{H}^{k}_{\mathrm{rad}}(\BB^{d}_{R})$ with $\| \mathbf{f} \|_{\mathfrak{H}^{k}(\BB^{d}_{R})}, \| \mathbf{g} \|_{\mathfrak{H}^{k}(\BB^{d}_{R})} \leq \frac{\delta}{C}$.
\end{proposition}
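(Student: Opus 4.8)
<br>

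The plan is to prove \Cref{StabilizedEvolution} by a standard Lyapunov--Perron fixed-point argument in the Banach space $\mathfrak{X}^{k}_{\mathrm{rad}}(\BB^{d}_{R})$. For $\mathbf{f}\in\mathfrak{H}^{k}_{\mathrm{rad}}(\BB^{d}_{R})$ and $\mathbf{u}\in\mathfrak{X}^{k}_{\mathrm{rad}}(\BB^{d}_{R})$ define the map
\begin{equation*}
\mathbf{K}(\mathbf{f},\mathbf{u})(\tau) = \mathbf{S}(\tau)\big(\mathbf{f}-\mathbf{C}(\mathbf{f},\mathbf{u})\big) + \int_{0}^{\tau}\mathbf{S}(\tau-\tau')\mathbf{N}(\mathbf{u}(\tau'))\dd\tau' \,.
\end{equation*}
The key point is that subtracting the correction term $\mathbf{C}(\mathbf{f},\mathbf{u})$ — which lies in $\operatorname{ran}(\mathbf{P}_{1})$ — precisely cancels the exponentially growing $\ee^{\tau}\mathbf{P}_{1}$ part of the flow. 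Indeed, using $\mathbf{S}(\tau)\mathbf{P}_{1}=\ee^{\tau}\mathbf{P}_{1}$ from \Cref{LinearizedEvolution} and the commutation $\mathbf{S}(\tau)\mathbf{P}_{1}=\mathbf{P}_{1}\mathbf{S}(\tau)$, one computes that the $\mathbf{P}_{1}$-component of $\mathbf{K}(\mathbf{f},\mathbf{u})(\tau)$ equals $-\mathbf{P}_{1}\int_{\tau}^{\infty}\ee^{\tau-\tau'}\mathbf{N}(\mathbf{u}(\tau'))\dd\tau'$, whose $\mathfrak{H}^{k}$-norm is $\lesssim\ee^{-\omega\tau}$ provided $\mathbf{N}(\mathbf{u}(\tau'))$ decays like $\ee^{-2\omega\tau'}$; the stable component is controlled directly by the estimate $\|\mathbf{S}(\tau)(\mathbf{I}-\mathbf{P}_{1})\|\lesssim\ee^{-\omega\tau}$ together with a Duhamel integration. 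First I would establish the self-mapping property: if $\|\mathbf{f}\|_{\mathfrak{H}^{k}(\BB^{d}_{R})}\le\delta/C$ and $\|\mathbf{u}\|_{\mathfrak{X}^{k}(\BB^{d}_{R})}\le\delta$, then by \Cref{LocLip} we have $\|\mathbf{N}(\mathbf{u}(\tau'))\|_{\mathfrak{H}^{k}(\BB^{d}_{R})}\lesssim\|\mathbf{u}(\tau')\|_{\mathfrak{H}^{k}(\BB^{d}_{R})}^{2}\lesssim\delta^{2}\ee^{-2\omega\tau'}$; inserting this into the two pieces of $\mathbf{K}$ and carrying out the elementary exponential integrals gives $\|\mathbf{K}(\mathbf{f},\mathbf{u})\|_{\mathfrak{X}^{k}(\BB^{d}_{R})}\lesssim\delta/C+\delta^{2}\le\delta$ once $C\ge C_{0}$ is large enough and $\delta\le\delta_{0}$ is small enough.

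Next I would prove the contraction property: for two inputs $\mathbf{u},\mathbf{v}$ with norms $\le\delta$, the Lipschitz bound of \Cref{LocLip} gives
\begin{equation*}
\|\mathbf{N}(\mathbf{u}(\tau'))-\mathbf{N}(\mathbf{v}(\tau'))\|_{\mathfrak{H}^{k}(\BB^{d}_{R})}\lesssim\delta\,\ee^{-\omega\tau'}\|\mathbf{u}-\mathbf{v}\|_{\mathfrak{X}^{k}(\BB^{d}_{R})}\,\ee^{-\omega\tau'} \cdot \ee^{\omega\tau'}
\end{equation*}
— more precisely $\lesssim\delta\,\ee^{-2\omega\tau'}\|\mathbf{u}-\mathbf{v}\|_{\mathfrak{X}^{k}(\BB^{d}_{R})}\,\ee^{\omega\tau'}$ after accounting for the weight — so that the same exponential integrals yield $\|\mathbf{K}(\mathbf{f},\mathbf{u})-\mathbf{K}(\mathbf{f},\mathbf{v})\|_{\mathfrak{X}^{k}(\BB^{d}_{R})}\lesssim\delta\|\mathbf{u}-\mathbf{v}\|_{\mathfrak{X}^{k}(\BB^{d}_{R})}$, which is a genuine contraction for $\delta$ sufficiently small. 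The Banach fixed-point theorem then produces a unique $\mathbf{u}_{\mathbf{f}}\in\mathfrak{X}^{k}_{\mathrm{rad}}(\BB^{d}_{R})$ with $\|\mathbf{u}_{\mathbf{f}}\|_{\mathfrak{X}^{k}(\BB^{d}_{R})}\le\delta$ solving $\mathbf{u}_{\mathbf{f}}=\mathbf{K}(\mathbf{f},\mathbf{u}_{\mathbf{f}})$, which is exactly the claimed fixed-point identity. For the Lipschitz dependence on the data, I would compare $\mathbf{u}_{\mathbf{f}}$ and $\mathbf{u}_{\mathbf{g}}$: subtracting the two fixed-point equations, using that $\mathbf{C}$ is affine in $\mathbf{f}$ and Lipschitz in $\mathbf{u}$, and absorbing the $\delta$-small terms into the left-hand side via the contraction estimate already established, one obtains $\|\mathbf{u}_{\mathbf{f}}-\mathbf{u}_{\mathbf{g}}\|_{\mathfrak{X}^{k}(\BB^{d}_{R})}\lesssim\|\mathbf{f}-\mathbf{g}\|_{\mathfrak{H}^{k}(\BB^{d}_{R})}$.

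I expect the main technical care — rather than a genuine obstacle — to lie in the bookkeeping of the exponential weights: one must verify that both the tail integral $\int_{\tau}^{\infty}\ee^{\tau-\tau'}(\cdots)\dd\tau'$ appearing in the unstable component and the Duhamel integral $\int_{0}^{\tau}\ee^{-\omega(\tau-\tau')}\ee^{-2\omega\tau'}\dd\tau'$ for the stable component both produce a clean $\ee^{-\omega\tau}$ decay, which forces the quadratic structure of $\mathbf{N}$ (i.e. one really needs $\mathbf{N}(\mathbf{u})=O(\|\mathbf{u}\|^{2})$, not merely $O(\|\mathbf{u}\|)$) and is the reason the growth rate $\omega$ of the stable semigroup suffices. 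A secondary point is to confirm that $\mathbf{C}$ is well-defined, i.e. that the integral $\int_{0}^{\infty}\ee^{-\tau'}\mathbf{N}(\mathbf{u}(\tau'))\dd\tau'$ converges absolutely in $\mathfrak{H}^{k}_{\mathrm{rad}}(\BB^{d}_{R})$ for $\mathbf{u}\in\mathfrak{X}^{k}_{\mathrm{rad}}(\BB^{d}_{R})$, which again follows from the $\ee^{-2\omega\tau'}$ decay of $\mathbf{N}(\mathbf{u}(\tau'))$. Everything else is the routine machinery of contraction mappings in weighted spaces, and no new ideas beyond \Cref{LinearizedEvolution} and \Cref{LocLip} are required.
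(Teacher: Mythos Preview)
Your proposal is correct and follows essentially the same Lyapunov--Perron contraction argument as the paper: define the same map $\mathbf{K}_{\mathbf{f}}(\mathbf{u})$, use \Cref{LinearizedEvolution} and \Cref{LocLip} to obtain the self-map and contraction estimates on the $\delta$-ball in $\mathfrak{X}^{k}_{\mathrm{rad}}(\BB^{d}_{R})$, and conclude by Banach's fixed-point theorem. Your explicit computation of the $\mathbf{P}_{1}$-component as the tail integral $-\mathbf{P}_{1}\int_{\tau}^{\infty}\ee^{\tau-\tau'}\mathbf{N}(\mathbf{u}(\tau'))\dd\tau'$ is exactly the mechanism behind the paper's terse ``Thanks to the correction term\ldots'' line; the paper simply states the resulting bounds $\|\mathbf{K}_{\mathbf{f}}(\mathbf{u})(\tau)\|\lesssim(\delta/C+\delta^{2})\ee^{-\omega\tau}$ and $\|\mathbf{K}_{\mathbf{f}}(\mathbf{u})(\tau)-\mathbf{K}_{\mathbf{f}}(\mathbf{v})(\tau)\|\lesssim\delta\ee^{-\omega\tau}\|\mathbf{u}-\mathbf{v}\|_{\mathfrak{X}^{k}(\BB^{d}_{R})}$ without spelling out this decomposition.
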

\begin{proof}
For now, let $\delta_{0} > 0$ and $C_{0} > 1$ be arbitrary but fixed and $0 < \delta \leq \delta_{0}$ and $C \geq C_{0}$. We set
\begin{equation*}
\mathfrak{B}_{\delta} = \big\{ \mathbf{u} \in \mathfrak{X}^{k}_{\mathrm{rad}}(\BB^{d}_{R}) \mid \| \mathbf{u} \|_{\mathfrak{X}^{k}(\BB^{d}_{R})} \leq \delta \big\}
\end{equation*}
for a closed ball in our Banach space. For any $\mathbf{f} \in \mathfrak{H}^{k}_{\mathrm{rad}}(\BB^{d}_{R})$ with $\| \mathbf{f} \|_{\mathfrak{H}^{k}(\BB^{d}_{R})} \leq \frac{\delta}{C}$ and $\mathbf{u} \in \mathfrak{X}^{k}_{\mathrm{rad}}(\BB^{d}_{R})$ we define
\begin{equation*}
\mathbf{K}_{\mathbf{f}}(\mathbf{u})(\tau) = \mathbf{S}(\tau) ( \mathbf{f} - \mathbf{C} ( \mathbf{f}, \mathbf{u} ) ) + \int_{0}^{\tau} \mathbf{S}(\tau-\tau') \mathbf{N}( \mathbf{u}(\tau') ) \dd\tau' \,.
\end{equation*}
Thanks to the correction term, we can conclude from \Cref{LinearizedEvolution} and \Cref{LocLip} the estimates
\begin{align*}
\| \mathbf{K}_{\mathbf{f}}(\mathbf{u})(\tau) \|_{\mathfrak{H}^{k}(\BB^{d}_{R})} &\lesssim \frac{\delta}{C} \ee^{-\omega\tau} + \delta^{2} \ee^{-\omega\tau} \,, \\
\| \mathbf{K}_{\mathbf{f}}(\mathbf{u})(\tau) - \mathbf{K}_{\mathbf{f}}(\mathbf{v})(\tau) \|_{\mathfrak{H}^{k}(\BB^{d}_{R})} &\lesssim \delta \| \mathbf{u}(\tau) - \mathbf{v}(\tau) \|_{\mathfrak{H}^{k}(\BB^{d}_{R})} \ee^{-\omega\tau} \,,
\end{align*}
for all $\mathbf{u},\mathbf{v} \in \mathfrak{X}^{k}_{\mathrm{rad}}(\BB^{d}_{R})$ with $\| \mathbf{u} \|_{\mathfrak{X}^{k}(\BB^{d}_{R})}, \| \mathbf{v} \|_{\mathfrak{X}^{k}(\BB^{d}_{R})} \leq \delta$. Hence, there are $\delta_{0} > 0$ and $C_{0} > 1$ such that the map
\begin{equation*}
\mathbf{K}_{\mathbf{f}}: \mathfrak{B}_{\delta} \rightarrow \mathfrak{B}_{\delta} \,, \qquad \mathbf{u} \mapsto \mathbf{K}_{\mathbf{f}} ( \mathbf{u} ) \,,
\end{equation*}
is well-defined and a contraction for any $\mathbf{f} \in \mathfrak{H}^{k}_{\mathrm{rad}}(\BB^{d}_{R})$ with $\| \mathbf{f} \|_{\mathfrak{H}^{k}(\BB^{d}_{R})} \leq \frac{\delta}{C}$. According to Banach's fixed-point theorem, there is a unique $\mathbf{u}_{\mathbf{f}} \in \mathfrak{B}_{\delta}$ such that $\mathbf{K}_{\mathbf{f}}(\mathbf{u}_{\mathbf{f}}) = \mathbf{u}_{\mathbf{f}}$. By Gr\"{o}nwall's inequality, this solution is unique in $\mathfrak{X}^{k}_{\mathrm{rad}}(\BB^{d}_{R})$. Finally, the above estimates also imply
\begin{equation*}
\| \mathbf{u}_{\mathbf{f}}(\tau) - \mathbf{u}_{\mathbf{g}}(\tau) \|_{\mathfrak{H}^{k}(\BB^{d}_{R})} \lesssim \| \mathbf{f} - \mathbf{g} \|_{\mathfrak{H}^{k}(\BB^{d}_{R})} \ee^{-\omega\tau}
\end{equation*}
for all $\mathbf{f}, \mathbf{g} \in \mathfrak{H}^{k}_{\mathrm{rad}}(\BB^{d}_{R})$ with $\| \mathbf{f} \|_{\mathfrak{H}^{k}(\BB^{d}_{R})}, \| \mathbf{g} \|_{\mathfrak{H}^{k}(\BB^{d}_{R})} \leq \frac{\delta}{C}$ and all $\tau \geq 0$.
\end{proof}
\subsection{Data along the initial hypersurface}
\label{PreparationData}
The initial data in \Cref{THM} are prescribed at time $t=0$ and have to be transported to data for the abstract Cauchy problem \eqref{AbstractCauchyProblem} along an initial hypersurface at $\tau = 0$ in similarity coordinates. We achieve this by adapting our coordinate system in such a way that the initial hypersurface directly picks up the perturbations for the blowup data.
\begin{lemma}
\label{InitialHypersurface}
Let $r>0$, $\varepsilon>0$ and $R \geq r+\varepsilon$, $T \geq 1 - \frac{\varepsilon}{r + \varepsilon}$. Let $h \in C^{\infty}_{\mathrm{rad}}(\RR^{d})$ and ${\chi\mathstrut}_{T} : \RR \times \RR^{d} \rightarrow \RR^{1,d}$ be as in \Cref{SimilarityCoordinates} with the additional property $h \equiv -1 \text{ in } \BB^{d}_{r + \varepsilon}$. Consider the domain
\begin{equation*}
\Lambda^{1,d}(r) = \big\{ (t,x) \in \RR^{1,d} \mid 0 \leq t \leq |x| - r \big\}
\end{equation*}
and the spacelike hypersurface
\begin{equation*}
\Sigma^{1,d}_{T,R}(0) = {\chi\mathstrut}_{T} \big( \{0\} \times \BB^{d}_{R} \big) \,.
\end{equation*}
Then
\begin{equation*}
\{ 0 \} \times \BB^{d}_{r} \subset \Sigma^{1,d}_{T,R}(0)
\qquad\text{and}\qquad
\Sigma^{1,d}_{T,R}(0) \setminus \big( \{ 0 \} \times \BB^{d}_{r} \big) \subset \Lambda^{1,d}(r) \,.
\end{equation*}
\end{lemma}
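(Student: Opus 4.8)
\emph{Proof proposal.} The plan is to argue directly from the explicit form of the coordinate map ${\chi\mathstrut}_{T}$. Writing $\widetilde{h}$ for the radial profile of $h$, every point of $\Sigma^{1,d}_{T,R}(0) = {\chi\mathstrut}_{T}\big(\{0\}\times\BB^{d}_{R}\big)$ is of the form $(t,x)$ with $x = T\xi$ and $t = T\big(1 + \widetilde{h}(|\xi|)\big)$ for some $\xi \in \BB^{d}_{R}$; in particular $|x| = T|\xi|$. I will use throughout that the hypothesis $T \geq 1 - \tfrac{\varepsilon}{r+\varepsilon}$ is equivalent to $T(r+\varepsilon) \geq r$, and hence also to $\tfrac{r}{T} \leq r+\varepsilon$.

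For the inclusion $\{0\}\times\BB^{d}_{r} \subset \Sigma^{1,d}_{T,R}(0)$, given $(0,x)$ with $|x| < r$ I would exhibit the preimage $\xi = x/T$: then $|\xi| = \tfrac{|x|}{T} < \tfrac{r}{T} \leq r+\varepsilon \leq R$, so $\xi \in \BB^{d}_{r+\varepsilon} \subset \BB^{d}_{R}$, whence $h(\xi) = -1$ and ${\chi\mathstrut}_{T}(0,\xi) = \big(T(1+h(\xi)),T\xi\big) = (0,x)$, as desired.

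For the second inclusion I would first record two elementary monotonicity facts about $h$, available from the standing assumptions in \Cref{SimilarityCoordinates} (and, if preferred, from the geometric properties collected in \Cref{AppendixSimilarityCoordinates}): (i) convexity of $h$ — more precisely, nonnegativity of the tangential Hessian eigenvalue $\widetilde{h}'(\rho)/\rho$ — forces $\widetilde{h}$ to be nondecreasing on $[0,\infty)$, so $\widetilde{h}(\rho) \geq \widetilde{h}(0) = -1$ and hence $t = T\big(1+\widetilde{h}(|\xi|)\big) \geq 0$ at every point of $\Sigma^{1,d}_{T,R}(0)$; and (ii) the gradient bound $|(\partial h)(\xi)| = |\widetilde{h}'(|\xi|)| < 1$ makes $\rho \mapsto \rho - \widetilde{h}(\rho)$ strictly increasing, so that, since $\widetilde{h} \equiv -1$ on $[0,r+\varepsilon)$, one gets $\rho - \widetilde{h}(\rho) \geq (r+\varepsilon) + 1$ for all $\rho \geq r+\varepsilon$. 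Now take a point $(t,x) = \big(T(1+\widetilde{h}(|\xi|)),T\xi\big) \in \Sigma^{1,d}_{T,R}(0) \setminus \big(\{0\}\times\BB^{d}_{r}\big)$ and split on $|\xi|$. If $|\xi| < r+\varepsilon$, then $t = 0$ and the point, not lying in $\{0\}\times\BB^{d}_{r}$, must satisfy $|x| \geq r$, so $0 = t \leq |x| - r$ and $(t,x) \in \Lambda^{1,d}(r)$. If $|\xi| \geq r+\varepsilon$, then (ii) gives $|\xi| - \widetilde{h}(|\xi|) \geq (r+\varepsilon)+1 \geq 1 + \tfrac{r}{T}$ (the last step being again $T(r+\varepsilon)\geq r$); multiplying by $T$ and rearranging yields $t = T\big(1+\widetilde{h}(|\xi|)\big) \leq T|\xi| - r = |x| - r$, which together with $t \geq 0$ from (i) places $(t,x)$ in $\Lambda^{1,d}(r)$ as well — and here $|x| = T|\xi| \geq T(r+\varepsilon) \geq r$, so such points automatically lie outside $\{0\}\times\BB^{d}_{r}$. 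This exhausts $\Sigma^{1,d}_{T,R}(0) \setminus \big(\{0\}\times\BB^{d}_{r}\big)$.

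The computation is elementary throughout; the only point that requires a little care is fact (i), i.e. showing that the radial profile $\widetilde{h}$ of a convex radial $h$ is nondecreasing so that the time coordinate stays nonnegative along the initial hypersurface. I would either read this off from the structure of the Hessian of a radial function, or simply invoke the geometry worked out in \Cref{AppendixSimilarityCoordinates}; everything else is bookkeeping with the single inequality $T(r+\varepsilon) \geq r$.
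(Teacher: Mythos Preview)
Your proof is correct and follows essentially the same route as the paper's: both arguments parametrize $\Sigma^{1,d}_{T,R}(0)$ as the graph $t_0(x)=T+Th(x/T)$, use $h\equiv -1$ together with $r/T\le r+\varepsilon$ to get the first inclusion, and combine the monotonicity $\widetilde h\ge -1$ (hence $t_0\ge 0$) with the Lipschitz bound $|\widetilde h'|<1$ to obtain $t_0(x)\le |x|-r$. The only cosmetic difference is that the paper compresses the second inclusion into the single chain $0\le T+Th(x/T)\le T+T\big(\tfrac{|x|-r}{T}-1\big)=|x|-r$ for $|x|\ge r$ (i.e.\ applies the Lipschitz bound from the base point $r/T$), whereas you split on $|\xi|\lessgtr r+\varepsilon$ and apply it from the base point $r+\varepsilon$; the content is the same.
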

\begin{proof}
Note that $\Sigma^{1,d}_{T,R}(0) = \{ (t_{0}(x),x) \in \RR^{1,d} \mid x \in \BB^{d}_{RT} \}$ is the graph of the function given by $t_{0}(x) = T + T h\big( \tfrac{x}{T}\big)$. If $|x| < r$, then $|x|/T < r/T \leq r + \varepsilon$ and therefore $h\big( \tfrac{x}{T}\big) = -1$. So $t_{0}(x) = 0$ for all $|x| < r$ which yields the inclusion $\{ 0 \} \times \BB^{d}_{r} \subset \Sigma^{1,d}_{T,R}(0)$. Moreover, the assumptions on $h$ imply the inequality
\begin{equation*}
0 \leq t_{0}(x) = T + T h\big( \tfrac{x}{T}\big) < T + T \big( \tfrac{|x|-r}{T} - 1 \big) = |x| - r
\end{equation*}
for all $|x| \geq r$, which yields the other inclusion $\Sigma^{1,d}_{T,R}(0) \setminus \big( \{ 0 \} \times \BB^{d}_{r} \big) \subset \Lambda^{1,d}_{r}$.
\end{proof}
Now, if
\begin{equation*}
\psi(0,\,.\,) = \psi^{\ast}_{1}(0,\,.\,) + f
\qquad\text{and}\qquad
(\pd_{0}\psi)(0,\,.\,) = (\pd_{0}\psi^{\ast}_{1})(0,\,.\,) + g
\end{equation*}
are data at $t = 0$ for the semilinear wave equation \eqref{RadialSemilinearWaveEquation} and the perturbations $f,g$ are supported in $\BB^{d}_{r}$, then the corresponding solution $\psi$ has to coincide with $\psi^{\ast}_{1}$ in $\Lambda^{1,d}_{r}$ by finite speed of propagation. Therefore, the solution $\psi$ is known along the initial hypersurface $\Sigma^{1,d}_{T,R}(0)$ from \Cref{InitialHypersurface} beforehand. Evaluating the corresponding rescaled evolution variables $u_{1},u_{2}$ from \Cref{EvoVar} at $\tau=0$ then yields precisely the correct data for the abstract Cauchy problem, which are provided by the following initial data operator.
\begin{definition}
\label{InitialDataOperator}
Let $r>0$, $\varepsilon>0$ and $R \geq r+\varepsilon$. Let $h \in C^{\infty}_{\mathrm{rad}}(\RR^{d})$ be as in \Cref{SimilarityCoordinates} with $h \equiv -1 \text{ in } \BB^{d}_{r+\varepsilon}$. We define the map
\begin{equation*}
\mathbf{U}: \big( C^{\infty}_{\mathrm{rad}}(\RR^{d}) \cap C^{\infty}_{\mathrm{c}}(\BB^{d}_{r}) \big)^{2} \times \big[ 1 - \tfrac{\varepsilon}{r + \varepsilon}, 1 + \tfrac{\varepsilon}{r + \varepsilon} \big] \rightarrow C^{\infty}_{\mathrm{rad}}(\overline{\BB^{d}_{R}})^{2} \,, \qquad
\mathbf{U}(\bm{f},T) = \bm{f}_{T} + \bm{\Psi}^{1}_{T} - \bm{\Psi}^{T}_{T} \,,
\end{equation*}
where
\begin{align*}
\bm{f}_{T}(\xi) =
\begin{bmatrix}
\hfill T^{s} f(T\xi) \\
T^{s+1} g(T\xi)
\end{bmatrix}
\,, \qquad
\mathbf{\Psi}_{T}^{1}(\xi) &=
\begin{bmatrix}
\hfill T^{s} \psi^{\ast}_{1} \big( T + Th(\xi) , T \xi \big) \\
T^{s+1} (\pd_{0}\psi^{\ast}_{1}) \big( T + Th(\xi) , T \xi \big)
\end{bmatrix}
\,, \\
\mathbf{\Psi}_{T}^{T}(\xi) &=
\begin{bmatrix}
\hfill T^{s} \psi^{\ast}_{T} \big( T + Th(\xi) , T \xi \big) \\
T^{s+1} (\pd_{0}\psi^{\ast}_{T}) \big( T + Th(\xi) , T \xi \big)
\end{bmatrix}
\,.
\end{align*}
\end{definition}
This initial data operator takes the following form.
\begin{lemma}
\label{InitialDataExpansion}
Recall the symmetry mode $\mathbf{f}_{1}^{\ast} \in C^{\infty}_{\mathrm{rad}}(\overline{\BB^{d}_{R}})^{2}$ from \Cref{SymmetryMode}. Then, there is a map $\mathbf{r} \in C^{\infty} \big( \overline{ \big( 1 - \tfrac{\varepsilon}{r + \varepsilon}, 1 + \tfrac{\varepsilon}{r + \varepsilon} \big) \times \BB^{d}_{R}} \big)^{2}$ such that
\begin{equation*}
\mathbf{U}(\bm{f},T) = \bm{f}_{T} + (1-T) \mathbf{f}_{1}^{\ast} + (1-T)^{2} \mathbf{r}({T}, \,.\,)
\end{equation*}
for all $\bm{f} \in \big( C^{\infty}_{\mathrm{rad}}(\RR^{d}) \cap C^{\infty}_{\mathrm{c}}(\BB^{d}_{r}) \big)^{2}$ and all $T \in \big[ 1 - \tfrac{\varepsilon}{r + \varepsilon}, 1 + \tfrac{\varepsilon}{r + \varepsilon} \big]$.
\end{lemma}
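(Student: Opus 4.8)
\emph{Proof plan.} The key observation is that $\mathbf{U}(\bm f,T)-\bm f_T=\bm\Psi^{1}_{T}-\bm\Psi^{T}_{T}$ does not involve $\bm f$ at all, so $\mathbf r$ will depend only on $h$ and $\psi^{\ast}$. Moreover, for a fixed coordinate parameter $T$, both $\psi^{\ast}_{1}$ and $\psi^{\ast}_{T}$ are members of the one-parameter family $\{\psi^{\ast}_{T'}\}_{T'}$ from \eqref{FiniteTimeBlowup}. I therefore introduce, for $T'\in[1-\tfrac{\varepsilon}{r+\varepsilon},1+\tfrac{\varepsilon}{r+\varepsilon}]$, the function $\mathbf G(T')\in C^{\infty}_{\mathrm{rad}}(\overline{\BB^{d}_{R}})^{2}$ with components
\[
[\mathbf G(T')]_{1}(\xi)=T^{s}\psi^{\ast}_{T'}\big(T+Th(\xi),T\xi\big),\qquad
[\mathbf G(T')]_{2}(\xi)=T^{s+1}(\pd_{0}\psi^{\ast}_{T'})\big(T+Th(\xi),T\xi\big),
\]
so that $\mathbf G(1)=\bm\Psi^{1}_{T}$, $\mathbf G(T)=\bm\Psi^{T}_{T}$, and $\mathbf U(\bm f,T)=\bm f_{T}+\mathbf G(1)-\mathbf G(T)$. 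Since $\psi^{\ast}_{T'}(t,x)=\psi^{\ast}(T'-t,x)$ and $\psi^{\ast}$ is smooth on $\RR^{1,d}\setminus\{(0,0)\}$ by \ref{F3}, while the evaluation point $(T'-T-Th(\xi),T\xi)$ never hits $(0,0)$ — indeed $T\xi=0$ forces $\xi=0$, hence $h(\xi)=h(0)=-1$ by the standing assumption $h\equiv-1$ on $\BB^{d}_{r+\varepsilon}$, so the time component equals $T'>0$ — the map $(T,T',\xi)\mapsto\mathbf G(T')(\xi)$ is smooth on $[1-\tfrac{\varepsilon}{r+\varepsilon},1+\tfrac{\varepsilon}{r+\varepsilon}]^{2}\times\overline{\BB^{d}_{R}}$. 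Taylor's theorem with integral remainder in $T'$ about $T'=T$ then yields
\[
\mathbf G(1)-\mathbf G(T)=(1-T)\,\mathbf G'(T)+(1-T)^{2}\int_{0}^{1}(1-\theta)\,\mathbf G''\big(T+\theta(1-T)\big)\dd\theta .
\]

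Next I identify $\mathbf G'(T)$. Using $\pd_{T'}\psi^{\ast}_{T'}(t,x)=(\pd_{0}\psi^{\ast})(T'-t,x)$ and $\pd_{T'}(\pd_{0}\psi^{\ast}_{T'})(t,x)=-(\pd_{0}^{2}\psi^{\ast})(T'-t,x)$, together with the self-similarity $\psi^{\ast}(t,x)=\mu^{s}\psi^{\ast}(\mu t,\mu x)$ — which gives $(\pd_{0}\psi^{\ast})(\mu t,\mu x)=\mu^{-(s+1)}(\pd_{0}\psi^{\ast})(t,x)$ and $(\pd_{0}^{2}\psi^{\ast})(\mu t,\mu x)=\mu^{-(s+2)}(\pd_{0}^{2}\psi^{\ast})(t,x)$ — and evaluating at $T'=T$, at the point $(-Th(\xi),T\xi)$, with $\mu=1/T$, one computes
\[
[\mathbf G'(T)]_{1}(\xi)=T^{-1}f_{1}^{\ast}(\xi),\qquad
[\mathbf G'(T)]_{2}(\xi)=-T^{-1}(\pd_{0}^{2}\psi^{\ast})(-h(\xi),\xi),
\]
where $f_{1}^{\ast}(\xi)=(\pd_{0}\psi^{\ast})(-h(\xi),\xi)$ as in \Cref{SymmetryMode}. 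To match the second component with $[\mathbf f_{1}^{\ast}]_{2}$, I differentiate the Euler relation $s\psi^{\ast}+t\pd_{0}\psi^{\ast}+x^{i}\pd_{i}\psi^{\ast}=0$ (the $\mu=1$ derivative of self-similarity) in $t$, evaluate at $(-h(\xi),\xi)$, and combine it with $\xi^{i}\pd_{i}f_{1}^{\ast}(\xi)=-(\pd_{0}^{2}\psi^{\ast})(-h(\xi),\xi)\,\xi^{i}\pd_{i}h(\xi)+\xi^{i}(\pd_{i}\pd_{0}\psi^{\ast})(-h(\xi),\xi)$ and $c=\xi^{i}\pd_{i}h-h$ from \Cref{hcw}; this gives
\[
\tfrac{1}{c(\xi)}\big((s+1)f_{1}^{\ast}(\xi)+\xi^{i}\pd_{i}f_{1}^{\ast}(\xi)\big)=-(\pd_{0}^{2}\psi^{\ast})(-h(\xi),\xi)=[\mathbf f_{1}^{\ast}]_{2}(\xi),
\]
hence $\mathbf G'(T)=T^{-1}\mathbf f_{1}^{\ast}$.

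Finally I assemble the expansion. Writing $T^{-1}=1+\tfrac{1-T}{T}$ gives $(1-T)\mathbf G'(T)=(1-T)\mathbf f_{1}^{\ast}+(1-T)^{2}T^{-1}\mathbf f_{1}^{\ast}$, so the assertion holds with
\[
\mathbf r(T,\,.\,)\coloneqq T^{-1}\mathbf f_{1}^{\ast}+\int_{0}^{1}(1-\theta)\,\mathbf G''\big(T+\theta(1-T)\big)\dd\theta .
\]
This is smooth on $\overline{(1-\tfrac{\varepsilon}{r+\varepsilon},1+\tfrac{\varepsilon}{r+\varepsilon})\times\BB^{d}_{R}}$: the first summand is smooth because $\tfrac{\varepsilon}{r+\varepsilon}<1$ for $r>0$ so the interval excludes $0$, and the integral term inherits smoothness from that of $(T,T',\xi)\mapsto\mathbf G''(T')(\xi)$ established above; it is manifestly independent of $\bm f$. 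The only genuinely delicate point in the argument is the singularity-avoidance claim, which both legitimizes differentiating $\psi^{\ast}$ along the initial hypersurface and secures the smoothness of the remainder; the rest is the self-similar scaling bookkeeping and the differentiated Euler identity.
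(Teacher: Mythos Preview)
Your proof is correct and follows essentially the same route as the paper's: Taylor expand $\mathbf{\Psi}^{T'}_{T}$ in $T'$ about $T'=T$, identify the linear coefficient with $\mathbf{f}_{1}^{\ast}$, and take the integral remainder as $\mathbf{r}$. Your treatment is in fact more careful than the paper's in two places --- you track the $T^{-1}$ factor in $\mathbf{G}'(T)$ (which the paper omits, though the discrepancy is $O((1-T)^{2})$ and absorbable into the remainder) and you verify the second-component match via the differentiated Euler identity rather than asserting it.
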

\begin{proof}
Note with the scaling \ref{F2} and self-similarity of the blowup profile from \ref{F3} that
\begin{equation*}
\mathbf{\Psi}_{T}^{T'}(\xi) \coloneqq
\begin{bmatrix}
\hfill T^{s} \psi^{\ast}_{T'} \big( T + Th(\xi) , T \xi \big) \\
T^{s+1} (\pd_{0}\psi^{\ast}_{T'}) \big( T + Th(\xi) , T \xi \big)
\end{bmatrix}
=
\begin{bmatrix}
\hfill\psi^{\ast} \big( \frac{T'}{T} - 1 - h(\xi) , \xi \big) \\
-(\pd_{0}\psi^{\ast}) \big( \frac{T'}{T} - 1 - h(\xi) , \xi \big)
\end{bmatrix}
\,.
\end{equation*}
Taylor's theorem gives
\begin{equation*}
\mathbf{\Psi}_{T}^{1}(\xi) - \mathbf{\Psi}_{T}^{T}(\xi) = (1-T) \left.\pd_{T'} \mathbf{\Psi}_{T}^{T'}(\xi)\right|_{T'=T} + (1-T)^{2} \mathbf{r}(T, \xi) \,,
\end{equation*}
where
\begin{equation*}
\left.\pd_{T'} \mathbf{\Psi}_{T}^{T'}(\xi)\right|_{T'=T} =
\begin{bmatrix}
\hfill
(\pd_{0}\psi^{\ast}) \big( -h(\xi) , \xi \big) \\
-(\pd_{0}^{2} \psi^{\ast}) \big( -h(\xi) , \xi \big)
\end{bmatrix}
=
\mathbf{f}_{1}^{\ast}(\xi)
\end{equation*}
and the integral remainder
\begin{equation*}
\mathbf{r}(T,\xi) = \int_{0}^{1} \left.\pd_{T'}^{2}\mathbf{\Psi}_{T}^{T'}(\xi)\right|_{T'=T+z(1-T)} (1-z) \dd z
\end{equation*}
defines a jointly smooth function.
\end{proof}
\subsection{Stable global wave evolution}
The parameter for the blowup time has entered through the initial data operator from \Cref{InitialDataOperator} in the analysis of the stability problem in similarity coordinates. By adjusting this parameter, we can remove the correction term which has stabilized the evolution in \Cref{StabilizedEvolution}. This yields the existence of stable solutions to the abstract Cauchy problem \eqref{AbstractCauchyProblem} which evolve from perturbations of the self-similar blowup solution posed at time $t = 0$.
\begin{proposition}
\label{GlobalMildSolution}
Let $d,k\in\NN$ and $R>r>0$ be as in \Cref{NotionSolution,InitialDataOperator}. Then, there are constants $\delta^{\ast} > 0$ and $C^{\ast} \geq 1$ such that for all $0 < \delta \leq \delta^{\ast}$ and $C \geq C^{\ast}$ and all $\bm{f} \in \big( C^{\infty}_{\mathrm{rad}}(\RR^{d}) \cap C^{\infty}_{\mathrm{c}}(\BB^{d}_{r}) \big)^{2}$ with $\| \bm{f} \|_{H^{k}(\RR^{d}) \times H^{k-1}(\RR^{d})} \leq \frac{\delta}{C^{2}}$ there exists a $T^{\ast} \in [ 1 - \frac{\delta}{C}, 1 + \frac{\delta}{C} ]$ and a unique $\mathbf{u}_{\bm{f}, T^{\ast}} \in C \big( [0,\infty), \mathfrak{H}^{k}_{\mathrm{rad}}(\BB^{d}_{R}) \big)$ with $\| \mathbf{u}_{\bm{f}, T^{\ast}}(\tau) \|_{\mathfrak{H}^{k}(\BB^{d}_{R})} \leq \delta \ee^{-\omega\tau}$ and
\begin{equation*}
\mathbf{u}_{\bm{f}, T^{\ast}}(\tau) = \mathbf{S}(\tau) \mathbf{U} ( \bm{f}, T^{\ast} ) + \int_{0}^{\tau} \mathbf{S}(\tau-\tau') \mathbf{N}( \mathbf{u}_{\bm{f}, T^{\ast}}(\tau') ) \dd\tau'
\end{equation*}
for all $\tau \geq 0$. In particular, $\mathbf{u}_{\bm{f},T^{\ast}} \in C^{\infty} \big( \overline{ (0,\infty) \times \BB^{d}_{R} } \big)^{2}$ with
\begin{equation*}
\renewcommand{\arraystretch}{1.2}
\left\{
\begin{array}{rcl}
\pd_{\tau} \mathbf{u}_{\bm{f}, T^{\ast}}(\tau, \,.\,) &=& \mathbf{L} \mathbf{u}_{\bm{f}, T^{\ast}}(\tau, \,.\,) + \mathbf{N} ( \mathbf{u}_{\bm{f}, T^{\ast}}(\tau, \,.\,) ) \,, \\
\mathbf{u}_{\bm{f}, T^{\ast}}(0, \,.\,) &=& \mathbf{U} ( \bm{f}, T^{\ast} ) \,.
\end{array}
\right.
\end{equation*}
\end{proposition}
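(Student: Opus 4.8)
\emph{Strategy.} The plan is to choose the blowup time $T^{\ast}$ so that the Lyapunov--Perron correction term $\mathbf{C}(\mathbf{U}(\bm{f},T^{\ast}),\,.\,)$ built into \Cref{StabilizedEvolution} vanishes; the stabilized solution then solves the abstract Cauchy problem \eqref{AbstractCauchyProblem} with initial datum $\mathbf{U}(\bm{f},T^{\ast})$ outright. Since $\operatorname{ran}(\mathbf{P}_{1})=\langle\mathbf{f}_{1}^{\ast}\rangle$ is one-dimensional by \Cref{AlgMult}, the correction term is always a scalar multiple of the symmetry mode $\mathbf{f}_{1}^{\ast}$, so the requirement is a single scalar equation in the single real parameter $T$, which I would solve by the intermediate value theorem.

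First I would fix $\bm{f}$ with $\|\bm{f}\|_{H^{k}\times H^{k-1}}\leq\delta/C^{2}$. For $T$ in the interval $[1-\tfrac{\delta}{C},1+\tfrac{\delta}{C}]$ — which lies in the domain of $\mathbf{U}$ as soon as $\delta^{\ast}/C^{\ast}\leq\varepsilon/(r+\varepsilon)$ — the expansion $\mathbf{U}(\bm{f},T)=\bm{f}_{T}+(1-T)\mathbf{f}_{1}^{\ast}+(1-T)^{2}\mathbf{r}(T,\,.\,)$ of \Cref{InitialDataExpansion} yields $\|\mathbf{U}(\bm{f},T)\|_{\mathfrak{H}^{k}}\lesssim\|\bm{f}\|+|1-T|\lesssim\delta/C$. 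Applying \Cref{StabilizedEvolution} with its small-data parameter replaced by a fixed multiple of $\delta$ (so $\delta^{\ast}$ must be shrunk and $C^{\ast}$ enlarged accordingly) produces for each such $T$ the stabilized solution $\mathbf{u}_{T}\coloneqq\mathbf{u}_{\mathbf{U}(\bm{f},T)}\in\mathfrak{X}^{k}_{\mathrm{rad}}(\BB^{d}_{R})$. Because $\mathbf{N}(\mathbf{0})=\mathbf{0}$ and $\mathbf{C}(\mathbf{0},\mathbf{0})=\mathbf{0}$ force $\mathbf{u}_{\mathbf{0}}=\mathbf{0}$, the Lipschitz bound of \Cref{StabilizedEvolution} upgrades this to $\|\mathbf{u}_{T}\|_{\mathfrak{X}^{k}(\BB^{d}_{R})}\lesssim\|\mathbf{U}(\bm{f},T)\|_{\mathfrak{H}^{k}(\BB^{d}_{R})}\lesssim\delta/C$ — a bound I will need to be this small.

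Next I would write $\mathbf{C}(\mathbf{U}(\bm{f},T),\mathbf{u}_{T})=\Phi(T)\mathbf{f}_{1}^{\ast}$ with a scalar $\Phi(T)$ (possible since $\operatorname{ran}(\mathbf{P}_{1})=\langle\mathbf{f}_{1}^{\ast}\rangle$). Continuity of $T\mapsto\mathbf{U}(\bm{f},T)$ in $\mathfrak{H}^{k}$ (smoothness of $f,g$ and $\mathbf{r}$), the Lipschitz continuity of the solution map $\mathbf{g}\mapsto\mathbf{u}_{\mathbf{g}}$, and the quadratic bound on $\mathbf{N}$ from \Cref{LocLip} make $\Phi$ continuous. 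Projecting the expansion of $\mathbf{U}(\bm{f},T)$ by $\mathbf{P}_{1}$ and estimating $\big\|\mathbf{P}_{1}\int_{0}^{\infty}\ee^{-\tau'}\mathbf{N}(\mathbf{u}_{T}(\tau'))\dd\tau'\big\|\lesssim\|\mathbf{u}_{T}\|_{\mathfrak{X}^{k}}^{2}\lesssim\delta^{2}/C^{2}$ gives $\Phi(T)=(1-T)+E(T)$ with $|E(T)|\lesssim\delta/C^{2}$ on the whole interval. For $C^{\ast}$ large this forces $\Phi(1-\tfrac{\delta}{C})>0>\Phi(1+\tfrac{\delta}{C})$, so the intermediate value theorem yields $T^{\ast}\in(1-\tfrac{\delta}{C},1+\tfrac{\delta}{C})$ with $\Phi(T^{\ast})=0$, i.e. $\mathbf{C}(\mathbf{U}(\bm{f},T^{\ast}),\mathbf{u}_{T^{\ast}})=\mathbf{0}$. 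For this $T^{\ast}$ the defining identity of $\mathbf{u}_{T^{\ast}}$ collapses to the Duhamel equation with datum $\mathbf{U}(\bm{f},T^{\ast})$ and no correction, so $\mathbf{u}_{\bm{f},T^{\ast}}\coloneqq\mathbf{u}_{T^{\ast}}$ is the asserted mild solution; feeding $\Phi(T^{\ast})=0$ back into the expansion and absorbing the $(1-T^{\ast})^{2}$-term gives the improved bounds $|1-T^{\ast}|\lesssim\delta/C^{2}$ and $\|\mathbf{u}_{\bm{f},T^{\ast}}\|_{\mathfrak{X}^{k}}\lesssim\delta/C^{2}$, hence $\|\mathbf{u}_{\bm{f},T^{\ast}}(\tau)\|_{\mathfrak{H}^{k}}\leq\delta\ee^{-\omega\tau}$ once $C^{\ast}$ is large. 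Uniqueness among decaying mild solutions with this datum follows by applying $\mathbf{P}_{1}$ and using $\mathbf{S}(\tau)\mathbf{P}_{1}=\ee^{\tau}\mathbf{P}_{1}$ from \Cref{LinearizedEvolution}: the $\mathbf{P}_{1}$-component can stay bounded only if $\mathbf{P}_{1}\mathbf{U}(\bm{f},T^{\ast})+\int_{0}^{\infty}\ee^{-\tau'}\mathbf{P}_{1}\mathbf{N}(\mathbf{v}(\tau'))\dd\tau'=\mathbf{0}$, so any such $\mathbf{v}$ solves the stabilized equation and equals $\mathbf{u}_{\bm{f},T^{\ast}}$ by the uniqueness part of \Cref{StabilizedEvolution}. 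Finally $\mathbf{U}(\bm{f},T^{\ast})\in C^{\infty}_{\mathrm{rad}}(\overline{\BB^{d}_{R}})^{2}$, because $f,g,h$ are smooth and the arguments at which $\psi^{\ast}$ is evaluated in $\mathbf{U}$ avoid the singularity $(0,0)$ (using $h\equiv-1$ near $0$ and $T^{\ast}>0$), so \Cref{SmoothClassicalSolution} promotes $\mathbf{u}_{\bm{f},T^{\ast}}$ to the jointly smooth classical solution of \eqref{AbstractCauchyProblem} on $(0,\infty)\times\BB^{d}_{R}$.

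\emph{Main obstacle.} The substance of the argument is the three-tiered bookkeeping of constants. \Cref{StabilizedEvolution} applies only when $\mathbf{U}(\bm{f},T)$ lies below its threshold $\delta/C$, which is precisely why the hypothesis on $\bm{f}$ carries the extra factor $C^{-1}$ and why $T$ is confined to an interval of radius $\delta/C$. More delicately, the intermediate value step needs the perturbation $E(T)$ of the linear relation $\Phi(T)\approx 1-T$ to be genuinely smaller than the interval radius $\delta/C$; this is exactly where one must exploit the sharper bound $\|\mathbf{u}_{T}\|_{\mathfrak{X}^{k}}\lesssim\delta/C$ (rather than the a priori $\lesssim\delta$) coming from the Lipschitz estimate together with $\mathbf{u}_{\mathbf{0}}=\mathbf{0}$, so that the nonlinear contribution to $\Phi$ is only $O(\delta^{2}/C^{2})$. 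Once the constants are arranged this way, the remaining steps — continuity of $\Phi$, the a posteriori improvement of the bounds, and smoothness — are routine given the results already established.
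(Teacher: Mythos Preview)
Your argument is correct and follows the paper's strategy: apply \Cref{StabilizedEvolution} for each $T$ in a $\delta/C$-interval around $1$, then use the one-dimensionality of $\operatorname{ran}(\mathbf{P}_{1})$ and the intermediate value theorem to find $T^{\ast}$ at which the correction vanishes, and finally upgrade via \Cref{SmoothClassicalSolution}. The one substantive difference is your sharper bound $\|\mathbf{u}_{T}\|_{\mathfrak{X}^{k}}\lesssim\delta/C$, obtained from the Lipschitz estimate in \Cref{StabilizedEvolution} together with $\mathbf{u}_{\mathbf{0}}=\mathbf{0}$; this makes the nonlinear contribution to $\Phi$ of order $\delta^{2}/C^{2}$ and the sign change follows for $C^{\ast}$ large alone. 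The paper instead uses only $\|\mathbf{u}_{\bm{f},T}\|_{\mathfrak{X}^{k}}\leq\delta$, giving a $\delta^{2}$ term that is absorbed by taking $\delta^{\ast}$ small as well as $C^{\ast}$ large; your bookkeeping is tighter here. Your uniqueness argument via projecting the Duhamel formula with $\mathbf{P}_{1}$ is a valid alternative to the paper's direct appeal to Gr\"onwall's inequality.
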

\begin{proof}
Pick $\delta_{0} > 0$ and $C_{0} \geq 1$ from \Cref{StabilizedEvolution}. For now, let $0 < \delta^{\ast} \leq \delta_{0}$ and $C^{\ast} \geq C_{0}$ be arbitrary but fixed and let
\begin{equation*}
0 < \delta \leq \delta^{\ast}
\qquad\text{and}\qquad
C \geq C^{\ast} \,.
\end{equation*}
Let $\bm{f} \in C^{\infty}_{\mathrm{rad}}(\overline{\BB^{d}_{R}})^{2} \cap C^{\infty}_{\mathrm{c}}(\BB^{d}_{r})^{2}$ with $\| \bm{f} \|_{\mathfrak{H}^{k}(\BB^{d}_{R})} \leq \frac{\delta}{C^{2}}$. If $C^{\ast} \geq C_{0}$ is large enough, \Cref{InitialDataExpansion} yields
\begin{align*}
\| \mathbf{U}(\bm{f},T) \|_{\mathfrak{H}^{k}(\BB^{d}_{R})} &\leq \| \bm{f}_{T} \|_{\mathfrak{H}^{k}(\BB^{d}_{R})} + |T-1| \| \mathbf{f}_{1}^{\ast} \|_{\mathfrak{H}^{k}(\BB^{d}_{R})} + |T-1|^{2} \| \mathbf{r}(T,\,.\,) \|_{\mathfrak{H}^{k}(\BB^{d}_{R})} \\&\leq \frac{\delta}{C_{0}}
\end{align*}
for all $T \in [ 1 - \frac{\delta}{C}, 1 + \frac{\delta}{C}]$ and so $\mathbf{U}(\bm{f},T) \in \mathfrak{H}^{k}_{\mathrm{rad}}(\BB^{d}_{R})$ satisfies the assumptions for the initial data in \Cref{StabilizedEvolution}. Thus, for any $T \in [ 1 - \frac{\delta}{C}, 1 + \frac{\delta}{C}]$ there is a $\mathbf{u}_{\bm{f},T} \in C \big( [0,\infty), \mathfrak{H}^{k}_{\mathrm{rad}}(\BB^{d}_{R}) \big)$ satisfying $\| \mathbf{u}_{\bm{f},T}(\tau) \|_{\mathfrak{H}^{k}(\BB^{d}_{R})} \leq \delta \ee^{-\omega\tau}$ and
\begin{equation*}
\mathbf{u}_{\bm{f},T}(\tau) = \mathbf{S}(\tau) \Big( \mathbf{U} ( \bm{f}, T ) - \mathbf{C} ( \mathbf{U}( \bm{f}, T ), \mathbf{u}_{\bm{f},T} ) \Big) + \int_{0}^{\tau} \mathbf{S}(\tau-\tau') \mathbf{N}( \mathbf{u}_{\bm{f},T}(\tau') ) \dd\tau'
\end{equation*}
for all $\tau \geq 0$. Note that this solution depends continuously on the parameter $T \in [ 1 - \frac{\delta}{C}, 1 + \frac{\delta}{C}]$ for the blowup time due to the continuous dependence on the initial data in \Cref{StabilizedEvolution} and \Cref{InitialDataExpansion}. In the rest of the proof, we extract such a time parameter along with a solution for which the corresponding correction term vanishes. Since $\mathbf{C} ( \mathbf{U}( \bm{f}, T ), \mathbf{u}_{\bm{f},T} ) \in \operatorname{ran}( \mathbf{P}_{1} )$ and $\operatorname{ran}( \mathbf{P}_{1} ) = \langle \mathbf{f}_{1}^{\ast} \rangle$ according to \Cref{AlgMult}, it is sufficient to prove that the continuous function
\begin{equation}
\label{SelfMap}
\Big[ 1 - \frac{\delta}{C}, 1 + \frac{\delta}{C} \Big] \rightarrow \RR \,, \qquad T \mapsto T + \Big( \mathbf{C} ( \mathbf{U} ( \bm{f}, T ), \mathbf{u}_{\bm{f},T} ) \,\Big|\, \mathbf{g}_{1}^{\ast} \Big)_{\mathfrak{H}^{k}(\BB^{d}_{R})} \,,
\end{equation}
where we set $\mathbf{g}_{1}^{\ast} \coloneqq \mathbf{f}_{1}^{\ast} / \| \mathbf{f}_{1}^{\ast} \|_{\mathfrak{H}^{k}(\BB^{d}_{R})}^{2}$, has a fixed point. Using the expansion for the initial data operator from \Cref{InitialDataExpansion}, we get
\begin{align*}
&
\Big( \mathbf{C} ( \mathbf{U} ( \bm{f}, T ), \mathbf{u}_{\bm{f},T} ) \,\Big|\, \mathbf{g}_{1}^{\ast} \Big)_{\mathfrak{H}^{k}(\BB^{d}_{R})} \\&\indent= \Big( \mathbf{P}_{1} \mathbf{U}(\bm{f},T) \,\Big|\, \mathbf{g}_{1}^{\ast} \Big)_{\mathfrak{H}^{k}(\BB^{d}_{R})} + \Big( \mathbf{P}_{1} \int_{0}^{\infty} \ee^{-\tau'} \mathbf{N}( \mathbf{u}_{\bm{f},T}(\tau') ) \dd\tau' \,\Big|\, \mathbf{g}_{1}^{\ast} \Big)_{\mathfrak{H}^{k}(\BB^{d}_{R})} \\&\indent=
T^{s} \Big( \mathbf{P}_{1} \bm{f}(T \,.\,) \,\Big|\, \mathbf{g}_{1}^{\ast} \Big)_{\mathfrak{H}^{k}(\BB^{d}_{R})} + (1-T) + (1-T)^{2} \Big( \mathbf{P}_{1} \mathbf{r}(T,\,.\,) \,\Big|\, \mathbf{g}_{1}^{\ast} \Big)_{\mathfrak{H}^{k}(\BB^{d}_{R})} \\&\indent\indent+ \Big( \mathbf{P}_{1} \int_{0}^{\infty} \ee^{-\tau'} \mathbf{N}( \mathbf{u}_{\bm{f},T}(\tau') ) \dd\tau' \,\Big|\, \mathbf{g}_{1}^{\ast} \Big)_{\mathfrak{H}^{k}(\BB^{d}_{R})} \,.
\end{align*}
Together with the Cauchy-Schwarz inequality, \Cref{LocLip} and \Cref{StabilizedEvolution}, we estimate
\begin{align*}
&
\Big| T + \Big( \mathbf{C} ( \mathbf{U} ( \bm{f}, T ), \mathbf{u}_{\bm{f},T} ) \,\Big|\, \mathbf{g}_{1}^{\ast} \Big)_{\mathfrak{H}^{k}(\BB^{d}_{R})} - 1 \Big| \\&\indent\indent\lesssim \| \bm{f} \|_{H^{k}(\RR^{d}) \times H^{k-1}(\RR^{d})} + |T-1|^{2} + \int_{0}^{\infty} \ee^{-\tau'} \| \mathbf{N}( \mathbf{u}_{\bm{f},T}(\tau') ) \big\|_{\mathfrak{H}^{k}(\BB^{d}_{R})} \dd\tau' \\&\indent\indent\lesssim
\frac{\delta}{C^{2}} + \frac{\delta^{2}}{C^{2}} + \delta^{2} 
\end{align*}
for all $T \in [1 - \frac{\delta}{C}, 1+\frac{\delta}{C}]$.
Thus, we can fix $C^{\ast} \geq C_{0}$ large enough and $0 < \delta^{\ast} \leq \delta_{0}$ small enough such that the map in \eqref{SelfMap} becomes a continuous self-map for any $0 < \delta \leq \delta^{\ast}$ and $C \geq C^{\ast}$. Thus, this map has a fixed point $T^{\ast} \in [1 - \frac{\delta}{C}, 1+\frac{\delta}{C}]$ by a consequence of the intermediate value theorem. Uniqueness of the associated mild solution $\mathbf{u}_{\bm{f},T^{\ast}} \in C \big( [0,\infty), \mathfrak{H}^{k}_{\mathrm{rad}}(\BB^{d}_{R}) \big)$ follows from Gr\"{o}nwall's inequality. Joint smoothness $\mathbf{u}_{\bm{f},T^{\ast}} \in C^{\infty} \big( \overline{ (0,\infty) \times \BB^{d}_{R} } \big)^{2}$ of the solution follows from \Cref{SmoothClassicalSolution}.
\end{proof}
\section{Proof of the main results}
\label{SecProofMainResults}
Here, we employ similarity coordinates to transform the solution of the abstract Cauchy problem into a solution of the nonlinear stability problem for the corotational wave maps equation and equivariant Yang-Mills equation in spacetime regions beyond light cones.
\subsection{Proof of Theorem \ref{THM}}
In the final step of our construction, we extend the solution in the image region of similarity coordinates by the solution outside of this region via finite speed of propagation. To ensure that the resulting map is smooth and solves the Cauchy problem, we recall the following version of finite speed of propagation for semilinear wave equations, also see \Cref{Fig_SolutionConstruction} for an illustration.
\begin{lemma}
\label{FiniteSpeedOfPropagation}
Let ${\chi\mathstrut}_{T} : \RR \times \RR^{d} \rightarrow \RR^{1,d}$ be similarity coordinates as in \Cref{SimilarityCoordinates}. Let $R \geq R_{0}$, where $R_{0} > 0$ is determined in \Cref{GSCLightCone} and consider the spacetime regions
\begin{equation*}
\mathrm{X}^{1,d}_{T,R} = {\chi\mathstrut}_{T} \big( (0,\infty) \times \BB^{d}_{R} \big) \,, \qquad \Sigma^{1,d}_{T,R}(\tau) = {\chi\mathstrut}_{T} \big( \{\tau\} \times \BB^{d}_{R} \big) \,.
\end{equation*}
For fixed $0 < \rho_{0} < R$ set $( t_{0},r_{0} ) = \big( T + T \widetilde{h}(\rho_{0}), T \rho_{0} \big) \in \Sigma^{1,d}_{T,R}(0)$ and define the domain
\begin{equation*}
\Lambda^{1,d}(t_{0},r_{0}) = \big\{ (t,x) \in \RR^{1,d} \mid |t-t_{0}| + r_{0} < |x| \big\} \,.
\end{equation*}
Assume that $\psi^{\mathrm{I}}, \psi^{\mathrm{I\!I}} \in C^{\infty} \big( \overline{\mathrm{X}}^{1,d}_{T,R} \big)$ are solutions to the semilinear wave equation
\begin{equation*}
(\Box \psi)(t,x) + F \big( x, \psi(t,x) \big) = 0 \quad \text{in } \mathrm{X}^{1,d}_{T,R}
\end{equation*}
with
\begin{equation*}
\big( \psi^{\mathrm{I}}, \pd_{0} \psi^{\mathrm{I}} \big) \big|_{\Sigma^{1,d}_{T,R}(0) \cap \Lambda^{1,d}(t_{0},r_{0})} =
\big( \psi^{\mathrm{I\!I}}, \pd_{0} \psi^{\mathrm{I\!I}} \big) \big|_{\Sigma^{1,d}_{T,R}(0) \cap \Lambda^{1,d}(t_{0},r_{0})} \,.
\end{equation*}
Then $\psi^{\mathrm{I}} = \psi^{\mathrm{I\!I}}$ in $\mathrm{X}^{1,d}_{T,R} \cap \Lambda^{1,d}(t_{0},r_{0})$.
\end{lemma}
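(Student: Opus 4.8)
The plan is to subtract the two solutions, reduce to a linear wave equation for the difference, and run the textbook finite‑speed‑of‑propagation energy argument on a truncated backward light cone; the only delicate ingredient is the geometry of that cone relative to the regions $\mathrm{X}^{1,d}_{T,R}$ and $\Lambda^{1,d}(t_{0},r_{0})$, which is where the structure of similarity coordinates enters.

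\textbf{Linearisation.} Set $v \coloneqq \psi^{\mathrm{I}} - \psi^{\mathrm{I\!I}} \in C^{\infty}\big( \overline{\mathrm{X}}^{1,d}_{T,R} \big)$. Since $F$ is smooth, the fundamental theorem of calculus gives $F\big( x, \psi^{\mathrm{I}}(t,x) \big) - F\big( x, \psi^{\mathrm{I\!I}}(t,x) \big) = \mathcal{W}(t,x)\, v(t,x)$ with $\mathcal{W}(t,x) = \int_{0}^{1} F'\big( x, \psi^{\mathrm{I\!I}}(t,x) + \theta v(t,x) \big) \dd\theta \in C^{\infty}\big( \overline{\mathrm{X}}^{1,d}_{T,R} \big)$, bounded on compact subsets. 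Hence $v$ solves the linear equation $(\Box v)(t,x) + \mathcal{W}(t,x)\, v(t,x) = 0$ in $\mathrm{X}^{1,d}_{T,R}$, and by hypothesis $v$ and $\pd_{0} v$ vanish on $\Sigma^{1,d}_{T,R}(0) \cap \Lambda^{1,d}(t_{0},r_{0})$; since $\Sigma^{1,d}_{T,R}(0)$ is spacelike, hence non‑characteristic, all spacetime derivatives of $v$ vanish there. It thus suffices to prove $v(t_{*},x_{*}) = 0$ for an arbitrary fixed $(t_{*},x_{*}) \in \mathrm{X}^{1,d}_{T,R} \cap \Lambda^{1,d}(t_{0},r_{0})$.

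\textbf{Geometry of the backward cone (the main obstacle).} Let $\mathcal{K} \coloneqq \big\{ (t,x) \in \overline{\mathrm{X}}^{1,d}_{T,R} \mid |x - x_{*}| \le t_{*} - t \big\}$ be the truncation of the solid backward light cone with apex $(t_{*},x_{*})$ to the closed coordinate region; it is a closed subset of the bounded set $\overline{\mathrm{X}}^{1,d}_{T,R}$, hence compact. First, $(t_{*},x_{*}) \in \Lambda^{1,d}(t_{0},r_{0})$ gives $|x_{*}| > r_{0}$, which forces the height of $\Sigma^{1,d}_{T,R}(0)$ above $x_{*}$ to be $\ge t_{0}$, so that $t_{*} > t_{0}$. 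The key claim is that $\mathcal{K} \subseteq \Lambda^{1,d}(t_{0},r_{0})$, that $\pd\mathcal{K} \setminus \{(t_{*},x_{*})\}$ consists of a portion of the characteristic mantle $\{ |x-x_{*}| = t_{*} - t \}$ and a portion of $\Sigma^{1,d}_{T,R}(0)$, and that $\mathcal{K} \cap \Sigma^{1,d}_{T,R}(0) \subseteq \Sigma^{1,d}_{T,R}(0) \cap \Lambda^{1,d}(t_{0},r_{0})$. Indeed, any $(t,x) \in \mathcal{K}$ with $t \ge t_{0}$ satisfies $|x| \ge |x_{*}| - (t_{*}-t) > (t_{*}-t_{0}+r_{0}) - (t_{*}-t) = (t-t_{0}) + r_{0}$, hence lies in $\Lambda^{1,d}(t_{0},r_{0})$; writing a point of $\Sigma^{1,d}_{T,R}(0)$ as $\big( T + T\widetilde{h}(\sigma), x \big)$ with $\sigma = |x|/T$ and $r_{0} = T\rho_{0}$, the same inequalities rearrange, using monotonicity of $\rho \mapsto \rho - \widetilde{h}(\rho)$ (which holds by the gradient bound $|(\pd h)| < 1$), to $\sigma > \rho_{0}$, whence $t = T + T\widetilde{h}(\sigma) \ge t_{0}$, and this also rules out points of $\mathcal{K}$ with $t < t_{0}$. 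That the cone cannot leave $\overline{\mathrm{X}}^{1,d}_{T,R}$ except across $\Sigma^{1,d}_{T,R}(0)$ rests on the elementary geometric facts of \Cref{AppendixSimilarityCoordinates}: the hypersurfaces $\Sigma^{1,d}_{T,R}(\tau)$ are spacelike and foliate $\mathrm{X}^{1,d}_{T,R}$ from $\Sigma^{1,d}_{T,R}(0)$ upward, and the radial boundary of the coordinate region lies within the light cone through the singular point $(T,0)$ by \Cref{GSCLightCone} (this is where $R \ge R_{0}$ is used), so that no past causal cone from $\mathrm{X}^{1,d}_{T,R}$ escapes sideways.

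\textbf{Energy estimate.} With $s_{\min} \coloneqq \min_{\mathcal{K}} t$ and $\mathcal{K}_{s} \coloneqq \{ x \mid (s,x) \in \mathcal{K} \}$, set
\[
\mathcal{E}(s) \coloneqq \frac{1}{2} \int_{\mathcal{K}_{s}} \Big( |\pd_{0} v(s,\,.\,)|^{2} + \sum_{i=1}^{d} |\pd_{i} v(s,\,.\,)|^{2} + |v(s,\,.\,)|^{2} \Big) \,, \qquad s \in [s_{\min}, t_{*}] \,.
\]
Differentiating, using $\Box v = -\mathcal{W} v$ and integrating by parts in $x$, the bulk terms are $\lesssim \mathcal{E}(s)$, with the zeroth‑order term in $\mathcal{E}$ absorbing $\mathcal{W} v\,\pd_{0} v$ and the constant depending on $\sup_{\mathcal{K}} |\mathcal{W}|$. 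The boundary of $\mathcal{K}_{s}$ splits, by the previous step, into a part on the characteristic mantle, whose flux contribution is nonpositive by the Cauchy--Schwarz inequality (the usual good sign), and a part on $\Sigma^{1,d}_{T,R}(0) \cap \Lambda^{1,d}(t_{0},r_{0})$, on which $v$ and $\pd v$ vanish and which therefore contributes nothing. Hence $\mathcal{E}'(s) \lesssim \mathcal{E}(s)$ on $(s_{\min},t_{*})$, while $\mathcal{E}$ vanishes at the bottom of $\mathcal{K}$ (the slice $\mathcal{K}_{s_{\min}}$ either degenerates to a set of Lebesgue measure zero or lies in $\Sigma^{1,d}_{T,R}(0) \cap \Lambda^{1,d}(t_{0},r_{0})$, where the integrand is zero). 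Grönwall's inequality forces $\mathcal{E} \equiv 0$, so $v \equiv 0$ on $\mathcal{K}$ and in particular $v(t_{*},x_{*}) = 0$; since $(t_{*},x_{*})$ was arbitrary, $\psi^{\mathrm{I}} = \psi^{\mathrm{I\!I}}$ in $\mathrm{X}^{1,d}_{T,R} \cap \Lambda^{1,d}(t_{0},r_{0})$. The hard part is the geometric step: verifying that $\mathcal{K}$ sits inside $\Lambda^{1,d}(t_{0},r_{0})$, meets the initial hypersurface only where the Cauchy data are known, and has its non‑characteristic boundary entirely on $\Sigma^{1,d}_{T,R}(0)$; once this is pinned down the energy estimate is routine.
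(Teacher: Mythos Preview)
Your approach is correct but takes a genuinely different route from the paper. The paper works entirely in similarity coordinates: it foliates $\mathrm{X}^{1,d}_{T,R}\cap\Lambda^{1,d}(t_0,r_0)$ by annular slices $\Sigma(\tau)=\chi_T\big(\{\tau\}\times(\BB^d_R\setminus\overline{\BB^d_{\rho(\tau)}})\big)$, where the inner radius $\rho(\tau)$ is defined implicitly from $\partial\Lambda$, sets up the energy $E_\Sigma[\psi](\tau)=\int_{\Sigma(\tau)}\mathrm{T}_{0\mu}N^\mu$, and computes $\tfrac{d}{d\tau}E_\Sigma$ directly. Two boundary terms appear, at $|\xi|=R$ (controlled by $\widetilde h(R)^2\leq R^2$, which is where $R\geq R_0$ enters) and at the moving inner radius $|\xi|=\rho(\tau)$ (controlled using $\dot\rho(\tau)=(\rho-\widetilde h(\rho))/(1-\widetilde h'(\rho))$); both are shown nonpositive by explicit computation, and Gr\"onwall finishes the argument for the whole region at once. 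You instead fix an apex $(t_*,x_*)$ and run a standard constant-$t$ backward-cone estimate. Your route makes the energy step completely textbook at the price of the geometric verification; the paper's route stays coherent with its similarity-coordinate framework and proves the full statement globally rather than pointwise.

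The one place your argument is genuinely incomplete is the claim that $\mathcal K$ never touches the lateral boundary $\{t=T+\kappa_R|x|\}$, where $\kappa_R=\widetilde h(R)/R$. The heuristic ``the radial boundary lies within the light cone through $(T,0)$, so no past causal cone escapes sideways'' is not a proof. The inequality you actually need is: if $t_*-T<\kappa_R|x_*|$ and $|x-x_*|\leq t_*-t$, then
\[
t-T\;\leq\;(t_*-T)-|x-x_*|\;<\;\kappa_R|x_*|-\big||x|-|x_*|\big|\;\leq\;\kappa_R|x|\,,
\]
the last step holding in both cases $|x|\gtrless|x_*|$ precisely because $-1\leq\kappa_R<1$ (equivalently $R\geq R_0$). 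With this supplied, every $(t,x)\in\mathcal K$ also satisfies $t-T\geq T\widetilde h(|x|/T)$ from $\overline{\mathrm{X}}^{1,d}_{T,R}$, and combining with the cone inequality and $|x_*|-t_*>r_0-t_0$ yields $|x|/T-\widetilde h(|x|/T)>\rho_0-\widetilde h(\rho_0)$, hence $|x|>r_0$ and $t\geq t_0$; this is the actual content of your phrase ``rules out points of $\mathcal K$ with $t<t_0$''. Once these two lines are written down, your argument is complete.
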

\begin{proof}
The assumptions on the similarity coordinates and \Cref{GeometryFoliation,GSCFoliation} imply that $\mathrm{X}^{1,d}_{T,R} \cap \Lambda^{1,d}(t_{0},r_{0})$ is nonempty and foliated by the spacelike hypersurfaces
\begin{equation*}
\Sigma(\tau) \coloneqq \Sigma^{1,d}_{T,R}(\tau) \cap \Lambda^{1,d}(t_{0},r_{0}) =
{\chi\mathstrut}_{T} \big( \{\tau\} \times (\BB^{d}_{R} \setminus \overline{\BB^{d}_{\rho(\tau)}}) \big) \,,
\end{equation*}
where $\rho_{0} \leq \rho(\tau) \leq R$ is defined implicitly through the equation
\begin{align*}
r(\tau) = r_{0} + t(\tau) - t_{0} \qquad\text{where}\qquad
t(\tau) &= T + T \ee^{-\tau} \widetilde{h}(\rho(\tau)) \,, \\
r(\tau) &= T \ee^{-\tau} \rho(\tau) \,,
\end{align*}
that is,
\begin{equation*}
\rho(\tau) = \ee^{\tau} \big( \rho_{0} - \widetilde{h}(\rho_{0}) \big) + \widetilde{h} \big( \rho(\tau) \big) \,.
\end{equation*}
We define the integral along $\Sigma(\tau)$ as in \Cref{IntegralHypersurface}. We recall the unit normal vector field of $\Sigma(\tau)$ from \eqref{FDUNVF} and the energy-momentum tensor associated to $\psi$ from \eqref{NRGMOM} and define an energy
\begin{equation*}
E_{\Sigma}[\psi](\tau) \coloneqq \int_{\Sigma(\tau)} \mathrm{T}_{0\mu} N^{\mu} =
(T\ee^{-\tau})^{d-2s-2} \frac{1}{2} \int_{\BB^{d}_{R} \setminus \overline{ \BB^{d}_{\rho(\tau)} } } \Big( |\pd_{\xi} u_{1}|^{2} + |u_{2}|^{2} w \Big) \,,
\end{equation*}
and
\begin{equation*}
u_{1}(\tau,\xi) = (T\ee^{-\tau})^{s} (\psi\circ{\chi\mathstrut}_{T})(\tau,\xi) \,, \qquad
u_{2}(\tau,\xi) = (T\ee^{-\tau})^{s+1} ( (\pd_{0} \psi) \circ{\chi\mathstrut}_{T} ) (\tau,\xi) \,.
\end{equation*}
A computation as in \Cref{DerivativeEnergy} and the Leibniz integral rule lead to the identity
\begin{align*}
&
\frac{\mathrm{d}}{\mathrm{d}\tau} E_{\Sigma}[\psi](\tau) = -(T\ee^{-\tau})^{d-s} \int_{\BB^{d}_{R} \setminus \overline{ \BB^{d}_{\rho(\tau)} } } \big( \Box \psi \circ {\chi\mathstrut}_{T} \big)(\tau,\,.\,) \big( c u_{2} \big) \\&\indent+
\frac{(T\ee^{-\tau})^{d-2s-2}}{R} \int_{\pd\BB^{d}_{R}} \Big( - \frac{|\xi|^{2}}{2} \big( |\pd_{\xi} u_{1}|^{2} + |u_{2}|^{2} w \big) - \xi^{i} (\pd_{i} h) c |u_{2}|^{2} + \xi^{i} (\pd_{i} u_{1}) ( c u_{2} ) \Big) \\&\indent-
\frac{(T\ee^{-\tau})^{d-2s-2}}{\rho(\tau)} \int_{\pd\BB^{d}_{\rho(\tau)}} \Big( - \frac{|\xi|^{2}}{2} \big( |\pd_{\xi} u_{1}|^{2} + |u_{2}|^{2} w \big) - \xi^{i} (\pd_{i} h) c |u_{2}|^{2} + \xi^{i} (\pd_{i} u_{1}) ( c u_{2} ) \Big) \\&\indent-
(T\ee^{-\tau})^{d-2s-2} \int_{\pd\BB^{d}_{\rho(\tau)}} \frac{\dot{\rho}(\tau)}{2} \big( |\pd_{\xi} u_{1}|^{2} + |u_{2}|^{2} w \big) \,.
\end{align*}
We find for the first boundary integral
\begin{align*}
&
\int_{\pd\BB^{d}_{R}} \Big( - \frac{|\xi|^{2}}{2} \big( |\pd_{\xi} u_{1}|^{2} + |u_{2}|^{2} w \big) - \xi^{i} (\pd_{i} h) c |u_{2}|^{2} + \xi^{i} (\pd_{i} u_{1}) ( c u_{2} ) \Big) \\&\indent\leq
\int_{\pd\BB^{d}_{R}} \Big( - \frac{|\xi|^{2}}{2} w - \xi^{i} (\pd_{i} h) c + \frac{1}{2} c^{2} \Big) |u_{2}|^{2} \\&\indent=
\frac{\widetilde{h}(R)^{2} - R^{2}}{2} \int_{\pd\BB^{d}_{R}} |u_{2}|^{2} \leq 0
\end{align*}
by \Cref{GSCLightCone} and since $R \geq R_{0}$. To estimate the remaining boundary integrals, we first compute via implicit differentiation
\begin{equation*}
\dot{\rho}(\tau) =
\frac{\rho(\tau) - \widetilde{h} \big( \rho(\tau) \big)}{1 - \widetilde{h}' \big( \rho(\tau) \big) } \,,
\qquad\text{thus}\qquad
\rho(\tau) - \dot{\rho}(\tau) = -\frac{\widetilde{c}\big( \rho(\tau) \big)}{1 - \widetilde{h}' \big( \rho(\tau) \big)} \leq \widetilde{h}(0) < 0 \,.
\end{equation*}
So we can estimate
\begin{equation*}
\big| \xi^{i} (\pd_{i} u_{1}) ( c u_{2} ) \big| \leq \frac{\rho(\tau) \dot{\rho}(\tau) - \rho(\tau)^{2}}{2} |\pd_{\xi}u_{1}|^{2} + \frac{1}{2} \frac{\rho(\tau) c^{2}}{\dot{\rho}(\tau) - \rho(\tau)} |u_{2}|^{2}
\end{equation*}
for $|\xi| = \rho(\tau)$ and get
\begin{align*}
&
\frac{1}{\rho(\tau)} \int_{\pd\BB^{d}_{\rho(\tau)}} \Big( \frac{\rho(\tau)^{2} - \rho(\tau)\dot{\rho}(\tau)}{2} \big( |\pd_{\xi} u_{1}|^{2} + |u_{2}|^{2} w \big) + \xi^{i} (\pd_{i} h) c |u_{2}|^{2} - \xi^{i} (\pd_{i} u_{1}) ( c u_{2} ) \Big) \\&\indent\leq
\int_{\pd\BB^{d}_{\rho(\tau)}} \Big( \frac{\rho(\tau) - \dot{\rho}(\tau)}{2} w + \widetilde{h}' c + \frac{1}{2} \frac{c^{2}}{\dot{\rho}(\tau) - \rho(\tau)} \Big) |u_{2}|^{2} = 0 \,.
\end{align*}
This implies
\begin{align*}
\frac{\mathrm{d}}{\mathrm{d}\tau} E_{\Sigma}[\psi](\tau) &\leq -(T\ee^{-\tau})^{d-s} \int_{\BB^{d}_{R} \setminus \overline{ \BB^{d}_{\rho(\tau)} } } \big( \Box \psi \circ {\chi\mathstrut}_{T} \big)(\tau,\,.\,) \big( c u_{2} \big) \\&\leq
E_{\Sigma}[\psi](\tau) +
\frac{1}{2} (T\ee^{-\tau})^{d - 2s - 2} \Big\| (T\ee^{-\tau})^{s+2} \sqrt{\tfrac{c^{2}}{w}} \big( \Box \psi \circ {\chi\mathstrut}_{T} \big)(\tau,\,.\,) \Big\|_{L^{2}( \BB^{d}_{R} \setminus \overline{ \BB^{d}_{\rho(\tau)} } )}^{2}
\end{align*}
for all $\psi \in C^{\infty} \big( \overline{\mathrm{X}}^{1,d}_{T,R} \big)$. Furthermore, we compute
\begin{align*}
&
\frac{\mathrm{d}}{\mathrm{d}\tau} \int_{\BB^{d}_{R} \setminus \overline{ \BB^{d}_{\rho(\tau)} } } |u_{1}|^{2} = \int_{\BB^{d}_{R} \setminus \overline{ \BB^{d}_{\rho(\tau)} } } \frac{1}{2} \pd_{\tau} |u_{1}|^{2} - \frac{\dot{\rho}(\tau)}{2} \int_{\pd \BB^{d}_{\rho(\tau)}} |u_{1}|^{2} \\&\indent=
\frac{1}{2} (d-2s) \int_{\BB^{d}_{R} \setminus \overline{ \BB^{d}_{\rho(\tau)} } } |u_{1}|^{2} + \int_{\BB^{d}_{R} \setminus \overline{ \BB^{d}_{\rho(\tau)} } } \big( c u_{1} u_{2} \big) -
\frac{R}{2} \int_{\pd\BB^{d}_{R}} |u_{1}|^{2} + \frac{\rho(\tau) - \dot{\rho}(\tau)}{2} \int_{\pd\BB^{d}_{\rho(\tau)}} |u_{1}|^{2} \,.
\end{align*}
This yields for the total energy
\begin{equation*}
E[\psi](\tau) \coloneqq \frac{1}{2} \int_{\BB^{d}_{R} \setminus \overline{ \BB^{d}_{\rho(\tau)} } } \Big( |u_{1}|^{2} + |\pd_{\xi} u_{1}|^{2} + |u_{2}|^{2} w \Big)
\end{equation*}
the estimate
\begin{align}
\label{TotalEnergyEstimate}
\frac{\mathrm{d}}{\mathrm{d}\tau} E[\psi](\tau) &\lesssim E[\psi](\tau) + \big\| (T\ee^{-\tau})^{s+2} \big( \Box \psi \circ {\chi\mathstrut}_{T} \big)(\tau,\,.\,) \big\|_{L^{2}( \BB^{d}_{R} \setminus \overline{ \BB^{d}_{\rho(\tau)} } )}^{2}
\end{align}
for all $\psi \in C^{\infty} \big( \overline{\mathrm{X}}^{1,d}_{T,R} \big)$ and all $0 \leq \tau \leq \rho^{-1}(R)$. Now, we apply the energy estimate \eqref{TotalEnergyEstimate} to the difference of the solutions $\psi^{\mathrm{I}},\psi^{\mathrm{I\!I}}$ to the semilinear wave equation. Since $F \in C^{\infty}(\RR^{d} \times \RR)$ is locally Lipschitz continuous in the second argument, we have
\begin{align*}
\big\| (T\ee^{-\tau})^{s+2} \big( \Box ( \psi^{\mathrm{I}} - \psi^{\mathrm{I\!I}} ) \circ {\chi\mathstrut}_{T} \big)(\tau,\,.\,) \big\|_{L^{2}( \BB^{d}_{R} \setminus \overline{ \BB^{d}_{\rho(\tau)} } )}^{2} &= \big\| F \big( \,.\,, u_{1}^{\mathrm{I}} (\tau,\,.\,) \big) - F \big( \,.\,, u_{1}^{\mathrm{I\!I}} (\tau,\,.\,) \big) \big\|_{L^{2}( \BB^{d}_{R} \setminus \overline{ \BB^{d}_{\rho(\tau)} } )}^{2} \\&\lesssim
E[ \psi^{\mathrm{I}} - \psi^{\mathrm{I\!I}} ](\tau)
\end{align*}
for all $0 \leq \tau \leq \rho^{-1}(R)$. Gr\"{o}nwall's inequality implies $E[ \psi^{\mathrm{I}} - \psi^{\mathrm{I\!I}} ](\tau) = 0$ from which we conclude the assertion of the lemma.
\end{proof}
We employ a fixed height function $h \in C^{\infty}_{\mathrm{rad}}(\RR^{d})$ as in \Cref{SimilarityCoordinatesFlat} and are set to translate our nonlinear stability theory back to the classical setting.
\begin{proof}[Proof of \Cref{THM}]
Pick $\delta^{*} > 0$ and $C^{*} \geq 1$ from \Cref{GlobalMildSolution} and let $0 < \delta \leq \delta^{\ast}$ and $C \geq C^{\ast}$ be arbitrary but fixed. Then, for any data $(f,g) \coloneqq \bm{f} \in C^{\infty}_{\mathrm{rad}}(\RR^{d})^{2}$ with $\operatorname{supp}(f,g) \subset \BB^{d}_{r}$ and $\| (f,g) \|_{H^{k}(\RR^{d}) \times H^{k-1}(\RR^{d})} \leq \delta / C^{2}$ there exist a blowup time $T^{\ast} \in [1-\frac{\delta}{C},1+\frac{\delta}{C}]$ and a unique solution
\begin{equation*}
\mathbf{u}_{\bm{f}, T^{\ast}} \in C^{\infty} \big( \overline{ (0,\infty) \times \BB^{d}_{R}} \big)^{2}
\qquad\text{with}\qquad
\| \mathbf{u}_{\bm{f}, T^{\ast}} (\tau) \|_{\mathfrak{H}^{k}(\BB^{d}_{R})} \leq \delta \ee^{-\omega \tau}
\end{equation*}
to the equation
\begin{equation*}
\renewcommand{\arraystretch}{1.2}
\left\{
\begin{array}{rcl}
\pd_{\tau} \mathbf{u}_{\bm{f}, T^{\ast}}(\tau, \,.\,) &=& \mathbf{L} \mathbf{u}_{\bm{f}, T^{\ast}}(\tau, \,.\,) + \mathbf{N} ( \mathbf{u}_{\bm{f}, T^{\ast}}(\tau, \,.\,) ) \,, \\
\mathbf{u}_{\bm{f}, T^{\ast}}(0, \,.\,) &=& \mathbf{U}(\bm{f},T^{\ast}) \,,
\end{array}
\right.
\end{equation*}
for all $\tau \geq 0$, where the initial data $\mathbf{U}(\bm{f},T^{\ast})$ are given in \Cref{InitialDataOperator}. Fix similarity coordinates ${\chi\mathstrut}_{T^{\ast}}: \RR \times \RR^{d} \rightarrow \RR^{1,d}$ for the given height function and recall from \Cref{RemarkImageRegion,RemarkSHSF,RemarkFlatRegion} the spacetime regions
\begin{align*}
\Omega^{1,d}_{T^{\ast}}(\kappa) &= \big\{ (t,x) \in \RR^{1,d} \mid 0 < t < T^{\ast} + \kappa |x| \big\} \,, \\
\mathrm{X}^{1,d}_{T^{\ast}}(\kappa) &= \big\{ (t,x) \in \RR^{1,d} \mid T^{\ast} + T^{\ast} h \big( \tfrac{x}{T^{\ast}}\big) < t < T^{\ast} + \kappa |x| \big\} \,,
\end{align*}
also see \Cref{FigHSC}. With this, we define
\begin{equation*}
u \in C^{\infty} \big( \overline{\mathrm{X}}^{1,d}_{T^{\ast}}(\kappa) \big)
\qquad\text{through}\qquad
(u \circ {\chi\mathstrut}_{T^{\ast}})(\tau,\xi) = \Big( \frac{\ee^{\tau}}{T^{\ast}} \Big)^{s} [\mathbf{u}_{\bm{f}, T^{\ast}}]_{1}(\tau,\xi)
\end{equation*}
and
\begin{equation}
\label{SolutionCoordinateRegion}
\psi \in C^{\infty} \big( \overline{\mathrm{X}}^{1,d}_{T^{\ast}}(\kappa) \big)
\qquad\text{by}\qquad
\psi = \psi^{\ast}_{T^{\ast}} + u \,.
\end{equation}
Note that $u(t,\,.\,)$ and $\psi(t,\,.\,)$ are smooth and radially symmetric. \Cref{NonlinearTransitionRelation} immediately gives
\begin{equation*}
\Box \psi + F(\,.\,,\psi) = \Box u + \mathcal{V}_{T^{\ast}} u + \mathcal{N}_{T^{\ast}}(u) = 0 \quad \text{in } \mathrm{X}^{1,d}_{T^{\ast}}(\kappa) \,.
\end{equation*}
By construction of the initial data for the abstract Cauchy problem, the evaluation of $\psi, \pd_{0} \psi$ along the initial hypersurface
\begin{align*}
\Sigma^{1,d}_{T^{\ast}}(0) &= {\chi\mathstrut}_{T^{\ast}} \big( \{0\} \times \overline{ \BB^{d}_{R} } \big) \\&=
\big\{ (t_{0}(x),x) \in \RR^{1,d} \mid x\in \overline{\BB^{d}_{T^{\ast}R}} \big\} \,, \quad \text{where}\quad t_{0}(x) = T^{\ast} + T^{\ast} h \big( \tfrac{x}{T^{\ast}} \big) \,,
\end{align*}
yields
\begin{align}
\nonumber
\psi(t_{0}(x),x) &= \big( \psi \circ {\chi\mathstrut}_{T^{\ast}} \big) \big(0,\tfrac{x}{T^{\ast}} \big) \\\nonumber&=
\big( \psi^{\ast}_{T^{\ast}} \circ {\chi\mathstrut}_{T^{\ast}} \big) \big(0,\tfrac{x}{T^{\ast}} \big) + (T^{\ast})^{-s} [\mathbf{u}_{\bm{f}, T^{\ast}}]_{1}\big(0,\tfrac{x}{T^{\ast}} \big) \\\label{psionSigma0}&=
f(x) + \psi^{\ast}_{1} \big( t_{0}(x),x \big) \,,
\end{align}
and
\begin{align}
\nonumber
(\pd_{0} \psi)(t_{0}(x),x) &= \big( (\pd_{0} \psi) \circ {\chi\mathstrut}_{T^{\ast}} \big) \big(0,\tfrac{x}{T^{\ast}} \big) \\\nonumber&=
\big( (\pd_{0}\psi^{\ast}_{T^{\ast}}) \circ {\chi\mathstrut}_{T^{\ast}} \big) \big(0,\tfrac{x}{T^{\ast}} \big) + (T^{\ast})^{-s-1} [\mathbf{u}_{\bm{f}, T^{\ast}}]_{2}\big(0,\tfrac{x}{T^{\ast}} \big) \\\label{pd0psionSigma0}&=
g(x) + (\pd_{0}\psi^{\ast}_{1}) \big( t_{0}(x),x \big) \,.
\end{align}
Since $\operatorname{supp}(f,g) \subset \BB^{d}_{r}$ and $h \equiv -1$ in $\BB^{d}_{r+\varepsilon}$, we have that $\psi$, $\pd_{0} \psi$ coincides with $\psi^{\ast}_{1}$, $\pd_{0}\psi^{\ast}_{1}$ on $\Sigma^{1,d}_{T^{\ast}}(0) \cap \Lambda^{1,d}_{T^{\ast}}(r)$, respectively, where
\begin{equation*}
\Lambda^{1,d}_{T^{\ast}}(r) = \{ (t,x) \in \Omega^{1,d}_{T^{\ast}}(\kappa) \mid 0 < t < |x| - r \} \,.
\end{equation*}
Finite speed of propagation in the version of \Cref{FiniteSpeedOfPropagation} implies that $\psi$ coincides with $\psi^{\ast}_{1}$ also in the region $\mathrm{X}^{1,d}_{T^{\ast}}(\kappa) \cap \Lambda^{1,d}_{T^{\ast}}(r)$. With this, let us extend the solution \eqref{SolutionCoordinateRegion} to
\begin{equation*}
\psi: \Omega^{1,d}_{T^{\ast}}(\kappa) \rightarrow \RR
\qquad\text{by}\qquad
\psi \coloneqq
\renewcommand{\arraystretch}{1.2}
\left\{
\begin{array}{ll}
\psi^{\ast}_{T^{\ast}} + u & \text{in } \mathrm{X}^{1,d}_{T^{\ast}}(\kappa) \,, \\
\psi^{\ast}_{1} & \text{in } \Omega^{1,d}_{T^{\ast}}(\kappa) \setminus \mathrm{X}^{1,d}_{T^{\ast}}(\kappa) \,.
\end{array}
\right.
\end{equation*}
We check that $\psi$ is smooth in the whole domain $\Omega^{1,d}_{T^{\ast}}(\kappa)$ by proving that $\psi$ defines a smooth map in an open neighbourhood of $\Sigma^{1,d}_{T^{\ast}}(0) \setminus \big( \{ 0 \} \times \RR^{d} \big)$, also see \Cref{Fig_SolutionConstruction}. By design of the similarity coordinates,
\begin{equation*}
\Lambda^{1,d}_{T^{\ast}}(r) = \big( \mathrm{X}^{1,d}_{T^{\ast}}(\kappa) \cap \Lambda^{1,d}_{T^{\ast}}(r) \big) \cup \big( \Omega^{1,d}_{T^{\ast}}(\kappa) \setminus \mathrm{X}^{1,d}_{T^{\ast}}(\kappa) \big)
\end{equation*}
is a relatively open region in $\Omega^{1,d}_{T^{\ast}}(\kappa)$ which contains $\Sigma^{1,d}_{T^{\ast}}(0) \setminus \big( \{ 0 \} \times \RR^{d} \big)$ and from above we infer that $\psi$ and $\psi^{\ast}_{1}$ coincide in $\Lambda^{1,d}_{T^{\ast}}(r)$. It follows that $\psi \in C^{\infty} \big( \Omega^{1,d}_{T^{\ast}}(\kappa) \big)$. As $t_{0}(x) = 0$ for $|x| < r$, \Cref{psionSigma0,pd0psionSigma0} show that $\psi$ satisfies the initial data for the Cauchy problem at $t=0$ as well. Hence $\psi$ is the unique solution to the Cauchy problem in the theorem. Finally, we verify the stability estimate. Since
\begin{align*}
(\psi^{\ast}_{T^{\ast}} \circ {\chi\mathstrut}_{T^{\ast}})(\tau,\xi) &= \Big(\frac{\ee^{\tau}}{T^{\ast}} \Big)^{s} \psi^{\ast}(-h(\xi),\xi) \,, \\
(\pd_{0} \psi^{\ast}_{T^{\ast}} \circ {\chi\mathstrut}_{T^{\ast}})(\tau,\xi) &= - \Big(\frac{\ee^{\tau}}{T^{\ast}} \Big)^{s+1} \pd_{0} \psi^{\ast}(-h(\xi),\xi) \,,
\end{align*}
the stability estimate follows directly from
\begin{align*}
\frac{\big\| \big( \psi - \psi^{\ast}_{T^{\ast}} \big) \circ {\chi\mathstrut}_{T^{\ast}}(\tau,\,.\,) \big\|_{H^{k}(\BB^{d}_{R})}}{\big\| \psi^{\ast}_{T^{\ast}} \circ {\chi\mathstrut}_{T^{\ast}} (\tau,\,.\,) \big\|_{H^{k}(\BB^{d}_{R})}} &=
\frac{\| [\mathbf{u}_{\bm{f}, T^{\ast}}]_{1}(\tau,\,.\,) \|_{H^{k}(\BB^{d}_{R})}}{\| \psi^{\ast}(-h(\,.\,), \,.\,) \|_{H^{k}(\BB^{d}_{R})}} \lesssim \delta \ee^{-\omega \tau} \,, \\
\frac{\big\| \big( \pd_{0} \psi - \pd_{0}\psi^{\ast}_{T^{\ast}} \big) \circ {\chi\mathstrut}_{T^{\ast}}(\tau,\,.\,) \big\|_{H^{k-1}(\BB^{d}_{R})}}{\big\| (\pd_{0}\psi^{\ast}_{T^{\ast}}) \circ {\chi\mathstrut}_{T^{\ast}} (\tau,\,.\,) \big\|_{H^{k-1}(\BB^{d}_{R})}} &=
\frac{\| [\mathbf{u}_{\bm{f}, T^{\ast}}]_{2}(\tau,\,.\,) \|_{H^{k-1}(\BB^{d}_{R})}}{\| (\pd_{0}\psi^{\ast})(-h(\,.\,), \,.\,) \|_{H^{k-1}(\BB^{d}_{R})}} \lesssim \delta \ee^{-\omega \tau} \,,
\end{align*}
for all $\tau \geq 0$. If $k > \frac{d}{2}$, we get additionally from Sobolev embedding
\begin{equation*}
\sup_{(t',x') \in \Sigma^{1,d}_{T^{\ast},R}(\tau)} \frac{|\psi(t',x') - \psi^{\ast}_{T^{\ast}}(t',x')|}{|\psi^{\ast}_{T^{\ast}}(t',x')|} \lesssim \big( T^{\ast} \ee^{-\tau} \big)^{s} \big\| \big( \psi - \psi^{\ast}_{T^{\ast}} \big) \circ {\chi\mathstrut}_{T^{\ast}}(\tau,\,.\,) \big\|_{L^{\infty}(\BB^{d}_{R})} \lesssim \delta \ee^{-\omega \tau}
\end{equation*}
for all $\tau \geq 0$.
\end{proof}
\begin{figure}
\centering
\includegraphics[width=\textwidth]{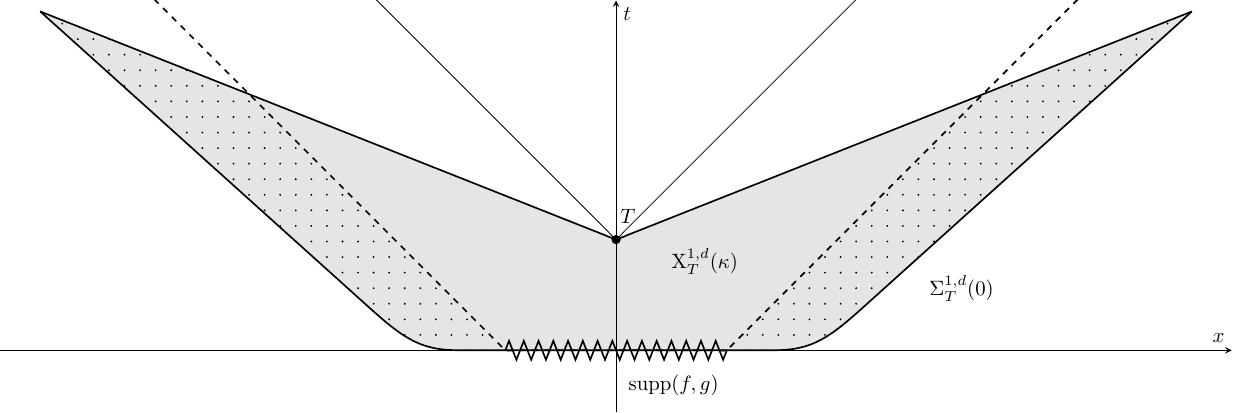}
\caption{Spacetime regions involved in the construction of the solution. The gray region illustrates the image region $\mathrm{X}^{1,d}_{T}(\kappa)$ of similarity coordinates from \Cref{SimilarityCoordinatesFlat}. This is where the stable solution $u$ has been constructed from initial data prescribed along the initial hypersurface $\Sigma^{1,d}_{T}(0)$. The zigzag line marks the support of the perturbations $f,g$ in the initial data. In the dotted region $\mathrm{X}^{1,d}_{T} \cap \Lambda^{1,d}_{T}(r)$, \Cref{FiniteSpeedOfPropagation} yields that $\psi^{\ast}_{1}$ is the unique solution.}
\label{Fig_SolutionConstruction}
\end{figure}
\subsection{Proof of Theorems \ref{THMWM} and \ref{THMYM}}
Before we conclude the stability theorem for the corotational wave maps equation, we prove a radial lemma which relates the Sobolev norm of a corotational wave map to the Sobolev norm of its profile.
\begin{lemma}
\label{CorotDataBound}
Let $n,k\in \NN$ with $n \geq 3$ and $k \geq \frac{n}{2} + 1$. Let
\begin{equation*}
U^{\ast}_{1}[0] \coloneqq \big( U^{\ast}_{1}(0,\,.\,), \pd_{0} U^{\ast}_{1}(0,\,.\,) \big) : \RR^{n} \rightarrow \mathrm{T}\mathbb{S}^{n} \subset \RR^{n+1} \times \RR^{n+1}
\end{equation*}
be the corotational initial data of the wave map \eqref{WaveMapsBlowup} with profile
\begin{equation*}
\widetilde{\psi}^{\ast}_{1}[0] \coloneqq \big( \widetilde{\psi}^{\ast}_{1}(0,|\,.\,|), \pd_{0} \widetilde{\psi}^{\ast}_{1}(0,|\,.\,|) \big) : \RR^{n} \rightarrow \RR \times \RR
\end{equation*}
given in \Cref{SelfSimilarSolutionsIntro}. Let $r > 0$ and $0 < \varepsilon_{r} \leq \frac{2(n-2)}{(n-2) + r^{2}}$. Then, there is a continuous function $C_{n,k,r,\varepsilon_{r}}: [0,\infty) \rightarrow (0,\infty)$ such that
\begin{align*}
&
\big\| \big( \widetilde{f}(|\,.\,|),\widetilde{g}(|\,.\,|) \big) - \widetilde{\psi}^{\ast}_{1}[0] \big\|_{H^{k}(\RR^{n+2}) \times H^{k-1}(\RR^{n+2})} \\&\indent\indent
\leq C_{n,k,r,\varepsilon_{r}} \Big( \big\| \big( F,G \big) - U^{\ast}_{1}[0] \big\|_{H^{k}(\RR^{n})\times H^{k-1}(\RR^{n})} \Big) \big\| \big( F,G \big) - U^{\ast}_{1}[0] \big\|_{H^{k}(\RR^{n})\times H^{k-1}(\RR^{n})}
\end{align*}
for all smooth corotational data $(F,G) : \RR^{n} \rightarrow \mathrm{T}\mathbb{S}^{n} \subset \RR^{n+1} \times \RR^{n+1}$ given by
\begin{equation*}
F(x) =
\begin{pmatrix}
|x|^{-1} \sin \big( |x| \widetilde{f}(|x|) \big) x \\
\cos \big( |x| \widetilde{f}(|x|) \big)
\end{pmatrix}
\,, \qquad
G(x) =
\begin{pmatrix}
\cos \big( |x| \widetilde{f}(|x|) \big) \widetilde{g}(|x|) x \\
-|x| \sin \big( |x| \widetilde{f}(|x|) \big) \widetilde{g}(|x|)
\end{pmatrix}
\,,
\end{equation*}
with $\widetilde{f},\widetilde{g} \in C^{\infty}_{\mathrm{ev}}([0,\infty))$ and
\begin{equation*}
\renewcommand{\arraystretch}{1.2}
\begin{array}{rcll}
(F,G) &=& U^{\ast}_{1}[0] & \text{in } \RR^{n} \setminus \BB^{n}_{r} \,, \\
F^{n+1} &\geq& -1 + \varepsilon_{r} & \text{in } \overline{\BB^{n}_{r}} \,.
\end{array}
\end{equation*}
\end{lemma}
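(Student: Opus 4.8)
The plan is to reduce the statement to the closed ball $\overline{\BB^{n+2}_{r}}$, to express the profiles $\widetilde f,\widetilde g$ as explicit nonlinear superposition operators in the Cartesian components of the data, and then to conclude by combining the radial Sobolev lemmas of \cite{2022arXiv220902286O} (which handle inverse powers of $|x|$ and the change of the ambient dimension from $n$ to $n+2$) with the Moser-type estimates already established in \Cref{LocLip}.

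First I would observe that, since $(F,G)=U^{\ast}_{1}[0]$ on $\RR^{n}\setminus\BB^{n}_{r}$ and everything in sight is smooth, the hypothesis $F^{n+1}=\cos(|\,.\,|\,\widetilde f(|\,.\,|))\geq-1+\varepsilon_{r}$ on $\overline{\BB^{n}_{r}}$ keeps the angle $\phi(\rho):=\rho\,\widetilde f(\rho)$ away from the odd multiples of $\pi$ on $[0,r]$; since $\phi$ is continuous with $\phi(0)=0$, this confines $\phi$ to a fixed interval $(-\pi+\delta,\pi-\delta)$ with $\delta=\delta(\varepsilon_{r})>0$, and comparison at $\rho=r$ with the explicit blowup angle $\rho\,\widetilde\psi^{\ast}_{1}(0,\rho)\in(0,\pi)$, propagated outward, forces $\widetilde f=\widetilde\psi^{\ast}_{1}(0,\,.\,)$ and $\widetilde g=\pd_{0}\widetilde\psi^{\ast}_{1}(0,\,.\,)$ on $[r,\infty)$ together with all derivatives at $\rho=r$. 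Hence the two differences in the conclusion are smooth, radial, and supported in $\overline{\BB^{n+2}_{r}}$, and it suffices to bound their $H^{k}(\BB^{n+2}_{r})$, resp.\ $H^{k-1}(\BB^{n+2}_{r})$, norm. On $\BB^{n}_{r}$ I would recover the profiles algebraically: from $\sum_{i=1}^{n}F^{i}(x)x^{i}=|x|\sin\phi(|x|)$ and $F^{n+1}(x)=\cos\phi(|x|)$ the half-angle identity gives
\[
\widetilde f(|x|)=\frac{\phi(|x|)}{|x|}=\frac{2}{|x|}\arctan\!\Big(\frac{\sum_{i=1}^{n}F^{i}(x)x^{i}}{|x|\,(1+F^{n+1}(x))}\Big),
\]
which is legitimate precisely because $1+F^{n+1}\geq\varepsilon_{r}>0$ on $\BB^{n}_{r}$, and a short computation using $G$ yields
\[
\widetilde g(|x|)=\frac{F^{n+1}(x)\sum_{i=1}^{n}G^{i}(x)x^{i}-\big(\sum_{i=1}^{n}F^{i}(x)x^{i}\big)G^{n+1}(x)}{|x|^{2}} .
\]
These formulas define nonlinear operators $\mathcal F,\mathcal G$ on the component functions that make sense for arbitrary data, not only sphere-valued maps, as long as $1+F^{n+1}$ stays positive; writing $\arctan y=y\cdot\tfrac{\arctan y}{y}$ and using that $\sum_{i}F^{i}x^{i}=|x|\sin(|x|\widetilde f(|x|))$ vanishes to second order at the origin, one checks that $\mathcal F(F),\mathcal G(F,G)$ are genuine even profiles with bounds controlled by $\varepsilon_{r}^{-1}$ and the fixed blowup profile.

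Next, to produce the extra factor of the difference, I would interpolate linearly: for $t\in[0,1]$ set $(F_{t},G_{t})=U^{\ast}_{1}[0]+t\,((F,G)-U^{\ast}_{1}[0])$. Since the last component of $U^{\ast}_{1}(0,\,.\,)$ equals $\cos(|\,.\,|\,\widetilde\psi^{\ast}_{1}(0,|\,.\,|))$ and a direct computation gives $1+\cos(|x|\widetilde\psi^{\ast}_{1}(0,|x|))=\tfrac{2(n-2)}{(n-2)+|x|^{2}}$, which is decreasing in $|x|$, on $\BB^{n}_{r}$ the last component of $F_{t}$ satisfies $1+F_{t}^{n+1}\geq(1-t)\tfrac{2(n-2)}{(n-2)+r^{2}}+t\,\varepsilon_{r}\geq\varepsilon_{r}$ for every $t$ — this is exactly the role of the hypothesis $\varepsilon_{r}\leq\tfrac{2(n-2)}{(n-2)+r^{2}}$. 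Hence $\mathcal F,\mathcal G$ and their differentials are well-defined and uniformly bounded along the whole segment, and the fundamental theorem of calculus gives $\widetilde f(|\,.\,|)-\widetilde\psi^{\ast}_{1}(0,|\,.\,|)=\int_{0}^{1}(D\mathcal F)(F_{t},G_{t})\big[(F,G)-U^{\ast}_{1}[0]\big]\dd t$ and an analogous expression for $\widetilde g$. This is linear in $(F,G)-U^{\ast}_{1}[0]$ with a coefficient depending continuously on $\sup_{t\in[0,1]}\|(F_{t},G_{t})\|$, hence on $\|(F,G)-U^{\ast}_{1}[0]\|_{H^{k}(\RR^{n})\times H^{k-1}(\RR^{n})}$, which is the claimed form of the estimate.

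It remains to estimate the integrand in $H^{k}(\BB^{n+2}_{r})\times H^{k-1}(\BB^{n+2}_{r})$. Each of $(D\mathcal F)[\,.\,]$, $(D\mathcal G)[\,.\,]$ is assembled from (i) bounded smooth functions of the components composed with $(F_{t},G_{t})$ (the $\arctan$ and its derivatives, evaluated where $1+F_{t}^{n+1}\geq\varepsilon_{r}$, hence with $C^{\infty}_{\mathrm{b}}$-bounds depending only on $\varepsilon_{r}$ and the fixed profile), (ii) products of such functions with components of the data, and (iii) division of a smooth radial function, vanishing at the origin to the appropriate order, by $|x|$ or $|x|^{2}$. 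For (i) and (ii) I would run the same H\"older/Sobolev and composition bookkeeping as in the proof of \Cref{LocLip}, which also covers the borderline case $k=(n+2)/2$; for (iii), which additionally effects the dimension shift $n\to n+2$, I would invoke the radial Sobolev lemmas of \cite{2022arXiv220902286O}, designed precisely to trade inverse powers of $|x|$ against the vanishing order and to compare radial Sobolev norms across dimensions. Assembling the blocks and integrating over $t$ yields the inequality. I expect the main obstacle to be not the composition/product estimates — routine given \Cref{LocLip} — but the careful application of the radial lemmas of \cite{2022arXiv220902286O} to each structural block of $D\mathcal F$ and $D\mathcal G$, and, more conceptually, that $\mathcal F,\mathcal G$ must be estimated on the interpolants $(F_{t},G_{t})$, which are \emph{not} sphere-valued and for which the clean algebraic structure available for genuine corotational maps has to be recovered from the second-order vanishing of $\sum_{i}F_{t}^{i}x^{i}$; keeping the entire segment inside the regime $1+F_{t}^{n+1}\geq\varepsilon_{r}$ is exactly why the sharp value of $\varepsilon_{r}$ enters.
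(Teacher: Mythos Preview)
Your approach is sound but takes a more circuitous route than the paper's. The paper avoids the interpolation $(F_{t},G_{t})$ entirely by finding \emph{per-component} identities that already display $x^{i}\widetilde f(|x|)$ and $x^{i}\widetilde g(|x|)$ as smooth superpositions of the Cartesian data: with $\varphi(z)=\arccos(z)/\sqrt{1-z^{2}}$ one has
\[
x^{i}\,\widetilde f(|x|)=\varphi\big(F^{n+1}(x)\big)\,F^{i}(x),\qquad x^{i}\,\widetilde g(|x|)=F^{n+1}(x)\,G^{i}(x)-F^{i}(x)\,G^{n+1}(x),
\]
and crucially $\varphi$ extends analytically past $z=1$ (via $\varphi(z)=\int_{0}^{1}(1+\zeta^{2}(z^{2}-1))^{-1/2}\dd\zeta$ near $z=1$), so all its derivatives are uniformly bounded on $[-1+\varepsilon_{r},1]$. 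The radial lemma \cite[Theorem~1.3]{2022arXiv220902286O} then gives the dimension shift directly,
\[
\big\|\widetilde f(|\,.\,|)-\widetilde\psi^{\ast}_{1}(0,|\,.\,|)\big\|_{H^{k}(\RR^{n+2})}\simeq\sum_{i=1}^{n}\big\|(\,.\,)^{i}\big(\widetilde f(|\,.\,|)-\widetilde\psi^{\ast}_{1}(0,|\,.\,|)\big)\big\|_{H^{k}(\RR^{n})},
\]
and each summand is estimated by adding and subtracting $\varphi(F^{n+1})\,U^{\ast i}_{1}(0,\,.\,)$ and invoking the algebra/Moser bounds as in \Cref{LocLip}. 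Your half-angle formula and your summed expression for $\widetilde g$ are algebraically equivalent but force explicit divisions by $|x|$ and $|x|^{2}$; this is why you then need the fundamental theorem of calculus along the segment and must confront the fact that the interpolants are not sphere-valued. In the paper's version the identities are applied only to the two genuinely corotational endpoints $F$ and $U^{\ast}_{1}(0,\,.\,)$, so that difficulty never arises. Your route does make transparent why the upper bound on $\varepsilon_{r}$ is the natural one (it keeps the whole convex segment inside the domain where the recovery map is well defined), but the per-component identities buy the paper a considerably shorter argument with no structural obstacle to check.
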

\begin{proof}
First, note that
\begin{equation}
\label{CorotationalMapExtended}
F^{i}(x) = \operatorname{si} \big( \big| \widetilde{f}(|x|) x \big| \big) \widetilde{f}(|x|) x^{i}
\qquad\text{and}\qquad
F^{n+1}(x) = \cos \big( \big| \widetilde{f}(|x|) x \big| \big)
\end{equation}
in terms of the analytic sinus cardinalis function given by $\operatorname{si}(\rho) = \sin(\rho) / \rho$ with $\operatorname{si}(0) = 1$. Since $F^{n+1} > -1$, we have $0 \leq \big| \widetilde{f}(|x|) x \big| < \pi$ for all $|x| \leq r$ by continuity. Thus, the identity $\big| \widetilde{f}(|x|) x \big| = \arccos( F^{n+1}(x) )$ holds, implying with \Cref{CorotationalMapExtended}
\begin{equation*}
\widetilde{f}(|x|) x^{i} = \varphi( F^{n+1}(x) ) F^{i}(x)
\end{equation*}
for $x \in \RR^{d}$, where
\begin{equation*}
\varphi : (-1,1) \rightarrow \RR \,, \qquad z \mapsto \frac{\arccos(z)}{\sqrt{1-z^{2}}} \,.
\end{equation*}
For $z \in (0,1)$, we note that
\begin{equation*}
\varphi(z) = \frac{\arccos(z)}{\sqrt{1-z^{2}}} = \frac{\arcsin(\sqrt{1-z^{2}})}{\sqrt{1-z^{2}}} = \int_{0}^{1} \frac{1}{\sqrt{1 + \zeta^{2} (z^{2} - 1)}} \dd \zeta
\end{equation*}
and get that $\varphi$ has an analytic continuation beyond $z=1$. Therefore, all derivatives of $\varphi$ are bounded near $z = 1$. Moreover, the identity
\begin{equation*}
\widetilde{g}(|x|) x^{i} = F^{n+1}(x) G^{i}(x) - F^{i}(x) G^{n+1}(x)
\end{equation*}
holds. Now, we conclude with \cite[Theorem 1.3]{2022arXiv220902286O}
\begin{align*}
&
\big\| \widetilde{f}(|\,.\,|) - \widetilde{\psi}^{\ast}_{1}(0,|\,.\,|) \big\|_{H^{k}(\RR^{n+2})} \simeq
\sum_{i=1}^{n} \big\| (\,.\,)^{i} \big( \widetilde{f}(|\,.\,|) - \widetilde{\psi}^{\ast}_{1}(0,|\,.\,|) \big) \big\|_{H^{k}(\RR^{n})} \\&\indent \lesssim
\sum_{i=1}^{n} \big\| \varphi(F^{n+1}) \big\|_{H^{k}(\RR^{n})} \big\| F^{i} - {U^{\ast}_{1}}^{i}(0,\,.\,) \big\|_{H^{k}(\RR^{n})} +
\big\| \varphi(F^{n+1}) - \varphi( {U^{\ast}_{1}}^{n+1}(0,\,.\,) ) \big\|_{H^{k}(\RR^{n})}
\end{align*}
and
\begin{align*}
&
\big\| \widetilde{g}(|\,.\,|) - \pd_{0}\widetilde{\psi}^{\ast}_{1}(0,|\,.\,|) \big\|_{H^{k-1}(\RR^{n+2})} \simeq
\sum_{i=1}^{n} \big\| (\,.\,)^{i} \big( \widetilde{g}(|\,.\,|) - \pd_{0}\widetilde{\psi}^{\ast}_{1}(0,|\,.\,|) \big) \big\|_{H^{k-1}(\RR^{n})} \\&\indent\lesssim
\sum_{i=1}^{n} \Bigg( \big\| F^{n+1}\big( G^{i} - \pd_{0} {U^{\ast}_{1}}^{i}(0,\,.\,) \big) \big\|_{H^{k-1}(\RR^{n})} + \big\| F^{i}\big( G^{n+1} - \pd_{0} {U^{\ast}_{1}}^{n+1}(0,\,.\,) \big) \big\|_{H^{k-1}(\RR^{n})} \\&\indent+ \big\| \big( F^{n+1} - {U^{\ast}_{1}}^{n+1}(0,\,.\,) \big) \pd_{0} {U^{\ast}_{1}}^{i}(0,\,.\,) \big\|_{H^{k-1}(\RR^{n})} + \big\| \big( F^{i} - {U^{\ast}_{1}}^{i}(0,\,.\,) \big) \pd_{0} {U^{\ast}_{1}}^{n+1}(0,\,.\,) \big\|_{H^{k-1}(\RR^{n})} \Bigg)
\end{align*}
for all corotational $(F,G):\RR^{n} \rightarrow \mathrm{T}\mathbb{S}^{n} \subset \RR^{n+1} \times \RR^{n+1}$ as in the assumption. We remark that the assumption $F^{n+1} \geq -1 + \varepsilon_{r}$ ensures a uniform bound for terms of the form $\varphi^{(\ell)} \circ F^{n+1}$ while the upper bound for $\varepsilon_{r} > 0$ leaves enough room for $F^{n+1}$ to be arbitrarily close to
\begin{equation*}
{U_{1}^{\ast}}^{n+1}(0,x) = \frac{(n-2) - |x|^{2}}{(n-2) + |x|^{2}} = - 1 + \frac{2(n-2)}{(n-2) + |x|^{2}} \,.
\end{equation*}
From here, the bounds follow from an application of H\"{o}lder's inequality and Sobolev embedding like in the proof of \Cref{LocLip}.
\end{proof}
The proofs of the main theorems are an application of \Cref{THM}.
\begin{proof}[Proof of \Cref{THMWM}]
First, pick $\delta^{\ast} > 0$ and $C^{\ast} \geq 1$ from \Cref{THM}. There is a constant $C \geq C^{\ast}$ such that the function from \Cref{CorotDataBound} satisfies $C_{n,k,r,\varepsilon_{r}} \leq C$ on $[0, \delta^{\ast} / (C^{\ast})^{3}]$. Put $M \coloneqq C^{3}$ and let $0 < \delta \leq \delta^{\ast}$. Let $(F,G):\RR^{n} \rightarrow \mathrm{T}\mathbb{S}^{n} \subset \RR^{n+1} \times \RR^{n+1}$ with $f,g \in C^{\infty}_{\mathrm{rad}}(\RR^{n})$ be data as in the assumption of the theorem. Let $\widetilde{f}, \widetilde{g} \in C^{\infty}_{\mathrm{ev}}([0,\infty))$ be the respective radial profile of $f,g \in C^{\infty}_{\mathrm{rad}}(\RR^{n})$. We define $f_{0}, g_{0} \in C^{\infty}_{\mathrm{rad}}(\RR^{n+2})$ by
\begin{equation*}
f_{0}(x) = \widetilde{f}(|x|) - \widetilde{\psi}^{\ast}_{1}(0,|x|)
\qquad\text{and}\qquad
g_{0}(x) = \widetilde{g}(|x|) - \pd_{0} \widetilde{\psi}^{\ast}_{1}(0,|x|)
\end{equation*}
for $x \in \RR^{n+2}$. Then $\operatorname{supp}(f_{0},g_{0}) \subset \BB^{n+2}_{r}$ and
\begin{align*}
\| (f_{0},g_{0}) \|_{H^{k}(\RR^{n+2}) \times H^{k-1}(\RR^{n+2})} &\leq C \big\| \big(F,G) - \big( U^{\ast}_{1}(0,\,.\,), \pd_{0} U^{\ast}_{1}(0,\,.\,) \big) \big\|_{H^{k}(\RR^{n})\times H^{k-1}(\RR^{n})} \\&\leq
C \frac{\delta}{M} = \frac{\delta}{C^{2}}
\end{align*}
by \Cref{CorotDataBound}. We infer from \Cref{THM} a $T^{\ast} \in [1 - \frac{\delta}{C}, 1 + \frac{\delta}{C}]$ and a unique solution $\psi \in C^{\infty} \big( \Omega^{1,n+2}_{T^{\ast}}(\kappa) \big)$ to
\begin{align*}
(\Box \psi)(t,x) - \frac{n-1}{2} \frac{\sin\big( 2 |x| \psi(t,x) \big) - 2 |x| \psi(t,x)}{|x|^{3}} = 0 & \quad \text{in } \Omega^{1,n+2}_{T^{\ast}}(\kappa) \,, \\
\psi(0,\,.\,) = \psi^{\ast}_{1}(0,\,.\,) + f_{0}
\quad\text{and}\quad
\pd_{0}\psi(0,\,.\,) = \pd_{0}\psi^{\ast}_{1}(0,\,.\,) + g_{0} & \quad
\text{in } \RR^{n+2} \,,
\end{align*}
which is radially symmetric. Let $\widetilde{\psi}$ be the radial profile of $\psi$ and define in $\RR^{1,n}$ the smooth corotational map
\begin{equation*}
U : \Omega^{1,n}_{T^{\ast}}(\kappa) \rightarrow \mathbb{S}^{n} \subset \RR^{n+1} \,, \qquad
U(t,x) =
\begin{pmatrix}
|x|^{-1} \sin \big( |x| \widetilde{\psi}(t,|x|) \big) x \\
\cos \big( |x| \widetilde{\psi}(t,|x|) \big)
\end{pmatrix}
\,.
\end{equation*}
We get
\begin{align*}
&
(\pd^{\mu} \pd_{\mu} U)(t,x) + \big\langle (\pd^{\mu} U)(t,x), (\pd_{\mu} U)(t,x) \big\rangle U(t,x) \\&\indent\indent=
\Big(
-\widetilde{\psi}_{tt} + \widetilde{\psi}_{rr} + \frac{n+1}{r} \widetilde{\psi}_{r} -
\frac{n-1}{2} \frac{\sin\big( 2 r \widetilde{\psi} \big) - 2 r \widetilde{\psi}}{r^{3}}
\Big)
\begin{pmatrix}
\cos\big( |x| \widetilde{\psi}(t,|x|) \big) x \\
- |x| \sin \big( |x| \widetilde{\psi}(t,|x|) \big)
\end{pmatrix}
\\&\indent\indent=
0 \,,
\end{align*}
as well as $U(0, \,.\,) = F$ and $\pd_{0} U(0, \,.\,) = G$
in $\RR^{n}$, where we abbreviate $r = |x|$ and
\begin{equation*}
\widetilde{\psi} \equiv \widetilde{\psi}(t,r) \,, \qquad
\widetilde{\psi}_{r} \equiv \pd_{r} \widetilde{\psi}(t,r) \,, \qquad
\widetilde{\psi}_{rr} \equiv \pd_{r}^{2} \widetilde{\psi}(t,r) \,, \qquad
\widetilde{\psi}_{tt} \equiv \pd_{t}^{2} \widetilde{\psi}(t,r) \,,
\end{equation*}
for $(t,x) \in \RR^{1,n}$. Finally, to show the stability estimate for this wave map, we set
\begin{equation*}
\widetilde{u}_{1}(\tau,\rho) = \big( T^{\ast} \ee^{-\tau} \big) \big( \widetilde{\psi} - \widetilde{\psi}^{\ast}_{T^{\ast}} \big) \circ (T^{\ast}+T^{\ast}\ee^{-\tau}\widetilde{h}(\rho),T^{\ast}\ee^{-\tau}\rho)
\end{equation*}
for $(\tau,\rho) \in [0,\infty) \times [0,R]$. Note that the stability estimate in \Cref{THM} asserts
\begin{equation*}
\| \widetilde{u}_{1}(\tau, |\,.\,|) \|_{H^{k}(\BB^{n+2}_{R})} \lesssim \delta \ee^{-\omega\tau}
\end{equation*}
for all $\tau \geq 0$. In terms of this function, an application of the fundamental theorem of calculus implies the identities
\begin{align*}
\big( U^{i} - U^{\ast}_{T^{\ast}}\!\!^{i} \big) \circ {\chi\mathstrut}_{T^{\ast}}(\tau,\xi) &= \xi^{i}\widetilde{u}_{1}(\tau,|\xi|) \int_{0}^{1} \cos(|\,.\,|) \circ \big( \widetilde{\psi}^{*}(-\widetilde{h}(|\xi|),|\xi|) \xi + z \widetilde{u}_{1}(\tau,|\xi|) \xi \big) \dd z \,, \\
\big( U^{n+1} - U^{\ast}_{T^{\ast}}\!\!^{n+1} \big) \circ {\chi\mathstrut}_{T^{\ast}}(\tau,\xi) &= \xi^{i} \widetilde{u}_{1}(\tau,|\xi|) \int_{0}^{1} \pd_{i} \cos(|\,.\,|) \circ \big( \widetilde{\psi}^{*}(-\widetilde{h}(|\xi|),|\xi|) \xi + z \widetilde{u}_{1}(\tau,|\xi|) \xi \big) \dd z \,,
\end{align*}
for $(\tau,\xi) \in [0,\infty) \times \overline{\BB^{n}_{R}}$. As $H^{k}(\BB^{n}_{R})$ is a Sobolev algebra, we get with \cite[Theorem 1.3]{2022arXiv220902286O}
\begin{align*}
\big\| \big( U - U^{\ast}_{T^{\ast}} \big) \circ {\chi\mathstrut}_{T^{\ast}}(\tau,\,.\,) \big\|_{H^{k}(\BB^{n}_{R})} &= \sum_{i=1}^{n+1} \big\| \big( U^{i} - {U^{\ast}_{T^{\ast}}}\!\!^{i} \big) \circ {\chi\mathstrut}_{T^{\ast}}(\tau,\,.\,) \big\|_{H^{k}(\BB^{n}_{R})} \\&\lesssim
\sum_{i=1}^{n} \big\| (\,.\,)^{i} \widetilde{u}_{1}(\tau, |\,.\,|) \big\|_{H^{k}(\BB^{n}_{R})} \\&\simeq \| \widetilde{u}_{1}(\tau, |\,.\,|) \|_{H^{k}(\BB^{n+2}_{R})} \\&\lesssim \delta \ee^{-\omega\tau}
\end{align*}
for all $\tau \geq 0$.
\end{proof}
\begin{proof}[Proof of \Cref{THMYM}]
There are smooth even functions $\widetilde{f},\widetilde{g} \in C^{\infty}_{\mathrm{ev}}([0,\infty))$ such that the initial data in the theorem are given by
\begin{equation*}
a_{\mu}{}^{ij}(x) = \widetilde{f}(|x|)( x^{i} \delta_{\mu}{}^{j} - x^{j} \delta_{\mu}{}^{i}) \,, \qquad b_{\mu}{}^{ij}(x) = \widetilde{g}(|x|) ( x^{i} \delta_{\mu}{}^{j} - x^{j} \delta_{\mu}{}^{i}) \,,
\end{equation*}
in $\RR^{n}$. Then \cite[Theorem 1.3]{2022arXiv220902286O} yields
\begin{equation*}
\| (a,b) \|_{H^{k}(\RR^{n}) \times H^{k-1}(\RR^{n})} \simeq \big\| \big( \widetilde{f}(|\,.\,|),\widetilde{g}(|\,.\,|) \big) \big\|_{H^{k}(\RR^{n+2}) \times H^{k-1}(\RR^{n+2})} \,.
\end{equation*}
According to \Cref{THM}, let $\psi \in C^{\infty} \big( \Omega^{1,n+2}_{T^{\ast}}(\kappa) \big)$ be the unique solution to the Cauchy problem
\begin{align*}
(\Box \psi)(t,x) + \big( n-2 \big) \big( 3 \psi(t,x)^{2} - |x|^{2} \psi(t,x)^{3} \big) = 0 & \quad \text{in } \Omega^{1,n+2}_{T^{\ast}}(\kappa) \,, \\
\psi(0,\,.\,) = \psi^{\ast}_{1}(0,\,.\,) + \widetilde{f}(|\,.\,|)
\quad\text{and}\quad
\pd_{0}\psi(0,\,.\,) = \pd_{0}\psi^{\ast}_{1}(0,\,.\,) + \widetilde{g}(|\,.\,|) & \quad
\text{in } \RR^{n+2} \,.
\end{align*}
The solution $\psi$ is radially symmetric with radial profile $\widetilde{\psi}$ and we set
\begin{align*}
A_{\mu} &: \Omega^{1,n}_{T^{\ast}}(\kappa) \rightarrow \mathfrak{so}(n) \,, & {A_{\mu}}^{ij}(t,x) &= \widetilde{\psi}(t,|x|) ( x^{i} \delta_{\mu}{}^{j} - x^{j} \delta_{\mu}{}^{i}) \,, \\
F_{\mu\nu} &: \Omega^{1,n}_{T^{\ast}}(\kappa) \rightarrow \mathfrak{so}(n) \,, & F_{\mu\nu} &= \pd_{\mu} A_{\nu} - \pd_{\nu} A_{\mu} + [A_{\mu}, A_{\nu}] \,,
\end{align*}
for $\mu,\nu = 0,1,\ldots,n$. It follows that
\begin{align*}
\pd^{\mu} {F_{\mu 0}}^{ij} + [A^{\mu}, F_{\mu 0}]^{ij} &= 0 \,, \\
\pd^{\mu} {F_{\mu \ell}}^{ij} + [A^{\mu}, F_{\mu \ell}]^{ij} &= - \big( -\widetilde{\psi}_{tt} + \widetilde{\psi}_{rr} + \tfrac{n+1}{r} \widetilde{\psi}_{r} + ( n-2 ) ( 3 \widetilde{\psi}^{2} - r^{2} \widetilde{\psi}^{3} ) \big) \big( x^{i} \delta_{\ell}^{j} - x^{j} \delta_{\ell}^{i} \big) \\&= 0 \,,
\end{align*}
as well as $A_{\mu}(0, \,.\,) = {A_{1}^{\ast}}_{\mu}(0,\,.\,) + a_{\mu}$ and $\pd_{0} A_{\mu}(0, \,.\,) = \pd_{0} {A_{1}^{\ast}}_{\mu}(0,\,.\,) + b_{\mu}$. Hence $A_{\mu}$ is a smooth Yang-Mills connection. To infer the stability estimate, we compute
\begin{align*}
\big( {A_{\mu}}^{ij} - {A^{\ast}_{T^{\ast}}}_{\mu}{}^{ij} \big) \circ {\chi\mathstrut}_{T^{\ast}}(\tau,\xi) &= T^{*} \ee^{-\tau} \big( \psi - \psi^{\ast}_{T^{\ast}} \big) \circ {\chi\mathstrut}_{T^{\ast}}(\tau,\xi) ( \xi^{i} \delta_{\mu}{}^{j} - \xi^{j} \delta_{\mu}{}^{i}) \\&=
\frac{\ee^{\tau}}{T^{*}} \widetilde{u}_{1}(\tau,|\xi|) ( \xi^{i} \delta_{\mu}{}^{j} - \xi^{j} \delta_{\mu}{}^{i}) \,,
\end{align*}
where
\begin{equation*}
\widetilde{u}_{1}(\tau,\rho) = \big( T^{\ast} \ee^{-\tau} \big)^{2} \big( \widetilde{\psi} - \widetilde{\psi}^{\ast}_{T^{\ast}} \big) \circ (T^{\ast}+T^{\ast}\ee^{-\tau}\widetilde{h}(\rho),T^{\ast}\ee^{-\tau}\rho)
\end{equation*}
for $(\tau,\rho) \in [0,\infty) \times [0,R]$. This gives with the definition of corotational Sobolev norms and \cite[Theorem 1.3]{2022arXiv220902286O}
\begin{align*}
\frac{\big\| \big( A - A^{\ast}_{T^{\ast}} \big) \circ {\chi\mathstrut}_{T^{\ast}}(\tau,\,.\,) \big\|_{H^{k}(\BB^{n}_{R})}}{\big\| A^{\ast}_{T^{\ast}} \circ {\chi\mathstrut}_{T^{\ast}}(\tau,\,.\,) \big\|_{H^{k}(\BB^{n}_{R})}} &\simeq
\sum_{i=1}^{n} \big\| (\,.\,)^{i} \widetilde{u}_{1}(\tau, |\,.\,|) \big\|_{H^{k}(\BB^{n}_{R})} \\&\simeq \| \widetilde{u}_{1}(\tau, |\,.\,|) \|_{H^{k}(\BB^{n+2}_{R})} \\&\lesssim
\delta \ee^{-\omega \tau}
\end{align*}
for all $\tau \geq 0$.
\end{proof}
\appendix
\section{Similarity coordinates}
\label{AppendixSimilarityCoordinates}
In this section, we study coordinate systems on Minkowski spacetime $\RR^{1,d}$ that are adapted to self-similar solutions of nonlinear wave equations. This property can be formalized quite generally by defining that a map $\chi : \RR \times \RR^{d} \rightarrow \RR^{1,d}$ is a \emph{similarity variable} if there exists a function $\lambda : \RR \times \RR^{d} \rightarrow \RR$ such that
\begin{equation*}
\chi(\tau,\xi) = \lambda(\tau,\xi) \chi(0,\xi)
\end{equation*}
for all $(\tau, \xi) \in \RR \times \RR^{d}$. If $\tau$ is interpreted as a time variable and $\xi$ is interpreted as a space variable, this expresses that moving forward along the new time coordinate while the new space coordinate is kept fix, occurs via scaling of an initial reference point. Instead of studying similarity variables in the above generality, we will choose exponential time
\begin{equation*}
\lambda(\tau,\xi) = \ee^{-\tau}
\end{equation*}
for the scaling factor, which becomes convenient for the transformation of differential operators later on. Also, we require compatibility with the wave evolution, that is, hypersurfaces of constant coordinate time shall be spacelike. Since spacelike hypersurfaces in Minkowski spacetime are the image of a graph, we focus on similarity variables of the following form.
\begin{definition}
\label{GraphicalSimilarityVariable}
Let $h\in C^{\infty}(\RR^{d})$. We define \emph{graphical similarity variables} by
\begin{equation*}
\chi :\RR \times \RR^{d}\rightarrow\RR^{1,d} \,, \quad \chi(\tau,\xi) = (\ee^{-\tau}h(\xi),\ee^{-\tau}\xi) \,.
\end{equation*}
\end{definition}
We remark that by a spacetime translation, graphical similarity variables may be centred around any given event in spacetime. When similarity variables define a coordinate system on their image, we call them \emph{similarity coordinates}. The function $h$ is referred to as a \emph{height function}. One of the simplest examples of a height function is given by
\begin{equation}
\label{ExampleSSC}
h(\xi) = -1 \,,
\end{equation}
yielding \emph{standard similarity coordinates}. It is even possible to cover the whole complement of a future light cone with similarity coordinates. This is demonstrated in \cite{MR4338226} with \emph{hyperboloidal similarity coordinates} and a height function of the form
\begin{equation}
\label{ExampleHSC}
h(\xi) = \alpha \sqrt{1 + \alpha^{-2}|\xi|^{2}} - \alpha - \beta
\end{equation}
for some $\alpha\geq 1$, $\beta > 0$. The examples in \Cref{ExampleSSC,ExampleHSC} already bear the characterising properties of height functions in order to yield similarity coordinates on the complement of a cone, which we discuss next.
\subsection{Geometry of graphical similarity coordinates}
We impose a convexity condition on the height function in \Cref{GraphicalSimilarityVariable} and introduce an auxiliary function to determine the geometry of graphical similarity variables.
\begin{lemma}
\label{GSCMap}
Let $h\in C^{\infty}(\RR^{d})$ with $h(0)<0$ and with positive semidefinite Hessian $(\pd^{2} h)(\xi)$ for all $\xi\in\RR^{d}$. Consider the smooth map $F \in C^{\infty} \big( (0,\infty)\times\RR \times \RR^{d} \big)$ given by
\begin{equation*}
F(\alpha,\beta,\xi) = \alpha h(\alpha^{-1}\xi) - \beta \,.
\end{equation*}
Then, for each $(\beta,\xi)\in\RR \times \RR^{d}$ the map $F(\,.\,,\beta,\xi): (0,\infty) \rightarrow \RR$ is strictly monotonously decreasing and the scaling relation
\begin{equation*}
F(\alpha,\lambda\beta,\lambda\xi) = \lambda F(\lambda^{-1}\alpha,\beta,\xi)
\end{equation*}
holds for all $\lambda>0$.
\end{lemma}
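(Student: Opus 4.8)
The statement to prove is \Cref{GSCMap}, which asserts two elementary properties of the auxiliary function $F(\alpha,\beta,\xi) = \alpha h(\alpha^{-1}\xi) - \beta$: strict monotonicity in $\alpha$ and a scaling relation.

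\medskip

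The plan is to verify the scaling relation by a direct substitution, and to establish strict monotonicity by differentiating $F$ with respect to $\alpha$ and exploiting the convexity hypothesis on $h$ together with $h(0) < 0$.

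\medskip

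\textbf{Scaling relation.} This is immediate: for $\lambda > 0$ we compute
\begin{equation*}
F(\alpha,\lambda\beta,\lambda\xi) = \alpha h\big(\alpha^{-1}\lambda\xi\big) - \lambda\beta = \lambda\Big( \lambda^{-1}\alpha \, h\big( (\lambda^{-1}\alpha)^{-1}\xi \big) - \beta \Big) = \lambda F(\lambda^{-1}\alpha,\beta,\xi) \,,
\end{equation*}
where in the middle step we simply rewrote $\alpha^{-1}\lambda = (\lambda^{-1}\alpha)^{-1}$ and factored out $\lambda$.

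\medskip

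\textbf{Strict monotonicity in $\alpha$.} Fix $(\beta,\xi)$ and set $g(\alpha) \coloneqq F(\alpha,\beta,\xi) = \alpha h(\alpha^{-1}\xi) - \beta$ for $\alpha > 0$. Then
\begin{equation*}
g'(\alpha) = h(\alpha^{-1}\xi) + \alpha \cdot \big(\pd h\big)(\alpha^{-1}\xi) \cdot \big( -\alpha^{-2}\xi \big) = h(\alpha^{-1}\xi) - \alpha^{-1}\, \big(\pd h\big)(\alpha^{-1}\xi)\cdot \xi \,.
\end{equation*}
Writing $\zeta \coloneqq \alpha^{-1}\xi$, this reads $g'(\alpha) = h(\zeta) - (\pd h)(\zeta)\cdot \zeta$, which is precisely $-c(\zeta)$ in the notation of \Cref{hcw} (though we will not need that identification). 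I claim $g'(\alpha) < 0$ for all $\alpha > 0$. Consider the function $\varphi(t) \coloneqq h(t\zeta)$ for $t \in \RR$; it is smooth with $\varphi'(t) = (\pd h)(t\zeta)\cdot\zeta$ and $\varphi''(t) = \zeta^{\top}(\pd^2 h)(t\zeta)\zeta \geq 0$ since the Hessian of $h$ is positive semidefinite everywhere. Hence $\varphi$ is convex, so its graph lies above every tangent line; applying this at $t=1$ and evaluating at $t=0$ gives $\varphi(0) \geq \varphi(1) + \varphi'(1)(0-1)$, i.e. $h(0) \geq h(\zeta) - (\pd h)(\zeta)\cdot\zeta = g'(\alpha)$. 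Since $h(0) < 0$ by hypothesis, we conclude $g'(\alpha) \leq h(0) < 0$ for every $\alpha > 0$, so $g$ is strictly monotonously decreasing on $(0,\infty)$.

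\medskip

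\textbf{Remark on difficulty.} There is no real obstacle here; the only point requiring a moment's thought is to recognize that the correct way to use the Hessian hypothesis is to restrict $h$ to the ray through $\zeta$ and apply the one-dimensional tangent-line characterization of convexity at the point $t=1$, read off at $t=0$, which is exactly where the sign of $h(0)$ enters. Everything else is routine differentiation and substitution.
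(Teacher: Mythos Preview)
Your proof is correct and follows essentially the same approach as the paper: both compute $\partial_\alpha F = -c(\alpha^{-1}\xi)$ and bound this by $h(0) < 0$ using convexity of $h$ along the ray through $\alpha^{-1}\xi$. The only cosmetic difference is that the paper phrases the convexity step as monotonicity of $z \mapsto c(z\xi)$ (via $\partial_z c(z\xi) \geq 0$), whereas you invoke the equivalent tangent-line inequality for the convex function $t \mapsto h(t\zeta)$.
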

\begin{proof}
First, consider the smooth function $c \in C^{\infty}(\RR^{d})$ given by
\begin{equation*}
c(\xi) = \xi^{i} (\pd_{i} h)(\xi) - h(\xi) \,.
\end{equation*}
Let $z\geq 0$. Then
\begin{equation*}
\pd_{z} c(z\xi) = z \xi^{i} \xi^{j} (\pd_{i} \pd_{j} h)(z\xi) \geq 0
\qquad\text{implies}\qquad
c(\xi) \geq c(0) = - h(0) > 0
\end{equation*}
for all $\xi \in \RR^{d}$. We find
\begin{equation}
\label{PartialF}
\pd_{\alpha} F(\alpha,\beta,\xi) = - c(\alpha^{-1}\xi) \leq h(0) < 0 \,,
\end{equation}
so $F(\,.\,,\beta,\xi)$ is strictly monotonously decreasing. The scaling relation follows from
\begin{equation*}
F(\alpha,\lambda\beta,\lambda\xi) = \alpha h(\alpha^{-1} \lambda\xi) - \lambda\beta = \lambda( (\lambda^{-1} \alpha) h((\lambda^{-1} \alpha)^{-1}\xi) - \beta) = \lambda F(\lambda^{-1}\alpha,\beta,\xi) \,.
\qedhere
\end{equation*}
\end{proof}
If the height function satisfies the assumptions of \Cref{GSCMap}, then graphical similarity variables provide a coordinate system on their image.
\begin{lemma}
\label{GSCLemma}
Let $h\in C^{\infty}(\RR^{d})$ with $h(0)<0$ and positive semidefinite Hessian $(\pd^{2} h)(\xi)$ for all $\xi\in\RR^{d}$. Then, the image $\mathrm{X}^{1,d} \subset \RR^{1,d}$ of similarity variables $\chi: \RR \times \RR^{d} \rightarrow \RR^{1,d}$ is open and
\begin{equation*}
\chi: \RR \times \RR^{d} \rightarrow \mathrm{X}^{1,d} \,, \quad \chi(\tau,\xi) = \big( \ee^{-\tau}h(\xi),\ee^{-\tau}\xi \big) \,,
\end{equation*}
is a diffeomorphism with inverse
\begin{equation*}
\chi^{-1}: \mathrm{X}^{1,d} \rightarrow \RR \times \RR^{d} \,, \quad
\chi\mathstrut^{-1}(t,x) = \big( -\log h^{+}(t,x), h^{+}(t,x)^{-1} x \big) \,,
\end{equation*}
for a positive function $h^{+} \in C^{\infty}(\mathrm{X}^{1,d})$. Moreover, if $(t,x)\in\mathrm{X}^{1,d}$, then for any $\lambda>0$ also $( \lambda t, \lambda x)\in\mathrm{X}^{1,d}$ and
\begin{equation*}
h^{+}(\lambda t, \lambda x) = \lambda h^{+}(t,x) \,.
\end{equation*}
\end{lemma}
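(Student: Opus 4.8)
The plan is to show that $\chi$ is a local diffeomorphism at every point by a direct Jacobian computation, deduce openness of $\mathrm{X}^{1,d}$, combine this with injectivity to get a global diffeomorphism, and then read off the inverse through the auxiliary function $F$ from \Cref{GSCMap}. First I would compute the Jacobian of $\chi$ at $(\tau,\xi)$: its columns are $\partial_\tau\chi = -\ee^{-\tau}\big(h(\xi),\xi\big)$ and $\partial_{\xi^i}\chi = \ee^{-\tau}\big((\partial_i h)(\xi), e_i\big)$, so factoring $\ee^{-\tau}$ out of each of the $d+1$ columns and taking the Schur complement with respect to the lower-right identity block yields
\[
\det(\partial\chi)(\tau,\xi) = \ee^{-(d+1)\tau}\big(\xi^i(\partial_i h)(\xi) - h(\xi)\big) = \ee^{-(d+1)\tau}\,c(\xi),
\]
where $c$ is the function from the proof of \Cref{GSCMap}. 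That proof already establishes $c(\xi)\geq -h(0)>0$ for all $\xi$, hence $\det(\partial\chi)$ never vanishes, $\chi$ is a local diffeomorphism everywhere, and in particular an open map, so $\mathrm{X}^{1,d}=\chi(\RR\times\RR^{d})$ is open.

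Next I would prove injectivity. If $\chi(\tau,\xi)=(t,x)$ then $\xi=\ee^{\tau}x$ and $t=\ee^{-\tau}h(\ee^{\tau}x)$, i.e. $\mu:=\ee^{-\tau}$ is a zero in $(0,\infty)$ of $F(\,\cdot\,,t,x)$, with $F(\alpha,\beta,\eta)=\alpha h(\alpha^{-1}\eta)-\beta$ as in \Cref{GSCMap}. Since $F(\,\cdot\,,t,x)$ is strictly monotonically decreasing by \Cref{GSCMap}, this zero is unique, so $\mu$, hence $\tau$, hence $\xi=\mu^{-1}x$, is determined by $(t,x)$. Thus $\chi$ is injective; being an injective local diffeomorphism, it is a diffeomorphism of $\RR\times\RR^{d}$ onto the open set $\mathrm{X}^{1,d}$.

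The inverse is now explicit: for $(t,x)\in\mathrm{X}^{1,d}$ define $h^{+}(t,x)>0$ to be the unique zero of $F(\,\cdot\,,t,x)$. Comparing with the previous step, $h^{+}(t,x)=\ee^{-[\chi^{-1}(t,x)]_{0}}$, so $h^{+}\in C^{\infty}(\mathrm{X}^{1,d})$ inherits smoothness from $\chi^{-1}$ (alternatively, directly from the implicit function theorem, since $\partial_\alpha F=-c(\alpha^{-1}\eta)\neq 0$ by \Cref{GSCMap}). Unravelling the definitions gives $\chi^{-1}(t,x)=\big(-\log h^{+}(t,x),\,h^{+}(t,x)^{-1}x\big)$, and one verifies $\chi\circ\chi^{-1}=\mathrm{id}$ from the identity $h^{+}(t,x)\,h\big(h^{+}(t,x)^{-1}x\big)=F(h^{+}(t,x),t,x)+t=t$, while $\chi^{-1}\circ\chi=\mathrm{id}$ is immediate by uniqueness of the zero.

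Finally, the scaling property follows from the scaling relation $F(\alpha,\lambda\beta,\lambda\eta)=\lambda F(\lambda^{-1}\alpha,\beta,\eta)$ in \Cref{GSCMap}: for $\lambda>0$ one has $F(\mu,\lambda t,\lambda x)=0$ if and only if $F(\lambda^{-1}\mu,t,x)=0$, so $(\lambda t,\lambda x)\in\mathrm{X}^{1,d}$ whenever $(t,x)\in\mathrm{X}^{1,d}$, and $h^{+}(\lambda t,\lambda x)=\lambda h^{+}(t,x)$. I expect no genuine obstacle here; the only step requiring a little care is the Jacobian determinant computation (equivalently, the reduction of injectivity to the monotonicity of $F$), and \Cref{GSCMap} has been set up precisely to deliver the monotonicity of $F$ and the positivity of $c$ that these arguments need — the rest is bookkeeping.
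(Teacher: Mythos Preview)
Your proof is correct and follows essentially the same approach as the paper: compute the Jacobian determinant to get a local diffeomorphism, use the strict monotonicity of $F$ from \Cref{GSCMap} for injectivity and to define $h^{+}$ as the unique zero of $F(\,\cdot\,,t,x)$, and invoke the scaling relation of $F$ for the homogeneity of $h^{+}$. The only cosmetic differences are that the paper re-derives the positivity of $c(\xi)=\xi^{i}(\pd_{i}h)(\xi)-h(\xi)$ via Taylor's theorem rather than quoting it from the proof of \Cref{GSCMap}, and phrases injectivity by starting from two preimages rather than from the image point.
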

\begin{proof}
According to the inverse function theorem, it suffices to show that $\chi$ is an injective local diffeomorphism. The assumptions on $h$ yield with Taylor's theorem
\begin{equation*}
h(\xi) - \xi^{i} (\pd_{i} h)(\xi) = h(0) - \frac{1}{2} \int_{0}^{1} \xi^{i} \xi^{j} ( \pd_{i} \pd_{j} h ) ( (1-z)\xi ) (1-z) \dd z \leq h(0) < 0
\end{equation*}
and hence the Jacobian determinant
\begin{equation*}
\det(\pd\chi)(\tau,\xi) = \ee^{-(d+1)\tau}(\xi^{i} (\pd_{i}h)(\xi) - h(\xi))
\end{equation*}
vanishes nowhere. So $\chi$ is a local diffeomorphism. To show that $\chi$ is injective, suppose that $(\tau_{1},\xi_{1}),(\tau_{2},\xi_{2})\in\RR \times \RR^{d}$ with $\chi(\tau_{1},\xi_{1}) = \chi(\tau_{2},\xi_{2})$. This gives
\begin{equation*}
\ee^{-\tau_{1}}h(\xi_{1}) = \ee^{-\tau_{2}}h(\xi_{2}) \,, \quad \ee^{-\tau_{1}}\xi_{1} = \ee^{-\tau_{2}}\xi_{2} \,,
\end{equation*}
which implies
\begin{equation*}
h(\xi_{1}) = \ee^{\tau_{1}-\tau_{2}} h(\ee^{-(\tau_{1}-\tau_{2}})\xi_{1})
\qquad\text{or equivalently}\qquad
F(1,0,\xi_{1}) = F \big( \ee^{\tau_{1}-\tau_{2}},0,\xi_{1} \big)
\end{equation*}
in terms of the function $F$ from \Cref{GSCMap}. Strict monotonicity of $F$ in the first argument yields $\tau_{1} = \tau_{2}$ and so $\chi$ is injective. Next, let $(\tau,\xi)\in\RR \times \RR^{d}$ and set $(t,x) = \chi(\tau,\xi)$. That is,
\begin{equation*}
t = \ee^{-\tau} h(\xi) \,, \qquad x = \ee^{-\tau} \xi \,,
\end{equation*}
which implies
\begin{equation*}
\ee^{-\tau} h \big( \ee^{\tau} x \big) -t = 0 \,.
\end{equation*}
In terms of the function $F$ from \Cref{GSCMap}, this condition means that $F(\,.\,,t,x)$ has a unique zero $\alpha = h^{+}(t,x) > 0$ which satisfies
\begin{equation*}
\tau = -\log h^{+}(t,x) \,, \qquad \xi = \frac{1}{h^{+}(t,x)} x \,.
\end{equation*}
Finally, note that $(t,x) = \chi(\tau,\xi)$ gives $( \lambda t, \lambda x) = \chi(\tau-\log\lambda,\xi)$. Then $h^{+}( \lambda t, \lambda x)$ is the unique zero of $F(\,.\,,\lambda t, \lambda x)$ and the scaling condition in \Cref{GSCMap} yields
\begin{equation*}
0 = F \big( h^{+}(\lambda t, \lambda x), \lambda t, \lambda x \big) =
\lambda F \big( \lambda^{-1} h^{+}(\lambda t, \lambda x), t, x \big) \,,
\end{equation*}
so by uniqueness $\lambda^{-1} h^{+}(\lambda t, \lambda x) = h^{+}(t,x)$.
\end{proof}
Some geometric quantities of Minkowski spacetime are computed in similarity coordinates in \cite[Appendix A]{MR4661000}. Next, let us also study the time slices of graphical similarity variables.
\begin{lemma}
\label{GeometryFoliation}
Let $h\in C^{\infty}(\RR^{d})$ with $|(\pd h)(\xi)|<1$ for all $\xi\in\RR^{d}$. Then, for all $\tau\in\RR$ the level set
\begin{equation*}
\Sigma^{1,d}(\tau) = \chi \big( \{\tau\} \times \RR^{d} \big)
\end{equation*}
is a smooth spacelike hypersurface in $\RR^{1,d}$. The matrix of the first fundamental form of $\Sigma^{1,d}(\tau)$ and its inverse matrix have components
\begin{equation*}
\gamma(\tau)_{ij} = \ee^{-2\tau} \big( \delta_{ij} - (\pd_{i} h) (\pd_{j} h) \big) \,, \qquad
\gamma(\tau)^{ij} = \ee^{2\tau} \Big( \delta^{ij} + \frac{(\pd^{i} h)(\pd^{j} h)}{1 - |\pd h|^{2}} \Big) \,,
\end{equation*}
respectively, and metric determinant
\begin{equation*}
\det \gamma(\tau) = \ee^{-2d\tau} \big( 1 - |\pd h|^{2} \big) \,.
\end{equation*}
Moreover, for all $\tau\in\RR$,
\begin{equation*}
\nu(\tau) = \frac{(1,\pd h)}{\sqrt{1-|\pd h|^{2}}} \in C^{\infty}(\RR^{d},\RR^{1,d})
\end{equation*}
defines the components of a future-directed timelike unit normal vector field for $\Sigma^{1,d}(\tau)$.
\end{lemma}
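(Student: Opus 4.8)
The plan is to realize $\Sigma^{1,d}(\tau)$ as the embedded image of the smooth map $\iota_{\tau} \coloneqq \chi(\tau,\,.\,) : \RR^{d} \rightarrow \RR^{1,d}$, $\iota_{\tau}(\xi) = \big( \ee^{-\tau}h(\xi), \ee^{-\tau}\xi \big)$, and to read off every asserted quantity from the pullback of the Minkowski metric $\eta = \operatorname{diag}(-1,1,\ldots,1)$ under $\iota_{\tau}$. First I would observe that $\iota_{\tau}$ is a smooth embedding: its spatial component $\xi \mapsto \ee^{-\tau}\xi$ is a diffeomorphism of $\RR^{d}$, so $\iota_{\tau}$ is an injective immersion whose image is the graph of $x \mapsto \ee^{-\tau}h(\ee^{\tau}x)$ over the time-zero slice; in particular $\Sigma^{1,d}(\tau)$ is a smooth hypersurface. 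The coordinate tangent vectors along $\Sigma^{1,d}(\tau)$ are $\pd_{i}\iota_{\tau}(\xi) = \ee^{-\tau}\big( (\pd_{i}h)(\xi), e_{i} \big)$, where $e_{i}$ denotes the $i$-th standard basis vector of $\RR^{d}$.

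The induced metric then follows from a one-line computation: $\gamma(\tau)_{ij} = \eta\big( \pd_{i}\iota_{\tau}, \pd_{j}\iota_{\tau} \big) = \ee^{-2\tau}\big( \delta_{ij} - (\pd_{i}h)(\pd_{j}h) \big)$. Because $|\pd h| < 1$, the symmetric matrix $\delta_{ij} - (\pd_{i}h)(\pd_{j}h)$ is positive definite --- on the orthogonal complement of $\pd h$ it acts as the identity, and along $\pd h$ it has eigenvalue $1 - |\pd h|^{2} > 0$ --- so $\gamma(\tau)$ is Riemannian and $\Sigma^{1,d}(\tau)$ is spacelike. For the inverse I would simply verify, by contracting with $\gamma(\tau)_{kj}$ and using $(\pd^{k}h)(\pd_{k}h) = |\pd h|^{2}$, that $\ee^{2\tau}\big( \delta^{ij} + \tfrac{(\pd^{i}h)(\pd^{j}h)}{1 - |\pd h|^{2}} \big)$ is the required inverse; this is just the Sherman--Morrison formula for a rank-one update of the identity. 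The determinant formula $\det\gamma(\tau) = \ee^{-2d\tau}\big( 1 - |\pd h|^{2} \big)$ is then the matrix determinant lemma $\det(\mathrm{I} - vv^{\top}) = 1 - |v|^{2}$ applied with $v = \pd h$.

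Finally, for the normal I would take the candidate $\nu(\tau) = \big( 1 - |\pd h|^{2} \big)^{-1/2}(1,\pd h)$ and check the three defining properties directly. Orthogonality: $\eta\big( \nu(\tau), \pd_{i}\iota_{\tau} \big) = \ee^{-\tau}\big( 1 - |\pd h|^{2} \big)^{-1/2}\big( -(\pd_{i}h) + (\pd_{i}h) \big) = 0$. Unit timelike normalization: $\eta\big( \nu(\tau), \nu(\tau) \big) = \big( 1 - |\pd h|^{2} \big)^{-1}\big( -1 + |\pd h|^{2} \big) = -1$. Future-direction: $\nu^{0}(\tau) = \big( 1 - |\pd h|^{2} \big)^{-1/2} > 0$. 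Smoothness of $\nu(\tau)$ in $\xi$ is immediate from $|\pd h| < 1$.

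I do not expect a genuine obstacle here: the lemma is a direct computation in an adapted parametrization, and the only points that call for a little care are the index bookkeeping in the inverse-metric check and recognizing the rank-one identities for the inverse and the determinant. The one structural subtlety is to justify the embedding statement --- and hence ``smooth hypersurface'' --- before pulling back the metric, which is why I would dispatch the graph structure of $\iota_{\tau}$ at the very start.
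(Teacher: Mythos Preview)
Your proposal is correct and follows essentially the same route as the paper: parametrize $\Sigma^{1,d}(\tau)$ by $\chi(\tau,\,.\,)$, compute the induced metric as $\ee^{-2\tau}(\delta_{ij} - (\pd_i h)(\pd_j h))$, read off positive definiteness from the rank-one eigenstructure, and verify the normal by direct contraction. The only cosmetic differences are that the paper phrases the inverse via a Neumann series and the determinant via the product of eigenvalues, whereas you invoke Sherman--Morrison and the matrix determinant lemma; these are the same identities.
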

\begin{proof}
For each $\tau\in\RR$, the smooth map
\begin{equation*}
\chi(\tau,\,.\,):\RR^{d} \rightarrow \RR^{1,d} \,, \qquad \xi \mapsto (\ee^{-\tau}h(\xi),\ee^{-\tau}\xi) \,,
\end{equation*}
defines a chart for $\Sigma^{1,d}(\tau)$. Hence, $\Sigma^{1,d}(\tau)$ is a $d$-dimensional smooth submanifold of $\RR^{1,d}$. The components of the pullback of the Minkowski metric $\eta$ on $\RR^{1,d}$ under $\Sigma^{1,d}(\tau)$ are given by
\begin{equation*}
\gamma(\tau)_{ij} = \eta \big( \pd_{i} \chi(\tau,\,.\,), \pd_{j} \chi(\tau,\,.\,) \big) = \ee^{-2\tau} \big( \delta_{ij} - (\pd_{i} h) (\pd_{j} h) \big) \,.
\end{equation*}
If $(\pd h)(\xi) = 0$, this matrix is a multiple of the identity matrix. If $(\pd h)(\xi) \neq 0$, the matrix has the $(d-1)$-fold eigenvalue $\ee^{-2\tau}$ with eigenvectors orthogonal to $(\pd h)(\xi) \in \RR^{d}$ and the simple eigenvalue $\ee^{-2\tau} (1-|(\pd h)(\xi)|^{2})$ with eigenvector $(\pd h)(\xi) \in \RR^{d}$. Since $|(\pd h)(\xi)| < 1$, the matrix is positive definite and thus $\Sigma^{1,d}(\tau)$ is a smooth spacelike hypersurface. The product of these eigenvalues gives the formula for the metric determinant. The inverse of the matrix $\gamma(\tau)$ can be computed from a Neumann series. Finally, we have
\begin{equation*}
\eta \big( \pd_{i}\chi(\tau,\,.\,), \nu(\tau) \big) = 0 \,, \qquad \eta \big( \nu(\tau),\nu(\tau) \big) = -1 \,,
\end{equation*}
for $i=1,\ldots,d$.
\end{proof}
It follows that the image of similarity coordinates is foliated by its time slices. This foliation also covers future lightlike infinity in the following sense.
\begin{lemma}
\label{GSCFoliation}
Let $h \in C^{\infty}(\RR^{d})$ such that
\begin{equation*}
h(0) < 0 \,, \qquad
|(\pd h)(\xi)| < 1 \,, \qquad
(\pd^{2} h)(\xi) \text{ is positive semidefinite,}
\end{equation*}
for all $\xi\in\RR^{d}$. Let $\tau\neq\tau'$. Then $\Sigma^{1,d}(\tau)$ is not asymptotic to $\Sigma^{1,d}(\tau')$.
\end{lemma}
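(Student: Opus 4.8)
The plan is to realize each leaf $\Sigma^{1,d}(\tau)$ as the graph of a function over $\RR^{d}$ and to show that, for $\tau \neq \tau'$, the two graph functions differ by a quantity bounded below by a strictly positive constant, uniformly in the spatial variable; this rules out that the leaves approach each other at infinity. First I would record that, by the definition of $\chi$ in \Cref{GraphicalSimilarityVariable}, a point $(t,x)$ lies on $\Sigma^{1,d}(\tau)$ precisely when $t = \ee^{-\tau} h(\ee^{\tau} x)$, so $\Sigma^{1,d}(\tau)$ is the graph of
\begin{equation*}
H_{\tau}(x) \coloneqq \ee^{-\tau} h(\ee^{\tau} x) = F(\ee^{-\tau},0,x) \,,
\end{equation*}
with $F \in C^{\infty}\big( (0,\infty)\times\RR\times\RR^{d} \big)$ the auxiliary function $F(\alpha,\beta,\xi) = \alpha h(\alpha^{-1}\xi) - \beta$ from \Cref{GSCMap}. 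The present hypotheses on $h$ are exactly those of \Cref{GSCMap}, which tells us that $\alpha \mapsto F(\alpha,0,\xi)$ is strictly monotonically decreasing with $\pd_{\alpha} F(\alpha,0,\xi) = -c(\alpha^{-1}\xi)$, where $c(\xi) = \xi^{i}(\pd_{i}h)(\xi) - h(\xi)$; moreover, as established inside the proof of that lemma, positive semidefiniteness of $(\pd^{2}h)$ together with $h(0)<0$ yields the pointwise bound $c(\xi) \geq c(0) = -h(0) > 0$ for all $\xi\in\RR^{d}$.

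Assuming without loss of generality $\tau < \tau'$, so that $\ee^{-\tau} > \ee^{-\tau'}$, I would then use the fundamental theorem of calculus and the two facts just recalled to obtain
\begin{equation*}
H_{\tau'}(x) - H_{\tau}(x) = \int_{\ee^{-\tau'}}^{\ee^{-\tau}} c(\alpha^{-1}x) \dd\alpha \geq \big( \ee^{-\tau} - \ee^{-\tau'} \big)\big( -h(0) \big) \eqqcolon \delta_{0} > 0
\end{equation*}
for every $x\in\RR^{d}$, with $\delta_{0}$ independent of $x$. In particular, $\Sigma^{1,d}(\tau)$ and $\Sigma^{1,d}(\tau')$ are disjoint and the vertical separation between them never drops below $\delta_{0}$. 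Since by \Cref{GeometryFoliation} both leaves are spacelike graphs with $|\pd h| < 1$, this uniform lower bound on the difference of the graph functions means the two hypersurfaces stay a definite distance apart all the way out to lightlike infinity; hence $\Sigma^{1,d}(\tau)$ is not asymptotic to $\Sigma^{1,d}(\tau')$.

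I expect the only genuinely delicate point to be matching the quantitative statement to the precise notion of ``asymptotic'' in force: once the uniform vertical gap $\delta_{0}>0$ is in hand, the conclusion is immediate for the natural reading (the graph functions do not approach each other at spatial infinity), while if ``asymptotic'' is phrased via sequences of points on the two leaves whose separation tends to zero, a short additional step using the uniformly timelike normal fields from \Cref{GeometryFoliation} is needed to convert the vertical gap into a genuine distance gap. Everything else is a direct application of \Cref{GSCMap,GSCLemma,GeometryFoliation}, with no new computation required beyond the one displayed above.
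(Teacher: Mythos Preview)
Your proof is correct and follows essentially the same approach as the paper: both realize $\Sigma^{1,d}(\tau)$ as the graph of $x \mapsto F(\ee^{-\tau},0,x)$ and use the pointwise bound $c \geq -h(0) > 0$ from \Cref{GSCMap} to obtain the uniform vertical gap $-h(0)\big(\ee^{-\tau}-\ee^{-\tau'}\big)$. The only cosmetic difference is that you integrate $\pd_{\alpha}F$ in $\alpha$, whereas the paper integrates $\pd_{\tau}F(\ee^{-\tau},0,\xi)$ in $\tau$; these are related by the change of variable $\alpha = \ee^{-\tau}$ and yield the identical bound.
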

\begin{proof}
Note that for fixed $\tau\in\RR$, each level set
\begin{equation*}
\Sigma^{1,d}(\tau) = \{ (\ee^{-\tau}h(\ee^{\tau}\xi),\xi) \in \RR^{1,d} \mid \xi \in \RR^{d} \}
\end{equation*}
from \Cref{GeometryFoliation} is the graph of the function $\xi \mapsto \ee^{-\tau}h(\ee^{\tau}\xi) = F(\ee^{-\tau},0,\xi)$, where $F$ is defined in \Cref{GSCMap}. We show that none of these graphs are asymptotic to each other. As in the proof of \Cref{GSCMap}, we compute
\begin{equation*}
\pd_{\tau} F(\ee^{-\tau},0,\xi) = \xi^{i} (\pd_{i} h)(\ee^{\tau}\xi) - h(\ee^{\tau}\xi) \geq - \ee^{-\tau} h(0)
\end{equation*}
for all $(\tau,\xi) \in \RR \times \RR^{d}$ and conclude for $\tau \neq \tau'$ that
\begin{equation*}
F(\ee^{-\tau},0,\xi) - F(\ee^{-\tau'},0,\xi) \geq - h(0) \Big( \ee^{-\tau'} - \ee^{-\tau} \Big) > 0
\end{equation*}
for all $\xi \in \RR^{d}$.
\end{proof}
\subsection{Radially symmetric height functions}
So far, we have formulated conditions for graphical similarity variables to yield a coordinate system on their image. Here, we describe the image in case the height function is radially symmetric. We begin with locating the backward light cone of the origin in similarity coordinates.
\begin{lemma}
\label{GSCLightCone}
Let $h \in C^{\infty}_{\mathrm{rad}}(\RR^{d})$ with $h(0) < 0$ and $|(\pd h)(\xi)| < 1$ for all $\xi\in\RR^{d}$ and with radial profile $\widetilde{h} \in C^{\infty}_{\mathrm{ev}}([0,\infty))$. Then, the function
\begin{equation*}
[0,\infty) \rightarrow \RR \,, \qquad \rho \mapsto \widetilde{h}(\rho)^{2} - \rho^{2} \,,
\end{equation*}
is strictly monotonously decreasing and has a unique simple zero $R_{0} > 0$.
\end{lemma}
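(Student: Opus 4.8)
The plan is to reduce everything to the behaviour of the single real function
\begin{equation*}
g(\rho) \coloneqq \widetilde{h}(\rho)^{2} - \rho^{2} \,, \qquad \rho \geq 0 \,,
\end{equation*}
which belongs to $C^{\infty}_{\mathrm{ev}}([0,\infty))$: I would show that $g'(\rho) < 0$ for all $\rho > 0$, whence $g$ is strictly decreasing on $[0,\infty)$, and then locate its unique zero via the intermediate value theorem. The first step is to rephrase the hypotheses on $h$ in terms of $\widetilde{h}$. Since $h$ is radial, $(\pd h)(\xi) = \widetilde{h}'(|\xi|)\,|\xi|^{-1}\xi$ for $\xi \neq 0$, so $|(\pd h)(\xi)| < 1$ is equivalent to $|\widetilde{h}'(\rho)| < 1$ for all $\rho \geq 0$. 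Writing the Hessian of a radial function in the orthonormal frame adapted to $\hat\xi$ shows that its eigenvalues are $\widetilde{h}''(\rho)$ and $\widetilde{h}'(\rho)/\rho$; hence the positive semidefiniteness of $(\pd^{2}h)(\xi)$ --- which we use here as for all height functions in this section --- forces $\widetilde{h}'(\rho) \geq 0$ for $\rho > 0$, i.e. $\widetilde{h}$ is nondecreasing on $[0,\infty)$.

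For the monotonicity I would compute $g'(\rho) = 2\big(\widetilde{h}(\rho)\widetilde{h}'(\rho) - \rho\big)$ and distinguish two cases at a fixed $\rho > 0$. If $\widetilde{h}(\rho) \leq 0$, then $\widetilde{h}(\rho)\widetilde{h}'(\rho) \leq 0 < \rho$ because $\widetilde{h}'(\rho) \geq 0$. If $\widetilde{h}(\rho) > 0$, then $0 \leq \widetilde{h}'(\rho) < 1$ gives $\widetilde{h}(\rho)\widetilde{h}'(\rho) < \widetilde{h}(\rho)$, and the fundamental theorem of calculus together with $\widetilde{h}(0) = h(0) < 0$ and $\widetilde{h}'(s) < 1$ yields
\begin{equation*}
\widetilde{h}(\rho) = \widetilde{h}(0) + \int_{0}^{\rho} \widetilde{h}'(s) \dd s < \widetilde{h}(0) + \rho < \rho \,,
\end{equation*}
so again $\widetilde{h}(\rho)\widetilde{h}'(\rho) < \rho$. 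In both cases $g'(\rho) < 0$, and since $g \in C^{1}$ this gives that $g$ is strictly decreasing on $[0,\infty)$.

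For existence, uniqueness, and simplicity of the zero I would use $g(0) = \widetilde{h}(0)^{2} = h(0)^{2} > 0$ on the one side, and on the other side factor $g(\rho) = \big(\widetilde{h}(\rho) - \rho\big)\big(\widetilde{h}(\rho) + \rho\big)$: the estimate $\widetilde{h}(\rho) - \rho = \widetilde{h}(0) + \int_{0}^{\rho}\big(\widetilde{h}'(s) - 1\big)\dd s < \widetilde{h}(0) < 0$ together with $\widetilde{h}(\rho) + \rho \geq \widetilde{h}(0) + \rho \to \infty$ (again using $\widetilde{h}' \geq 0$) shows $g(\rho) \to -\infty$ as $\rho\to\infty$. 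By continuity and strict monotonicity, $g$ has exactly one zero $R_{0}$, with $R_{0} > 0$ because $g(0) > 0$, and this zero is simple since $g'(R_{0}) < 0 \neq 0$ by the previous step. The only genuinely delicate point is controlling the sign of the product $\widetilde{h}\widetilde{h}'$ appearing in $g'$: without the convexity of $h$ the asserted monotonicity actually fails, so the real content of the argument is the reduction of the semidefiniteness of $(\pd^{2}h)$ to the monotonicity $\widetilde{h}'\geq 0$ of the radial profile --- after that, every remaining estimate is an elementary one-variable computation.
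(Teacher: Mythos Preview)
Your proof is correct and follows essentially the same route as the paper: both argue that $0 \leq \widetilde{h}'(\rho) < 1$ together with $\widetilde{h}(\rho) < \rho$ forces $\widetilde{h}(\rho)\widetilde{h}'(\rho) - \rho < 0$, hence strict monotonicity of $g$, and both use the factorisation $g(\rho) = (\widetilde{h}(\rho)+\rho)(\widetilde{h}(\rho)-\rho)$ together with the intermediate value theorem to locate the unique simple zero. You are in fact slightly more careful than the paper in flagging that the inequality $\widetilde{h}' \geq 0$ requires the convexity of $h$, which the paper uses tacitly; this hypothesis is indeed necessary, since without it the monotonicity claim fails.
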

\begin{proof}
The assumptions imply $0 \leq \widetilde{h}'(\rho) < 1$ and $\widetilde{h}(\rho) < \rho$. This gives $\widetilde{h}(\rho)\widetilde{h}'(\rho) - \rho < 0$ for all $\rho > 0$ and so $\rho \mapsto \widetilde{h}(\rho)^{2} - \rho^{2}$ is strictly monotonously decreasing. Since $\widetilde{h}'(\rho) - 1 < 0$ and $\widetilde{h}(0) < 0$, we have $\widetilde{h}(\rho) - \rho < 0$ for all $\rho \geq 0$. Because $\widetilde{h}(0) + \rho < \widetilde{h}(\rho) + \rho < 2\rho$ and $\widetilde{h}'(\rho) + 1 > 1$, there is a unique $R_{0} > 0$ such that $\widetilde{h}(R_{0}) + R_{0} = 0$. This is the unique zero of $\widetilde{h}^{2}(\rho) - \rho^{2} = ( \widetilde{h}(\rho) + \rho ) ( \widetilde{h}(\rho) - \rho )$.
\end{proof}
The image of similarity coordinates with a radially symmetric height function is the complement of a cone.
\begin{lemma}
\label{GSCImage}
Let $h \in C^{\infty}_{\mathrm{rad}}(\RR^{d})$ such that
\begin{equation*}
h(0) < 0 \,, \qquad
|(\pd h)(\xi)| < 1 \,, \qquad
(\pd^{2} h)(\xi) \text{ is positive semidefinite,}
\end{equation*}
for all $\xi\in\RR^{d}$ and with radial profile $\widetilde{h} \in C^{\infty}_{\mathrm{ev}}([0,\infty))$. Let $R>0$. Then, graphical similarity coordinates $\chi:\RR \times \RR^{d} \rightarrow \RR^{1,d}$ have the image regions
\begin{align*}
\chi(\RR \times \RR^{d}) &= \big\{ (t,x) \in \RR^{1,d} \mid \kappa |x| > t \big\} \,, \\
\chi \big( \RR\times\BB^{d}_{R} \big) &= \big\{ (t,x) \in \RR^{1,d} \mid {\kappa\mathstrut}_{R} |x| > t \big\} \,, \\
\chi \big( (0,\infty)\times\BB^{d}_{R} \big) &= \big\{ (t,x) \in \RR^{1,d} \mid {\kappa\mathstrut}_{R} |x| > t > \widetilde{h}(|x|) \big\} \,,
\end{align*}
where
\begin{equation*}
\kappa = \lim_{\rho\to\infty} \widetilde{h}'(\rho)
\qquad\text{and}\qquad
{\kappa\mathstrut}_{R} = \frac{\widetilde{h}(R)}{R} \,.
\end{equation*}
\end{lemma}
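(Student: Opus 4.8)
The plan is to reduce everything to the scalar analysis of the auxiliary function $F(\alpha,\beta,\xi)=\alpha h(\alpha^{-1}\xi)-\beta$ from \Cref{GSCMap} and to exploit the explicit inverse of $\chi$ furnished by \Cref{GSCLemma}. By rotational symmetry of $h$ it is enough to describe the image in terms of $t$ and $r\coloneqq|x|$, and I would first record the elementary consequences of the hypotheses for the radial profile: the eigenvalues of $(\pd^{2} h)$ are $\widetilde h''(r)$ and $\widetilde h'(r)/r$, so $\widetilde h''\ge 0$ and $0\le\widetilde h'<1$; hence $\widetilde h$ is convex and nondecreasing, the limit $\kappa=\lim_{\rho\to\infty}\widetilde h'(\rho)$ exists in $[0,1]$, and --- via the representation $\widetilde h(\rho)/\rho=\widetilde h(0)/\rho+\rho^{-1}\int_{0}^{\rho}\widetilde h'$ together with $\widetilde h(0)=h(0)<0$ --- one gets $\widetilde h(\rho)/\rho\to\kappa$ and $\widetilde h(R)/R<\kappa$ for every $R>0$.

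Next I would handle $\chi(\RR\times\RR^{d})$. By \Cref{GSCLemma}, $(t,x)$ lies in the image exactly when $\alpha\mapsto F(\alpha,t,x)=\alpha\widetilde h(r/\alpha)-t$ has a zero on $(0,\infty)$, necessarily unique since this map is strictly decreasing by \Cref{GSCMap}. As $\alpha\to\infty$ one has $\widetilde h(r/\alpha)\to\widetilde h(0)<0$ and the prefactor diverges, so $F\to-\infty$; hence a zero exists iff $\lim_{\alpha\to0^{+}}F(\alpha,t,x)>0$. Setting $\rho=r/\alpha$ gives $\lim_{\alpha\to0^{+}}F(\alpha,t,x)=r\lim_{\rho\to\infty}\widetilde h(\rho)/\rho-t=\kappa r-t$ when $r>0$, while for $x=0$ one reads off directly that $F(\alpha,t,0)=\alpha h(0)-t$ has a positive zero iff $t<0$; both cases combine to $\chi(\RR\times\RR^{d})=\{\,\kappa|x|>t\,\}$.

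For the two ball restrictions I would use the explicit inverse $\chi^{-1}(t,x)=(-\log h^{+}(t,x),\,h^{+}(t,x)^{-1}x)$. Membership in $\chi(\RR\times\BB^{d}_{R})$ means $|h^{+}(t,x)^{-1}x|<R$, i.e.\ $h^{+}(t,x)>|x|/R$, which --- because $F(\,.\,,t,x)$ is strictly decreasing and vanishes at $h^{+}(t,x)$ --- is equivalent to the sign condition $F(|x|/R,t,x)=\tfrac{\widetilde h(R)}{R}|x|-t>0$, that is $\kappa_{R}|x|>t$; since $\kappa_{R}<\kappa$ this already forces $(t,x)\in\chi(\RR\times\RR^{d})$. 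Membership in $\chi((0,\infty)\times\BB^{d}_{R})$ demands in addition $-\log h^{+}(t,x)>0$, i.e.\ $h^{+}(t,x)<1$, equivalently $F(1,t,x)=\widetilde h(|x|)-t<0$, i.e.\ $t>\widetilde h(|x|)$; convexity of $\widetilde h$ with $\widetilde h(0)<0$ yields $\widetilde h(|x|)<\kappa_{R}|x|$ for $0\le|x|<R$, so the intersection of the conditions is precisely $\{\,\kappa_{R}|x|>t>\widetilde h(|x|)\,\}$ and is nonempty over $\BB^{d}_{R}$.

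I expect the only real difficulty to be bookkeeping rather than substance: correctly evaluating the one-sided limits of $F$ (in particular the asymptotics $\widetilde h(\rho)/\rho\to\kappa$ and the degenerate point $x=0$), and being careful that each passage from ``the zero $h^{+}(t,x)$ lies below/above a threshold $\alpha_{0}$'' to a sign condition $F(\alpha_{0},t,x)\gtrless 0$ uses strict monotonicity of $F(\,.\,,t,x)$ --- which is exactly where the convexity assumption on $h$ enters.
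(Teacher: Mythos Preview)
Your proposal is correct and follows essentially the same approach as the paper: both reduce the image description to the strict monotonicity of $\alpha\mapsto F(\alpha,t,x)$ from \Cref{GSCMap}, compute the boundary limits (in particular $\widetilde h(\rho)/\rho\to\kappa$), and translate the constraints $|\xi|<R$ and $\tau>0$ into sign conditions on $F$ at the thresholds $\alpha=|x|/R$ and $\alpha=1$. The only cosmetic difference is that the paper parametrizes by $\tau$ via $F(\ee^{-\tau},t,x)$ and phrases the ball restriction through the relation $t/|x|=\widetilde h(|\xi|)/|\xi|$, whereas you work with $\alpha$ and the explicit inverse $h^{+}$ from \Cref{GSCLemma}; these are equivalent viewpoints.
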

\begin{proof}
\Cref{GSCLemma} yields that $\chi:\RR \times \RR^{d} \rightarrow \RR^{1,d}$ are coordinates on their image. Now, suppose $(t,x) \in \RR^{1,d}$ with $(t,x) = \chi(\tau,\xi)$ for some $(\tau,\xi) \in \RR \times \RR^{d}$. If $x=0$ then $\xi = 0$ and $t = \ee^{-\tau} \widetilde{h}(0) < 0$. If $x \neq 0$, then
\begin{equation}
\label{ImageT}
\frac{t}{|x|} = \frac{\widetilde{h}(|\xi|)}{|\xi|} \,.
\end{equation}
Since the map $(0,\infty) \ni \rho \mapsto \rho^{-1} \widetilde{h}(\rho)$ is strictly monotonously increasing by \Cref{GSCMap} with
\begin{equation*}
\lim_{\rho\to\infty} \frac{\widetilde{h}(\rho)}{\rho} = \lim_{\rho\to\infty} \widetilde{h}'(\rho) \eqqcolon \kappa \in [0,1] \,,
\end{equation*}
we get $\kappa |x| > t$. If additionally $0 < |\xi| < R$ and $\tau > 0$ we conclude from strict monotonicity that ${\kappa\mathstrut}_{R} |x| > t$. On the other hand, fix $(t,x)\in\RR^{1,d}$ with $\kappa |x| > t$ and consider the map $\tau \mapsto F\big( \ee^{-\tau},t,x\big) = \ee^{-\tau} \widetilde{h}(\ee^{\tau}|x|)-t$. By \Cref{GSCMap}, this map is strictly monotonously increasing and we have
\begin{equation*}
\lim_{\tau\to-\infty} F\big( \ee^{-\tau},t,x \big) = - \infty
\qquad\text{and}\qquad
\lim_{\tau\to\infty} F\big( \ee^{-\tau},t,x\big) = \kappa|x| - t > 0 \,,
\end{equation*}
so there exists a unique $\tau \in \RR$ with $F\big( \ee^{-\tau},t,x) = 0$. Then $\tau$ and $\xi \coloneqq \ee^{\tau} x$ satisfy $\chi(\tau,\xi) = (t,x)$. If we are given ${\kappa\mathstrut}_{R} |x| > t > \widetilde{h}(|x|)$, then \Cref{ImageT} and strict monotonicity imply $\tau > 0$ and $|\xi| < R$.
\end{proof}
The previous lemma also shows that coordinate cylinders get mapped to complements of cones. On such regions, we consider the transition diffeomorphism between radial similarity coordinates.
\begin{lemma}
\label{TransitionDiffeo}
Let $\chi,\overline{\chi} : \RR \times \RR^{d} \rightarrow \RR^{1,d}$ be graphical similarity coordinates with respective radially symmetric height functions $h,\overline{h} \in C^{\infty}_{\mathrm{rad}}(\RR^{d})$ as in \Cref{GSCImage}. Let $R,\overline{R}>0$ such that $\chi ( \RR \times \BB^{d}_{R} ) = \overline{\chi} ( \RR \times \BB^{d}_{\overline{R}} )$. Then, there is a positive function $h_{+} \in C^{\infty}_{\mathrm{rad}}(\RR^{d})$ such that the transition diffeomorphism $\chi^{-1} \circ \overline{\chi} : \RR \times \BB^{d}_{\overline{R}} \rightarrow \RR \times \BB^{d}_{R}$ is given by
\begin{equation*}
\big( \chi^{-1} \circ \overline{\chi} \big)(\overline{\tau},\overline{\xi}) = \big( \overline{\tau} - \log h_{+}(\overline{\xi}), h_{+}(\overline{\xi})^{-1} \overline{\xi} \big) \,.
\end{equation*}
\end{lemma}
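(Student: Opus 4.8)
The plan is to obtain the formula by directly composing $\overline{\chi}$ with the explicit inverse of $\chi$ furnished by \Cref{GSCLemma}, and then to exploit the scaling covariance of the auxiliary function $h^{+}$ produced there. First note that, by \Cref{GSCLemma}, the restrictions of $\chi$ to $\RR\times\BB^{d}_{R}$ and of $\overline{\chi}$ to $\RR\times\BB^{d}_{\overline{R}}$ are diffeomorphisms onto the same open set $\chi(\RR\times\BB^{d}_{R}) = \overline{\chi}(\RR\times\BB^{d}_{\overline{R}})$; hence $\chi^{-1}\circ\overline{\chi} : \RR\times\BB^{d}_{\overline{R}}\rightarrow\RR\times\BB^{d}_{R}$ is automatically a well-defined diffeomorphism, and the content of the lemma is only the explicit shape of this map.

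Next I would insert the inverse from \Cref{GSCLemma}, namely $\chi^{-1}(t,x) = \big( -\log h^{+}(t,x),\, h^{+}(t,x)^{-1}x \big)$ with $h^{+} \in C^{\infty}(\chi(\RR\times\RR^{d}))$ positive, evaluated at $(t,x) = \overline{\chi}(\overline{\tau},\overline{\xi}) = \big( \ee^{-\overline{\tau}}\overline{h}(\overline{\xi}), \ee^{-\overline{\tau}}\overline{\xi} \big)$. The scaling identity $h^{+}(\lambda t, \lambda x) = \lambda h^{+}(t,x)$ from \Cref{GSCLemma}, applied with $\lambda = \ee^{-\overline{\tau}}$, gives $h^{+}(\overline{\chi}(\overline{\tau},\overline{\xi})) = \ee^{-\overline{\tau}}\, h^{+}\big(\overline{h}(\overline{\xi}),\overline{\xi}\big)$. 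I would then set $h_{+}(\overline{\xi}) \coloneqq h^{+}\big(\overline{h}(\overline{\xi}),\overline{\xi}\big) = h^{+}(\overline{\chi}(0,\overline{\xi}))$, which is well-defined, smooth and positive on $\BB^{d}_{\overline{R}}$ because $\overline{\chi}(0,\overline{\xi}) \in \overline{\chi}(\RR\times\BB^{d}_{\overline{R}}) = \chi(\RR\times\BB^{d}_{R})$ lies in the domain of $h^{+}$. Substituting back yields
\[
(\chi^{-1}\circ\overline{\chi})(\overline{\tau},\overline{\xi})
= \big( -\log(\ee^{-\overline{\tau}}h_{+}(\overline{\xi})),\ (\ee^{-\overline{\tau}}h_{+}(\overline{\xi}))^{-1}\ee^{-\overline{\tau}}\overline{\xi} \big)
= \big( \overline{\tau} - \log h_{+}(\overline{\xi}),\ h_{+}(\overline{\xi})^{-1}\overline{\xi} \big),
\]
which is the asserted form.

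It remains to record radial symmetry of $h_{+}$. By \Cref{GSCMap,GSCLemma}, $h^{+}(t,x)$ is the unique positive zero $\alpha$ of $\alpha \mapsto \alpha\,\widetilde{h}(\alpha^{-1}|x|) - t$, so $h^{+}(t,x)$ depends on $x$ only through $|x|$; consequently $h_{+}(\overline{\xi}) = h^{+}\big(\widetilde{\overline{h}}(|\overline{\xi}|),\overline{\xi}\big)$ depends only on $|\overline{\xi}|$, and it extends to an element of $C^{\infty}_{\mathrm{rad}}(\RR^{d})$ (in the cases of interest, where $\overline{\chi}$ maps $\{0\}\times\RR^{d}$ into the domain of $h^{+}$, this extension is literally $\overline{\xi}\mapsto h^{+}(\overline{\chi}(0,\overline{\xi}))$; otherwise one extends the restriction off $\BB^{d}_{\overline{R}}$).

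I do not expect a genuine obstacle: the argument is a short chain of substitutions using the properties of $h^{+}$ already established in \Cref{GSCMap,GSCLemma}. The only points needing a little care are (i) checking that the base point $\overline{\chi}(0,\overline{\xi})$ at which $h^{+}$ is evaluated really lies in the domain $\chi(\RR\times\RR^{d})$ of $h^{+}$ — which is exactly where the hypothesis that the two coordinate cylinders have the same image enters — and (ii) reading off the radial symmetry of $h_{+}$ from the defining equation of $h^{+}$ rather than from a closed-form expression, since no such expression for $h^{+}$ is available.
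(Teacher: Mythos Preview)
Your proof is correct and follows essentially the same approach as the paper: both compose the explicit inverse from \Cref{GSCLemma} with $\overline{\chi}$, use the scaling identity $h^{+}(\lambda t,\lambda x)=\lambda h^{+}(t,x)$ to factor out $\ee^{-\overline{\tau}}$, and then define $h_{+}(\overline{\xi})\coloneqq h^{+}(\overline{h}(\overline{\xi}),\overline{\xi})$. Your treatment is in fact more detailed than the paper's, which states the formula tersely and records radial symmetry of $h_{+}$ in a single line.
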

\begin{proof}
By \Cref{GSCImage}, the transition diffeomorphism $\chi^{-1} \circ \overline{\chi} : \RR \times \BB^{d}_{\overline{R}} \rightarrow \RR \times \BB^{d}_{R}$ is well-defined. By \Cref{GSCLemma}, there is a positive radially symmetric function $h^{+} \in C^{\infty}_{\mathrm{rad}}(\RR^{d})$ such that
\begin{equation*}
\big(\chi^{-1}\circ\overline{\chi} \big)(\overline{\tau},\overline{\xi}) = \big( \overline{\tau} - \log h^{+} \big( \overline{h}(\overline{\xi}), \overline{\xi} \big), h^{+} \big( \overline{h}(\overline{\xi}), \overline{\xi} \big)^{-1} \overline{\xi} \big) \,.
\end{equation*}
Now set $h_{+}(\overline{\xi}) = h^{+} \big( \overline{h}(\overline{\xi}), \overline{\xi} \big) > 0$. Since $h,\overline{h} \in C^{\infty}_{\mathrm{rad}}(\RR^{d})$, also $h_{+} \in C^{\infty}_{\mathrm{rad}}(\RR^{d})$.
\end{proof}
\subsection{Construction of height functions}
Similarity coordinates with a radially symmetric height function as in \Cref{GSCImage} can be concretely implemented in various ways. To give some examples, we first translate positive semidefiniteness of a radial function to nonnegativity of the second derivative of its radial profile.
\begin{lemma}
\label{RadialHeight}
Let $h \in C^{\infty}_{\mathrm{rad}}(\RR^{d})$ with radial profile $\widetilde{h} \in C^{\infty}_{\mathrm{ev}}([0,\infty))$. Then, the Hessian matrix $(\pd^{2} h)(\xi)$ is positive semidefinite for all $\xi\in\RR^{d}$ if and only if $\widetilde{h}''(\rho) \geq 0$ for all $\rho \geq 0$.
\end{lemma}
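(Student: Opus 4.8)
The plan is to reduce the matrix condition to a scalar one by diagonalizing the Hessian of a radial function explicitly. First I would write $h(\xi) = \widetilde{h}(|\xi|)$ and compute, for $\xi \neq 0$, the gradient $\pd_i h(\xi) = \widetilde{h}'(|\xi|)\xi^i/|\xi|$ and the Hessian
\begin{equation*}
(\pd_i \pd_j h)(\xi) = \widetilde{h}''(|\xi|) \frac{\xi^i \xi^j}{|\xi|^2} + \frac{\widetilde{h}'(|\xi|)}{|\xi|}\Big( \delta_{ij} - \frac{\xi^i \xi^j}{|\xi|^2} \Big) \,.
\end{equation*}
Setting $r = |\xi|$ and letting $P_\xi$ denote the orthogonal projection of $\RR^d$ onto $\RR\xi$, this reads $(\pd^2 h)(\xi) = \widetilde{h}''(r) P_\xi + \tfrac{\widetilde{h}'(r)}{r}(I_d - P_\xi)$. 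Hence $(\pd^2 h)(\xi)$ is a symmetric matrix with the simple eigenvalue $\widetilde{h}''(r)$ in the radial direction $\xi$ and the $(d-1)$-fold eigenvalue $\widetilde{h}'(r)/r$ in the directions tangent to the sphere of radius $r$. Since a symmetric matrix is positive semidefinite exactly when all its eigenvalues are nonnegative, for $\xi \neq 0$ positive semidefiniteness of $(\pd^2 h)(\xi)$ is equivalent to $\widetilde{h}''(r) \geq 0$ together with $\widetilde{h}'(r) \geq 0$.

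Next I would bring in the evenness of the profile. Since $\widetilde{h} \in C^\infty_{\mathrm{ev}}([0,\infty))$ we have $\widetilde{h}'(0) = 0$, so the fundamental theorem of calculus gives $\widetilde{h}'(r) = \int_0^r \widetilde{h}''(s)\dd s$ for all $r \geq 0$. Consequently $\widetilde{h}'' \geq 0$ on $[0,\infty)$ already forces $\widetilde{h}' \geq 0$ on $[0,\infty)$, and then by the previous paragraph $\widetilde{h}'' \geq 0$ implies that $(\pd^2 h)(\xi)$ is positive semidefinite for every $\xi \neq 0$. Conversely, if $(\pd^2 h)(\xi)$ is positive semidefinite for all $\xi$, then reading off the radial eigenvalue for $\xi \neq 0$ yields $\widetilde{h}''(r) \geq 0$ for every $r > 0$.

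It remains to treat the origin. I would use Taylor's theorem together with $\widetilde{h}'(0) = 0$ to write $h(\xi) = \widetilde{h}(0) + \tfrac12 \widetilde{h}''(0)|\xi|^2 + O(|\xi|^4)$ near $\xi = 0$, whence $(\pd^2 h)(0) = \widetilde{h}''(0) I_d$; alternatively one passes to the limit $r \to 0$ in the eigenvalue formulas, using $\widetilde{h}'(r)/r \to \widetilde{h}''(0)$ and continuity of $\pd^2 h$. Thus $(\pd^2 h)(0)$ is positive semidefinite if and only if $\widetilde{h}''(0) \geq 0$, and combining the three steps gives the claimed equivalence. I do not expect a genuine obstacle here; the only point requiring a little care is the behaviour at the origin — specifically, exploiting the evenness of $\widetilde{h}$ to get $\widetilde{h}'(0) = 0$, both to identify $(\pd^2 h)(0)$ and to upgrade $\widetilde{h}'' \geq 0$ to $\widetilde{h}' \geq 0$ so that the tangential eigenvalue is automatically nonnegative.
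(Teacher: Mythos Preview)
Your proof is correct and follows essentially the same approach as the paper: both compute the Hessian of the radial function explicitly, read off positivity of the quadratic form (you via the eigenvalue decomposition with the radial projection, the paper via the formula $(\pd_i\pd_j h)(\xi) = (D\widetilde h)(|\xi|)\delta_{ij} + (D^2\widetilde h)(|\xi|)\xi_i\xi_j$ with $D\widetilde h = \widetilde h'/\rho$), and both use $\widetilde h'(0)=0$ together with $\widetilde h''\geq 0$ to obtain $\widetilde h'\geq 0$ for the converse. Your treatment of the origin is slightly more explicit, but otherwise the arguments coincide.
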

\begin{proof}
We have for the second partial derivatives of $h$ the identity
\begin{equation*}
(\pd_{i} \pd_{j} h)(\xi) = (D\widetilde{h})(|\xi|) \delta_{ij} + (D^{2}\widetilde{h})(|\xi|) \xi_{i} \xi_{j}
\end{equation*}
in terms of the operation $(D\widetilde{h})(\rho) = \widetilde{h}'(\rho)/\rho$. Now, suppose that $(\pd^{2} h)(\xi)$ is positive semidefinite for all $\xi\in\RR^{d}$. Then
\begin{equation*}
0 \leq v^{\top} (\pd^{2} h)(\xi) v = (D\widetilde{h})(|\xi|) |v|^{2} + (D^{2}\widetilde{h})(|\xi|) (\xi \cdot v)^{2}
\end{equation*}
for all $v,\xi\in\RR^{d}$, where $v^{\top}$ denotes the transpose vector. Thus $(D\widetilde{h})(|\xi|) \geq 0$ and $(D^{2}\widetilde{h})(|\xi|) \geq 0$ for all $\xi\in\RR^{d}$, hence also
\begin{equation*}
\widetilde{h}''(\rho) = (D \widetilde{h})(\rho) + \rho^{2} (D^{2} \widetilde{h})(\rho) \geq 0
\end{equation*}
for all $\rho \geq 0$. Conversely, suppose that $\widetilde{h}''(\rho) \geq 0$ for all $\rho \geq 0$. Since $\widetilde{h}'(0) = 0$, it follows that also $\widetilde{h}'(\rho) \geq 0$ for all $\rho \geq 0$. Hence $(\pd^{2} h)(\xi)$ is positive semidefinite for all $\xi\in\RR^{d}$.
\end{proof}
We end this section with the presentation of an exemplary class of radially symmetric height functions. The associated similarity coordinates have the convenient property to coincide with standard similarity coordinates in a backward light cone, whereas beyond they can be arranged to extend to any possible image according to \Cref{GSCImage}.
\begin{lemma}
\label{GSCExemplary}
Let $0 < \overline{\kappa} < 1$, $\overline{r} > 0$ and $0 < \varepsilon < \overline{r}$. Then, there is a function $\overline{h} \in C^{\infty}_{\mathrm{rad}}(\RR^{d})$ with positive semidefinite Hessian $(\pd^{2} \overline{h})(\xi)$ and bounded gradient $|(\pd \overline{h})(\xi)| < 1$ for all $\xi \in \RR^{d}$ which satisfies
\begin{equation*}
\overline{h}(\xi) =
\renewcommand{\arraystretch}{1.2}
\left\{
\begin{array}{ll}
\overline{\kappa} ( |\xi| - \overline{r} ) - 1 & \text{if } |\xi| \geq \overline{r} + \varepsilon \,, \\
- 1 & \text{if } |\xi| \leq \overline{r} - \varepsilon \,.
\end{array}
\right.
\end{equation*}
\end{lemma}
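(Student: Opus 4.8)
The plan is to construct the radial profile $\widetilde{h} \in C^{\infty}_{\mathrm{ev}}([0,\infty))$ directly and then set $\overline{h}(\xi) = \widetilde{h}(|\xi|)$. By \Cref{RadialHeight}, positive semidefiniteness of $(\pd^{2}\overline{h})$ is equivalent to $\widetilde{h}'' \geq 0$ on $[0,\infty)$, and once this holds $\widetilde{h}'$ is nondecreasing with $\widetilde{h}'(0)=0$, so that $|(\pd\overline{h})(\xi)| = \widetilde{h}'(|\xi|)$ and the gradient bound will follow as soon as $\widetilde{h}'$ stays below $1$; this I will arrange by keeping $\widetilde{h}'$ between $0$ and $\overline{\kappa} < 1$. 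The boundary prescription forces $\widetilde{h}' = 0$ on $[0,\overline{r}-\varepsilon]$ and $\widetilde{h}' = \overline{\kappa}$ on $[\overline{r}+\varepsilon,\infty)$, so the whole construction reduces to interpolating the derivative smoothly and monotonically across $[\overline{r}-\varepsilon,\overline{r}+\varepsilon]$ while hitting the correct antiderivative at the right endpoint.

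Concretely, I would fix a standard smooth cutoff $\phi \in C^{\infty}(\RR)$, nondecreasing, with $\phi$ constant equal to $0$ on $(-\infty,-\varepsilon]$ and constant equal to $1$ on $[\varepsilon,\infty)$, and then replace it by its symmetrization $\tfrac{1}{2}\big(\phi(t) + 1 - \phi(-t)\big)$, which is again smooth, nondecreasing, locally constant near $\pm\varepsilon$ with the same boundary values, and additionally satisfies $\phi(t) + \phi(-t) = 1$ for all $t$. I then set $\widetilde{h}'(\rho) \coloneqq \overline{\kappa}\,\phi(\rho - \overline{r})$ for $\rho \geq 0$ and
\[
\widetilde{h}(\rho) \coloneqq -1 + \int_{\overline{r}-\varepsilon}^{\rho} \overline{\kappa}\,\phi(\sigma - \overline{r})\,\dd\sigma \,.
\]
Since $\phi$ is flat to the left of $-\varepsilon$, one gets $\widetilde{h} \equiv -1$ on $[0,\overline{r}-\varepsilon]$, hence $\widetilde{h} \in C^{\infty}_{\mathrm{ev}}([0,\infty))$ (all odd derivatives vanish at $0$ because $\widetilde{h}$ is constant there) and $\overline{h}(\xi) = \widetilde{h}(|\xi|)$ is a smooth radial function on $\RR^{d}$. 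The symmetry identity gives $\int_{\overline{r}-\varepsilon}^{\overline{r}+\varepsilon}\phi(\sigma-\overline{r})\,\dd\sigma = \varepsilon$, so $\widetilde{h}(\overline{r}+\varepsilon) = \overline{\kappa}\varepsilon - 1$; combined with $\widetilde{h}' \equiv \overline{\kappa}$ on $[\overline{r}+\varepsilon,\infty)$ this yields $\widetilde{h}(\rho) = \overline{\kappa}(\rho - \overline{r}) - 1$ there, matching the prescribed affine branch together with all of its derivatives.

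It then remains to read off the two claimed properties: $\widetilde{h}''(\rho) = \overline{\kappa}\,\phi'(\rho-\overline{r}) \geq 0$ for all $\rho \geq 0$, so $(\pd^{2}\overline{h})(\xi)$ is positive semidefinite by \Cref{RadialHeight}; and $0 \leq \widetilde{h}'(\rho) = \overline{\kappa}\,\phi(\rho-\overline{r}) \leq \overline{\kappa} < 1$, so $|(\pd\overline{h})(\xi)| < 1$. I do not expect any genuine analytic difficulty here. The only points requiring care are the normalization of the interpolation so that the antiderivative lands exactly on the line $\overline{\kappa}(\rho-\overline{r}) - 1$ at $\rho = \overline{r}+\varepsilon$ — which is precisely what the symmetrization of $\phi$ delivers, using $0 < \varepsilon$ and $0 < \overline{\kappa} < 1$ so that the required value $\overline{\kappa}\varepsilon$ lies in the admissible range $(0,2\overline{\kappa}\varepsilon)$ — and the verification that the gluing at $\rho = \overline{r}\pm\varepsilon$ is $C^{\infty}$, which is immediate since $\phi$ is locally constant near $\pm\varepsilon$.
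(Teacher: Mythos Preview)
Your proof is correct and follows essentially the same idea as the paper's: smoothly interpolate the corner of the piecewise linear profile $\rho \mapsto \overline{\kappa}\max\{\rho-\overline{r},0\}-1$ in a symmetric way so that the affine branch is hit exactly at $\overline{r}+\varepsilon$. The paper mollifies this profile directly with an even bump $\phi_{\varepsilon}$, while you prescribe the derivative via a symmetrized cutoff and integrate; these are two sides of the same construction (your $\overline{\kappa}\,\phi(\rho-\overline{r})$ is precisely $(\mathsf{h}' \ast \phi_{\varepsilon})(\rho)$ for the appropriate choice), and the symmetry device plays the same role in both---in the paper it is the evenness of the mollifier, in yours the identity $\phi(t)+\phi(-t)=1$. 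Your write-up is in fact more explicit about the gradient bound and the endpoint matching, which the paper leaves to the reader.
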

\begin{proof}
Let $\phi,\phi_{\varepsilon} \in C^{\infty}(\RR)$ be given by
\begin{equation*}
\phi(\rho) =
\renewcommand{\arraystretch}{1.2}
\left\{
\begin{array}{ll}
\ee^{-\frac{1}{1-\rho^{2}}} \,, & |\rho|<1 \,, \\
0 \,, & |\rho| \geq 1 \,,
\end{array}
\right.
\qquad\text{and}\qquad
\phi_{\varepsilon}(\rho) = \frac{\varepsilon^{-1}}{\| \phi \|_{L^{1}(\RR)}} \phi(\varepsilon^{-1}\rho) \,.
\end{equation*}
Let $\mathsf{h} \ast {\phi\mathstrut}_{\varepsilon}$ be the convolution of the function $\mathsf{h}\in C(\RR)$ given by
\begin{equation*}
\mathsf{h}(\rho) = \frac{\overline{\kappa}}{2} \big( |\rho| - \overline{r} + \big| |\rho| - \overline{r} \big| \big) - 1
\end{equation*}
with the function $\phi_{\varepsilon} \in C^{\infty}(\RR)$. Note that $\mathsf{h} \ast {\phi\mathstrut}_{\varepsilon} \in C^{\infty}(\RR)$ is even and given by
\begin{equation*}
(\mathsf{h} \ast {\phi\mathstrut}_{\varepsilon})(\rho) =
\renewcommand{\arraystretch}{1.5}
\left\{
\begin{array}{ll}
\mathsf{h}(\rho) & \text{if } | \rho - \overline{r} | \geq \varepsilon \,, \\
\displaystyle{
\overline{\kappa} \int_{-\varepsilon}^{\rho-\overline{r}} (\rho - z - \overline{r}) {\phi\mathstrut}_{\varepsilon}(z) \dd z - 1
} & \text{if } | \rho - \overline{r} | < \varepsilon \,.
\end{array}
\right.
\end{equation*}
It follows from \Cref{RadialHeight} that $\overline{h} = (\mathsf{h} \ast {\phi\mathstrut}_{\varepsilon})(|\,.\,|) \in C^{\infty}_{\mathrm{rad}}(\RR^{d})$ has a positive semidefinite Hessian.
\end{proof}
\section{The free wave flow in similarity coordinates}
\label{AppendixFreeWave}
In this section, we develop a general functional analytic framework for the wave operator
\begin{equation*}
(\Box u)(t,x) = (-\pd_{t}^{2} + \Delta_{x} ) u(t,x)
\end{equation*}
in similarity coordinates without imposing any symmetry restrictions on the wave flow. First, we give a formulation of the wave flow as a first-order system in similarity coordinates which is encompassed by methods from semigroup theory. Once this is provided, we work out the necessary analytical foundations for an application of the Lumer-Phillips theorem \cite[p.~83, Theorem 3.15]{MR1721989}. This concerns dissipative estimates in \Cref{EnergyDissipativity} and a density property of the range in \Cref{DenseRange}. As a result, we establish in \Cref{TheGenerationTHM} the existence and sharp growth bounds for the wave flow in similarity coordinates.
\subsection{A first-order formalism}
\label{SubSec1stOrderForm}
Throughout this section, we employ the following version of similarity coordinates from \Cref{AppendixSimilarityCoordinates}.
\begin{definition}
\label{AssumptionsAppendix}
Let $d\in\NN$. Let $h \in C^{\infty}_{\mathrm{rad}}(\RR^{d})$ such that
\begin{equation*}
h(0) < 0 \,, \qquad
|(\pd h)(\xi)| < 1 \,, \qquad
(\pd^{2} h)(\xi) \text{ is positive semidefinite} \,,
\end{equation*}
for all $\xi \in \RR^{d}$. Let $R > 0$. We define \emph{graphical similarity coordinates}
\begin{equation*}
\chi :\RR \times \RR^{d}\rightarrow\RR^{1,d} \,, \qquad \chi(\tau,\xi) = (\ee^{-\tau}h(\xi),\ee^{-\tau}\xi) \,,
\end{equation*}
in the image region
\begin{equation*}
\mathrm{X}^{1,d}_{R} \coloneqq \chi \big( (0,\infty)\times\BB^{d}_{R} \big) \,.
\end{equation*}
\end{definition}
Graphical similarity coordinates are well-adapted to the wave evolution as we have seen in \Cref{GSCLemma,GeometryFoliation,GSCFoliation}. The image region is explicitly determined in \Cref{GSCImage}. We frequently consider the space
\begin{equation*}
C^{\infty} \big( \overline{\mathrm{X}}^{1,d}_{R} \big) \coloneqq \big\{ u \in C^{\infty} \big( \mathrm{X}^{1,d}_{R} \big) \mid u \circ \chi \in C^{\infty} \big( \overline{ (0,\infty) \times \BB^{d}_{R} } \big) \big\}
\end{equation*}
of smooth functions in the image region whose derivatives of all orders are bounded away from the center of the coordinate system. Now, we introduce the following differential operator.
\begin{definition}
\label{WaveFlowOperation}
Let $d\in\NN$ and $R>0$. Let $c,w \in C^{\infty}_{\mathrm{rad}}(\RR^{d})$ be given by
\begin{equation*}
c(\xi) = \xi^{i} (\pd_{i} h)(\xi) - h(\xi)
\qquad\text{and}\qquad
w(\xi) = 1-(\pd^{i} h)(\xi)(\pd_{i} h)(\xi) \,.
\end{equation*}
For $\mathbf{f} \in C^{\infty} ( \overline{\BB^{d}_{R}} )^{2}$, we define
\begin{equation*}
\renewcommand{\arraystretch}{1.5}
\mathbf{L}_{\chi} \mathbf{f} =
\begin{bmatrix}
- \xi^{i} \pd_{i} f_{1}
+ c f_{2} \\
\displaystyle{
\frac{c}{w} \pd^{i} \pd_{i} f_{1}
- \Big( 1 + \frac{c}{w} \pd^{i} \pd_{i} h \Big) f_{2}
- \Big( \xi^{i} + 2 \frac{c}{w} \pd^{i} h \Big) \pd_{i} f_{2}
}
\end{bmatrix}
\in C^{\infty} ( \overline{\BB^{d}_{R}} )^{2} \,.
\end{equation*}
\end{definition}
This definition is based on the formulation of the wave operator in similarity coordinates as a first-order flow.
\begin{lemma}
\label{TransitionRelation}
Let $u\in C^{\infty} \big( \overline{\mathrm{X}}^{1,d}_{R} \big)$ and define $\mathbf{u}
\in C^{\infty} \big( \overline{ (0,\infty) \times \BB^{d}_{R} } \big)^{2}$ with components
\begin{equation*}
u_{1}(\tau,\xi) = (u\circ\chi)(\tau,\xi) \,, \qquad
u_{2}(\tau,\xi) = \ee^{-\tau} \big( (\pd_{0} u) \circ\chi \big) (\tau,\xi) \,.
\end{equation*}
Then
\begin{equation*}
\pd_{\tau}
\mathbf{u}(\tau,\,.\,)
=
\mathbf{L}_{\chi}
\mathbf{u}(\tau,\,.\,)
-
\begin{bmatrix}
0 \\
\displaystyle{
\ee^{-2\tau} \frac{c}{w} \big( (\Box u) \circ \chi \big)(\tau,\,.\,)
}
\end{bmatrix}
\,.
\end{equation*}
\end{lemma}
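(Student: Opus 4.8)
The plan is to verify Lemma~\ref{TransitionRelation} by a direct computation: transform the wave operator $\Box = -\pd_t^2 + \Delta_x$ to similarity coordinates $\chi(\tau,\xi) = (\ee^{-\tau}h(\xi), \ee^{-\tau}\xi)$ and then reorganize the resulting second-order PDE as a first-order system in the variables $u_1 = u \circ \chi$ and $u_2 = \ee^{-\tau}(\pd_0 u)\circ\chi$. First I would record the Jacobian of $\chi$ and its inverse. Writing $t = \ee^{-\tau}h(\xi)$, $x = \ee^{-\tau}\xi$, one has $\pd_\tau t = -\ee^{-\tau}h(\xi)$, $\pd_{\xi^i} t = \ee^{-\tau}\pd_i h(\xi)$, $\pd_\tau x^j = -\ee^{-\tau}\xi^j$, $\pd_{\xi^i}x^j = \ee^{-\tau}\delta_i^j$. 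Inverting this $(d+1)\times(d+1)$ matrix (its determinant is $\ee^{-(d+1)\tau}(\xi^i\pd_i h - h) = \ee^{-(d+1)\tau}c(\xi)$, nonzero by the convexity assumption as shown in \Cref{GSCLemma}) gives expressions for $\pd_t$ and $\pd_{x^j}$ in terms of $\pd_\tau$ and $\pd_{\xi^i}$, with the key combination $\pd_t = \tfrac{\ee^{\tau}}{c}(\pd_\tau + \xi^i\pd_{\xi^i})$ acting on functions pulled back by $\chi$.

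The first component of the claimed identity is then essentially immediate: from $u_1(\tau,\xi) = (u\circ\chi)(\tau,\xi)$ and the chain rule, $\pd_\tau u_1 = (\pd_t u)\circ\chi \cdot (-\ee^{-\tau}h) + (\pd_{x^j}u)\circ\chi\cdot(-\ee^{-\tau}\xi^j)$. Rewriting $(\pd_t u)\circ\chi = \ee^{\tau}u_2$ and $(\pd_{x^j}u)\circ\chi$ via the inverse Jacobian — using that $\xi^j\pd_{\xi^j}u_1 = \xi^j\big[\ee^{-\tau}(\pd_t u)\circ\chi\,\pd_j h + \ee^{-\tau}(\pd_{x^k}u)\circ\chi\,\delta_j^k\big]$ — one collects terms to get $\pd_\tau u_1 = -\xi^i\pd_i u_1 + c\,u_2$, which matches $[\mathbf{L}_\chi\mathbf{u}]_1$. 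For the second component I would compute $\pd_\tau u_2 = -\ee^{-\tau}(\pd_0 u)\circ\chi + \ee^{-\tau}\pd_\tau\big[(\pd_0 u)\circ\chi\big]$; the first term is $-u_2$, and the second is handled exactly like the first component but applied to $\pd_0 u$ in place of $u$, producing $-\xi^i\pd_i u_2 - \tfrac{c}{w}(\ldots)$ contributions. The remaining work is to express $\Delta_x u$ (equivalently $\Box u + \pd_t^2 u$) in similarity coordinates: one needs $\pd_{x^i}\pd_{x^i}$ applied to $u\circ\chi$, which by the Laplace--Beltrami computation analogous to \Cref{GSCLaplaceBeltrami} introduces the coefficients $\tfrac{c}{w}\pd^i\pd_i h$ (from the curvature of the hypersurfaces) and $2\tfrac{c}{w}\pd^i h$ (from cross terms). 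Substituting $\pd_t^2 u = \pd_t(\ee^{\tau}u_2/\ee^{-\tau}\cdot\ldots)$ — more precisely using $(\pd_0^2 u)\circ\chi = \ee^{\tau}\tfrac{\ee^{\tau}}{c}(\pd_\tau+\xi^i\pd_{\xi^i})u_2$ after accounting for the $\ee^{-\tau}$ weight in the definition of $u_2$ — and isolating $\pd_\tau u_2$ gives the stated formula with the explicit $-\ee^{-2\tau}\tfrac{c}{w}(\Box u)\circ\chi$ remainder.

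The main obstacle I anticipate is purely organizational: bookkeeping the numerous chain-rule terms and the powers of $\ee^{\tau}$ that arise from the rescaling $u_2 = \ee^{-\tau}(\pd_0 u)\circ\chi$, and correctly separating the "geometric" part of $\Delta_x$ (which becomes $\mathbf{L}_\chi$) from the genuine d'Alembertian $\Box u$ (which becomes the inhomogeneity). There is no conceptual difficulty — the convexity hypotheses on $h$ guarantee $\chi$ is a diffeomorphism and $c > 0$, so all divisions by $c$ and $w$ are legitimate on $\overline{\mathrm{X}}^{1,d}_R$ — but care is needed to verify that the coefficient functions appearing match exactly those in \Cref{WaveFlowOperation}. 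A clean way to structure the calculation is to first establish the identity for the pullback of a general function $v$, namely $\pd_\tau(v\circ\chi) = -\xi^i\pd_i(v\circ\chi) + c\cdot\ee^{-\tau}(\pd_0 v)\circ\chi$, apply it once with $v = u$ and once with $v = \pd_0 u$, and then feed in the similarity-coordinate expression for $\Delta_x u = \Box u + \pd_0^2 u$; this reduces the whole proof to two invocations of a single lemma plus the Laplace--Beltrami formula already available in the appendix.
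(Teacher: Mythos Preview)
Your proposal is correct and follows essentially the same approach as the paper: both record the transformation formulas $(\pd_0 u)\circ\chi = \tfrac{\ee^\tau}{c}(\pd_\tau + \xi^i\pd_{\xi^i})(u\circ\chi)$ and $(\pd_i u)\circ\chi = \ee^\tau\pd_{\xi^i}(u\circ\chi) - (\pd_i h)(\pd_0 u)\circ\chi$, derive the first component immediately, and then compute $(\Box u)\circ\chi$ in terms of $u_1$, $u_2$ and their derivatives, solving for $\pd_\tau u_2$. The paper's writeup is slightly more compact in that it goes directly to the expression for $(\Box u)\circ\chi$ rather than separately organizing the $\pd_0^2 u$ and $\Delta_x u$ pieces, but the content is identical.
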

\begin{proof}
The transformation for partial derivatives of $u\in C^{\infty} \big( \overline{\mathrm{X}}^{1,d}_{R} \big)$ reads
\begin{align}
\label{pd0SC}
\big( (\pd_{0} u) \circ \chi \big) (\tau,\xi) &= \ee^{\tau} \frac{1}{c(\xi)} (\pd_{\tau} + \xi^{i}\pd_{\xi^{i}}) (u\circ \chi)(\tau,\xi) \,, \\
\label{pdiSC}
\big( (\pd_{i} u) \circ \chi \big) (\tau,\xi) &= \ee^\tau \pd_{\xi^{i}} (u\circ \chi)(\tau,\xi) - (\pd_{i} h)(\xi) \big( (\pd_{0} u) \circ \chi \big) (\tau,\xi) \,.
\end{align}
From this we obtain
\begin{equation*}
\pd_{\tau} u_{1}(\tau,\xi) = - \xi^{i}\pd_{\xi^{i}} u_{1}(\tau,\xi) + c(\xi) u_{2}(\tau,\xi)
\end{equation*}
and
\begin{align*}
\big( (\Box u) \circ \chi \big)(\tau,\xi) &= \big( (- \pd_{0}^{2} u + \pd^{i} \pd_{i} u) \circ \chi \big) (\tau,\xi) \\&= \ee^{2\tau} \Big(
- \frac{w(\xi)}{c(\xi)} \pd_{\tau} u_{2}(\tau,\xi)
- \Big( \frac{w(\xi)}{c(\xi)} \xi^{i} + 2(\pd^{i} h)(\xi) \Big) \pd_{\xi^{i}} u_{2}(\tau,\xi) \\&\indent-
\Big( \frac{w(\xi)}{c(\xi)} + ( \pd^{i} \pd_{i} h)(\xi) \Big) u_{2}(\tau,\xi) +
\pd_{\xi^{i}} \pd_{\xi_{i}} u_{1}(\tau,\xi) \Big) \,,
\end{align*}
which yields the asserted relation.
\end{proof}
\subsection{Commuting differential operators}
\label{SubSecCommDiffOp}
We implement classical spacetime derivatives as differential operators in the first-order formalism.
\begin{definition}
\label{CommutingOperators}
Let $d\in \NN$ and $R>0$. Let $c_{\mu}\in C^{\infty}(\RR^{d})$ be given by
\begin{equation*}
c_{\mu} = 1 \quad \text{for } \mu = 0
\qquad\text{and}\qquad
c_{\mu} = - \pd_{\mu} h \quad \text{for } \mu = 1,\ldots,d \,.
\end{equation*}
For $\mu=0,1,\ldots,d$ and $\mathbf{f} \in C^{\infty} ( \overline{\BB^{d}_{R}} )^{2}$, we define
\begin{equation*}
\renewcommand{\arraystretch}{1.5}
\mathbf{D}_{\mu} \mathbf{f} =
\begin{bmatrix}
\delta_{\mu}{}^{i}\pd_{i} f_{1} + c_{\mu} f_{2} \\
\displaystyle{
\frac{c_{\mu}}{w} \pd^{i} \pd_{i} f_{1}
- \frac{c_{\mu}}{w} (\pd^{i} \pd_{i} h) f_{2}
+ \Big( \delta_{\mu}{}^{i} - 2\frac{c_{\mu}}{w} \pd^{i} h \Big) \pd_{i} f_{2}
}
\end{bmatrix}
\in C^{\infty} ( \overline{\BB^{d}_{R}} )^{2} \,.
\end{equation*}
\end{definition}
This relates the evolution variables in the sense of \Cref{TransitionRelation} and the spacetime derivatives of the corresponding function as follows.
\begin{lemma}
\label{PartialDRelation}
Let $u\in C^{\infty}\big( \overline{\mathrm{X}}^{1,d}_{R} \big)$ and define $\mathbf{u}, \mathbf{u}_{\mu} \in C^{\infty} \big( \overline{ (0,\infty) \times \BB^{d}_{R} } \big)^{2}$ with components
\begin{align*}
u_{1}(\tau,\xi) &= (u\circ \chi)(\tau,\xi) \,, &
u_{2}(\tau,\xi) &= \ee^{-\tau} \big( (\pd_{0} u)\circ \chi \big)(\tau,\xi) \,, \\
u{_{\mu}}_{1}(\tau,\xi) &= \big( (\pd_{\mu} u)\circ \chi\big)(\tau,\xi) \,, &
u{_{\mu}}_{2}(\tau,\xi) &= \ee^{-\tau} \big( (\pd_{0} \pd_{\mu} u)\circ \chi\big)(\tau,\xi) \,,
\end{align*}
for $\mu = 0,1,\ldots,d$, respectively. Then
\begin{equation*}
\mathbf{u}_{\mu}(\tau,\,.\,)
=\ee^{\tau}
\mathbf{D}_{\mu} \mathbf{u}(\tau,\,.\,)
-
\begin{bmatrix}
0 \\
\displaystyle{
\ee^{-\tau} \frac{c_{\mu}}{w} \big( (\Box u) \circ \chi \big)(\tau,\,.\,)
}
\end{bmatrix}
\,.
\end{equation*}
\end{lemma}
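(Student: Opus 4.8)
\textbf{Proof proposal for Lemma~\ref{PartialDRelation}.}

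The plan is to prove this as a direct computational corollary of \Cref{TransitionRelation}, combined with the transformation rules \eqref{pd0SC}, \eqref{pdiSC} for partial derivatives in graphical similarity coordinates. First I would observe that the quantities $u{_{\mu}}_{1}$ and $u{_{\mu}}_{2}$ are, by definition, precisely the evolution variables attached to the function $\pd_{\mu} u \in C^{\infty}(\overline{\mathrm{X}}^{1,d}_{R})$ in the sense of \Cref{TransitionRelation}. That is, if we write $v \coloneqq \pd_{\mu} u$ and let $\mathbf{v}$ be its associated pair $(v\circ\chi, \ee^{-\tau}((\pd_{0}v)\circ\chi))$, then $\mathbf{u}_{\mu} = \mathbf{v}$. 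So the content of the lemma is to express $\mathbf{v}$ in terms of $\mathbf{u}$, i.e. to relate the evolution variables of $\pd_{\mu} u$ to those of $u$.

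For the first component this is immediate: $u{_{\mu}}_{1}(\tau,\xi) = ((\pd_{\mu}u)\circ\chi)(\tau,\xi)$, and applying \eqref{pd0SC} (for $\mu=0$) or \eqref{pdiSC} (for $\mu = 1,\ldots,d$) rewrites this in terms of $\pd_{\xi}$-derivatives of $u_{1} = u\circ\chi$ and of $u_{2}$. A short inspection shows the result equals $\ee^{\tau}$ times the first component of $\mathbf{D}_{\mu}\mathbf{u}$, since $[\mathbf{D}_{\mu}\mathbf{f}]_{1} = \delta_{\mu}{}^{i}\pd_{i}f_{1} + c_{\mu}f_{2}$ and $c_{\mu}$ is exactly the coefficient appearing in \eqref{pd0SC}, \eqref{pdiSC}. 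The key step is the second component: here $u{_{\mu}}_{2}(\tau,\xi) = \ee^{-\tau}((\pd_{0}\pd_{\mu}u)\circ\chi)(\tau,\xi)$, and one wants to differentiate the relation $[\mathbf{u}_{\mu}]_{1} = \ee^{\tau}[\mathbf{D}_{\mu}\mathbf{u}]_{1}$ — alternatively, one substitutes $v = \pd_{\mu}u$ into \Cref{TransitionRelation}, which gives $\pd_{\tau}\mathbf{u}_{\mu} = \mathbf{L}_{\chi}\mathbf{u}_{\mu} - (0, \ee^{-2\tau}\tfrac{c}{w}(\Box \pd_{\mu}u)\circ\chi)^{\top}$, and uses that $\Box$ commutes with $\pd_{\mu}$ so that $\Box\pd_{\mu}u = \pd_{\mu}\Box u$. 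Applying the coordinate transformation \eqref{pd0SC}, \eqref{pdiSC} once more to express $(\pd_{\mu}\Box u)\circ\chi$ back in terms of the evolution variables of $\Box u$ should produce exactly the $\mathbf{D}_{\mu}$-structure in the second slot, with the inhomogeneity $\ee^{-\tau}\tfrac{c_{\mu}}{w}(\Box u)\circ\chi$ — noting the appearance of $c_{\mu}/w$ rather than $c/w$, which is consistent with the coefficients in \Cref{CommutingOperators}.

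The main obstacle I expect is purely bookkeeping: one must carefully track how the mixed derivative $\pd_{0}\pd_{\mu}$ transforms, making sure that the second-order $\pd_{\xi}^{2}$ terms in $[\mathbf{D}_{\mu}\mathbf{f}]_{2}$, the first-order drift terms $(\delta_{\mu}{}^{i} - 2\tfrac{c_{\mu}}{w}\pd^{i}h)\pd_{i}f_{2}$, and the zeroth-order term $-\tfrac{c_{\mu}}{w}(\pd^{i}\pd_{i}h)f_{2}$ all emerge with the right coefficients. A clean way to organise this is to compute $\pd_{\tau}$ of the already-established first-component identity and match it against $\mathbf{L}_{\chi}$-free manipulations, rather than expanding $\pd_{0}\pd_{\mu}(u\circ\chi)$ from scratch; the consistency of the two routes serves as a built-in check. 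Since all functions involved lie in $C^{\infty}(\overline{(0,\infty)\times\BB^{d}_{R}})^{2}$, there are no regularity subtleties, and the identity holds pointwise in $(0,\infty)\times\BB^{d}_{R}$ and extends to the closure by continuity.
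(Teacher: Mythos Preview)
Your overall strategy is correct and matches the paper's: both components are obtained by applying the transformation rules \eqref{pd0SC}, \eqref{pdiSC} (unified as \eqref{pdmuxitau} in the paper) together with \Cref{TransitionRelation}. The first component falls out exactly as you describe.

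For the second component, however, your ``alternative'' route is a detour that does not close. Substituting $v=\pd_{\mu}u$ into \Cref{TransitionRelation} yields the \emph{differential} relation $\pd_{\tau}\mathbf{u}_{\mu}=\mathbf{L}_{\chi}\mathbf{u}_{\mu}-(0,\ee^{-2\tau}\tfrac{c}{w}(\Box\pd_{\mu}u)\circ\chi)^{\top}$, which involves $\mathbf{u}_{\mu}$ on both sides and does not by itself produce the \emph{algebraic} identity $\mathbf{u}_{\mu}=\ee^{\tau}\mathbf{D}_{\mu}\mathbf{u}-(\ldots)$ that you are after; transforming $(\pd_{\mu}\Box u)\circ\chi$ does not extract the $\mathbf{D}_{\mu}$-structure from this equation. (That differential relation is exactly what is used \emph{later}, in the proof of \Cref{DmuLdCommutationRelation}, once the present lemma is already available.)

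The paper's computation, which is also what your first suggestion (``differentiate the first-component identity'') amounts to, is more direct: apply the transformation formula \eqref{pdmuxitau} with $u$ replaced by $\pd_{0}u$ to compute $u_{\mu 2}=\ee^{-\tau}((\pd_{\mu}\pd_{0}u)\circ\chi)$. Since $(\pd_{0}u)\circ\chi=\ee^{\tau}u_{2}$, this produces a term $\tfrac{c_{\mu}}{c}\pd_{\tau}(\ee^{\tau}u_{2})$, and the $\pd_{\tau}u_{2}$ is then eliminated using the \emph{second} component of \Cref{TransitionRelation} applied to $u$ (not to $\pd_{\mu}u$). Substituting and simplifying gives precisely $\ee^{\tau}[\mathbf{D}_{\mu}\mathbf{u}]_{2}-\ee^{-\tau}\tfrac{c_{\mu}}{w}(\Box u)\circ\chi$. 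So the correct use of \Cref{TransitionRelation} here is for $u$, to supply $\pd_{\tau}u_{2}$, rather than for $\pd_{\mu}u$.
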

\begin{proof}
We have from the transformations \eqref{pd0SC} and \eqref{pdiSC}
\begin{align}
\notag
\big( (\pd_{\mu}u) \circ \chi\big)(\tau,\xi) &= e^{\tau} \delta_{\mu}{}^{i} \pd_{\xi^{i}} (u\circ \chi)(\tau,\xi) + c_{\mu}(\xi) \big( (\pd_{0} u) \circ \chi\big)(\tau,\xi) \\&=
\label{pdmuxitau}
e^{\tau} \Big( \delta_{\mu}{}^{i} + \frac{c_{\mu}(\xi)}{c(\xi)} \xi^{i} \Big) \pd_{\xi^{i}} (u\circ \chi)(\tau,\xi) + e^{\tau} \frac{c_{\mu}(\xi)}{c(\xi)} \pd_{\tau} (u\circ \chi)(\tau,\xi) \,.
\end{align}
This gives
\begin{equation*}
u{_{\mu}}_{1}(\tau,\xi) = e^{\tau} \delta_{\mu}{}^{i} \pd_{\xi^{i}} u_{1}(\tau,\xi) + e^{\tau} c_{\mu}(\xi) u_{2}(\tau,\xi)
\end{equation*}
and together with \Cref{TransitionRelation}
\begin{align*}
&
u{_{\mu}}_{2}(\tau,\xi) = e^{-\tau} \big( (\pd_{\mu}\pd_{0}u) \circ \chi \big)(\tau,\xi) \\&\indent=
\Big( \delta_{\mu}{}^{i} + \frac{c_{\mu}(\xi)}{c(\xi)} \xi^{i} \Big) \pd_{\xi^{i}} \big( (\pd_{0}u)\circ \chi \big)(\tau,\xi) + \frac{c_{\mu}(\xi)}{c(\xi)} \pd_{\tau} \big( (\pd_{0}u)\circ \chi \big)(\tau,\xi) \\&\indent=
e^{\tau} \frac{c_{\mu}(\xi)}{w(\xi)} \pd_{\xi^{i}} \pd_{\xi_{i}} u_{1}(\tau,\xi) - e^{\tau} \frac{c_{\mu}(\xi)}{w(\xi)} (\pd^{i} \pd_{i} h)(\xi) u_{2}(\tau,\xi) + e^{\tau} \Big( \delta_{\mu}{}^{i} - 2 \frac{c_{\mu}(\xi)}{w(\xi)} (\pd^{i} h)(\xi) \Big) \pd_{\xi^{i}} u_{2}(\tau,\xi) \\&\indent\indent-
e^{-\tau} \frac{c_{\mu}(\xi)}{w(\xi)} \big( (\Box u) \circ \chi \big)(\tau,\xi) \,,
\end{align*}
which yields the relation.
\end{proof}
The differential operators in \Cref{CommutingOperators} inherit the commutation relations for classical spacetime derivatives. In particular, they commute with the wave evolution operator from \Cref{WaveFlowOperation} up to a decay-inducing term.
\begin{lemma}
\label{DmuLdCommutationRelation}
Let $d\in\NN$ and $R>0$. Then
\begin{equation*}
\mathbf{D}_{\mu} \mathbf{L}_{\chi} \mathbf{f} - \mathbf{L}_{\chi} \mathbf{D}_{\mu} \mathbf{f} = - \mathbf{D}_{\mu} \mathbf{f}
\qquad\text{and}\qquad
\mathbf{D}_{\mu} \mathbf{D}_{\nu} \mathbf{f} = \mathbf{D}_{\nu} \mathbf{D}_{\mu} \mathbf{f}
\end{equation*}
for all $\mathbf{f}\in C^{\infty} ( \overline{\BB^{d}_{R}} )^{2}$ and all $\mu,\nu = 0,1,\ldots,d$.
\end{lemma}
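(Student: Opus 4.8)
\textbf{Proof proposal for \Cref{DmuLdCommutationRelation}.} The plan is to verify both identities by a direct computation at the level of smooth functions, exploiting the fact that the operators $\mathbf{L}_{\chi}$ and $\mathbf{D}_{\mu}$ are, by construction, the similarity-coordinate incarnations of the classical wave operator flow and the classical spacetime derivatives. The cleanest route is to avoid brute-force manipulation of the explicit formulas in \Cref{WaveFlowOperation,CommutingOperators} and instead argue through the transition relations of \Cref{TransitionRelation,PartialDRelation}. Concretely, given $\mathbf{f}\in C^{\infty}(\overline{\BB^{d}_{R}})^{2}$, I would first note that it suffices to prove the identities after composing with the similarity-coordinate substitution, because the map $\mathbf{f}\mapsto \mathbf{f}$ viewed on a fixed $\tau$-slice together with its $\tau$-evolution determines the operators uniquely; more carefully, one picks a $u\in C^{\infty}(\overline{\mathrm{X}}^{1,d}_{R})$ whose associated evolution variables $(u_{1},u_{2})$ at $\tau=0$ equal $\mathbf{f}$ (such a $u$ exists: solve the wave equation, or just any Cauchy problem, with the prescribed data along $\Sigma^{1,d}_{R}(0)$, or simply take $u$ to be the function with $u\circ\chi$ and $\ee^{-\tau}(\pd_0 u)\circ\chi$ prescribed freely at $\tau=0$).

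Then I would combine the three transition relations. From \Cref{TransitionRelation}, $\pd_\tau \mathbf{u} = \mathbf{L}_\chi \mathbf{u} - \mathbf{B}[u]$ where $\mathbf{B}[u](\tau,\,.\,) = \bigl(0,\ \ee^{-2\tau}\tfrac{c}{w}(\Box u)\circ\chi\bigr)^{\!\top}$. From \Cref{PartialDRelation}, $\mathbf{u}_\mu = \ee^{\tau}\mathbf{D}_\mu \mathbf{u} - \tilde{\mathbf{B}}_\mu[u]$ with $\tilde{\mathbf{B}}_\mu[u](\tau,\,.\,) = \bigl(0,\ \ee^{-\tau}\tfrac{c_\mu}{w}(\Box u)\circ\chi\bigr)^{\!\top}$, where $\mathbf{u}_\mu$ denotes the evolution pair built from $\pd_\mu u$. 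Now $\pd_\mu u \in C^{\infty}(\overline{\mathrm{X}}^{1,d}_R)$ is itself a valid input function, so applying \Cref{TransitionRelation} to it gives $\pd_\tau \mathbf{u}_\mu = \mathbf{L}_\chi \mathbf{u}_\mu - \mathbf{B}[\pd_\mu u]$. On the other hand, differentiating the identity $\mathbf{u}_\mu = \ee^{\tau}\mathbf{D}_\mu\mathbf{u} - \tilde{\mathbf{B}}_\mu[u]$ in $\tau$ and substituting $\pd_\tau\mathbf{u} = \mathbf{L}_\chi\mathbf{u} - \mathbf{B}[u]$ yields a second expression for $\pd_\tau\mathbf{u}_\mu$. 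Equating the two and evaluating at $\tau=0$ (where $\mathbf{u}(0,\,.\,)=\mathbf{f}$), the terms involving $\Box u$ must be tracked carefully but should cancel or reassemble correctly because $\Box(\pd_\mu u) = \pd_\mu(\Box u)$ and the transformation law \eqref{pdmuxitau} for $\pd_\mu$ is exactly the structure encoded in the $c_\mu$ coefficients; what survives is precisely $\mathbf{D}_\mu\mathbf{L}_\chi\mathbf{f} - \mathbf{L}_\chi\mathbf{D}_\mu\mathbf{f} = -\mathbf{D}_\mu\mathbf{f}$. The second identity $\mathbf{D}_\mu\mathbf{D}_\nu\mathbf{f} = \mathbf{D}_\nu\mathbf{D}_\mu\mathbf{f}$ follows the same way: applying \Cref{PartialDRelation} twice, once to $u$ and once to $\pd_\nu u$, and using $\pd_\mu\pd_\nu u = \pd_\nu\pd_\mu u$ together with the Schwarz symmetry of classical derivatives, forces the similarity-coordinate operators to commute as well, again after evaluation at $\tau=0$.

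Alternatively — and this may in fact be the more honest and self-contained route to present — I would just expand both sides of each identity using the explicit formulas in \Cref{WaveFlowOperation,CommutingOperators} and the definitions of $c$, $w$, $c_\mu$ from \Cref{hcw} and \Cref{CommutingOperators}. This is a finite, mechanical verification: each side is a two-component vector whose entries are second-order differential expressions in $f_1,f_2$ with coefficients built from $h$ and its derivatives up to third order, and one checks the coefficients of $\pd^\alpha f_1$ and $\pd^\alpha f_2$ match on both sides. The key algebraic inputs are the relation $\xi^i\pd_i c = \xi^i\xi^j\pd_i\pd_j h$ (so that $c$ and its derivatives interact predictably with the scaling vector field $\xi^i\pd_i$), the identity $\pd_i w = -2(\pd^j h)(\pd_i\pd_j h)$, and $\pd_i c_\mu = -\pd_i\pd_\mu h = -\pd_\mu\pd_i h$, which is where the symmetry of the Hessian enters and which ultimately produces the commutativity $\mathbf{D}_\mu\mathbf{D}_\nu = \mathbf{D}_\nu\mathbf{D}_\mu$.

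The main obstacle is bookkeeping, not conceptual depth: the $-\mathbf{D}_\mu\mathbf{f}$ on the right-hand side of the first identity is a genuine ``anomaly'' arising from the exponential time $\lambda(\tau,\xi)=\ee^{-\tau}$ (it is the infinitesimal version of the weight $\ee^\tau$ appearing in \Cref{PartialDRelation}), and one must make sure the homogeneous-degree counting is done correctly so that this term appears with exactly coefficient $-1$ and no spurious lower-order remainders survive; in the function-theoretic approach the delicate point is confirming that all $\Box u$-dependent contributions cancel, which requires that the two boundary-type correction vectors $\mathbf{B}$ and $\tilde{\mathbf{B}}_\mu$ are consistent with $\Box\pd_\mu = \pd_\mu\Box$ under the coordinate change, i.e. that $\ee^{-\tau}\tfrac{c_\mu}{w}$ transforms correctly when one differentiates $\mathbf{B}[u]$ in $\tau$. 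Once the coefficient comparison is organized by differential order, both identities fall out; I would relegate the explicit coefficient matching to a remark that it is a direct computation using \Cref{hcw}.
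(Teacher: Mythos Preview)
Your proposal is correct and follows essentially the same route as the paper: derive two expressions for $\pd_\tau \mathbf{u}_\mu$ via \Cref{TransitionRelation} and \Cref{PartialDRelation}, subtract, and evaluate at $\tau=0$ on a function $u$ with $\mathbf{u}(0,\cdot)=\mathbf{f}$; likewise iterate \Cref{PartialDRelation} and use Schwarz symmetry for the second identity. Two points where the paper is more concrete than your sketch: (i) rather than invoking a Cauchy problem, the paper writes down an explicit $u=v\circ\chi^{-1}$ with $v(\tau,\xi)=f_1(\xi)+\tau\bigl(c(\xi)f_2(\xi)-\xi^i\pd_{\xi^i}f_1(\xi)\bigr)$, which visibly gives $\mathbf{u}(0,\cdot)=\mathbf{f}$ without any solvability input; (ii) the cancellation of the $\Box u$ terms is not left as ``should cancel'' but is carried out using the identities $\pd_{\xi^i}(c/w)=(\xi^j+2(c/w)\pd^j h)(\pd_i\pd_j h)/w$ and $\pd_{\xi^i}(c_\mu/w)=-(\delta_\mu{}^j-2(c_\mu/w)\pd^j h)(\pd_i\pd_j h)/w$ together with \eqref{pdmuxitau}, which is exactly the bookkeeping you flag as the main obstacle.
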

\begin{proof}
Let $u\in C^{\infty} \big( \overline{\mathrm{X}}^{1,d}_{R} \big)$ and define the variables $\mathbf{u}, \mathbf{u}_{\mu} \in C^{\infty} \big( \overline{ (0,\infty) \times \BB^{d}_{R} } \big)^{2}$ as in \Cref{PartialDRelation}. \Cref{TransitionRelation} applied to $\pd_{\mu} u \in C^{\infty} \big( \overline{\mathrm{X}}^{1,d}_{R} \big)$ yields with \Cref{PartialDRelation}
\begin{equation*}
\pd_{\tau} \mathbf{u}_{\mu}(\tau,\,.\,)
=
\ee^{\tau} \mathbf{L}_{\chi} \mathbf{D}_{\mu}
\mathbf{u}(\tau,\,.\,)
-
\mathbf{L}_{\chi}
\begin{bmatrix}
0 \\
\displaystyle{
\ee^{-\tau} \frac{c_{\mu}}{w} \big( (\Box u) \circ \chi \big)(\tau,\,.\,)
}
\end{bmatrix}
-
\begin{bmatrix}
0 \\
\displaystyle{
\ee^{-2\tau} \frac{c}{w} \big( (\Box \pd_{\mu} u) \circ \chi \big)(\tau,\,.\,)
}
\end{bmatrix}
\,.
\end{equation*}
The left-hand side can be computed in a second way by first differentiating the relation in \Cref{PartialDRelation} and then using \Cref{TransitionRelation},
\begin{align*}
\pd_{\tau} \mathbf{u}_{\mu}(\tau,\,.\,)
&=
\ee^{\tau}
\mathbf{D}_{\mu} \mathbf{u}(\tau,\,.\,)
+
\ee^{\tau}\mathbf{D}_{\mu} \mathbf{L}_{\chi} \mathbf{u}(\tau,\,.\,)
-
\mathbf{D}_{\mu}
\begin{bmatrix}
0 \\
\displaystyle{
\ee^{-\tau} \frac{c}{w} \big( (\Box u) \circ \chi \big)(\tau,\,.\,)
}
\end{bmatrix}
\\&\indent-
\pd_{\tau}
\begin{bmatrix}
0 \\
\displaystyle{
\ee^{-\tau} \frac{c_{\mu}}{w} \big( (\Box u) \circ \chi \big)(\tau,\,.\,)
}
\end{bmatrix}
\,.
\end{align*}
Subtracting the latter two identities from each other and using the identities
\begin{align*}
\pd_{\xi^{i}} \frac{c(\xi)}{w(\xi)} &= \Big( \xi^{j} + 2 \frac{c(\xi)}{w(\xi)} (\pd^{j} h)(\xi) \Big) \frac{(\pd_{i}\pd_{j} h)(\xi)}{w(\xi)} \,, \\
\pd_{\xi^{i}} \frac{c_{\mu}(\xi)}{w(\xi)} &= -\Big( \delta_{\mu}{}^{j} - 2 \frac{c_{\mu}(\xi)}{w(\xi)} (\pd^{j} h)(\xi) \Big) \frac{(\pd_{i}\pd_{j} h)(\xi)}{w(\xi)} \,,
\end{align*}
as well as transformation \eqref{pdmuxitau} implies
\begin{equation}
\label{DiLdcommutationfield}
\Big(
\mathbf{D}_{\mu} \mathbf{L}_{\chi} -
\mathbf{L}_{\chi} \mathbf{D}_{\mu} +
\mathbf{D}_{\mu}
\Big) \mathbf{u}(\tau,\,.\,)
=
\mathbf{0} \,.
\end{equation}
Furthermore, consider the variable $\mathbf{u}_{\mu\nu} \in C^{\infty} \big( \overline{ (0,\infty) \times \BB^{d}_{R} } \big)^{2}$ with components
\begin{equation*}
u{_{\mu\nu}}_{1}(\tau,\,.\,) = \big( (\pd_{\mu}\pd_{\nu} u) \circ \chi \big)(\tau,\,.\,) \,, \quad
u{_{\mu\nu}}_{2}(\tau,\,.\,) = \ee^{-\tau} \big( (\pd_{0} \pd_{\mu}\pd_{\nu} u) \circ \chi \big)(\tau,\,.\,) \,.
\end{equation*}
By construction, two applications of \Cref{PartialDRelation} yield
\begin{equation*}
\mathbf{u}_{\mu\nu}(\tau,\,.\,)
=
\ee^{2\tau} \mathbf{D}_{\mu}\mathbf{D}_{\nu}
\mathbf{u}(\tau,\,.\,)
-
\mathbf{D}_{\mu}
\begin{bmatrix}
0 \\
\displaystyle{
\frac{c_{\nu}}{w} \big( (\Box u) \circ \chi \big)(\tau,\,.\,)
}
\end{bmatrix}
-
\begin{bmatrix}
0 \\
\displaystyle{
\ee^{-\tau} \frac{c_{\mu}}{w} \big( (\Box \pd_{\nu} u) \circ \chi \big)(\tau,\,.\,)
}
\end{bmatrix}
\,.
\end{equation*}
A computation shows
\begin{align*}
\mathbf{D}_{\mu}
&
\begin{bmatrix}
0 \\
\displaystyle{
\frac{c_{\nu}}{w} \big( (\Box u) \circ \chi \big)(\tau,\,.\,)
}
\end{bmatrix}
-
\mathbf{D}_{\nu}
\begin{bmatrix}
0 \\
\displaystyle{
\frac{c_{\mu}}{w} \big( (\Box u) \circ \chi \big)(\tau,\,.\,)
}
\end{bmatrix}
\\&=
-
\begin{bmatrix}
0 \\
\displaystyle{
\ee^{-\tau} \frac{c_{\mu}}{w} \big( (\pd_{\nu} \Box u)\circ \chi \big)(\tau,\,.\,) - \ee^{-\tau} \frac{c_{\nu}}{w} \big( (\pd_{\mu} \Box u)\circ \chi \big)(\tau,\,.\,)
}
\end{bmatrix}
\,.
\end{align*}
Hence, symmetry of second partial derivatives yields
\begin{equation}
\label{DmuDnucommutationfield}
\Big( \mathbf{D}_{\mu}\mathbf{D}_{\nu} - \mathbf{D}_{\nu}\mathbf{D}_{\mu} \Big) \mathbf{u}(\tau,\,.\,)
= \mathbf{0} \,.
\end{equation}
To conclude, let $\mathbf{f} \in C^{\infty} ( \overline{\BB^{d}_{R}} )^{2}$ and choose $u = v \circ\chi^{-1} \in C^{\infty} \big( \overline{\mathrm{X}}^{1,d}_{R} \big)$ with
\begin{equation*}
v(\tau,\xi) = f_{1}(\xi) + \tau \big( c(\xi) f_{2}(\xi) - \xi^{i} \pd_{\xi^{i}} f_{1}(\xi) \big) \,.
\end{equation*}
Then $\mathbf{u}(0, \,.\,) = \mathbf{f}$ and evaluating \Cref{DmuDnucommutationfield,DiLdcommutationfield} at $\tau = 0$ yields the result.
\end{proof}
\subsection{Inner products}
\label{SubSecIP}
With these preparations at hand, we are ready to furnish Hilbert space structures and prove dissipative estimates for the wave evolution operator from \Cref{WaveFlowOperation} in all space dimensions. The energy discussed in \Cref{SubSecEnTop} motivates the following choice of sesquilinear forms.
\begin{definition}
\label{1InnerProduct}
Let $d\in\NN$ and $R>0$. Fix $\varepsilon_{1}>0$. We define for $\mathbf{f},\mathbf{g} \in C^{\infty} ( \overline{\BB^{d}_{R}} )^{2}$
\begin{equation*}
\Big( \mathbf{f} \,\Big|\, \mathbf{g} \Big)_{\mathfrak{E}^{1}(\BB^{d}_{R})} = \int_{\BB^{d}_{R}} \overline{\pd^{i} f_{1}}\pd_{i} g_{1} + \int_{\BB^{d}_{R}} \overline{f_{2}} g_{2} w + \frac{2 \varepsilon_{1}}{R} \int_{\pd\BB^{d}_{R}} \overline{f_{1}} g_{1} \,.
\end{equation*}
The induced quadratic form is defined by
\begin{equation*}
\| \mathbf{f} \|_{\mathfrak{E}^{1}(\BB^{d}_{R})} = \sqrt{\Big( \mathbf{f} \,\Big|\, \mathbf{f} \Big)_{\mathfrak{E}^{1}(\BB^{d}_{R})}} \,.
\end{equation*}
\end{definition}
The boundary term ensures that this sesquilinear form is positive definite and thus an inner product.
\begin{lemma}
\label{1Norm}
Let $d\in\NN$ and $R>0$. Fix $\varepsilon_{1}>0$ in \Cref{1InnerProduct}. Then
\begin{equation*}
\| \mathbf{f} \|_{\mathfrak{E}^{1}(\BB^{d}_{R})} \simeq \| ( f_{1},f_{2} ) \|_{H^{1}(\BB^{d}_{R}) \times L^{2}(\BB^{d}_{R})}
\end{equation*}
for all $\mathbf{f} = (f_{1},f_{2}) \in C^{\infty} ( \overline{\BB^{d}_{R}} )^{2}$.
\end{lemma}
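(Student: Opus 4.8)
The plan is to bound the two slots of $\| \mathbf{f} \|_{\mathfrak{E}^{1}(\BB^{d}_{R})}^{2} = \int_{\BB^{d}_{R}} |\pd f_{1}|^{2} + \int_{\BB^{d}_{R}} |f_{2}|^{2} w + \frac{2\varepsilon_{1}}{R}\int_{\pd\BB^{d}_{R}} |f_{1}|^{2}$ separately against $\|f_{1}\|_{H^{1}(\BB^{d}_{R})}^{2}$ and $\|f_{2}\|_{L^{2}(\BB^{d}_{R})}^{2}$; since both quantities are squares of the norms in question, a two-sided estimate at the level of squares gives the asserted equivalence of norms. The $f_{2}$-slot is immediate: $w = 1 - |\pd h|^{2}$ is smooth and, by the standing hypothesis $|(\pd h)(\xi)| < 1$, satisfies $0 < w \le 1$ on $\RR^{d}$, hence attains a strictly positive minimum $w_{0} > 0$ on the compact ball $\overline{\BB^{d}_{R}}$, so that $w_{0}\,\|f_{2}\|_{L^{2}(\BB^{d}_{R})}^{2} \le \int_{\BB^{d}_{R}} |f_{2}|^{2} w \le \|f_{2}\|_{L^{2}(\BB^{d}_{R})}^{2}$.

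It remains to compare the $f_{1}$-slot $\int_{\BB^{d}_{R}} |\pd f_{1}|^{2} + \frac{2\varepsilon_{1}}{R}\int_{\pd\BB^{d}_{R}} |f_{1}|^{2}$ with $\|f_{1}\|_{H^{1}(\BB^{d}_{R})}^{2} = \|\pd f_{1}\|_{L^{2}(\BB^{d}_{R})}^{2} + \|f_{1}\|_{L^{2}(\BB^{d}_{R})}^{2}$. The upper bound follows at once from the trace estimate $\|f_{1}\|_{L^{2}(\pd\BB^{d}_{R})} \lesssim \|f_{1}\|_{H^{1}(\BB^{d}_{R})}$, e.g. the trace lemma \cite[Lemma 2.1]{MR4778061} already invoked in \Cref{SubSecEnTop}. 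For the lower bound the only thing to prove is the Poincar\'{e}-type inequality
\begin{equation*}
\|f_{1}\|_{L^{2}(\BB^{d}_{R})}^{2} \lesssim \|\pd f_{1}\|_{L^{2}(\BB^{d}_{R})}^{2} + \|f_{1}\|_{L^{2}(\pd\BB^{d}_{R})}^{2}
\end{equation*}
with a constant depending only on $d$ and $R$. I would establish this directly: for $x = \rho\theta$ with $\rho \in (0,R]$, $\theta \in \mathbb{S}^{d-1}$, the fundamental theorem of calculus along the ray gives $f_{1}(\rho\theta) = f_{1}(R\theta) - \int_{\rho}^{R} (\pd f_{1})(r\theta)\cdot\theta \dd r$, so that Cauchy-Schwarz yields $|f_{1}(\rho\theta)|^{2} \lesssim |f_{1}(R\theta)|^{2} + R\int_{\rho}^{R} |(\pd f_{1})(r\theta)|^{2}\dd r$. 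Multiplying by $\rho^{d-1}$, integrating over $(0,R)\times\mathbb{S}^{d-1}$, and swapping the order of integration in the double radial integral (bounding $\int_{0}^{r}(R-\rho)\rho^{d-1}\dd\rho \lesssim R^{2} r^{d-1}$) turns the second term into $\|\pd f_{1}\|_{L^{2}(\BB^{d}_{R})}^{2}$ and the first into $\|f_{1}\|_{L^{2}(\pd\BB^{d}_{R})}^{2}$, up to $R$-dependent constants. Combining the two slots then gives the equivalence, with constants depending on $d$, $R$, $\varepsilon_{1}$ and $h$.

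The only genuine point is the lower bound for the $f_{1}$-slot: the Dirichlet term $\int_{\BB^{d}_{R}}|\pd f_{1}|^{2}$ alone is merely a seminorm that annihilates constants, so it is essential that the boundary term $\frac{2\varepsilon_{1}}{R}\int_{\pd\BB^{d}_{R}}|f_{1}|^{2}$ is present and that one genuinely uses a Poincar\'{e}-type inequality rather than a pure gradient bound. As an alternative to the explicit radial computation one may argue by Rellich compactness: were the inequality to fail, a normalized sequence $f_{1}^{(n)}$ with $\|f_{1}^{(n)}\|_{L^{2}(\BB^{d}_{R})} = 1$ and $\|\pd f_{1}^{(n)}\|_{L^{2}(\BB^{d}_{R})}^{2} + \|f_{1}^{(n)}\|_{L^{2}(\pd\BB^{d}_{R})}^{2} \to 0$ would be bounded in $H^{1}(\BB^{d}_{R})$, hence converge, after passing to a subsequence, strongly in $L^{2}(\BB^{d}_{R})$ and in $L^{2}(\pd\BB^{d}_{R})$ to a constant function with vanishing trace, i.e. to $0$, contradicting the normalization. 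I would present the explicit route, since it makes the dependence of the equivalence constants on $R$ transparent; no density argument is required here, as the statement is only claimed for $\mathbf{f} \in C^{\infty}(\overline{\BB^{d}_{R}})^{2}$.
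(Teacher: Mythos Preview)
Your proof is correct and follows essentially the same approach as the paper: both reduce the equivalence to the trace lemma \cite[Lemma 2.1]{MR4778061}, and both treat the $f_{2}$-slot via the uniform bounds $0 < w \le 1$ on $\overline{\BB^{d}_{R}}$. The only difference is that the paper simply invokes that reference for the two-sided equivalence $\|\pd f_{1}\|_{L^{2}(\BB^{d}_{R})} + \|f_{1}\|_{L^{2}(\pd\BB^{d}_{R})} \simeq \|f_{1}\|_{H^{1}(\BB^{d}_{R})}$, whereas you spell out the Poincar\'{e}-type lower bound explicitly via radial integration (with a compactness alternative); this is a harmless expansion of what the cited lemma already contains.
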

\begin{proof}
We have
\begin{equation*}
\| \mathbf{f} \|_{\mathfrak{E}^{1}(\BB^{d}_{R})} \simeq \| \pd f_{1} \|_{L^{2}(\BB^{d}_{R})} + \| f_{1} \|_{L^{2}(\pd\BB^{d}_{R})} + \| f_{2} \|_{L^{2}(\BB^{d}_{R})} \simeq \| f_{1} \|_{H^{1}(\BB^{d}_{R})} + \| f_{2} \|_{L^{2}(\BB^{d}_{R})}
\end{equation*}
for all $\mathbf{f} \in C^{\infty} ( \overline{\BB^{d}_{R}} )^{2}$, where the second equivalence follows from the trace lemma \cite[Lemma 2.1]{MR4778061}.
\end{proof}
We utilize the differential operators from \Cref{CommutingOperators} to extend the inner product in \Cref{1InnerProduct} to also include derivatives of any given nonnegative integer order.
\begin{definition}
\label{InnerProducts}
Let $d,k\in\NN$ and $R>0$. Fix $\varepsilon_{j}>0$ for $1 \leq j < \frac{d}{2} + 1$. We define for $\mathbf{f},\mathbf{g} \in C^{\infty} ( \overline{\BB^{d}_{R}} )^{2}$ recursively
\begin{equation*}
\renewcommand{\arraystretch}{1.5}
\Big( \mathbf{f} \,\Big|\, \mathbf{g} \Big)_{\mathfrak{E}^{k}(\BB^{d}_{R})} =
\left\{
\begin{array}{ll}
\displaystyle{
\int_{\BB^{d}_{R}} \overline{\pd^{i} f_{1}}\pd_{i} g_{1} + \int_{\BB^{d}_{R}} \overline{f_{2}} g_{2} w + \frac{2 \varepsilon_{1}}{R} \int_{\pd\BB^{d}_{R}} \overline{f_{1}} g_{1}
} & \text{if } k = 1 \,, \\
\displaystyle{
\sum_{\mu=0}^{d} \Big( \mathbf{D}_{\mu} \mathbf{f} \,\Big|\, \mathbf{D}_{\mu} \mathbf{g} \Big)_{\mathfrak{E}^{k-1}(\BB^{d}_{R})} +
\frac{2\varepsilon_{k}}{R} \int_{\pd\BB^{d}_{R}} \overline{f_{1}} g_{1}
} & \text{if } 2\leq k < \frac{d}{2} + 1 \,,
\\ \displaystyle{
\sum_{\mu=0}^{d} \Big( \mathbf{D}_{\mu} \mathbf{f} \,\Big|\, \mathbf{D}_{\mu} \mathbf{g} \Big)_{\mathfrak{E}^{k-1}(\BB^{d}_{R})} +
\Big( \mathbf{f} \,\Big|\, \mathbf{g} \Big)_{\mathfrak{E}^{k-1}(\BB^{d}_{R})}
} & \text{if } \frac{d}{2} + 1 \leq k \,.
\end{array}
\right.
\end{equation*}
The induced quadratic form is defined by
\begin{equation*}
\| \mathbf{f} \|_{\mathfrak{E}^{k}(\BB^{d}_{R})} = \sqrt{\Big( \mathbf{f} \,\Big|\, \mathbf{f} \Big)_{\mathfrak{E}^{k}(\BB^{d}_{R})}} \,.
\end{equation*}
\end{definition}
These sesquilinear forms are inner products whose induced norms are equivalently characterized as Sobolev norms.
\begin{proposition}
\label{kNorm}
Let $d,k\in\NN$ and $R>0$. Fix $\varepsilon_{j} > 0$ for $1\leq j < \frac{d}{2} + 1$ in \Cref{InnerProducts}. Then the equivalence
\begin{equation*}
\| \mathbf{f} \|_{\mathfrak{E}^{k}(\BB^{d}_{R})}
\simeq
\| (f_{1},f_{2}) \|_{H^{k}(\BB^{d}_{R})\times H^{k-1}(\BB^{d}_{R})}
\end{equation*}
holds for all $\mathbf{f} = (f_{1},f_{2}) \in C^{\infty} ( \overline{\BB^{d}_{R}} )^{2}$.
\end{proposition}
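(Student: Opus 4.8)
The plan is to prove the norm equivalence in \Cref{kNorm} by induction on $k$, using the recursive definition of the inner products in \Cref{InnerProducts} together with the structure of the commuting operators $\mathbf{D}_{\mu}$ from \Cref{CommutingOperators}. The base case $k=1$ is exactly \Cref{1Norm}, so the work is in the inductive step. Throughout, the key observation is that the operators $\mathbf{D}_{\mu}$ have principal symbol which, taken over all $\mu=0,1,\dots,d$, recovers the full set of first-order spatial derivatives: inspecting \Cref{CommutingOperators}, the first component of $\mathbf{D}_{\mu}\mathbf{f}$ is $\delta_{\mu}{}^{i}\pd_{i}f_{1} + c_{\mu}f_{2}$ and the second component contains $\frac{c_{\mu}}{w}\pd^{i}\pd_{i}f_{1}$ and $(\delta_{\mu}{}^{i} - 2\frac{c_{\mu}}{w}\pd^{i}h)\pd_{i}f_{2}$. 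Taking $\mu=0$ gives $c_{0}=1$, so $[\mathbf{D}_{0}\mathbf{f}]_{1} = f_{2}$ up to the harmless term, and taking $\mu = 1,\dots,d$ one extracts each $\pd_{i}f_{1}$; similarly the second components recover $\Delta f_{1}$ and $\pd_{i}f_{2}$. Since $w$ and the $c_{\mu}$ are smooth and bounded with $w$ bounded away from zero on $\overline{\BB^{d}_{R}}$ (because $|\pd h|<1$), these are bounded invertible changes of variables at the level of top-order symbols.

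For the upper bound $\| \mathbf{f}\|_{\mathfrak{E}^{k}(\BB^{d}_{R})} \lesssim \|(f_{1},f_{2})\|_{H^{k}\times H^{k-1}}$, I would unfold the recursion: each application of $\sum_{\mu}(\mathbf{D}_{\mu}\,\cdot\,|\,\mathbf{D}_{\mu}\,\cdot\,)_{\mathfrak{E}^{k-1}}$ produces, by the symbol analysis above and the product/chain rule with smooth bounded coefficients, a finite sum of $L^2$ norms of derivatives of $f_1$ up to order $k$ and of $f_2$ up to order $k-1$, plus the boundary terms $\frac{2\varepsilon_{j}}{R}\int_{\pd\BB^{d}_{R}}|f_1|^2$ which are controlled by the trace lemma \cite[Lemma 2.1]{MR4778061}, hence by $\|f_1\|_{H^1}$. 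This direction is essentially bookkeeping once one has the commutation structure. For the lower bound $\|(f_{1},f_{2})\|_{H^{k}\times H^{k-1}} \lesssim \| \mathbf{f}\|_{\mathfrak{E}^{k}(\BB^{d}_{R})}$, I would again induct: assuming the equivalence at level $k-1$, the term $\sum_{\mu=0}^{d}\|\mathbf{D}_{\mu}\mathbf{f}\|_{\mathfrak{E}^{k-1}(\BB^{d}_{R})}^{2}$ controls $\|[\mathbf{D}_{\mu}\mathbf{f}]_{1}\|_{H^{k-1}}^{2}$ and $\|[\mathbf{D}_{\mu}\mathbf{f}]_{2}\|_{H^{k-2}}^{2}$ for each $\mu$ (when $k-1\ge 2$; for $k-1=1$ use $L^2$ in the second slot). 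Summing over $\mu$ and using that the symbols span all first derivatives gives control of $\pd_{i}f_{1}$ in $H^{k-1}$, of $f_{2}$ in $H^{k-1}$, and of $\Delta f_{1}$ together with $\pd_{i}f_{2}$ in the appropriate lower norm; combined with elliptic regularity for $\Delta$ on $\BB^{d}_{R}$ (or simply interpolating, since one already controls the full gradient of $f_1$) and the zeroth-order control of $f_1$ coming from the boundary term via the trace lemma and Poincaré, one recovers $\|f_{1}\|_{H^{k}} + \|f_{2}\|_{H^{k-1}}$.

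The main obstacle I anticipate is twofold. First, the coefficients $c$, $w$, $c_{\mu}$ and in particular $\frac{c}{w}$, $\frac{c_{\mu}}{w}$ appearing in $\mathbf{D}_{\mu}$ are smooth on $\overline{\BB^{d}_{R}}$ but one must be careful that the second components of $\mathbf{D}_{\mu}\mathbf{f}$ mix $\pd^{i}\pd_{i}f_{1}$ with lower-order terms in $f_{1}$ and $f_{2}$; recovering $\|\Delta f_1\|_{H^{k-2}}$ from $\|[\mathbf{D}_{\mu}\mathbf{f}]_{2}\|_{H^{k-2}}$ requires absorbing these lower-order terms, which is fine by induction but needs the terms to be strictly lower order — this is where one checks the coefficients are smooth and $w$ is uniformly positive. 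Second, and more delicately, the bookkeeping must track \emph{which} norms the boundary terms $\frac{2\varepsilon_{j}}{R}\int_{\pd\BB^{d}_{R}}|f_1|^2$ land in: they only ever supply zeroth-order boundary control of $f_1$, never of derivatives, so the lower bound for the \emph{top} derivative $\pd^{\alpha}f_1$ with $|\alpha|=k$ must come entirely from the $\mathbf{D}_{\mu}$-iteration landing in the $\mathfrak{E}^{1}$-level $\int|\pd^i(\cdot)_1|^2$ term, and one must verify the iteration indeed reaches every such $\pd^{\alpha}f_1$. Once these structural points are pinned down — using \Cref{DmuLdCommutationRelation} only to know the $\mathbf{D}_{\mu}$ are genuinely derivative-like and commute among themselves, and otherwise just the explicit symbols — the equivalence follows by a clean induction, and I expect the argument to mirror closely the proof in the odd-dimensional case in the cited literature.
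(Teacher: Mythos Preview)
Your inductive strategy is correct and matches the paper's approach; the base case and the upper bound are essentially as written there. For the lower bound, however, you are making it harder than necessary by analysing the two components of $\mathbf{D}_{\mu}\mathbf{f}$ separately and then appealing to elliptic regularity for the Laplacian. That appeal is problematic on a bounded ball with no boundary condition, and in any case it is not needed: the paper observes the clean \emph{vector} identity
\[
\pd_{i}\mathbf{f} \;=\; \mathbf{D}_{i}\mathbf{f} + (\pd_{i}h)\,\mathbf{D}_{0}\mathbf{f}
\qquad\text{together with}\qquad
[\mathbf{D}_{0}\mathbf{f}]_{1} = f_{2},
\]
which holds in both components simultaneously. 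Applying the induction hypothesis to each $\mathbf{D}_{\mu}\mathbf{f}$ then controls $\|\pd_{i}\mathbf{f}\|_{H^{k-1}\times H^{k-2}}$ directly, giving all top derivatives of $(f_{1},f_{2})$ at once without ever isolating $\Delta f_{1}$.

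The remaining zero-order control of $f_{1}$ is handled by a case split you did not make explicit: for $1\le k < \tfrac{d}{2}+1$ it comes from the boundary term $\tfrac{2\varepsilon_{k}}{R}\int_{\pd\BB^{d}_{R}}|f_{1}|^{2}$ via the trace lemma (not Poincar\'{e}), while for $k\ge \tfrac{d}{2}+1$ the recursion already contains the full $\|\mathbf{f}\|_{\mathfrak{E}^{k-1}}$ term, which by induction supplies $\|f_{1}\|_{H^{k-1}}$. Once you use the vector identity above and keep this case distinction in mind, the argument goes through exactly as in the paper with no need for elliptic estimates.
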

\begin{proof}
\begin{enumerate}[wide, itemsep=1em, topsep=1em]
\item[``$\lesssim$'':] The inequality is valid in the base case $k=1$ by \Cref{1Norm}. We proceed by induction and assume that for an arbitrary but fixed $k\in\NN$ the bound
\begin{equation*}
\| \mathbf{f} \|_{\mathfrak{E}^{k}(\BB^{d}_{R})} \lesssim \| (f_{1},f_{2}) \|_{H^{k}(\BB^{d}_{R})\times H^{k-1}(\BB^{d}_{R})}
\end{equation*}
holds for all $\mathbf{f}\in C^{\infty} ( \overline{\BB^{d}_{R}} )^{2}$. Hence,
\begin{align*}
\| \mathbf{D}_{\mu} \mathbf{f} \|_{\mathfrak{E}^{k}(\BB^{d}_{R})} &\lesssim \| \mathbf{D}_{\mu} \mathbf{f}\|_{H^{k}(\BB^{d}_{R}) \times H^{k-1}(\BB^{d}_{R})} \\&\lesssim
\| \pd f_{1} \|_{H^{k}(\BB^{d}_{R})} + \| f_{2} \|_{H^{k}(\BB^{d}_{R})} \\&\indent+
\| \pd^{2} f_{1} \|_{H^{k-1}(\BB^{d}_{R})} +
\| f_{2} \|_{H^{k-1}(\BB^{d}_{R})} +
\| \pd f_{2} \|_{H^{k-1}(\BB^{d}_{R})} \\&\lesssim
\| (f_{1},f_{2}) \|_{H^{k+1}(\BB^{d}_{R})\times H^{k}(\BB^{d}_{R})}
\end{align*}
for all $\mathbf{f} \in C^{\infty} ( \overline{\BB^{d}_{R}} )^{2}$. With this, the first inequality follows.
\item[``$\gtrsim$'':] Again, the inequality holds if $k=1$ by \Cref{1Norm}. By induction, assume that for an arbitrary but fixed $k\in\NN$ the bound
\begin{equation*}
\| \mathbf{f} \|_{\mathfrak{E}^{k}(\BB^{d}_{R})} \gtrsim \| (f_{1},f_{2}) \|_{H^{k}(\BB^{d}_{R})\times H^{k-1}(\BB^{d}_{R})}
\end{equation*}
holds for all $\mathbf{f}\in C^{\infty} ( \overline{\BB^{d}_{R}} )^{2}$.
\begin{enumerate}[wide,itemsep=1em,topsep=1em]
\item[\textit{Case $1\leq k < \frac{d}{2}$.}] We conclude with
\begin{equation}
\label{WhyweneedD0}
[\mathbf{D}_{0} \mathbf{f}]_{1} = f_{2}
\qquad\text{and}\qquad
\pd_{i} \mathbf{f} = \mathbf{D}_{i} \mathbf{f} + (\pd_{i} h) \mathbf{D}_{0} \mathbf{f} \,,
\end{equation}
and the trace lemma \cite[Lemma 2.1]{MR4778061} for the Sobolev norm the estimate
\begin{align*}
\| (f_{1},f_{2}) \|_{H^{k+1}(\BB^{d}_{R})\times H^{k}(\BB^{d}_{R})} &\simeq \sum_{i=1}^{d} \| (\pd_{i} f_{1}, \pd_{i} f_{2}) \|_{H^{k}(\BB^{d}_{R})\times H^{k-1}(\BB^{d}_{R})} + \| f_{1} \|_{L^{2}(\pd\BB^{d}_{R})} + \| f_{2} \|_{L^{2}(\BB^{d}_{R})} \\&\lesssim
\sum_{\mu=0}^{d} \| \mathbf{D}_{\mu} \mathbf{f} \|_{H^{k}(\BB^{d}_{R})\times H^{k-1}(\BB^{d}_{R})} + \| f_{1} \|_{L^{2}(\pd\BB^{d}_{R})} \\&\lesssim
\sum_{\mu=0}^{d} \| \mathbf{D}_{\mu} \mathbf{f} \|_{\mathfrak{E}^{k}(\BB^{d}_{R})} + \| f_{1} \|_{L^{2}(\pd\BB^{d}_{R})} \\&\simeq
\| \mathbf{f} \|_{\mathfrak{E}^{k+1}(\BB^{d}_{R})}
\end{align*}
for all $\mathbf{f}\in C^{\infty} ( \overline{\BB^{d}_{R}} )^{2}$. \item[\textit{Case $k \geq \frac{d}{2}$.}] We conclude with \eqref{WhyweneedD0} as well
\begin{align*}
\| (f_{1},f_{2}) \|_{H^{k+1}(\BB^{d}_{R})\times H^{k}(\BB^{d}_{R})} &\simeq \sum_{i=1}^{d} \| (\pd_{i} f_{1}, \pd_{i} f_{2}) \|_{H^{k}(\BB^{d}_{R})\times H^{k-1}(\BB^{d}_{R})} + \| (f_{1},f_{2}) \|_{H^{k}(\BB^{d}_{R})\times H^{k-1}(\BB^{d}_{R})} \\&\lesssim
\sum_{\mu=0}^{d} \| \mathbf{D}_{\mu} \mathbf{f} \|_{H^{k}(\BB^{d}_{R})\times H^{k-1}(\BB^{d}_{R})} + \| (f_{1},f_{2}) \|_{H^{k}(\BB^{d}_{R})\times H^{k-1}(\BB^{d}_{R})} \\&\lesssim
\sum_{\mu=0}^{d} \| \mathbf{D}_{\mu} \mathbf{f} \|_{\mathfrak{E}^{k}(\BB^{d}_{R})} + \| \mathbf{f} \|_{\mathfrak{E}^{k}(\BB^{d}_{R})} \\&\simeq
\| \mathbf{f} \|_{\mathfrak{E}^{k+1}(\BB^{d}_{R})}
\end{align*}
for all $\mathbf{f}\in C^{\infty} ( \overline{\BB^{d}_{R}} )^{2}$.
\end{enumerate}
This proves the other direction.
\qedhere
\end{enumerate}
\end{proof}
\subsection{Dissipative estimates}
\label{SubSecDissEst}
The energy estimate in \Cref{SubSecEnTop} for the wave equation in similarity coordinates manifests itself as a dissipative estimate for the wave evolution operator from \Cref{WaveFlowOperation}.
\begin{lemma}
\label{1Dissipativity}
Let $d\in \NN$ and $R \geq R_{0}$ with $R_{0}>0$ from \Cref{GSCLightCone}. Then
\begin{equation*}
\Re \Big( \mathbf{L}_{\chi} \mathbf{f} \,\Big|\, \mathbf{f} \Big)_{\mathfrak{E}^{1}(\BB^{d}_{R})} \leq \Big( \frac{d}{2}-1\Big) \| \mathbf{f} \|_{\mathfrak{E}^{1}(\BB^{d}_{R})}^{2} +
\Big( -\frac{d}{2} + 1 + \varepsilon_{1} \Big) \frac{2\varepsilon_{1}}{R} \int_{\pd\BB^{d}_{R}} |f_{1}|^{2}
\end{equation*}
for all $\mathbf{f} \in C^{\infty} ( \overline{\BB^{d}_{R}} )^{2}$.
\end{lemma}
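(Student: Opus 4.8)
The plan is to compute the real part of the inner product directly from \Cref{1InnerProduct} and the explicit form of $\mathbf{L}_{\chi}$ in \Cref{WaveFlowOperation}, then bound the resulting bulk and boundary terms using the convexity and light-cone properties of the height function. Writing $\mathbf{f} = (f_{1},f_{2})$ and unpacking the definition, we have
\begin{equation*}
\Big( \mathbf{L}_{\chi} \mathbf{f} \,\Big|\, \mathbf{f} \Big)_{\mathfrak{E}^{1}(\BB^{d}_{R})} = \int_{\BB^{d}_{R}} \overline{\pd^{i}[\mathbf{L}_{\chi}\mathbf{f}]_{1}} \, \pd_{i} f_{1} + \int_{\BB^{d}_{R}} \overline{[\mathbf{L}_{\chi}\mathbf{f}]_{2}} \, f_{2} \, w + \frac{2\varepsilon_{1}}{R} \int_{\pd\BB^{d}_{R}} \overline{[\mathbf{L}_{\chi}\mathbf{f}]_{1}} \, f_{1} \,,
\end{equation*}
where $[\mathbf{L}_{\chi}\mathbf{f}]_{1} = -\xi^{i}\pd_{i} f_{1} + c f_{2}$ and $[\mathbf{L}_{\chi}\mathbf{f}]_{2}$ is the second component from \Cref{WaveFlowOperation}. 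First I would substitute these expressions, take real parts, and integrate by parts in the bulk terms to move derivatives around; the key algebraic fact to exploit is the divergence identity already recorded in \eqref{DerivativeEnergy} in the case $\Box u = 0$, which is exactly the statement that $\frac12 \pd_\tau(|\pd u_1|^2 + |u_2|^2 w)$ equals a dimensional multiple of the energy density plus a spatial divergence. Translating that computation to the static (operator) level, the bulk contribution collapses to $(\tfrac{d}{2}-1)\big( \int_{\BB^{d}_{R}} |\pd f_1|^2 + \int_{\BB^{d}_{R}} |f_2|^2 w \big)$ plus boundary integrals over $\pd\BB^{d}_{R}$ coming from the divergence term and from integrating $\xi^i \pd_i(\cdot)$ by parts.

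The second step is to collect all boundary terms. Besides the ones produced by the bulk integration by parts, there is the explicit boundary term $\frac{2\varepsilon_1}{R}\int_{\pd\BB^{d}_{R}} \overline{[\mathbf{L}_\chi\mathbf{f}]_1} f_1$, which expands via $[\mathbf{L}_\chi\mathbf{f}]_1 = -\xi^i\pd_i f_1 + c f_2$ and whose $\xi^i\pd_i f_1$ piece should be handled by writing $\Re(\overline{\xi^i\pd_i f_1}\, f_1) = \tfrac12 \xi^i\pd_i|f_1|^2$ and integrating by parts on the sphere to produce a term proportional to $\frac{2\varepsilon_1}{R}\int_{\pd\BB^{d}_{R}}|f_1|^2$ with the coefficient $-\tfrac{d}{2}+1$, accounting for the surface measure scaling on $\pd\BB^{d}_{R}$. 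What remains after this bookkeeping is, up to the boundary term $(-\tfrac{d}{2}+1+\varepsilon_1)\frac{2\varepsilon_1}{R}\int_{\pd\BB^{d}_{R}}|f_1|^2$ and the main term $(\tfrac{d}{2}-1)\|\mathbf{f}\|^2_{\mathfrak{E}^1(\BB^{d}_{R})}$, a collection of quadratic boundary integrals over $\pd\BB^{d}_{R}$ involving $|\xi|^2|\pd f_1|^2$, $|cf_2|^2$, $(\pd^i h) c |f_2|^2$, cross terms $\xi^i(\pd_i f_1)(cf_2)$, and the $\varepsilon_1$-cross terms; these are exactly the boundary terms appearing in \Cref{SubSecEnTop}.

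The third step is to show that this leftover bundle of boundary integrals is nonpositive. For the pure wave part this is the Cauchy--Schwarz estimate $\Re(\xi^i(\pd_i f_1)\overline{(cf_2)}) \leq \tfrac12(|\xi|^2|\pd f_1|^2 + |cf_2|^2)$ combined with the inequality $\widetilde h(R)^2 - R^2 \leq 0$ for $R \geq R_0$ furnished by \Cref{GSCLightCone}, which makes the coefficient of $\int_{\pd\BB^{d}_R}|f_2|^2$ nonpositive after one uses $c(\xi)^2 - |\xi|^2 w(\xi) - 2\xi^i(\pd_i h) c = \widetilde h(R)^2 - R^2$ on $|\xi|=R$ (a pointwise identity in $\widetilde h, \widetilde h'$ that I would verify by direct computation from \Cref{WaveFlowOperation}'s $c,w$ and $h$ radial). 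For the $\varepsilon_1$-cross terms, the three-term inequality $\Re\big((\xi^i\pd_i f_1)\overline{cf_2} - (\xi^i\pd_i f_1)\overline{2\varepsilon_1 f_1} + (cf_2)\overline{2\varepsilon_1 f_1}\big) \leq \tfrac12(|\xi^i\pd_i f_1|^2 + |cf_2|^2 + |2\varepsilon_1 f_1|^2)$ — the same one used in \Cref{SubSecEnTop} — absorbs them, with the $|2\varepsilon_1 f_1|^2$ leftover precisely producing the extra $\varepsilon_1$ in the boundary coefficient. Finally, dropping the now-nonpositive boundary bundle and recognizing $(\tfrac{d}{2}-1)\big(\int|\pd f_1|^2 + \int|f_2|^2 w + \frac{2\varepsilon_1}{R}\int_{\pd\BB^{d}_R}|f_1|^2\big) = (\tfrac{d}{2}-1)\|\mathbf{f}\|^2_{\mathfrak{E}^1(\BB^{d}_R)}$ (here using $\tfrac{d}{2}-1 \geq 0$ so the positive boundary term may be added back) yields the claimed estimate. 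The main obstacle is purely organizational: tracking signs and surface-measure factors through the repeated integrations by parts over both $\BB^{d}_R$ and $\pd\BB^{d}_R$, and isolating the pointwise identity on $|\xi|=R$ that ties the boundary coefficient of $|f_2|^2$ to $\widetilde h(R)^2 - R^2$; once that identity and the two Cauchy--Schwarz inequalities are in hand, the estimate follows by inspection, essentially mirroring the energy computation of \Cref{SubSecEnTop} verbatim at the level of the operator rather than the flow.
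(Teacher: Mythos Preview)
Your overall strategy matches the paper's proof: expand the inner product, integrate by parts in the bulk to extract the $(\tfrac{d}{2}-1)$ factor, bound the boundary cross terms by the three-term inequality, and use the pointwise identity $c^{2}-|\xi|^{2}w-2c\,\xi^{i}\pd_{i}h = h(\xi)^{2}-|\xi|^{2}$ on $|\xi|=R$ together with $R\geq R_{0}$ to discard the remaining boundary piece. That part is fine.

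The one genuine misstep is your treatment of the term $\tfrac{2\varepsilon_{1}}{R}\int_{\pd\BB^{d}_{R}}\Re\big(\overline{-\xi^{i}\pd_{i}f_{1}}\,f_{1}\big)$. You propose to rewrite $\Re(\overline{\xi^{i}\pd_{i}f_{1}}\,f_{1})=\tfrac{1}{2}\xi^{i}\pd_{i}|f_{1}|^{2}$ and ``integrate by parts on the sphere'' to produce the coefficient $-\tfrac{d}{2}+1$. This does not work: on $\pd\BB^{d}_{R}$ the operator $\xi^{i}\pd_{i}$ is $R$ times the \emph{normal} derivative, so no integration by parts on the closed manifold $\pd\BB^{d}_{R}$ (which only sees tangential derivatives) can convert it into a zeroth-order term. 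The normal derivative of $f_{1}$ on the boundary is independent data from $f_{1}$ itself.

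In the paper this cross term is never integrated by parts; it is simply kept inside the three-term inequality
\[
\Re\big(\xi^{i}(\pd_{i}f_{1})\overline{cf_{2}}-\overline{\xi^{i}(\pd_{i}f_{1})}\,2\varepsilon_{1}f_{1}+\overline{cf_{2}}\,2\varepsilon_{1}f_{1}\big)\leq\tfrac{1}{2}\big(|\xi|^{2}|\pd f_{1}|^{2}+c^{2}|f_{2}|^{2}+4\varepsilon_{1}^{2}|f_{1}|^{2}\big),
\]
which you already state correctly in your third step. After applying this bound, what remains is $\big(\tfrac{d}{2}-1\big)\int(|\pd f_{1}|^{2}+|f_{2}|^{2}w)+\tfrac{2\varepsilon_{1}^{2}}{R}\int_{\pd\BB^{d}_{R}}|f_{1}|^{2}$, and the coefficient $-\tfrac{d}{2}+1+\varepsilon_{1}$ arises purely from the \emph{algebraic} rewriting that completes the bulk part to $\|\mathbf{f}\|_{\mathfrak{E}^{1}}^{2}$ (an equality, so no sign condition on $\tfrac{d}{2}-1$ is needed). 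In short: delete the ``integration by parts on the sphere'' paragraph, keep the three-term inequality, and your proof is the paper's proof.
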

\begin{proof}
We have
\begin{equation*}
\Big( \mathbf{L}_{\chi} \mathbf{f} \,\Big|\, \mathbf{f} \Big)_{\mathfrak{E}^{1}(\BB^{d}_{R})} =
\int_{\BB^{d}_{R}} \overline{(\pd^{i}[\mathbf{L}_{\chi}\mathbf{f}]_{1})} (\pd_{i} f_{1}) +
\int_{\BB^{d}_{R}} \overline{[\mathbf{L}_{\chi}\mathbf{f}]_{2}} f_{2} w +
\frac{2\varepsilon_{1}}{R} \int_{\pd\BB^{d}_{R}} \overline{[\mathbf{L}_{\chi}\mathbf{f}]_{1}} f_{1}
\end{equation*}
with integrands
\begin{align*}
\overline{(\pd^{i}[\mathbf{L}_{\chi}\mathbf{f}]_{1})} (\pd_{i} f_{1}) &=
- | \pd f_{1} |^{2}
- \overline{ \xi^{j} (\pd_{i} \pd_{j} f_{1})} (\pd^{i} f_{1})
- c (\pd^{i} \pd_{i} f_{1}) \overline{f_{2}} + \pd_{i} \big( c (\pd^{i} f_{1}) \overline{f_{2}} \big) \,, \\
\overline{[\mathbf{L}_{\chi}\mathbf{f}]_{2}} f_{2} w &=
c \overline{(\pd^{i} \pd_{i} f_{1})} f_{2} -
\big( w + c (\pd^{i} \pd_{i} h) \big) |f_{2}|^{2} -
\big( \xi^{j} w + 2c (\pd^{j} h) \big) \overline{(\pd_{j} f_{2})} f_{2} \,, \\
\overline{[\mathbf{L}_{\chi}\mathbf{f}]_{1}} f_{1} &= - \overline{\xi^{i}(\pd_{i} f_{1})} f_{1} + \overline{c f_{2}} f_{1} \,.
\end{align*}
Integration by parts yields the equation
\begin{align*}
\Re\Big( \mathbf{L}_{\chi} \mathbf{f} \,\Big|\, \mathbf{f} \Big)_{\mathfrak{E}^{1}(\BB^{d}_{R})} &=
\Big( \frac{d}{2} - 1 \Big) \int_{\BB^{d}_{R}} \Big( | \pd f_{1} |^{2} + |f_{2}|^{2} w \Big)
\\&\indent+
\frac{1}{R} \int_{\pd\BB^{d}_{R}} \Re \Big(
\xi^{i} (\pd_{i} f_{1}) \overline{c f_{2}} -
\overline{\xi^{i} (\pd_{i} f_{1})} 2\varepsilon_{1} f_{1} +
\overline{c f_{2}} 2\varepsilon_{1} f_{1}
\Big)
\\&\indent-
\frac{1}{R} \int_{\pd\BB^{d}_{R}} \frac{1}{2} \Big(
|\xi|^{2} | \pd f_{1} |^{2} + \big( w |\xi|^{2} + 2c \xi^{i} (\pd_{i} h) \big) |f_{2}|^{2} \Big) \,.
\end{align*}
The first boundary term is estimated with the elementary inequality
\begin{equation*}
\Re \Big(
\xi^{i} (\pd_{i} f_{1}) \overline{c f_{2}} -
\overline{\xi^{i} (\pd_{i} f_{1})} 2\varepsilon_{1} f_{1} +
\overline{c f_{2}} 2\varepsilon_{1} f_{1}
\Big) \leq \frac{1}{2} \Big( |\xi|^{2} | \pd f_{1} |^{2} + c^{2} |f_{2}|^{2} + 4\varepsilon_{1}^{2} |f_{1}|^{2} \Big) \,.
\end{equation*}
We then use
\begin{align*}
c(\xi)^{2} - w(\xi) |\xi|^{2} - 2 c(\xi) \xi^{i} (\pd_{i} h)(\xi) &= - (\xi^{i} (\pd_{i} h)(\xi))^{2} + |\xi|^{2} |(\pd h)(\xi)|^{2} + h(\xi)^{2} - |\xi|^{2} \\&=
h(\xi)^{2} - |\xi|^{2}
\end{align*}
with $h(\xi)^{2} - |\xi|^{2} \leq 0$ if and only if $|\xi| \geq R_{0}$ and conclude the estimate
\begin{align*}
\Re\Big( \mathbf{L}_{\chi} \mathbf{f} \,\Big|\, \mathbf{f} \Big)_{\mathfrak{E}^{1}(\BB^{d}_{R})} &\leq
\Big( \frac{d}{2} - 1 \Big) \int_{\BB^{d}_{R}} \Big( | \pd f_{1} |^{2} + |f_{2}|^{2} w \Big) +
\frac{2\varepsilon_{1}^{2}}{R} \int_{\pd\BB^{d}_{R}} |f_{1}|^{2} \\&=
\Big( \frac{d}{2} - 1 \Big) \| \mathbf{f} \|_{\mathfrak{E}^{1}(\BB^{d}_{R})}^{2} + \Big( - \frac{d}{2} + 1 + \varepsilon_{1} \Big) \frac{2\varepsilon_{1}}{R} \int_{\pd\BB^{d}_{R}} |f_{1}|^{2} \,.
\qedhere
\end{align*}
\end{proof}
The recursive construction of the inner products allows to obtain inductively the desired dissipative estimates.
\begin{proposition}
\label{EnergyDissipativity}
Let $d,k\in\NN$ and $R \geq R_{0}$ with $R_{0}>0$ from \Cref{GSCLightCone}. Fix $0<\varepsilon_{1}\leq\frac{1}{2}$ in \Cref{InnerProducts} and set
\begin{equation*}
\varepsilon_{k} = \varepsilon_{1} \prod_{i=1}^{k-1}
\frac{ \big( \frac{d}{2} - i - \varepsilon_{1} \big) \varepsilon_{1}}{R^{2}}
> 0 \qquad \text{for } 2 \leq k < \frac{d}{2} + 1 \,.
\end{equation*}
Then
\begin{align*}
&\Re \Big( \mathbf{L}_{\chi} \mathbf{f} \,\Big|\, \mathbf{f} \Big)_{\mathfrak{E}^{k}(\BB^{d}_{R})} \\&\indent\leq
\renewcommand{\arraystretch}{1.5}
\left\{
\begin{array}{ll}
\displaystyle{
\Big( \frac{d}{2}-k\Big) \| \mathbf{f} \|_{\mathfrak{E}^{k}(\BB^{d}_{R})}^{2} +
\Big(-\frac{d}{2}+k+\varepsilon_{1}\Big) \frac{2 \varepsilon_{k}}{R} \int_{\pd\BB^{d}_{R}} |f_{1}|^{2}
} & \text{ if } 1\leq k < \frac{d}{2} + 1 \,, \\
\displaystyle{
\varepsilon_{1} \| \mathbf{f} \|_{\mathfrak{E}^{k}(\BB^{d}_{R})}^{2}
} & \text{ if } \frac{d}{2} + 1 \leq k \,,
\end{array}
\right.
\end{align*}
for all $\mathbf{f}\in C^{\infty} ( \overline{\BB^{d}_{R}} )^{2}$.
\end{proposition}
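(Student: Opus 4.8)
The plan is to prove the estimate by induction on $k$, using the recursive structure of the inner products in \Cref{InnerProducts} together with the commutation relation $\mathbf{D}_{\mu} \mathbf{L}_{\chi} - \mathbf{L}_{\chi} \mathbf{D}_{\mu} = - \mathbf{D}_{\mu}$ from \Cref{DmuLdCommutationRelation}. The base case $k=1$ is exactly \Cref{1Dissipativity}, which already yields the stated bound with the boundary term, since $\varepsilon_1 = \varepsilon_1$ and the coefficient in front of the boundary integral is $-\frac{d}{2}+1+\varepsilon_1$, matching the claimed pattern at $k=1$.

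For the inductive step with $2 \leq k < \frac{d}{2}+1$, I would start from the definition
\begin{equation*}
\Big( \mathbf{L}_{\chi} \mathbf{f} \,\Big|\, \mathbf{f} \Big)_{\mathfrak{E}^{k}(\BB^{d}_{R})} = \sum_{\mu=0}^{d} \Big( \mathbf{D}_{\mu} \mathbf{L}_{\chi} \mathbf{f} \,\Big|\, \mathbf{D}_{\mu} \mathbf{f} \Big)_{\mathfrak{E}^{k-1}(\BB^{d}_{R})} + \frac{2\varepsilon_{k}}{R} \int_{\pd\BB^{d}_{R}} \overline{[\mathbf{L}_{\chi}\mathbf{f}]_{1}} f_{1} \,,
\end{equation*}
then rewrite $\mathbf{D}_{\mu} \mathbf{L}_{\chi} \mathbf{f} = \mathbf{L}_{\chi} \mathbf{D}_{\mu} \mathbf{f} - \mathbf{D}_{\mu} \mathbf{f}$ in each summand. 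Taking real parts, the first piece becomes $\sum_{\mu} \Re\big( \mathbf{L}_{\chi} \mathbf{D}_{\mu} \mathbf{f} \,\big|\, \mathbf{D}_{\mu} \mathbf{f} \big)_{\mathfrak{E}^{k-1}(\BB^{d}_{R})}$, to which the induction hypothesis applies with $\mathbf{D}_{\mu}\mathbf{f}$ in place of $\mathbf{f}$; this produces $\big(\frac{d}{2}-(k-1)\big)\sum_{\mu}\|\mathbf{D}_{\mu}\mathbf{f}\|_{\mathfrak{E}^{k-1}(\BB^{d}_{R})}^2$ plus boundary contributions of the form $\big(-\frac{d}{2}+(k-1)+\varepsilon_1\big)\frac{2\varepsilon_{k-1}}{R}\int_{\pd\BB^{d}_R} |[\mathbf{D}_\mu\mathbf{f}]_1|^2$. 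The second piece contributes $-\sum_{\mu}\|\mathbf{D}_{\mu}\mathbf{f}\|_{\mathfrak{E}^{k-1}(\BB^{d}_{R})}^2$, which combines with the first to give $\big(\frac{d}{2}-k\big)\sum_{\mu}\|\mathbf{D}_{\mu}\mathbf{f}\|_{\mathfrak{E}^{k-1}(\BB^{d}_{R})}^2$. Since $\|\mathbf{f}\|_{\mathfrak{E}^{k}(\BB^{d}_{R})}^2 = \sum_\mu \|\mathbf{D}_\mu\mathbf{f}\|_{\mathfrak{E}^{k-1}(\BB^{d}_R)}^2 + \frac{2\varepsilon_k}{R}\int_{\pd\BB^{d}_R}|f_1|^2$, the interior term is already in the right shape up to reinserting $\frac{2\varepsilon_k}{R}\int_{\pd\BB^{d}_R}|f_1|^2$ with coefficient $\frac{d}{2}-k$ and correcting it.

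The main obstacle — and the place where the precise value of $\varepsilon_k$ is forced — is bookkeeping the boundary terms so that everything collapses into a single multiple of $\frac{2\varepsilon_k}{R}\int_{\pd\BB^{d}_R}|f_1|^2$ with the asserted coefficient $-\frac{d}{2}+k+\varepsilon_1$. Here I would use, as in the proof of \Cref{kNorm}, the identity $[\mathbf{D}_0\mathbf{f}]_1 = f_2$ and $\pd_i \mathbf{f} = \mathbf{D}_i\mathbf{f} + (\pd_i h)\mathbf{D}_0\mathbf{f}$, i.e. $[\mathbf{D}_i\mathbf{f}]_1 = \pd_i f_1 - (\pd_i h) f_2$, together with the trace structure on $\pd\BB^{d}_R$ where $\xi$ is the outward normal direction; on the sphere $\{|\xi|=R\}$ the radial derivative $\xi^i\pd_i f_1/R$ and the tangential derivatives can be grouped so that $\sum_{\mu=0}^d |[\mathbf{D}_\mu \mathbf{f}]_1|^2$ on $\pd\BB^d_R$ relates back to $|f_1|^2$ only after integrating by parts along the boundary sphere — a closed manifold, so no new boundary terms appear. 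This is the computational heart: one checks that the surviving boundary integral from the induction hypothesis, namely $\big(-\frac{d}{2}+k-1+\varepsilon_1\big)\frac{2\varepsilon_{k-1}}{R}\sum_\mu\int_{\pd\BB^d_R}|[\mathbf{D}_\mu\mathbf{f}]_1|^2$, plus the boundary term $\frac{2\varepsilon_k}{R}\Re\int_{\pd\BB^d_R}\overline{[\mathbf{L}_\chi\mathbf{f}]_1}f_1$ coming from the outermost layer (expanded via $[\mathbf{L}_\chi\mathbf{f}]_1 = -\xi^i\pd_i f_1 + c f_2$ and integration by parts on the sphere, producing a $(\frac{d}{2}-k)$-type coefficient as in \Cref{1Dissipativity}), together reduce to $\big(-\frac{d}{2}+k+\varepsilon_1\big)\frac{2\varepsilon_k}{R}\int_{\pd\BB^d_R}|f_1|^2$ precisely when $\varepsilon_k = \varepsilon_{k-1}\cdot\frac{(\frac{d}{2}-k+1-\varepsilon_1)\varepsilon_1}{R^2}$, which unwinds to the displayed product formula for $\varepsilon_k$. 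Finally, for $k \geq \frac{d}{2}+1$ the inner product no longer carries a boundary term but instead an extra copy $\big(\,\cdot\,|\,\cdot\,\big)_{\mathfrak{E}^{k-1}(\BB^d_R)}$; applying the same commutator trick gives $\sum_\mu \Re(\mathbf{L}_\chi \mathbf{D}_\mu\mathbf{f}\,|\,\mathbf{D}_\mu\mathbf{f})_{\mathfrak{E}^{k-1}} - \sum_\mu\|\mathbf{D}_\mu\mathbf{f}\|_{\mathfrak{E}^{k-1}}^2 + \Re(\mathbf{L}_\chi\mathbf{f}\,|\,\mathbf{f})_{\mathfrak{E}^{k-1}}$, and I would induct downward from whichever layer first reaches the regime $k-j < \frac{d}{2}+1$, using the already-established bound there (whose interior coefficient $\frac{d}{2}-(k-j)$ becomes negative once $k-j>\frac d2$, hence $\leq 0$, while the boundary coefficient is also negative for $\varepsilon_1\leq\frac12$), so that every contribution is bounded by $\varepsilon_1\|\mathbf{f}\|_{\mathfrak{E}^{k}(\BB^d_R)}^2$ after summing the telescoping $-\|\mathbf{D}_\mu\mathbf{f}\|^2$ terms against the nonpositive ones; the $+\varepsilon_1\|\cdot\|^2$ residue comes exactly from the one layer sitting at $k-j$ just below $\frac d2+1$ where the coefficient $\frac d2-(k-j)$ can be as large as $\varepsilon_1$ minus a boundary gain, and a short estimate confirms the clean bound $\varepsilon_1\|\mathbf{f}\|_{\mathfrak{E}^{k}(\BB^d_R)}^2$.
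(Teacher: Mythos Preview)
Your overall inductive architecture is correct and matches the paper: base case via \Cref{1Dissipativity}, inductive step via the commutation relation $\mathbf{D}_{\mu}\mathbf{L}_{\chi}-\mathbf{L}_{\chi}\mathbf{D}_{\mu}=-\mathbf{D}_{\mu}$, and you correctly identify the recursion $\varepsilon_{k}=\varepsilon_{k-1}\cdot\frac{(\frac{d}{2}-k+1-\varepsilon_{1})\varepsilon_{1}}{R^{2}}$. However, the mechanism you describe for closing the boundary terms is not right and would not work as written.

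You propose to handle $\sum_{\mu}\int_{\pd\BB^{d}_{R}}|[\mathbf{D}_{\mu}\mathbf{f}]_{1}|^{2}$ and $\Re\int_{\pd\BB^{d}_{R}}\overline{[\mathbf{L}_{\chi}\mathbf{f}]_{1}}f_{1}$ by ``integration by parts along the boundary sphere'', aiming to reduce everything to $\int_{\pd\BB^{d}_{R}}|f_{1}|^{2}$. This cannot succeed: on $\pd\BB^{d}_{R}$ the vector $\xi$ is the outward normal, so $\xi^{i}\pd_{i}f_{1}$ is a \emph{normal} derivative, and there is no integration by parts available for it on the sphere. Likewise $\sum_{\mu}|[\mathbf{D}_{\mu}\mathbf{f}]_{1}|^{2}=|f_{2}|^{2}+\sum_{i}|\pd_{i}f_{1}-(\pd_{i}h)f_{2}|^{2}$ genuinely contains first derivatives of $f_{1}$ and cannot be converted into $|f_{1}|^{2}$ by any boundary manipulation.

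What the paper actually does is different and simpler. First, one rewrites $[\mathbf{L}_{\chi}\mathbf{f}]_{1}=-h\,[\mathbf{D}_{0}\mathbf{f}]_{1}-\xi^{i}[\mathbf{D}_{i}\mathbf{f}]_{1}$, using $[\mathbf{D}_{0}\mathbf{f}]_{1}=f_{2}$ and $[\mathbf{D}_{i}\mathbf{f}]_{1}=\pd_{i}f_{1}-(\pd_{i}h)f_{2}$. Since $|h(\xi)|\leq R$ on $\pd\BB^{d}_{R}$ (because $R\geq R_{0}$), Young's inequality gives
\[
\Re\int_{\pd\BB^{d}_{R}}\overline{[\mathbf{L}_{\chi}\mathbf{f}]_{1}}f_{1}\leq \varepsilon_{1}\int_{\pd\BB^{d}_{R}}|f_{1}|^{2}+\frac{R^{2}}{2\varepsilon_{1}}\sum_{\mu=0}^{d}\int_{\pd\BB^{d}_{R}}|[\mathbf{D}_{\mu}\mathbf{f}]_{1}|^{2}\,.
\]
The point is that the dangerous term $\sum_{\mu}\int_{\pd\BB^{d}_{R}}|[\mathbf{D}_{\mu}\mathbf{f}]_{1}|^{2}$ is not eliminated but \emph{absorbed}: it appears here with coefficient $\frac{2\varepsilon_{k}}{R}\cdot\frac{R^{2}}{2\varepsilon_{1}}=\frac{R\varepsilon_{k}}{\varepsilon_{1}}$, while the induction hypothesis supplies the \emph{same} integral with the \emph{negative} coefficient $\big(-\tfrac{d}{2}+k-1+\varepsilon_{1}\big)\frac{2\varepsilon_{k-1}}{R}$ (negative precisely because $k-1<\tfrac{d}{2}-\varepsilon_{1}$ in this range). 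Choosing $\varepsilon_{k}$ by the stated product formula makes the sum of these two coefficients nonpositive, so the whole $\sum_{\mu}|[\mathbf{D}_{\mu}\mathbf{f}]_{1}|^{2}$ contribution can simply be dropped. What survives is exactly $\big(\tfrac{d}{2}-k\big)\|\mathbf{f}\|_{\mathfrak{E}^{k}}^{2}+\big(-\tfrac{d}{2}+k+\varepsilon_{1}\big)\frac{2\varepsilon_{k}}{R}\int_{\pd\BB^{d}_{R}}|f_{1}|^{2}$.

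For $k\geq\frac{d}{2}+1$ your idea is in the right direction but overcomplicated. The paper simply observes that at $k=\lceil\tfrac{d}{2}\rceil$ the first-case estimate already implies $\Re(\mathbf{L}_{\chi}\mathbf{f}\,|\,\mathbf{f})_{\mathfrak{E}^{k}}\leq\varepsilon_{1}\|\mathbf{f}\|_{\mathfrak{E}^{k}}^{2}$ (using $\frac{2\varepsilon_{k}}{R}\int_{\pd\BB^{d}_{R}}|f_{1}|^{2}\leq\|\mathbf{f}\|_{\mathfrak{E}^{k}}^{2}$), and then a one-line induction using the commutator and dropping the nonpositive $-\sum_{\mu}\|\mathbf{D}_{\mu}\mathbf{f}\|_{\mathfrak{E}^{k}}^{2}$ gives the same bound for all larger $k$.
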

\begin{proof}
The estimate for $k=1$ is proved in \Cref{1Dissipativity}. Now, let us assume by induction that the lemma holds for an arbitrary but fixed integer $k \in \NN$.
\begin{enumerate}[wide,itemsep=1em,topsep=1em]
\item[\textit{Case $1\leq k < \frac{d}{2}$.}] We find with the commutation relation from \Cref{DmuLdCommutationRelation}
\begin{align*}
\Re \Big( \mathbf{L}_{\chi} \mathbf{f} \,\Big|\, \mathbf{f} \Big)_{\mathfrak{E}^{k+1}(\BB^{d}_{R})} = \sum_{\mu=0}^{d} \Re \Big( \mathbf{D}_{\mu} \mathbf{L}_{\chi} \mathbf{f} \,\Big|\, \mathbf{D}_{\mu} \mathbf{f} \Big)_{\mathfrak{E}^{k}(\BB^{d}_{R})} &+ \frac{2\varepsilon_{k+1}}{R} \Re \int_{\pd\BB^{d}_{R}} \overline{[\mathbf{L}_{\chi}\mathbf{f}]_{1}} f_{1} \\=
\sum_{\mu=0}^{d} \Re \Big( \mathbf{L}_{\chi} \mathbf{D}_{\mu} \mathbf{f} \,\Big|\, \mathbf{D}_{\mu} \mathbf{f} \Big)_{\mathfrak{E}^{k}(\BB^{d}_{R})} - \sum_{\mu=0}^{d} \| \mathbf{D}_{\mu}\mathbf{f} \|_{\mathfrak{E}^{k}(\BB^{d}_{R})}^{2} &+ \frac{2\varepsilon_{k+1}}{R} \Re \int_{\pd\BB^{d}_{R}} \overline{[\mathbf{L}_{\chi}\mathbf{f}]_{1}} f_{1} \,.
\end{align*}
From the assumptions in \Cref{SimilarityCoordinatesFlat} it follows that $h(\xi) < |\xi|$ and so the boundary term is estimated by
\begin{align*}
\Re\int_{\pd\BB^{d}_{R}}\overline{[\mathbf{L}_{\chi}\mathbf{f}]_{1}} f_{1} &= \Re \int_{\pd\BB^{d}_{R}} \Big( - \overline{h[\mathbf{D}_{0}\mathbf{f}]_{1}} f_{1} - \overline{\xi^{i} [\mathbf{D}_{i}\mathbf{f}]_{1}} f_{1} \Big) \\&\leq
\varepsilon_{1} \int_{\pd\BB^{d}_{R}} |f_{1}|^{2} +
\frac{R^{2}}{2\varepsilon_{1}} \int_{\pd\BB^{d}_{R}} \sum_{\mu=0}^{d} \Big| [\mathbf{D}_{\mu}\mathbf{f}]_{1} \Big|^{2} \,.
\end{align*}
This implies with the induction hypothesis and the choice of $\varepsilon_{k+1} > 0$ from above
\begin{align*}
\Re \Big( \mathbf{L}_{\chi} \mathbf{f} \,\Big|\, \mathbf{f} \Big)_{\mathfrak{E}^{k+1}(\BB^{d}_{R})} &\leq
\Big( \frac{d}{2} - k - 1 \Big) \sum_{\mu=0}^{d} \| \mathbf{D}_{\mu}\mathbf{f} \|_{\mathfrak{E}^{k}(\BB^{d}_{R})}^{2} + \varepsilon_{1}\frac{2\varepsilon_{k+1}}{R} \int_{\pd\BB^{d}_{R}} |f_{1}|^{2} \\&\indent+
\Big( \Big( - \frac{d}{2} + k + \varepsilon_{1} \Big) \frac{2\varepsilon_{k}}{R} + \frac{R}{\varepsilon_{1}} \varepsilon_{k+1} \Big) \int_{\pd\BB^{d}_{R}} \sum_{\mu=0}^{d} \Big|[\mathbf{D}_{\mu}\mathbf{f}]_{1}\Big|^{2} \\&\leq
\Big( \frac{d}{2} - k - 1 \Big) \| \mathbf{f} \|_{\mathfrak{E}^{k+1}(\BB^{d}_{R})}^{2} + \Big(-\frac{d}{2} + k + 1 + \varepsilon_{1} \Big) \frac{2 \varepsilon_{k+1}}{R} \int_{\pd\BB^{d}_{R}} |f_{1}|^{2} \,.
\end{align*}
So the lemma is proved for all $1\leq k < \frac{d}{2} + 1$.
\item[\textit{Case $k\geq \frac{d}{2}$.}] Note that when $k = \Big\lceil\frac{d}{2}\Big\rceil \geq \frac{d}{2}$ the induction hypothesis gives
\begin{equation*}
\Re \Big( \mathbf{L}_{\chi} \mathbf{f} \,\Big|\, \mathbf{f} \Big)_{\mathfrak{E}^{k}(\BB^{d}_{R})} \leq \varepsilon_{1} \| \mathbf{f} \|_{\mathfrak{E}^{k}(\BB^{d}_{R})}^{2} \,.
\end{equation*}
So, by induction and the commutation relation from \Cref{DmuLdCommutationRelation}
\begin{align*}
\Re \Big( \mathbf{L}_{\chi} \mathbf{f} \,\Big|\, \mathbf{f} \Big)_{\mathfrak{E}^{k+1}(\BB^{d}_{R})} &= \sum_{\mu=0}^{d} \Re\Big( \mathbf{D}_{\mu}\mathbf{L}_{\chi} \mathbf{f} \,\Big|\, \mathbf{D}_{\mu}\mathbf{f} \Big)_{\mathfrak{E}^{k}(\BB^{d}_{R})} + \Re \Big( \mathbf{L}_{\chi} \mathbf{f} \,\Big|\, \mathbf{f} \Big)_{\mathfrak{E}^{k}(\BB^{d}_{R})} \\&\leq
\varepsilon_{1} \Big( \sum_{\mu=0}^{d} \| \mathbf{D}_{\mu}\mathbf{f} \|_{\mathfrak{E}^{k}(\BB^{d}_{R})}^{2} + \| \mathbf{f} \|_{\mathfrak{E}^{k}(\BB^{d}_{R})}^{2} \Big) - \sum_{\mu=0}^{d} \| \mathbf{D}_{\mu}\mathbf{f} \|_{\mathfrak{E}^{k}(\BB^{d}_{R})}^{2} \\&\leq
\varepsilon_{1} \| \mathbf{f} \|_{\mathfrak{E}^{k}(\BB^{d}_{R})}^{2}
\end{align*}
and the estimates are proved.
\qedhere
\end{enumerate}
\end{proof}
\subsection{Density of the range}
\label{SubSecDenseRange}
\Cref{TransitionRelation} shows that a well-posedness theory of the Cauchy problem for the wave equation in similarity coordinates can be established if the differential operator from \Cref{WaveFlowOperation} can be realized as the generator of a strongly continuous semigroup. So far, we have shown in \Cref{kNorm,EnergyDissipativity} that this operator is densely defined and dissipative on a Hilbert space. Thus, the Lumer-Phillips theorem provides a necessary and sufficient condition for a generation theorem. In order to apply it, we further need to prove a density result for the range of this operator. For this, we introduce the Laplace-Beltrami operator in similarity coordinates.
\begin{definition}
\label{GSCLaplaceBeltrami}
For a map $v \in C^{\infty} \big( \overline{ (0,\infty) \times \BB^{d}_{R} } \big)$, we define the Laplace-Beltrami operator with respect to the Minkowski metric in graphical similarity coordinates from \Cref{AssumptionsAppendix} by
\begin{equation*}
(\Box_{\chi} v) (\tau,\xi) =
\ee^{2\tau} \Big( c^{00}(\xi) \pd_{\tau}^{2} + c^{i0}(\xi)\pd_{\xi^{i}}\pd_{\tau} + c^{ij}(\xi)\pd_{\xi^{i}}\pd_{\xi^{j}} + c^0(\xi)\pd_{\tau} + c^{i}(\xi)\pd_{\xi^{i}} \Big) v(\tau,\xi) \,,
\end{equation*}
where the smooth coefficients are given by
\begin{align*}
c^{00}(\xi) &= -\frac{w(\xi)}{c(\xi)^{2}} \,, \\
c^{i0}(\xi) &= -2\Big( \frac{w(\xi)}{c(\xi)^{2}}\xi^{i} + \frac{(\pd^{i} h)(\xi)}{c(\xi)} \Big) \,, \\
c^{ij}(\xi) &= \delta^{ij} - \frac{w(\xi)}{c(\xi)^{2}} \xi^{i}\xi^{j} - \frac{\xi^{i}(\pd^{j} h)(\xi) + \xi^{j}(\pd^{i} h)(\xi)}{c(\xi)} \,, \\
c^{0}(\xi) &= -\frac{w(\xi)}{c(\xi)^{2}} - \Big( \delta^{ij} - \frac{w(\xi)}{c(\xi)^{2}} \xi^{i}\xi^{j} - \frac{\xi^{i}(\pd^{j} h)(\xi) + \xi^{j}(\pd^{i} h)(\xi)}{c(\xi)} \Big) \frac{(\pd_{i}\pd_{j} h)(\xi)}{c(\xi)} \,, \\
c^{i}(\xi) &= -2\Big( \frac{w(\xi)}{c(\xi)^{2}}\xi^{i} + \frac{(\pd^{i} h)(\xi)}{c(\xi)} \Big) - \Big( \delta^{jk} - \frac{w(\xi)}{c(\xi)^{2}} \xi^{j}\xi^{k} - \frac{\xi^{j} (\pd^{k} h)(\xi) + \xi^{k} (\pd^{j} h)(\xi)}{c(\xi)} \Big) \frac{(\pd_{j}\pd_{k} h)(\xi)}{c(\xi)} \xi^{i} \,.
\end{align*}
\end{definition}
Recall that if $u\in C^{\infty} \big( \overline{\mathrm{X}}^{1,d}_{R} \big)$ and $v \in C^{\infty} \big( \overline{ (0,\infty) \times \BB^{d}_{R} } \big)$ are related via $v = u \circ \chi$, then the classical wave operator is related to the Laplace-Beltrami operator via
\begin{equation}
\label{LaplaceBeltramiSimilarityCoordinates}
(\Box u ) \circ \chi = \Box_{\chi} v \,.
\end{equation}
The following lemma links the respective condition from the Lumer-Phillips theorem to the existence of \emph{mode solutions} of an inhomogeneous wave equation in similarity coordinates.
\begin{lemma}
\label{Range}
Let $\lambda\in\CC$. Let $\mathbf{F} \in C^{\infty} ( \overline{\BB^{d}_{R}} )^{2}$ and define $F \in C^{\infty} ( \overline{\BB^{d}_{R}} )$ by
\begin{align}
\begin{split}
\label{F}
F(\xi) &= \Big( - \Big( (\lambda+1) \frac{w(\xi)}{c(\xi)^{2}} + \Big( \delta^{ij} - \frac{w(\xi)}{c(\xi)^{2}} \xi^{i}\xi^{j} - 2\frac{(\pd^{i} h)(\xi)\xi^{j}}{c(\xi)} \Big) \frac{(\pd_{i}\pd_{j} h)(\xi)}{c(\xi)} \Big) F_{1}(\xi) \\&\indent-
\Big( \frac{w(\xi)}{c(\xi)^{2}} \xi^{i} + \frac{(\pd^{i} h)(\xi)}{c(\xi)} \Big) (\pd_{i} F_{1})(\xi) - \frac{w(\xi)}{c(\xi)} F_{2}(\xi) \Big) \,.
\end{split}
\end{align}
Let $f\in C^{\infty} ( \overline{\BB^{d}_{R}} )$ and define $\mathbf{f} \in C^{\infty} ( \overline{\BB^{d}_{R}} )^{2}$ by
\begin{equation*}
\mathbf{f} =
\begin{bmatrix}
f \\
\displaystyle{
\frac{\lambda}{c} f + \frac{1}{c} \xi^{i} \pd_{i} f - \frac{1}{c} F_{1}
}
\end{bmatrix}
\,.
\end{equation*}
Then
\begin{equation*}
\lambda\mathbf{f} - \mathbf{L}_{\chi}\mathbf{f} - \mathbf{F} =-
\left.
\begin{bmatrix}
0 \\
\displaystyle{
\frac{c}{w} \Big( \Box_{\chi} v(\tau,\,.\,) - G(\tau,\,.\,) \Big)
}
\end{bmatrix}
\right|_{\tau = 0}
\,,
\end{equation*}
where $v, G \in C^{\infty}(\RR \times \overline{\BB^{d}_{R}})$ are given by
\begin{align}
\label{vlambda}
v_{\lambda}(\tau,\xi) &= \ee^{\lambda\tau} f(\xi) \,, \\
\label{Glambda}
G_{\lambda}(\tau,\xi) &= \ee^{(\lambda+2)\tau} F(\xi) \,.
\end{align}
\end{lemma}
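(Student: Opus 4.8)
The plan is to verify the asserted identity by a direct computation that reverses the transition relation from \Cref{TransitionRelation}. Concretely, given the data $\mathbf{f}$ built from $f$ and $F_1$ as in the statement, and given $\mathbf{F}$ reconstructed from $F$ via \eqref{F}, I would compute $\lambda\mathbf{f} - \mathbf{L}_{\chi}\mathbf{f}$ componentwise using the explicit form of $\mathbf{L}_{\chi}$ from \Cref{WaveFlowOperation} and show that it differs from $\mathbf{F}$ only in the second component, by exactly the term $-\tfrac{c}{w}(\Box_\chi v_\lambda - G_\lambda)|_{\tau=0}$.

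First I would treat the first component. By definition $[\mathbf{L}_{\chi}\mathbf{f}]_1 = -\xi^i\pd_i f_1 + c f_2$, and plugging in $f_1 = f$ and $f_2 = \tfrac{\lambda}{c}f + \tfrac1c\xi^i\pd_i f - \tfrac1c F_1$ gives $[\mathbf{L}_{\chi}\mathbf{f}]_1 = -\xi^i\pd_i f + \lambda f + \xi^i\pd_i f - F_1 = \lambda f - F_1$. Hence $\lambda f_1 - [\mathbf{L}_{\chi}\mathbf{f}]_1 = F_1$, so the first component of $\lambda\mathbf{f} - \mathbf{L}_{\chi}\mathbf{f} - \mathbf{F}$ vanishes identically; note this is precisely why $[\mathbf{f}]_2$ was chosen the way it is. This matches the first component of the right-hand side, which is zero.

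The second component is where the real work lies. I would compute $[\mathbf{L}_{\chi}\mathbf{f}]_2 = \tfrac{c}{w}\pd^i\pd_i f_1 - (1 + \tfrac{c}{w}\pd^i\pd_i h)f_2 - (\xi^i + 2\tfrac{c}{w}\pd^i h)\pd_i f_2$ with the same substitution for $f_1,f_2$, collect terms, and compare to $\lambda f_2 - F_2$. In parallel, I would evaluate $\Box_\chi v_\lambda$ at $\tau=0$ using \Cref{GSCLaplaceBeltrami}: since $v_\lambda(\tau,\xi) = \ee^{\lambda\tau}f(\xi)$, we have $\pd_\tau v_\lambda = \lambda\ee^{\lambda\tau}f$, $\pd_\tau^2 v_\lambda = \lambda^2\ee^{\lambda\tau}f$, $\pd_{\xi^i}\pd_\tau v_\lambda = \lambda\ee^{\lambda\tau}\pd_i f$, etc., so $(\Box_\chi v_\lambda)(0,\xi)$ becomes an explicit second-order differential expression in $f$ with the coefficients $c^{00},c^{i0},c^{ij},c^0,c^i$. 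Likewise $G_\lambda(0,\xi) = F(\xi)$ with $F$ given by \eqref{F}. The key algebraic fact I expect to use repeatedly is the relation between the $c^{\bullet\bullet}$ coefficients and $w,c,\pd h,\pd^2 h$: for instance $c^{ij}$ and the $\tfrac{w}{c^2}\xi^i\xi^j$, $\tfrac{\xi^i\pd^j h}{c}$ pieces should combine with the terms coming from differentiating $\tfrac1c\xi^i\pd_i f$ twice (which produces $\pd^2 h$ contractions through $\pd_i c = \xi^j\pd_i\pd_j h$). The main obstacle will be bookkeeping: matching the many second-derivative, first-derivative, and zeroth-order terms in $f$ that arise from expanding $\tfrac{c}{w}[\mathbf{L}_{\chi}\mathbf{f}]_2$ against those in $-(\Box_\chi v_\lambda - G_\lambda)|_{\tau=0}$, using the identities $c = \xi^i\pd_i h - h$, $w = 1 - \pd^i h\pd_i h$, and $\pd_i c = \xi^j\pd_i\pd_j h$ to reconcile them. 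A clean way to organize this is to first derive, from \Cref{TransitionRelation} applied to a suitably chosen $u$ with $(u\circ\chi)(\tau,\xi) = \ee^{\lambda\tau}f(\xi)$ (so that $u_1 = v_\lambda$ and $u_2 = \ee^{-\tau}(\pd_0 u)\circ\chi = \tfrac{\ee^{\lambda\tau}}{c}(\lambda f + \xi^i\pd_i f)$), the general transition formula, and then observe that the stated $\mathbf{f}$ is a rank-one modification of $(u_1,u_2)|_{\tau=0}$ by $-\tfrac1c F_1$ in the second slot; tracking how this modification propagates through $\mathbf{L}_{\chi}$ and $\Box_\chi$ should reduce the verification to the already-known homogeneous case plus a short inhomogeneous correction, with \eqref{F} being exactly the expression making the correction consistent.
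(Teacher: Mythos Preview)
Your proposal is correct and converges to essentially the paper's approach: the paper defines $\mathbf{v}(\tau,\xi) = \ee^{\lambda\tau}\mathbf{f}(\xi)$, recognizes its second component as $\tfrac{1}{c}(\pd_\tau+\xi^i\pd_{\xi^i})v_\lambda - \tfrac{1}{c}\ee^{\lambda\tau}F_1$, and then invokes \Cref{TransitionRelation} together with \eqref{LaplaceBeltramiSimilarityCoordinates} to obtain the identity in one stroke. Your ``clean way'' at the end is exactly this; the preceding brute-force expansion you sketch would also work but is unnecessary once you spot the rank-one modification structure.
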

\begin{proof}
Let $\mathbf{v} \in C^{\infty}(\RR\times\overline{\BB^{d}_{R}})^{2}$ be given by
\begin{equation*}
\mathbf{v}(\tau,\xi) \coloneqq \ee^{\lambda\tau} \mathbf{f}(\xi) =
\begin{bmatrix}
v_{\lambda}(\tau,\xi) \\
\displaystyle{
\frac{1}{c(\xi)} \big( \pd_{\tau} + \xi^{i} \pd_{\xi^{i}} \big) v_{\lambda}(\tau,\xi) - \frac{1}{c(\xi)} \ee^{\lambda\tau} F_{1}(\xi)
}
\end{bmatrix}
\,.
\end{equation*}
Using \Cref{TransitionRelation} and relation \eqref{LaplaceBeltramiSimilarityCoordinates}, we find
\begin{align*}
\lambda\mathbf{f} - \mathbf{L}_{\chi}\mathbf{f} - \mathbf{F} &=
\left.\Big( \pd_{\tau} \mathbf{v}(\tau,\,.\,) - \mathbf{L}_{\chi} \mathbf{v}(\tau,\,.\,) \Big) \right|_{\tau = 0} - \mathbf{F} \\&=-
\left.
\begin{bmatrix}
0 \\
\displaystyle{
\frac{c}{w} \Big( \Box_{\chi} v_{\lambda}(\tau,\,.\,) - G_{\lambda}(\tau,\,.\,) \Big)
}
\end{bmatrix}
\right|_{\tau = 0}
\,.
\qedhere
\end{align*}
\end{proof}
Solutions to the wave equation of the form \eqref{vlambda} are called \emph{mode solutions} and they are invariant under transition diffeomorphisms between graphical similarity coordinates.
\begin{lemma}
\label{ModeInvariance}
Let $\chi,\overline{\chi}:\RR \times \RR^{d} \rightarrow \RR^{1,d}$ be graphical similarity coordinates and $R,\overline{R} > 0$ such that $\chi(\RR\times\BB^{d}_{R}) = \overline{\chi}(\RR\times\BB^{d}_{\overline{R}})$. Let $\lambda\in\CC$ and let $f,F \in C^{\infty} ( \overline{\BB^{d}_{R}} )$ and $\overline{f},\overline{F} \in C^{\infty} ( \overline{\BB^{d}_{R}} )$ be related via the transition diffeomorphism $\chi^{-1} \circ \overline{\chi}$ from \Cref{TransitionDiffeo}, that is
\begin{equation}
\label{barfbarF}
\overline{f}(\overline{\xi}) = h_{+}(\overline{\xi})^{-\lambda} f \big( h_{+}(\overline{\xi})^{-1} \overline{\xi} \big) \,, \qquad \overline{F}(\overline{\xi}) = h_{+}(\overline{\xi})^{-\lambda+2} F \big( h_{+}(\overline{\xi})^{-1} \overline{\xi} \big) \,.
\end{equation}
Set
\begin{equation}
\label{GbarG}
G(\tau,\xi) = \ee^{(\lambda+2)\tau} F(\xi) \,, \qquad \overline{G}(\overline{\tau},\overline{\xi}) = \ee^{(\lambda+2)\overline{\tau}} \overline{F}(\overline{\xi}) \,.
\end{equation}
Then
\begin{equation*}
\renewcommand{\arraystretch}{1.5}
\left\{
\begin{array}{rcl}
\Box_{\chi} v &=& G \,, \\
v(\tau,\xi) &=& \displaystyle{
\ee^{\lambda\tau} f(\xi) \,,
}
\end{array}
\right.
\qquad\text{if and only if}\qquad
\renewcommand{\arraystretch}{1.5}
\left\{
\begin{array}{rcl}
\Box_{\overline{\chi}} \overline{v} &=& \overline{G} \,, \\
\overline{v}(\overline{\tau},\overline{\xi}) &=& \displaystyle{
\ee^{\lambda\overline{\tau}} \overline{f}(\overline{\xi}) \,.
}
\end{array}
\right.
\end{equation*}
\end{lemma}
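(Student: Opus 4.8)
\textbf{Proof plan for \Cref{ModeInvariance}.}
The plan is to establish the claim by a direct change of variables, exploiting that the Laplace--Beltrami operator is a geometric object and that both $\chi$ and $\overline\chi$ parametrize the same open subset of Minkowski spacetime. By symmetry of the statement it suffices to prove one implication, say the ``only if'' direction; the converse then follows by swapping the roles of $\chi$ and $\overline\chi$ (note that the transition diffeomorphism $\overline\chi^{-1}\circ\chi$ has the same structural form as $\chi^{-1}\circ\overline\chi$ by \Cref{TransitionDiffeo}, with $h_+$ replaced by the corresponding positive radial function).

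First I would pass from the coordinate representatives to the spacetime objects. Given $v$ with $\Box_\chi v = G$, set $u \coloneqq v\circ\chi^{-1} \in C^\infty(\mathrm{X}^{1,d})$ on the common image region $\mathrm{X}^{1,d} = \chi(\RR\times\BB^d_R) = \overline\chi(\RR\times\BB^d_{\overline R})$. By \Cref{LaplaceBeltramiSimilarityCoordinates} we have $(\Box u)\circ\chi = \Box_\chi v = G$, hence $\Box u = G\circ\chi^{-1}$ in $\mathrm{X}^{1,d}$, and likewise $\Box_{\overline\chi}(u\circ\overline\chi) = (\Box u)\circ\overline\chi$. So the entire statement reduces to two bookkeeping claims: (i) the mode ansatz is preserved, i.e. $v(\tau,\xi) = \ee^{\lambda\tau}f(\xi)$ on $\RR\times\BB^d_R$ is equivalent to $\overline v \coloneqq u\circ\overline\chi$ having the form $\overline v(\overline\tau,\overline\xi) = \ee^{\lambda\overline\tau}\overline f(\overline\xi)$ on $\RR\times\BB^d_{\overline R}$ with $\overline f$ given by \eqref{barfbarF}; and (ii) the inhomogeneity transforms the same way, i.e. $G\circ\chi^{-1}$, when pulled back by $\overline\chi$, equals $\overline G$ from \eqref{GbarG} with $\overline F$ given by \eqref{barfbarF}. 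Both (i) and (ii) are purely computational and follow from the explicit formula for the transition map in \Cref{TransitionDiffeo}, namely $(\chi^{-1}\circ\overline\chi)(\overline\tau,\overline\xi) = (\overline\tau - \log h_+(\overline\xi),\, h_+(\overline\xi)^{-1}\overline\xi)$, together with the homogeneity of $h^+$ recorded in \Cref{GSCLemma}.

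Concretely, for (i) I would compute $\overline v(\overline\tau,\overline\xi) = v\big((\chi^{-1}\circ\overline\chi)(\overline\tau,\overline\xi)\big) = v\big(\overline\tau - \log h_+(\overline\xi),\, h_+(\overline\xi)^{-1}\overline\xi\big)$. If $v(\tau,\xi) = \ee^{\lambda\tau}f(\xi)$, the $\ee^{\lambda(\overline\tau - \log h_+(\overline\xi))} = \ee^{\lambda\overline\tau} h_+(\overline\xi)^{-\lambda}$ factor splits off cleanly, leaving $\overline v(\overline\tau,\overline\xi) = \ee^{\lambda\overline\tau}\, h_+(\overline\xi)^{-\lambda} f\big(h_+(\overline\xi)^{-1}\overline\xi\big) = \ee^{\lambda\overline\tau}\overline f(\overline\xi)$, exactly \eqref{barfbarF}; the converse reversal is the same algebra read backwards. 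For (ii) the calculation is identical but with exponent $\lambda+2$: $\overline G(\overline\tau,\overline\xi) = G\big(\overline\tau - \log h_+(\overline\xi),\, h_+(\overline\xi)^{-1}\overline\xi\big) = \ee^{(\lambda+2)\overline\tau} h_+(\overline\xi)^{-(\lambda+2)} F\big(h_+(\overline\xi)^{-1}\overline\xi\big) = \ee^{(\lambda+2)\overline\tau}\overline F(\overline\xi)$, which is precisely \eqref{barfbarF}--\eqref{GbarG}. Combining: $\Box_{\overline\chi}\overline v = (\Box u)\circ\overline\chi = (G\circ\chi^{-1})\circ\overline\chi = G\circ(\chi^{-1}\circ\overline\chi) = \overline G$, and $\overline v$ has the asserted mode form.

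The main (mild) obstacle is a regularity bookkeeping point rather than a genuine difficulty: one must check that the transformed objects $\overline f, \overline F$ indeed lie in $C^\infty(\overline{\BB^d_{\overline R}})$ and that $\overline v, \overline G \in C^\infty(\RR\times\overline{\BB^d_{\overline R}})$, i.e. that the transition diffeomorphism and the factor $h_+$ (which is smooth, radial, and strictly positive on all of $\RR^d$ by \Cref{TransitionDiffeo}) do not spoil smoothness up to the boundary. Since $h_+ \in C^\infty_{\mathrm{rad}}(\RR^d)$ with $h_+ > 0$ everywhere and $\chi^{-1}\circ\overline\chi$ is a diffeomorphism of $\RR\times\BB^d_{\overline R}$ onto $\RR\times\BB^d_R$ with all derivatives bounded on compact subsets, composition and multiplication by powers of $h_+$ preserve the relevant smooth-up-to-boundary classes, so this causes no real trouble. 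I would also note in passing that the identity $\Box_\chi v = G \iff \Box u = G\circ\chi^{-1}$ uses $\chi$ being a diffeomorphism onto $\mathrm{X}^{1,d}$, which is exactly \Cref{GSCLemma} under the standing assumptions on $h$; no new hypotheses are needed.
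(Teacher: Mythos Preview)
Your proposal is correct and follows essentially the same approach as the paper: the paper's proof simply records that $\overline v = v \circ (\chi^{-1}\circ\overline\chi)$ and $\overline G = G \circ (\chi^{-1}\circ\overline\chi)$, and then concludes $(\Box_\chi v - G)\circ(\chi^{-1}\circ\overline\chi) = \Box_{\overline\chi}\overline v - \overline G$. You carry out exactly these identifications, just with the intermediate spacetime function $u = v\circ\chi^{-1}$ made explicit and the mode/inhomogeneity computations written out.

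One small point: your computation for the inhomogeneity yields $\overline F(\overline\xi) = h_+(\overline\xi)^{-(\lambda+2)} F(h_+(\overline\xi)^{-1}\overline\xi)$, whereas the displayed formula \eqref{barfbarF} in the statement reads $h_+(\overline\xi)^{-\lambda+2}$. Your exponent $-(\lambda+2)$ is the correct one (this is confirmed by the application in the proof of \Cref{AlgMult}, where the lemma is used with mode parameter $s+1$ and the resulting exponent is $-(s+3)$); the $-\lambda+2$ in the statement appears to be a typographical slip. So when you write ``exactly \eqref{barfbarF}--\eqref{GbarG}'' you should be aware you are matching the intended formula, not the literal printed exponent.
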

\begin{proof}
Note that $v, G \in C^{\infty} \big( \overline{ (0,\infty) \times \BB^{d}_{R} } \big)$ and $\overline{v}, \overline{G} \in C^{\infty} \big( \overline{ (0,\infty) \times \BB^{d}_{\overline{R}} } \big)$ are related via
\begin{equation*}
v \circ \big( \chi^{-1} \circ \overline{\chi} \big) = \overline{v}
\qquad\text{and}\qquad
G \circ \chi^{-1} \circ \overline{\chi} = \overline{G}\,.
\end{equation*}
Hence
\begin{equation*}
\big( \Box_{\chi} v - G \big) \circ \big( \chi^{-1} \circ \overline{\chi} \big) = \Box_{\overline{\chi}} \overline{v} -\overline{G} \,.
\qedhere
\end{equation*}
\end{proof}
Our final result for the operator from \Cref{WaveFlowOperation} verifies the assumptions in the Lumer-Phillips theorem.
\begin{proposition}
\label{DenseRange}
Let $d\in\NN$ and $R \geq R_{0}$ with $R_{0}>0$ from \Cref{GSCLightCone}. For all $\mathbf{F} \in C^{\infty} ( \overline{\BB^{d}_{R}} )^{2}$ there is a unique $\mathbf{f} \in C^{\infty} ( \overline{\BB^{d}_{R}} )^{2}$ such that
\begin{equation*}
\big( \frac{d}{2} \mathbf{I} - \mathbf{L}_{\chi} \big) \mathbf{f} = \mathbf{F} \,.
\end{equation*}
If $\mathbf{F} \in C^{\infty}_{\mathrm{rad}}(\overline{\BB^{d}_{R}})^{2}$, then also $\mathbf{f} \in C^{\infty}_{\mathrm{rad}}(\overline{\BB^{d}_{R}})^{2}$.
\end{proposition}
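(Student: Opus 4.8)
\textbf{Proof proposal for \Cref{DenseRange}.}
The plan is to reduce the surjectivity statement $(\tfrac{d}{2}\mathbf{I}-\mathbf{L}_{\chi})\mathbf{f}=\mathbf{F}$ to a scalar inhomogeneous second-order ODE via \Cref{Range}, solve that ODE with Frobenius theory at its regular singular points, and then read the result back through the transition-to-standard-similarity-coordinates machinery of \Cref{TransitionDiffeo,ModeInvariance}. Concretely, set $\lambda=\tfrac{d}{2}$ and, given $\mathbf{F}\in C^\infty(\overline{\BB^d_R})^2$, let $F\in C^\infty(\overline{\BB^d_R})$ be the combination \eqref{F} built from $\mathbf{F}$. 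By \Cref{Range}, finding $\mathbf{f}$ with $(\lambda\mathbf{I}-\mathbf{L}_{\chi})\mathbf{f}=\mathbf{F}$ and $[\mathbf{f}]_1=f$ is equivalent to finding $f\in C^\infty(\overline{\BB^d_R})$ such that the mode $v_\lambda(\tau,\xi)=\ee^{\lambda\tau}f(\xi)$ solves $\Box_\chi v_\lambda=G_\lambda$ with $G_\lambda(\tau,\xi)=\ee^{(\lambda+2)\tau}F(\xi)$; the second component of $\mathbf{f}$ is then determined algebraically by $f$ and $F_1$. Since the coefficients $c^{\mu\nu},c^\mu$ in \Cref{GSCLaplaceBeltrami} are radial, separating the mode ansatz gives for the radial profile $\widetilde f$ exactly an ODE of the shape appearing in \eqref{ApproximateModeRadial}–\eqref{a0Coeff} (with the potential term absent and with the inhomogeneity replaced by $\widetilde F$), posed on $(0,R)$ with regular singular points at $0$ and at $R_0$ from \Cref{GSCLightCone}.

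Next I would move to standard similarity coordinates, exactly as in the proof of \Cref{ModeCriterion}: pick a height function $\overline h\equiv-1$ on $\overline{\BB^d_1}$ as in \Cref{GSCExemplary}, use the transition diffeomorphism of \Cref{TransitionDiffeo}, and apply \Cref{ModeInvariance} (the inhomogeneous version, which is exactly what that lemma provides). The transformed profile $\overline g$ then solves a hypergeometric-type equation on $(0,1)$ of the form
\begin{equation*}
(1-\overline\rho^2)\overline g''(\overline\rho)+\Big(2\Big(\tfrac{d-3}{2}-\lambda\Big)\overline\rho+(d-1)\tfrac{1-\overline\rho^2}{\overline\rho}\Big)\overline g'(\overline\rho)-\lambda(\lambda+1)\overline g(\overline\rho)=\overline F(\overline\rho),
\end{equation*}
with $\overline F\in C^\infty_{\mathrm{ev}}([0,1])$ analytic (in fact polynomial in the relevant change of variables after the substitution $z=\overline\rho^2$). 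With $\lambda=\tfrac{d}{2}$, the Frobenius indices at $z=1$ become $\{0,\tfrac{d}{2}-s-\tfrac12-\lambda\}$-type exponents specialized appropriately — crucially, the dangerous logarithmic/branch solution has exponent with real part $\le -\tfrac12-k$ for all $k$, hence is non-$L^2$ near $z=1$, while at $z=0$ the second Frobenius solution $z^{-(d-2)/2}\varphi_2(z)+C\log z\,\varphi_1(z)$ is non-$L^2$ because $\tfrac{d}{2}-1>0$. So I would construct a particular solution by variation of parameters against a fundamental system adapted to these singular points, and use the one-dimensional freedom in the homogeneous solution to kill the obstruction at each endpoint: the point of choosing $\lambda=\tfrac{d}{2}$ precisely is that $\lambda\in\varrho(\mathbf{L}_\chi)$ — it lies strictly to the right of the dissipativity threshold from \Cref{1Dissipativity} (or rather, from the unshifted \Cref{EnergyDissipativity}), so the homogeneous problem has only the trivial $H^1$-solution and uniqueness is automatic, which forces existence of a regular particular solution. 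Regularity $\overline g\in C^\infty_{\mathrm{ev}}([0,1])$ then follows from the Frobenius analysis exactly as in \Cref{ModeCriterion}, and transporting back through \eqref{barfbarF}, \eqref{GbarG} and the transition diffeomorphism gives $f\in C^\infty(\overline{\BB^d_R})$, whence $\mathbf{f}\in C^\infty(\overline{\BB^d_R})^2$; if $\mathbf{F}$ is radial the whole construction stays within radial functions.

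The main obstacle I anticipate is not the abstract scheme but bookkeeping at the two regular singular points simultaneously: one must show that the single constant of integration available in the homogeneous part of the variation-of-parameters formula can be tuned to remove the non-$L^2$ behaviour at $z=1$ while the behaviour at $z=0$ is automatically regular (or vice versa), and that this does not produce a contradiction — this is where the choice $\lambda=\tfrac d2$, ensuring $\tfrac d2\in\varrho(\mathbf L_\chi)$ via the dissipative estimate in \Cref{EnergyDissipativity}, does the real work. A clean way to organize this, avoiding a delicate direct ODE argument, is: (i) prove by \Cref{EnergyDissipativity} (applied in $\mathfrak{E}^1$, noting $\tfrac d2>\tfrac d2-1$) that $\tfrac d2\mathbf I-\mathbf L_\chi$ is injective on the relevant space and has closed range; (ii) observe that the formal mode construction above produces a solution in $H^1$ of the scalar problem with a smooth right-hand side, using the Frobenius classification only to exclude the bad exponents; (iii) invoke elliptic-type interior regularity plus the Frobenius structure at the singular points to upgrade to $C^\infty$. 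Steps (ii)–(iii) are close copies of arguments already executed in \Cref{ModeCriterion} and \Cref{AlgMult}, so the genuinely new content is just checking that at $\lambda=\tfrac d2$ the indicial roots land in the regime where exactly one solution is admissible at each endpoint and the admissible ones match up.
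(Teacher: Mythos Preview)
Your reduction to a scalar radial ODE only treats the case where $\mathbf{F}$ is radial. The proposition is stated for arbitrary $\mathbf{F}\in C^{\infty}(\overline{\BB^{d}_{R}})^{2}$, and the operator $\mathbf{L}_{\chi}$ has radial coefficients but acts on non-radial functions; the mode equation $\Box_{\chi}v_{\lambda}=G_{\lambda}$ is then a genuine PDE in $\xi$, not an ODE for a radial profile. The paper handles this by expanding $F$ in spherical harmonics, which decouples the problem into a family of ODEs indexed by the angular degree $\ell$ (with the extra term $-\ell(\ell+d-2)/\overline{\rho}^{2}$ in the zeroth-order coefficient). Your write-up never introduces this decomposition, and the single ODE you display is only the $\ell=0$ equation.

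Even granting the spherical harmonics reduction, a second ingredient is missing: one cannot simply solve the ODE for each $(\ell,m)$ and sum, because there is no a priori control on the growth of the solutions $\overline{f}_{\ell,m}$ in $\ell$. The paper sidesteps this by solving only for finite truncations $F_{n}$, obtaining $\mathbf{f}_{n}\in C^{\infty}(\overline{\BB^{d}_{R}})^{2}$ with $(\tfrac{d}{2}\mathbf{I}-\mathbf{L}_{\chi})\mathbf{f}_{n}=\mathbf{F}_{n}$, and then invoking the dissipative estimate $\|(\alpha\mathbf{I}-\mathbf{L}_{\chi})\mathbf{f}\|_{\mathfrak{E}^{k}}\geq(\alpha-\varepsilon)\|\mathbf{f}\|_{\mathfrak{E}^{k}}$ to show that $(\mathbf{f}_{n})$ is Cauchy in every $\mathfrak{E}^{k}$, hence converges to a smooth limit by Sobolev embedding. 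Your step (i) gestures at this, but steps (ii)--(iii) propose instead to build an $H^{1}$ solution directly and bootstrap regularity, which does not address convergence of the angular series.

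Finally, the Frobenius bookkeeping you sketch is imprecise (the indices you quote carry a stray $s$ from the wave-maps/Yang--Mills setting, which is absent here), and the assertion that ``the admissible ones match up'' at $\lambda=\tfrac{d}{2}$ is exactly the nontrivial computation. The paper does not carry this out ab initio: for the ODE on $(0,1)$ it cites an explicit smooth solution formula from \cite[Eq.~(2.13)]{MR4778061}, and then extends past $\overline{\rho}=1$ by observing there are no further singular points on $(1,\overline{R}]$.
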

\begin{proof}
Throughout this proof, set $\lambda = \frac{d}{2}$. Let $\mathbf{F} \in C^{\infty} ( \overline{\BB^{d}_{R}} )^{2}$ be arbitrary but fixed and define $F \in C^{\infty} ( \overline{\BB^{d}_{R}} )$ as in \Cref{F}. We define for $n\in\NN$ the spherical harmonics approximation
\begin{equation*}
F_{n}(\xi) = \sum_{\ell=0}^{n} \sum_{m\in\Omega_{d,\ell}} F_{\ell,m}(|\xi|) Y_{\ell,m}(\tfrac{\xi}{|\xi|}) \,, \qquad F_{\ell,m}(\rho) = \int_{\pd\BB^{d}} \overline{Y_{\ell,m}} F(\rho\,.\,) \,.
\end{equation*}
We recall from \cite[Lemma A.2]{MR3537340} that $F_{n} \in C^{\infty} ( \overline{\BB^{d}_{R}} )$ and
\begin{equation}
\label{ApproximationAllk}
\lim_{n\to\infty} \Big\| \frac{c}{w} ( F_{n} - F ) \Big\|_{H^{k-1}(\BB^{d}_{R})} = 0
\end{equation}
for any $k \in \NN$. Next, fix similarity coordinates $\overline{\chi}: \RR \times \BB^{d}_{\overline{R}} \rightarrow \RR^{1,d}$ with a height function $\overline{h}(|\,.\,|) \in C^{\infty}_{\mathrm{rad}}(\RR^{d})$ with $\overline{h}(\rho) = -1$ for $\rho \in [0,1]$ and with the same image as $\chi : \RR \times \BB^{d}_{R} \rightarrow \RR^{1,d}$, see \Cref{GSCImage,GSCExemplary}. As a matter of transparency, we use overbared variables for the analysis in this coordinate system. Let $\overline{F}_{n} \in C^{\infty}(\overline{\BB^{d}_{\overline{R}}})$ be related to $F_{n} \in C^{\infty} ( \overline{\BB^{d}_{R}} )$ via \Cref{barfbarF}, i.e.
\begin{align*}
\overline{F}_{n}(\overline{\xi}) = \sum_{\ell=0}^{n} \sum_{m\in\Omega_{d,\ell}} \overline{F}_{\ell,m}(|\overline{\xi}|) Y_{\ell,m}(\tfrac{\overline{\xi}}{|\overline{\xi}|}) \,, \qquad
\overline{F}_{\ell,m}(\overline{\rho}) = \widetilde{h}_{+}(\overline{\rho})^{-\lambda+2} F_{\ell,m} \big( \widetilde{h}_{+}(\overline{\rho})^{-1} \overline{\rho} \big) \,.
\end{align*}
Furthermore, define $G_{n} \in C^{\infty}(\RR \times \overline{\BB^{d}_{R}})$, $\overline{G}_{n} \in C^{\infty}(\RR \times \overline{\BB^{d}_{\overline{R}}})$ according to \Cref{GbarG} and consider the problem
\begin{equation}
\label{ModeProblem}
\renewcommand{\arraystretch}{1.5}
\left\{
\begin{array}{rcl}
\Box_{\overline{\chi}} \overline{v}_{n} &=& \overline{G}_{n} \,, \\
\overline{v}_{n}(\overline{\tau},\overline{\xi}) &=& \displaystyle{
\ee^{\lambda\overline{\tau}} \overline{f}_{n}(\overline{\xi}) \,,
}
\end{array}
\right.
\end{equation}
for solutions of the form
\begin{equation}
\label{SphericalHarmonicsAnsatz}
\overline{f}_{n}(\overline{\xi}) = \sum_{\ell=0}^{n} \sum_{m\in\Omega_{d,\ell}} f_{\ell,m}(|\overline{\xi}|) Y_{\ell,m}(\tfrac{\overline{\xi}}{|\overline{\xi}|}) \,.
\end{equation}
Inserting the expansion \eqref{SphericalHarmonicsAnsatz} in problem \eqref{ModeProblem} yields for $(\overline{\tau},\overline{\rho}) \in \RR \times (0,\overline{R})$ and $\omega \in \pd \BB^{d}$ the identity
\begin{align*}
& \Big( \Box_{\overline{\chi}} \overline{v}_{n} -\overline{G}_{n} \Big) (\overline{\tau},\overline{\rho}\omega) = \ee^{(\lambda+2)\overline{\tau}} \Big( \Big( \overline{c}^{ij} \pd_{i}\pd_{j} + \big( \lambda \overline{c}^{i0} + \overline{c}^{i} \big) \pd_{i} + \lambda \big( \lambda \overline{c}^{00} + \overline{c}^{0} \big) \Big) \overline{f}_{n} - \overline{F}_{n} \Big)(\overline{\rho}\omega) \\&=
\sum_{\ell=0}^{n} \sum_{m\in\Omega_{d,\ell}} \ee^{(\lambda+2)\overline{\tau}} \Big( \overline{a}_{2}(\overline{\rho}) \overline{f}_{\ell,m}''(\overline{\rho}) + \overline{a}_{1}(\overline{\rho};d,\lambda) \overline{f}_{\ell,m}'(\overline{\rho}) + \overline{a}_{0}(\overline{\rho};d,\lambda,\ell) \overline{f}_{\ell,m}(\overline{\rho}) - \overline{F}_{\ell,m}(\overline{\rho}) \Big) Y_{\ell,m}(\omega)
\end{align*}
with coefficients given by
\begin{align*}
\overline{a}_{2}(\overline{\rho}) &= - \frac{\overline{\rho}^{2} - \overline{h}(\overline{\rho})^{2}}{\big( \overline{\rho} \overline{h}'(\overline{\rho}) - \overline{h}(\overline{\rho}) \big)^{2}} \,, \\
\overline{a}_{1}(\overline{\rho};d,\lambda) &= 2\Big( \frac{d-3}{2} - \lambda \Big) \frac{\overline{\rho} - \overline{h}(\overline{\rho})\overline{h}'(\overline{\rho})}{\big( \overline{\rho} \overline{h}'(\overline{\rho}) - \overline{h}(\overline{\rho}) \big)^{2}} - \Big( \frac{d-1}{\overline{\rho}} - \frac{\overline{\rho} \overline{h}''(\overline{\rho})}{\overline{\rho} \overline{h}'(\overline{\rho}) - \overline{h}(\overline{\rho})} \Big) \frac{\overline{\rho}^{2} - \overline{h}(\overline{\rho})^{2}}{\big( \overline{\rho} \overline{h}'(\overline{\rho}) - \overline{h}(\overline{\rho}) \big)^{2}} \,, \\
\overline{a}_{0}(\overline{\rho};d,\lambda,\ell) &= - \lambda (\lambda+1) \frac{1-\overline{h}'(\overline{\rho})^{2}}{\big( \overline{\rho} \overline{h}'(\overline{\rho}) - \overline{h}(\overline{\rho}) \big)^{2}} - \frac{\ell(\ell+d-2)}{\overline{\rho}^{2}} \\&\indent - \lambda \Big( \frac{d-1}{\overline{\rho} \overline{h}'(\overline{\rho}) - \overline{h}(\overline{\rho})}\frac{\overline{h}'(\overline{\rho})}{\overline{\rho}} - \frac{\overline{h}''(\overline{\rho})}{\overline{\rho} \overline{h}'(\overline{\rho})-\overline{h}(\overline{\rho})} \frac{\overline{\rho}^{2} - \overline{h}(\overline{\rho})^{2}}{\big( \overline{\rho} \overline{h}'(\overline{\rho}) - \overline{h}(\overline{\rho}) \big)^{2}} \Big) \,.
\end{align*}
It follows that \eqref{ModeProblem} holds if and only if for $\ell\in\{0,\ldots,n\}$, $m\in\Omega_{d,\ell}$ there are functions $\overline{f}_{\ell,m} \in C^{\infty}([0,\overline{R}])$ such that
\begin{equation}
\label{DecoupledODE}
\overline{a}_{2}(\overline{\rho}) \overline{f}_{\ell,m}''(\overline{\rho}) + \overline{a}_{1}(\overline{\rho};d,\lambda) \overline{f}_{\ell,m}'(\overline{\rho}) + \overline{a}_{0}(\overline{\rho};d,\lambda,\ell) \overline{f}_{\ell,m}(\overline{\rho}) = \overline{F}_{\ell,m}(\overline{\rho}) \quad \text{in } (0,\overline{R}) \,.
\end{equation}
Now, the analysis of the differential equation is reduced to the one carried out in \cite[Proof of Lemma 2.6]{MR4778061}. Indeed, since $\overline{h} \equiv -1$ in $[0,1]$, \Cref{DecoupledODE} reduces in $(0,1)$ precisely to \cite[Eq. (2.8)]{MR4778061}, which has for $\lambda=\frac{d}{2}$ a smooth solution given in \cite[Eq. (2.13)]{MR4778061}. Since \Cref{DecoupledODE} has no real-valued singular points when $\overline{\rho}>1$, each solution in $[0,1]$ can be extended to a smooth solution $\overline{f}_{\ell,m} \in C^{\infty}([0,\overline{R}])$. Solutions in case $\ell = 0$ are smooth and even, so if $\mathbf{F}$ is radially symmetric, also the mode solution to problem \eqref{ModeProblem} is radially symmetric. By \Cref{ModeInvariance}, we have thus obtained for each $n\in\NN$ a solution $f_{n} \in C^{\infty} ( \overline{\BB^{d}_{R}} )$ to
\begin{equation*}
\renewcommand{\arraystretch}{1.5}
\left\{
\begin{array}{rcl}
\Box_{\chi} v_{n} &=& G_{n} \,, \\
v_{n}(\tau,\xi) &=& \displaystyle{
\ee^{\lambda\tau} f_{n}(\xi) \,.
}
\end{array}
\right.
\end{equation*}
With this, put
\begin{equation*}
\mathbf{f}_{n} =
\begin{bmatrix}
f_{n} \\
\displaystyle{
\frac{\lambda}{c} f_{n} + \frac{1}{c} \xi^{i} \pd_{i} f_{n} - \frac{1}{c} [\mathbf{F}]_{1}
}
\end{bmatrix}
\in C^{\infty} ( \overline{\BB^{d}_{R}} )^{2} \,,
\qquad
\mathbf{F}_{n} \coloneqq \big( \frac{d}{2} \mathbf{I} - \mathbf{L}_{\chi} \big) \mathbf{f}_{n} \in C^{\infty} ( \overline{\BB^{d}_{R}} )^{2} \,.
\end{equation*}
By construction and \Cref{Range},
\begin{equation*}
\mathbf{F}_{n} - \mathbf{F} =
-
\left.
\begin{bmatrix}
0 \\
\displaystyle{
\frac{c}{w} \Big( \Box_{\chi} v_{n}(\tau,\,.\,) - G(\tau,\,.\,) \Big)
}
\end{bmatrix}
\right|_{\tau = 0}
=
\begin{bmatrix}
0 \\
\displaystyle{
\frac{c}{w} ( F_{n} - F ) 
}
\end{bmatrix}
\,,
\end{equation*}
so \Cref{ApproximationAllk} yields
\begin{equation}
\label{SmoothConvergenceInhomogeneity}
\lim_{n\to\infty} \big\| \mathbf{F}_{n} - \mathbf{F} \|_{H^{k}(\BB^{d}_{R}) \times H^{k-1}(\BB^{d}_{R})} = 0 \quad \text{for any } k \in \NN \,.
\end{equation}
Now, consider for any $k > \frac{d}{2}$ and for fixed $\varepsilon_{1} = \varepsilon > 0$ the inner product $(\,.\,| \,..\,)_{\mathfrak{E}^{k}(\BB^{d}_{R})}$ from \Cref{InnerProducts}. \Cref{EnergyDissipativity} yields the dissipativity estimate
\begin{equation}
\label{NormDissipativity}
\big\| \big( \alpha \mathbf{I} - \mathbf{L}_{\chi} \big) \mathbf{f} \big\|_{\mathfrak{E}^{k}(\BB^{d}_{R})} \geq ( \alpha - \varepsilon ) \| \mathbf{f} \|_{\mathfrak{E}^{k}(\BB^{d}_{R})}
\end{equation}
for all $\alpha > \varepsilon$ and all $\mathbf{f} \in C^{\infty} ( \overline{\BB^{d}_{R}} )^{2}$. This implies for $\alpha = \lambda = \frac{d}{2}$ the inequality
\begin{equation*}
\| \mathbf{f}_{n} - \mathbf{f}_{m} \|_{\mathfrak{E}^{k}(\BB^{d}_{R})} \leq \frac{1}{\lambda - \varepsilon} \| \mathbf{F}_{n} - \mathbf{F}_{m} \|_{\mathfrak{E}^{k}(\BB^{d}_{R})}
\end{equation*}
for any $m,n \in \NN$. Since $\| \,.\, \|_{\mathfrak{E}^{k}(\BB^{d}_{R})} \simeq \| \,.\, \|_{H^{k}(\BB^{d}_{R})\times H^{k-1}(\BB^{d}_{R})}$ by \Cref{kNorm}, we get from this, \Cref{SmoothConvergenceInhomogeneity} and Sobolev embedding that there exists an $\mathbf{f} \in C^{\infty} ( \overline{\BB^{d}_{R}} )^{2}$ with
\begin{equation}
\label{SmoothConvergenceSolution}
\lim_{n\to\infty} \big\| \mathbf{f}_{n} - \mathbf{f} \|_{H^{k}(\BB^{d}_{R}) \times H^{k-1}(\BB^{d}_{R})} = 0 \quad \text{for any } k \in \NN \,.
\end{equation}
Finally, we follow the proof of \cite[p.~82 f., Proposition 3.14]{MR1721989} to show that $\mathbf{g} \coloneqq \lambda \mathbf{f} - \mathbf{L}_{\chi} \mathbf{f} - \mathbf{F}$ equals $\mathbf{0}$. \Cref{SmoothConvergenceInhomogeneity,NormDissipativity,SmoothConvergenceSolution} yield
\begin{align*}
\| \mathbf{g} \|_{\mathfrak{E}^{k}(\BB^{d}_{R})} &= \lim_{n\to\infty} \| (\alpha - \varepsilon) ( \mathbf{f}_{n} - \mathbf{f} ) + \mathbf{g} \|_{\mathfrak{E}^{k}(\BB^{d}_{R})} \\&\leq
\frac{1}{\alpha - \varepsilon} \lim_{n\to\infty} \big\| \alpha (\alpha - \varepsilon) ( \mathbf{f}_{n} - \mathbf{f} ) + \alpha \mathbf{g} - (\alpha - \varepsilon) \mathbf{L}_{\chi} ( \mathbf{f}_{n} - \mathbf{f} ) - \mathbf{L}_{\chi} \mathbf{g} \big\|_{\mathfrak{E}^{k}(\BB^{d}_{R})} \\&=
\frac{1}{\alpha - \varepsilon} \big\| \varepsilon \mathbf{g} - \mathbf{L}_{\chi} \mathbf{g} \big\|_{\mathfrak{E}^{k}(\BB^{d}_{R})} 
\end{align*}
for all $\alpha > \varepsilon$, so we conclude $\mathbf{g} = \mathbf{0}$ as $\alpha \to \infty$. Uniqueness of the smooth solution follows from \Cref{NormDissipativity}.
\end{proof}
\subsection{The generation theorem}
We collect the results from the previous subsections and prove well-posedness of the free wave flow in similarity coordinates for all space dimensions. We emphasize once more that no symmetry assumptions are imposed on the function spaces and data. Nevertheless, the flow in radial symmetry is readily concluded from restriction properties of the general case.
\begin{theorem}
\label{TheGenerationTHM}
Let $d,k\in\NN$. Let $h \in C^{\infty}_{\mathrm{rad}}(\RR^{d})$ such that
\begin{equation*}
h(0) < 0 \,, \qquad
|(\pd h)(\xi)| < 1 \,, \qquad
(\pd^{2} h)(\xi) \text{ is positive semidefinite} \,,
\end{equation*}
for all $\xi \in \RR^{d}$. Let $R_{0} > 0$ be determined as in \Cref{GSCLightCone} and fix $R \geq R_{0}$. We define on
\begin{equation*}
\mathfrak{H}^{k}(\BB^{d}_{R}) \coloneqq H^{k}(\BB^{d}_{R}) \times H^{k-1}(\BB^{d}_{R})
\end{equation*}
the operator $\mathbf{L}_{\chi} : \mathfrak{D}(\mathbf{L}_{\chi}) \subset \mathfrak{H}^{k}(\BB^{d}_{R}) \rightarrow \mathfrak{H}^{k}(\BB^{d}_{R})$ by
\begin{equation*}
\renewcommand{\arraystretch}{1.5}
\mathbf{L}_{\chi} \mathbf{f} =
\begin{bmatrix}
- \xi^{i} \pd_{i} f_{1}
+ c f_{2} \\
\displaystyle{
\frac{c}{w} \pd^{i} \pd_{i} f_{1}
- \Big( 1 + \frac{c}{w} \pd^{i} \pd_{i} h \Big) f_{2}
- \Big( \xi^{i} + 2 \frac{c}{w} \pd^{i} h \Big) \pd_{i} f_{2}
}
\end{bmatrix}
\,, \qquad
\mathfrak{D}(\mathbf{L}_{\chi}) = C^{\infty}(\overline{\BB^{d}_{R}})^{2} \,,
\end{equation*}
where the coefficients $c,w \in C^{\infty}_{\mathrm{rad}}(\RR^{d})$ are given by
\begin{equation*}
c(\xi) = \xi^{i} (\pd_{i} h)(\xi) - h(\xi) \,, \qquad
w(\xi) = 1-(\pd^{i} h)(\xi)(\pd_{i} h)(\xi) \,.
\end{equation*}
Then, this operator is closable and its closure $\overline{\mathbf{L}_{\chi}} : \mathfrak{D}(\overline{\mathbf{L}_{\chi}}) \subset \mathfrak{H}^{k}(\BB^{d}_{R}) \rightarrow \mathfrak{H}^{k}(\BB^{d}_{R})$ is the generator of a strongly continuous semigroup $\mathbf{S}_{\chi} : [0,\infty) \rightarrow \mathfrak{L}(\mathfrak{H}^{k}(\BB^{d}_{R}))$ with the following properties.
\begin{enumerate}[itemsep=1em,topsep=1em]
\item For any $0 < \varepsilon < \frac{1}{2}$ there is a constant $M_{d,k,R,h,\varepsilon} \geq 1$ such that the growth estimate
\begin{equation*}
\| \mathbf{S}_{\chi}(\tau) \mathbf{f} \|_{\mathfrak{H}^{k}(\BB^{d}_{R})} \leq M_{d,k,R,h,\varepsilon} \ee^{ \omega_{d,k,\varepsilon} \tau} \| \mathbf{f} \|_{\mathfrak{H}^{k}(\BB^{d}_{R})} \,, \qquad \omega_{d,k,\varepsilon} = \max \{ \tfrac{d}{2} - k, \varepsilon \} \,,
\end{equation*}
holds for all $\mathbf{f} \in \mathfrak{H}^{k}(\BB^{d}_{R})$ and all $\tau \geq 0$.
\item\label{itemrad} The radial subspace $\mathfrak{H}^{k}_{\mathrm{rad}}(\BB^{d}_{R}) \coloneqq H^{k}_{\mathrm{rad}}(\BB^{d}_{R}) \times H^{k-1}_{\mathrm{rad}}(\BB^{d}_{R})$ is an invariant closed subspace of $\mathfrak{H}^{k}(\BB^{d}_{R})$ for the semigroup $\mathbf{S}_{\chi}$, that is
\begin{equation*}
\mathbf{S}_{\chi}(\tau) \mathbf{f} \in \mathfrak{H}^{k}_{\mathrm{rad}}(\BB^{d}_{R})
\end{equation*}
for all $\mathbf{f} \in \mathfrak{H}^{k}_{\mathrm{rad}}(\BB^{d}_{R})$ and all $\tau \geq 0$. Furthermore, the operator $\mathbf{L}_{\chi} {\mathord{\restriction}} : \mathfrak{D}(\mathbf{L}_{\chi} {\mathord{\restriction}}) \subset \mathfrak{H}^{k}_{\mathrm{rad}}(\BB^{d}_{R}) \rightarrow \mathfrak{H}^{k}_{\mathrm{rad}}(\BB^{d}_{R})$ given by
\begin{equation*}
\mathbf{L}_{\chi} {\mathord{\restriction}} \mathbf{f} = \mathbf{L}_{\chi} \mathbf{f} \,, \qquad
\mathfrak{D}(\mathbf{L}_{\chi} {\mathord{\restriction}})
=
C^{\infty}_{\mathrm{rad}}(\overline{\BB^{d}_{R}})^{2} \,,
\end{equation*}
is well-defined and closable with closure $\overline{\mathbf{L}_{\chi} {\mathord{\restriction}}} : \mathfrak{D}(\overline{\mathbf{L}_{\chi} {\mathord{\restriction}}}) \subset \mathfrak{H}^{k}_{\mathrm{rad}}(\BB^{d}_{R}) \rightarrow \mathfrak{H}^{k}_{\mathrm{rad}}(\BB^{d}_{R})$ given by
\begin{equation*}
\overline{\mathbf{L}_{\chi} {\mathord{\restriction}}} \mathbf{f} = \overline{\mathbf{L}_{\chi}} \mathbf{f} \,, \qquad
\mathfrak{D}(\overline{\mathbf{L}_{\chi} {\mathord{\restriction}}}) = \mathfrak{D}(\overline{\mathbf{L}_{\chi}}) \cap 
\mathfrak{H}^{k}_{\mathrm{rad}}(\BB^{d}_{R}) \,,
\end{equation*}
which is the generator of the subspace semigroup $\mathbf{S}_{\chi}{\mathord{\restriction}} : [0,\infty) \rightarrow \mathfrak{L}(\mathfrak{H}^{k}_{\mathrm{rad}}(\BB^{d}_{R}))$ given by
\begin{equation*}
\mathbf{S}_{\chi}{\mathord{\restriction}}(\tau) \mathbf{f} = \mathbf{S}_{\chi}(\tau) \mathbf{f}
\end{equation*}
for all $\mathbf{f} \in \mathfrak{H}^{k}_{\mathrm{rad}}(\BB^{d}_{R})$ and all $\tau \geq 0$.
\item Consider the spacelike hypersurface $\Sigma^{1,d}_{R}(0) \coloneqq \chi\big( \{0\} \times \BB^{d}_{R} \big)$. Let $f,g \in C^{\infty}\big(\Sigma^{1,d}_{R}(0)\big)$ such that $\mathbf{f} \coloneqq (f, g) \circ \chi(0,\,.\,) \in C^{\infty}(\overline{\BB^{d}_{R}})^{2}$. Then $\mathbf{u} \coloneqq \mathbf{S}_{\chi}(\,.\,)\mathbf{f} \in C^{\infty} \big( \overline{(0,\infty) \times \BB^{d}_{R}} \big)^{2}$ and $u \coloneqq u_{1} \circ \chi^{-1} \in C^{\infty} \big( \overline{\mathrm{X}}^{1,d}_{R} \big)$ is the unique solution to the Cauchy problem
\begin{equation*}
\renewcommand{\arraystretch}{1.5}
\left\{
\begin{array}{rcll}
\Box u &=& 0 & \text{in } \mathrm{X}^{1,d}_{R} \,, \\
u|_{\Sigma^{1,d}_{R}(0)} &=& f \,, & \\
(\pd_{0} u)|_{\Sigma^{1,d}_{R}(0)} &=& g \,. &
\end{array}
\right.
\end{equation*}
\end{enumerate}
\end{theorem}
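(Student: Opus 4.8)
The plan is to assemble \Cref{TheGenerationTHM} from the three pillars established in the preceding subsections: the norm equivalence \Cref{kNorm}, the dissipative estimates \Cref{EnergyDissipativity}, and the range density \Cref{DenseRange}. First I would verify the hypotheses of the Lumer--Phillips theorem \cite[p.~83, Theorem 3.15]{MR1721989} for the operator $\alpha\mathbf{I}-\mathbf{L}_{\chi}$ on the Hilbert space $\big(\mathfrak{H}^{k}(\BB^{d}_{R}), (\,.\,|\,..\,)_{\mathfrak{E}^{k}(\BB^{d}_{R})}\big)$, where I fix $\varepsilon_{1}=\varepsilon\in(0,\tfrac12)$ and pick $\alpha > \max\{\tfrac{d}{2}-k,\varepsilon\}$ together with $\alpha \geq \tfrac{d}{2}$ so that \Cref{DenseRange} applies. \Cref{EnergyDissipativity} gives that $\alpha\mathbf{I}-\mathbf{L}_{\chi}$ is accretive (the boundary terms have favourable sign for $k<\tfrac{d}{2}+1$ by the choice of $\varepsilon_{k}$, and directly for $k\geq\tfrac{d}{2}+1$), hence $\mathbf{L}_{\chi}-\alpha\mathbf{I}$ is dissipative and in particular closable; \Cref{DenseRange} shows its range is all of $C^{\infty}(\overline{\BB^{d}_{R}})^{2}$, which is dense. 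Lumer--Phillips then yields that $\overline{\mathbf{L}_{\chi}}-\alpha\mathbf{I}$ generates a contraction semigroup, so $\overline{\mathbf{L}_{\chi}}$ generates a strongly continuous semigroup $\mathbf{S}_{\chi}$ with $\|\mathbf{S}_{\chi}(\tau)\|_{\mathfrak{L}(\mathfrak{E}^{k})}\leq \ee^{\alpha\tau}$; optimising $\alpha$ down to $\omega_{d,k,\varepsilon}=\max\{\tfrac{d}{2}-k,\varepsilon\}$ and translating back to the $\mathfrak{H}^{k}(\BB^{d}_{R})$-norm via \Cref{kNorm} (which costs the constant $M_{d,k,R,h,\varepsilon}$) produces the growth estimate of item~(1).

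For item~(2), I would invoke the restriction/subspace-semigroup formalism. The key point is that $\mathbf{L}_{\chi}$ maps $C^{\infty}_{\mathrm{rad}}(\overline{\BB^{d}_{R}})^{2}$ into itself, since all coefficients $c,w,\pd h$ are radial (they arise from the radial height function), and that $\mathfrak{H}^{k}_{\mathrm{rad}}(\BB^{d}_{R})$ is a closed subspace of $\mathfrak{H}^{k}(\BB^{d}_{R})$. Invariance of this closed subspace under $\mathbf{S}_{\chi}$ follows from the fact that $\mathbf{R}_{\overline{\mathbf{L}_{\chi}}}(\alpha)$ leaves $\mathfrak{H}^{k}_{\mathrm{rad}}(\BB^{d}_{R})$ invariant --- this is exactly the content of the second assertion in \Cref{DenseRange}, that radial data yield radial solutions --- combined with the exponential formula $\mathbf{S}_{\chi}(\tau)=\lim_{n\to\infty}\big(\tfrac{n}{\tau}\mathbf{R}_{\overline{\mathbf{L}_{\chi}}}(\tfrac{n}{\tau})\big)^{n}$ and closedness of the subspace. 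Then standard subspace-semigroup theory \cite[p.~60 f.]{MR1721989} identifies the generator of the restricted semigroup as the part of $\overline{\mathbf{L}_{\chi}}$ in $\mathfrak{H}^{k}_{\mathrm{rad}}(\BB^{d}_{R})$, and one checks that the closure of $\mathbf{L}_{\chi}{\mathord{\restriction}}$ defined on $C^{\infty}_{\mathrm{rad}}(\overline{\BB^{d}_{R}})^{2}$ coincides with this part, again using density of the radial range granted by \Cref{DenseRange} and the dissipative bound restricted to radial functions.

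For item~(3), given $f,g$ with $\mathbf{f}=(f,g)\circ\chi(0,\,.\,)\in C^{\infty}(\overline{\BB^{d}_{R}})^{2}$, I would first upgrade the regularity of the mild solution $\mathbf{u}=\mathbf{S}_{\chi}(\,.\,)\mathbf{f}$ to a jointly smooth classical one. Smoothness in the spatial variable at each fixed $\tau$ follows because $\mathbf{f}$ lies in every $\mathfrak{H}^{k}(\BB^{d}_{R})$ and the growth estimates in item~(1) hold for all $k$ with compatible semigroups (a restriction/consistency property), so $\mathbf{u}(\tau)\in\bigcap_{k}\mathfrak{H}^{k}(\BB^{d}_{R})=C^{\infty}(\overline{\BB^{d}_{R}})^{2}$ by Sobolev embedding; smoothness in $\tau$ comes from $\mathbf{u}\in C^{\infty}([0,\infty),\mathfrak{H}^{k})$ for data in the (dense, smooth) domain, differentiating $\pd_{\tau}\mathbf{u}=\overline{\mathbf{L}_{\chi}}\mathbf{u}$ repeatedly and again using spatial smoothness, followed by Schwarz's theorem exactly as in the proof of \Cref{SmoothClassicalSolution}. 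Finally, \Cref{TransitionRelation} shows that $u=u_{1}\circ\chi^{-1}$ then satisfies $\Box u=0$ in $\mathrm{X}^{1,d}_{R}$ with $u|_{\Sigma^{1,d}_{R}(0)}=f$, $(\pd_{0}u)|_{\Sigma^{1,d}_{R}(0)}=g$; uniqueness follows from a local energy estimate for $\Box$ along the spacelike foliation $\Sigma^{1,d}_{R}(\tau)$ together with Gr\"onwall, as in the proof of \Cref{FiniteSpeedOfPropagation} applied on the whole slice rather than a truncated one. I expect the main obstacle to be the bookkeeping in item~(2)--(3): carefully matching the abstract closure $\overline{\mathbf{L}_{\chi}{\mathord{\restriction}}}$ with the part of $\overline{\mathbf{L}_{\chi}}$ in the radial subspace, and propagating the smoothness obtained scale-by-scale in $k$ through a consistent family of semigroups, rather than any single hard estimate --- all the analytic heavy lifting is already done in \Cref{EnergyDissipativity,DenseRange,kNorm}.
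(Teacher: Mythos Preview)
Your proposal is correct and follows essentially the same route as the paper: Lumer--Phillips via \Cref{kNorm}, \Cref{EnergyDissipativity}, and \Cref{DenseRange} for item~(1); resolvent invariance on radial data from \Cref{DenseRange} plus the exponential/Post--Widder formula for item~(2); and the consistency of the semigroups across all $k$ combined with \Cref{TransitionRelation} and Schwarz's theorem for item~(3). One small expository point: there is no need to ``optimise $\alpha$ down''---\Cref{EnergyDissipativity} already gives dissipativity of $\mathbf{L}_{\chi}-\omega_{d,k,\varepsilon}\mathbf{I}$, and since $\tfrac{d}{2}>\omega_{d,k,\varepsilon}$, the range condition from \Cref{DenseRange} at $\lambda=\tfrac{d}{2}$ feeds directly into Lumer--Phillips applied to $\mathbf{L}_{\chi}-\omega_{d,k,\varepsilon}\mathbf{I}$, yielding the sharp growth bound in one step.
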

\begin{proof}
\begin{enumerate}[wide,itemsep=1em,topsep=1em]
\item By \Cref{kNorm}, the sesquilinear form $( \,.\, \,|\, \,..\, )_{\mathfrak{E}^{k}(\BB^{d}_{R})}$ from \Cref{InnerProducts} uniquely extends to an inner product on $\mathfrak{H}^{k}(\BB^{d}_{R})$ whose induced norm is equivalent to the classical Sobolev norm. \Cref{EnergyDissipativity} yields for $\varepsilon_{1} = \varepsilon > 0$ the dissipative estimate
\begin{equation*}
\Big( \mathbf{L}_{\chi} \mathbf{f} \,\Big|\, \mathbf{f} \Big)_{\mathfrak{E}^{k}(\BB^{d}_{R})} \leq \omega_{d,k,\varepsilon} \| \mathbf{f} \|_{\mathfrak{E}^{k}(\BB^{d}_{R})}^{2}
\end{equation*}
for all $\mathbf{f} \in \mathfrak{D}(\mathbf{L}_{\chi})$. Consequently, the operator $\mathbf{L}_{\chi}$ is linear, densely defined and dissipative, hence closable in $\mathfrak{H}^{k}(\BB^{d}_{R})$. \Cref{DenseRange} shows for $\lambda = \frac{d}{2}$ that $\lambda \mathbf{I} - \mathbf{L}_{\chi}$ has range $C^{\infty}(\overline{\BB^{d}_{R}})^{2}$, which is dense in $\mathfrak{H}^{k}(\BB^{d}_{R})$. Now, we conclude from the Lumer-Phillips theorem \cite[p.~83, Theorem 3.15]{MR1721989} that the closure $\overline{\mathbf{L}_{\chi}}$ generates a contraction semigroup with the asserted properties.
\item By \Cref{DenseRange}, we have for $\lambda = \frac{d}{2}$ that $\mathbf{R}_{\overline{\mathbf{L}_{\chi}}}(\lambda) \mathbf{f} \in C^{\infty}_{\mathrm{rad}}(\overline{\BB^{d}_{R}})^{2}$ for all $\mathbf{f} \in C^{\infty}_{\mathrm{rad}}(\overline{\BB^{d}_{R}})^{2}$. This statement extends to all $\lambda > \frac{d}{2}$ via a Neumann series argument, see \cite[p.~82, 3.14 Proposition]{MR1721989}. Now, The Post-Widder inversion formula \cite[p.~223, 5.5 Corollary]{MR1721989} gives
\begin{equation*}
\mathbf{S}(\tau) \mathbf{f} = \lim_{n\to\infty} \Big( \tfrac{n}{\tau} \mathbf{R}_{\overline{\mathbf{L}_{\chi}}}(\tfrac{n}{\tau}) \Big)^{n} \mathbf{f} \in \mathfrak{H}^{k}_{\mathrm{rad}}(\BB^{d}_{R})
\end{equation*}
for all $\mathbf{f} \in C^{\infty}_{\mathrm{rad}}(\overline{\BB^{d}_{R}})^{2}$. By density and boundedness this extends to $\mathfrak{H}^{k}_{\mathrm{rad}}(\BB^{d}_{R})$.
\item The semigroup is established in the spaces $\mathfrak{H}^{k}(\BB^{d}_{R})$ for all $k\in \NN$. So \cite[Theorem C.1]{2022arXiv220706952G} gives $\mathbf{u}(\tau) \in \mathfrak{H}^{k}(\BB^{d}_{R})$ for all $k\in \NN$. Hence $\mathbf{u}(\tau) \in C^{\infty} ( \overline{\BB^{d}_{R}} )^{2}$ for all $\tau \geq 0$ by Sobolev embedding. Using \cite[p.~50, 1.3 Lemma]{MR1721989}, we have
\begin{equation*}
\pd_{\tau} \mathbf{u}(\tau) = \overline{\mathbf{L}_{\chi}} \mathbf{u}(\tau) = \mathbf{L}_{\chi} \mathbf{u}(\tau) \in C^{\infty} ( \overline{\BB^{d}_{R}} )^{2}
\end{equation*}
for all $\tau \geq 0$. By induction, $\pd_{\tau}^{\ell} \mathbf{u}(\tau) = \mathbf{L}_{\chi}^{\ell} \mathbf{u}(\tau) \in C^{\infty} ( \overline{\BB^{d}_{R}} )^{2}$ for all $\ell \in \NN$. Schwarz's theorem \cite[p.~235, Theorem 9.41]{MR385023} implies $\mathbf{u} \in C^{\infty} \big( \overline{ (0,\infty) \times \BB^{d}_{R}} \big)^{2}$. Finally, the transition lemma \ref{TransitionRelation} yields that $u$ solves the Cauchy problem.
\qedhere
\end{enumerate}
\end{proof}


\begin{thebibliography}{100}

\bibitem{MR3636308}
Pawe{\l} Biernat, Piotr Bizo\'{n}, and Maciej Maliborski.
\newblock Threshold for blowup for equivariant wave maps in higher dimensions.
\newblock {\em Nonlinearity}, 30(4):1513--1522, 2017.

\bibitem{MR4338226}
Pawe{\l} Biernat, Roland Donninger, and Birgit Sch\"{o}rkhuber.
\newblock Hyperboloidal similarity coordinates and a globally stable blowup
  profile for supercritical wave maps.
\newblock {\em Int. Math. Res. Not. IMRN}, 2021(21):16530--16591, 2021.

\bibitem{MR2069700}
P.~Bizo\'{n}, Yu.~N. Ovchinnikov, and I.~M. Sigal.
\newblock Collapse of an instanton.
\newblock {\em Nonlinearity}, 17(4):1179--1191, 2004.

\bibitem{MR1877844}
P.~Bizo\'{n} and Z.~Tabor.
\newblock On blowup of {Y}ang-{M}ills fields.
\newblock {\em Phys. Rev. D (3)}, 64(12):121701, 4, 2001.

\bibitem{MR1799875}
Piotr Bizo\'{n}.
\newblock Equivariant self-similar wave maps from {M}inkowski spacetime into
  3-sphere.
\newblock {\em Comm. Math. Phys.}, 215(1):45--56, 2000.

\bibitem{MR1923684}
Piotr Bizo\'{n}.
\newblock Formation of singularities in {Y}ang-{M}ills equations.
\newblock {\em Acta Phys. Polon. B}, 33(7):1893--1922, 2002.

\bibitem{MR3355819}
Piotr Bizo\'{n} and Pawe{\l} Biernat.
\newblock Generic self-similar blowup for equivariant wave maps and
  {Y}ang-{M}ills fields in higher dimensions.
\newblock {\em Comm. Math. Phys.}, 338(3):1443--1450, 2015.

\bibitem{Bizo__2005}
Piotr Bizo\'{n} and Tadeusz Chmaj.
\newblock Convergence towards a self-similar solution for a nonlinear wave
  equation: A case study.
\newblock {\em Physical Review D}, 72(4), 8 2005.

\bibitem{MR1767966}
Piotr Bizo\'{n}, Tadeusz Chmaj, and Zbis{\l}aw Tabor.
\newblock Dispersion and collpse of wave maps.
\newblock {\em Nonlinearity}, 13(4):1411--1423, 2000.

\bibitem{MR1862811}
Piotr Bizo\'{n}, Tadeusz Chmaj, and Zbis{\l}aw Tabor.
\newblock Formation of singularities for equivariant {$(2+1)$}-dimensional wave
  maps into the 2-sphere.
\newblock {\em Nonlinearity}, 14(5):1041--1053, 2001.

\bibitem{MR3883339}
Timothy Candy and Sebastian Herr.
\newblock On the division problem for the wave maps equation.
\newblock {\em Ann. PDE}, 4(2):Paper No. 17, 61, 2018.

\bibitem{MR1622539}
Thierry Cazenave, Jalal Shatah, and A.~Shadi Tahvildar-Zadeh.
\newblock Harmonic maps of the hyperbolic space and development of
  singularities in wave maps and {Y}ang-{M}ills fields.
\newblock {\em Ann. Inst. H. Poincar\'{e} Phys. Th\'{e}or.}, 68(3):315--349,
  1998.

\bibitem{MR3680948}
Athanasios Chatzikaleas, Roland Donninger, and Irfan Glogi\'{c}.
\newblock On blowup of co-rotational wave maps in odd space dimensions.
\newblock {\em J. Differential Equations}, 263(8):5090--5119, 2017.

\bibitem{MR4700297}
Po-Ning Chen, Roland Donninger, Irfan Glogi\'{c}, Michael McNulty, and Birgit
  Sch\"{o}rkhuber.
\newblock Co-dimension one stable blowup for the quadratic wave equation beyond
  the light cone.
\newblock {\em Comm. Math. Phys.}, 405(2):Paper No. 34, 46, 2024.

\bibitem{2023arXiv231007042C}
Po-Ning {Chen}, Michael {McNulty}, and Birgit {Sch{\"o}rkhuber}.
\newblock {Singularity formation for the higher dimensional Skyrme model in the
  strong field limit}.
\newblock {\em arXiv e-prints}, page arXiv:2310.07042, October 2023.

\bibitem{MR3592527}
Elisabetta Chiodaroli and Joachim Krieger.
\newblock A class of large global solutions for the wave-map equation.
\newblock {\em Trans. Amer. Math. Soc.}, 369(4):2747--2773, 2017.

\bibitem{MR1230285}
Demetrios Christodoulou and A.~Shadi Tahvildar-Zadeh.
\newblock On the asymptotic behavior of spherically symmetric wave maps.
\newblock {\em Duke Math. J.}, 71(1):31--69, 1993.

\bibitem{MR1223662}
Demetrios Christodoulou and A.~Shadi Tahvildar-Zadeh.
\newblock On the regularity of spherically symmetric wave maps.
\newblock {\em Comm. Pure Appl. Math.}, 46(7):1041--1091, 1993.

\bibitem{MR4182428}
Piotr~T. Chru\'{s}ciel.
\newblock {\em Elements of general relativity}.
\newblock Compact Textbooks in Mathematics. Birkh\"{a}user/Springer, Cham,
  2019.

\bibitem{MR3538419}
O.~Costin, R.~Donninger, and X.~Xia.
\newblock A proof for the mode stability of a self-similar wave map.
\newblock {\em Nonlinearity}, 29(8):2451--2473, 2016.

\bibitem{MR3623242}
Ovidiu Costin, Roland Donninger, and Irfan Glogi\'{c}.
\newblock Mode stability of self-similar wave maps in higher dimensions.
\newblock {\em Comm. Math. Phys.}, 351(3):959--972, 2017.

\bibitem{MR3475668}
Ovidiu Costin, Roland Donninger, Irfan Glogi\'{c}, and Min Huang.
\newblock On the stability of self-similar solutions to nonlinear wave
  equations.
\newblock {\em Comm. Math. Phys.}, 343(1):299--310, 2016.

\bibitem{MR3403756}
R.~C\^{o}te.
\newblock On the soliton resolution for equivariant wave maps to the sphere.
\newblock {\em Comm. Pure Appl. Math.}, 68(11):1946--2004, 2015.

\bibitem{MR3318089}
R.~C\^{o}te, C.~E. Kenig, A.~Lawrie, and W.~Schlag.
\newblock Characterization of large energy solutions of the equivariant wave
  map problem: {I}.
\newblock {\em Amer. J. Math.}, 137(1):139--207, 2015.

\bibitem{MR3318090}
R.~C\^{o}te, C.~E. Kenig, A.~Lawrie, and W.~Schlag.
\newblock Characterization of large energy solutions of the equivariant wave
  map problem: {II}.
\newblock {\em Amer. J. Math.}, 137(1):209--250, 2015.

\bibitem{MR2443303}
Rapha\"{e}l C\^{o}te, Carlos~E. Kenig, and Frank Merle.
\newblock Scattering below critical energy for the radial 4{D} {Y}ang-{M}ills
  equation and for the 2{D} corotational wave map system.
\newblock {\em Comm. Math. Phys.}, 284(1):203--225, 2008.

\bibitem{MR2020108}
Piero D'Ancona and Vladimir Georgiev.
\newblock On the continuity of the solution operator to the wave map system.
\newblock {\em Comm. Pure Appl. Math.}, 57(3):357--383, 2004.

\bibitem{MR3401013}
Benjamin Dodson and Andrew Lawrie.
\newblock Scattering for radial, semi-linear, super-critical wave equations
  with bounded critical norm.
\newblock {\em Arch. Ration. Mech. Anal.}, 218(3):1459--1529, 2015.

\bibitem{MR2839272}
Roland Donninger.
\newblock On stable self-similar blowup for equivariant wave maps.
\newblock {\em Comm. Pure Appl. Math.}, 64(8):1095--1147, 2011.

\bibitem{MR3278903}
Roland Donninger.
\newblock Stable self-similar blowup in energy supercritical {Y}ang-{M}ills
  theory.
\newblock {\em Math. Z.}, 278(3-4):1005--1032, 2014.

\bibitem{2023arXiv231012016D}
Roland {Donninger}.
\newblock {Spectral theory and self-similar blowup in wave equations}.
\newblock {\em arXiv e-prints}, page arXiv:2310.12016, October 2023.

\bibitem{MR2412310}
Roland Donninger and Peter~C. Aichelburg.
\newblock On the mode stability of a self-similar wave map.
\newblock {\em J. Math. Phys.}, 49(4):043515, 9, 2008.

\bibitem{MR3861895}
Roland Donninger and Irfan Glogi\'{c}.
\newblock On the existence and stability of blowup for wave maps into a
  negatively curved target.
\newblock {\em Anal. PDE}, 12(2):389--416, 2019.

\bibitem{MR4661000}
Roland Donninger and Matthias Ostermann.
\newblock A globally stable self-similar blowup profile in energy supercritical
  {Y}ang-{M}ills theory.
\newblock {\em Comm. Partial Differential Equations}, 48(9):1148--1213, 2023.

\bibitem{MR3537340}
Roland Donninger and Birgit Sch\"{o}rkhuber.
\newblock On blowup in supercritical wave equations.
\newblock {\em Comm. Math. Phys.}, 346(3):907--943, 2016.

\bibitem{MR2881965}
Roland Donninger, Birgit Sch\"{o}rkhuber, and Peter~C. Aichelburg.
\newblock On stable self-similar blow up for equivariant wave maps: the
  linearized problem.
\newblock {\em Ann. Henri Poincar\'{e}}, 13(1):103--144, 2012.

\bibitem{2022arXiv221208374D}
Roland {Donninger} and David {Wallauch}.
\newblock {Optimal blowup stability for three-dimensional wave maps}.
\newblock {\em arXiv e-prints}, page arXiv:2212.08374, December 2022.

\bibitem{MR4640202}
Roland Donninger and David Wallauch.
\newblock Optimal blowup stability for supercritical wave maps.
\newblock {\em Adv. Math.}, 433:Paper No. 109291, 86, 2023.

\bibitem{MR3878592}
Thomas Duyckaerts, Hao Jia, Carlos Kenig, and Frank Merle.
\newblock Universality of blow up profile for small blow up solutions to the
  energy critical wave map equation.
\newblock {\em Int. Math. Res. Not. IMRN}, (22):6961--7025, 2018.

\bibitem{MR4397184}
Thomas Duyckaerts, Carlos Kenig, Yvan Martel, and Frank Merle.
\newblock Soliton resolution for critical co-rotational wave maps and radial
  cubic wave equation.
\newblock {\em Comm. Math. Phys.}, 391(2):779--871, 2022.

\bibitem{MR649158}
Douglas~M. Eardley and Vincent Moncrief.
\newblock The global existence of {Y}ang-{M}ills-{H}iggs fields in
  {$4$}-dimensional {M}inkowski space. {I}. {L}ocal existence and smoothness
  properties.
\newblock {\em Comm. Math. Phys.}, 83(2):171--191, 1982.

\bibitem{MR649159}
Douglas~M. Eardley and Vincent Moncrief.
\newblock The global existence of {Y}ang-{M}ills-{H}iggs fields in
  {$4$}-dimensional {M}inkowski space. {II}. {C}ompletion of proof.
\newblock {\em Comm. Math. Phys.}, 83(2):193--212, 1982.

\bibitem{MR1721989}
Klaus-Jochen Engel and Rainer Nagel.
\newblock {\em One-parameter semigroups for linear evolution equations}, volume
  194 of {\em Graduate Texts in Mathematics}.
\newblock Springer-Verlag, New York, 2000.
\newblock With contributions by S. Brendle, M. Campiti, T. Hahn, G. Metafune,
  G. Nickel, D. Pallara, C. Perazzoli, A. Rhandi, S. Romanelli and R.
  Schnaubelt.

\bibitem{MR3359541}
Can Gao and Joachim Krieger.
\newblock Optimal polynomial blow up range for critical wave maps.
\newblock {\em Commun. Pure Appl. Anal.}, 14(5):1705--1741, 2015.

\bibitem{2022arXiv220801503G}
Cristian {Gavrus}.
\newblock {The gauge-invariant I-method for Yang-Mills}.
\newblock {\em arXiv e-prints}, page arXiv:2208.01503, August 2022.

\bibitem{MR3585834}
Dan-Andrei Geba and Manoussos~G. Grillakis.
\newblock {\em An introduction to the theory of wave maps and related geometric
  problems}.
\newblock World Scientific Publishing Co. Pte. Ltd., Hackensack, NJ, 2017.

\bibitem{MR140316}
M.~Gell-Mann and M.~L\'{e}vy.
\newblock The axial vector current in beta decay.
\newblock {\em Nuovo Cimento (10)}, 16:705--726, 1960.

\bibitem{MR2450171}
Pierre Germain.
\newblock Besov spaces and self-similar solutions for the wave-map equation.
\newblock {\em Comm. Partial Differential Equations}, 33(7-9):1571--1596, 2008.

\bibitem{MR2494812}
Pierre Germain.
\newblock On the existence of smooth self-similar blowup profiles for the wave
  map equation.
\newblock {\em Comm. Pure Appl. Math.}, 62(5):706--728, 2009.

\bibitem{MR3812220}
T.~Ghoul, S.~Ibrahim, and V.~T. Nguyen.
\newblock Construction of type {II} blowup solutions for the 1-corotational
  energy supercritical wave maps.
\newblock {\em J. Differential Equations}, 265(7):2968--3047, 2018.

\bibitem{MR638511}
J.~Ginibre and G.~Velo.
\newblock The {C}auchy problem for coupled {Y}ang-{M}ills and scalar fields in
  the temporal gauge.
\newblock {\em Comm. Math. Phys.}, 82(1):1--28, 1981/82.

\bibitem{MR653018}
J.~Ginibre and G.~Velo.
\newblock The {C}auchy problem for coupled {Y}ang-{M}ills and scalar fields in
  the {L}orentz gauge.
\newblock {\em Ann. Inst. H. Poincar\'{e} Sect. A (N.S.)}, 36(1):59--78, 1982.

\bibitem{MR678488}
J.~Ginibre and G.~Velo.
\newblock The {C}auchy problem for the
  {${\mathrm{O}}(N),\,{\mathbf{C}}{\mathrm{P}}(N-1),$} and
  {$G_{{\mathbf{C}}}(N,\,p)$} models.
\newblock {\em Ann. Physics}, 142(2):393--415, 1982.

\bibitem{2022arXiv220706952G}
Irfan {Glogi{\'c}}.
\newblock {Globally stable blowup profile for supercritical wave maps in all
  dimensions}.
\newblock {\em arXiv e-prints}, page arXiv:2207.06952, July 2022.

\bibitem{MR4469070}
Irfan Glogi\'{c}.
\newblock Stable blowup for the supercritical hyperbolic {Y}ang-{M}ills
  equations.
\newblock {\em Adv. Math.}, 408:Paper No. 108633, 2022.

\bibitem{2023arXiv230510312G}
Irfan {Glogi{\'c}}.
\newblock {Global-in-space stability of singularity formation for Yang-Mills
  fields in higher dimensions}.
\newblock {\em arXiv e-prints}, page arXiv:2305.10312, May 2023.

\bibitem{MR3627409}
Roland Grinis.
\newblock Quantization of time-like energy for wave maps into spheres.
\newblock {\em Comm. Math. Phys.}, 352(2):641--702, 2017.

\bibitem{MR596432}
Chao~Hao Gu.
\newblock On the {C}auchy problem for harmonic maps defined on two-dimensional
  {M}inkowski space.
\newblock {\em Comm. Pure Appl. Math.}, 33(6):727--737, 1980.

\bibitem{MR3837560}
Mark J.~D. Hamilton.
\newblock {\em Mathematical gauge theory}.
\newblock Universitext. Springer, Cham, 2017.
\newblock With applications to the standard model of particle physics.

\bibitem{MR3842064}
Jacek Jendrej and Andrew Lawrie.
\newblock Two-bubble dynamics for threshold solutions to the wave maps
  equation.
\newblock {\em Invent. Math.}, 213(3):1249--1325, 2018.

\bibitem{2021arXiv210610738J}
Jacek {Jendrej} and Andrew {Lawrie}.
\newblock {Soliton resolution for energy-critical wave maps in the equivariant
  case}.
\newblock {\em arXiv e-prints}, page arXiv:2106.10738, June 2021.

\bibitem{MR4409881}
Jacek Jendrej and Andrew Lawrie.
\newblock An asymptotic expansion of two-bubble wave maps in high equivariance
  classes.
\newblock {\em Anal. PDE}, 15(2):327--403, 2022.

\bibitem{MR4589375}
Jacek Jendrej and Andrew Lawrie.
\newblock Continuous time soliton resolution for two-bubble equivariant wave
  maps.
\newblock {\em Math. Res. Lett.}, 29(6):1745--1766, 2022.

\bibitem{MR4599919}
Jacek Jendrej and Andrew Lawrie.
\newblock Uniqueness of two-bubble wave maps in high equivariance classes.
\newblock {\em Comm. Pure Appl. Math.}, 76(8):1608--1656, 2023.

\bibitem{MR3730929}
Hao Jia and Carlos Kenig.
\newblock Asymptotic decomposition for semilinear wave and equivariant wave map
  equations.
\newblock {\em Amer. J. Math.}, 139(6):1521--1603, 2017.

\bibitem{MR2314330}
Mathias Jungen.
\newblock On equivariant self-similar wave maps.
\newblock {\em NoDEA Nonlinear Differential Equations Appl.}, 13(4):469--483,
  2006.

\bibitem{MR1335452}
Tosio Kato.
\newblock {\em Perturbation theory for linear operators}.
\newblock Classics in Mathematics. Springer-Verlag, Berlin, 1995.
\newblock Reprint of the 1980 edition.

\bibitem{MR1663216}
Markus Keel and Terence Tao.
\newblock Local and global well-posedness of wave maps on {$\mathbf{R}^{1+1}$}
  for rough data.
\newblock {\em Internat. Math. Res. Notices}, (21):1117--1156, 1998.

\bibitem{MR1231427}
S.~Klainerman and M.~Machedon.
\newblock Space-time estimates for null forms and the local existence theorem.
\newblock {\em Comm. Pure Appl. Math.}, 46(9):1221--1268, 1993.

\bibitem{MR1338675}
S.~Klainerman and M.~Machedon.
\newblock Finite energy solutions of the {Y}ang-{M}ills equations in
  {$\mathbf{R}^{3+1}$}.
\newblock {\em Ann. of Math. (2)}, 142(1):39--119, 1995.

\bibitem{MR1381973}
S.~Klainerman and M.~Machedon.
\newblock Smoothing estimates for null forms and applications.
\newblock volume~81, pages 99--133 (1996). 1995.
\newblock A celebration of John F. Nash, Jr.

\bibitem{MR1446618}
Sergiu Klainerman and Matei Machedon.
\newblock On the regularity properties of a model problem related to wave maps.
\newblock {\em Duke Math. J.}, 87(3):553--589, 1997.

\bibitem{MR1843256}
Sergiu Klainerman and Igor Rodnianski.
\newblock On the global regularity of wave maps in the critical {S}obolev norm.
\newblock {\em Internat. Math. Res. Notices}, (13):655--677, 2001.

\bibitem{MR1452172}
Sergiu Klainerman and Sigmund Selberg.
\newblock Remark on the optimal regularity for equations of wave maps type.
\newblock {\em Comm. Partial Differential Equations}, 22(5-6):901--918, 1997.

\bibitem{MR1901147}
Sergiu Klainerman and Sigmund Selberg.
\newblock Bilinear estimates and applications to nonlinear wave equations.
\newblock {\em Commun. Contemp. Math.}, 4(2):223--295, 2002.

\bibitem{MR1626261}
Sergiu Klainerman and Daniel Tataru.
\newblock On the optimal local regularity for {Y}ang-{M}ills equations in
  {${\bf R}^{4+1}$}.
\newblock {\em J. Amer. Math. Soc.}, 12(1):93--116, 1999.

\bibitem{MR2488946}
J.~Krieger.
\newblock Global regularity and singularity development for wave maps.
\newblock In {\em Surveys in differential geometry. {V}ol. {XII}. {G}eometric
  flows}, volume~12 of {\em Surv. Differ. Geom.}, pages 167--201. Int. Press,
  Somerville, MA, 2008.

\bibitem{MR2372807}
J.~Krieger, W.~Schlag, and D.~Tataru.
\newblock Renormalization and blow up for charge one equivariant critical wave
  maps.
\newblock {\em Invent. Math.}, 171(3):543--615, 2008.

\bibitem{MR2522426}
J.~Krieger, W.~Schlag, and D.~Tataru.
\newblock Renormalization and blow up for the critical {Y}ang-{M}ills problem.
\newblock {\em Adv. Math.}, 221(5):1445--1521, 2009.

\bibitem{MR1990880}
Joachim Krieger.
\newblock Global regularity of wave maps from {${\bf R}^{3+1}$} to surfaces.
\newblock {\em Comm. Math. Phys.}, 238(1-2):333--366, 2003.

\bibitem{MR2094472}
Joachim Krieger.
\newblock Global regularity of wave maps from {$\mathbf{R}^{2+1}$} to {$H^2$}.
  {S}mall energy.
\newblock {\em Comm. Math. Phys.}, 250(3):507--580, 2004.

\bibitem{MR4065147}
Joachim Krieger and Shuang Miao.
\newblock On the stability of blowup solutions for the critical corotational
  wave-map problem.
\newblock {\em Duke Math. J.}, 169(3):435--532, 2020.

\bibitem{2020arXiv200908843K}
Joachim {Krieger}, Shuang {Miao}, and Wilhelm {Schlag}.
\newblock {A stability theory beyond the co-rotational setting for critical
  Wave Maps blow up}.
\newblock {\em arXiv e-prints}, page arXiv:2009.08843, September 2020.

\bibitem{MR2895939}
Joachim Krieger and Wilhelm Schlag.
\newblock {\em Concentration compactness for critical wave maps}.
\newblock EMS Monographs in Mathematics. European Mathematical Society (EMS),
  Z\"{u}rich, 2012.

\bibitem{MR3087010}
Joachim Krieger and Jacob Sterbenz.
\newblock Global regularity for the {Y}ang-{M}ills equations on high
  dimensional {M}inkowski space.
\newblock {\em Mem. Amer. Math. Soc.}, 223(1047):vi+99, 2013.

\bibitem{MR3664812}
Joachim Krieger and Daniel Tataru.
\newblock Global well-posedness for the {Y}ang-{M}ills equation in {$4+1$}
  dimensions. {S}mall energy.
\newblock {\em Ann. of Math. (2)}, 185(3):831--893, 2017.

\bibitem{MR3465437}
Andrew Lawrie and Sung-Jin Oh.
\newblock A refined threshold theorem for {$(1+2)$}-dimensional wave maps into
  surfaces.
\newblock {\em Comm. Math. Phys.}, 342(3):989--999, 2016.

\bibitem{2024arXiv240815345M}
Michael {McNulty}.
\newblock {Singularity formation for the higher dimensional Skyrme model}.
\newblock {\em arXiv e-prints}, page arXiv:2408.15345, August 2024.

\bibitem{MR2016196}
Andrea Nahmod, Atanas Stefanov, and Karen Uhlenbeck.
\newblock On the well-posedness of the wave map problem in high dimensions.
\newblock {\em Comm. Anal. Geom.}, 11(1):49--83, 2003.

\bibitem{MR3190112}
Sung-Jin Oh.
\newblock Gauge choice for the {Y}ang-{M}ills equations using the
  {Y}ang-{M}ills heat flow and local well-posedness in {$H^1$}.
\newblock {\em J. Hyperbolic Differ. Equ.}, 11(1):1--108, 2014.

\bibitem{MR3357182}
Sung-Jin Oh.
\newblock Finite energy global well-posedness of the {Y}ang-{M}ills equations
  on {$\mathbb{R}^{1+3}$}: an approach using the {Y}ang-{M}ills heat flow.
\newblock {\em Duke Math. J.}, 164(9):1669--1732, 2015.

\bibitem{MR3907955}
Sung-Jin Oh and Daniel Tataru.
\newblock The hyperbolic {Y}ang-{M}ills equation for connections in an
  arbitrary topological class.
\newblock {\em Comm. Math. Phys.}, 365(2):685--739, 2019.

\bibitem{MR3923343}
Sung-Jin Oh and Daniel Tataru.
\newblock The threshold theorem for the {$(4+1)$}-dimensional {Y}ang-{M}ills
  equation: an overview of the proof.
\newblock {\em Bull. Amer. Math. Soc. (N.S.)}, 56(2):171--210, 2019.

\bibitem{MR4113787}
Sung-Jin Oh and Daniel Tataru.
\newblock The hyperbolic {Y}ang-{M}ills equation in the caloric gauge: local
  well-posedness and control of energy-dispersed solutions.
\newblock {\em Pure Appl. Anal.}, 2(2):233--384, 2020.

\bibitem{MR4298746}
Sung-Jin Oh and Daniel Tataru.
\newblock The threshold conjecture for the energy critical hyperbolic
  {Y}ang-{M}ills equation.
\newblock {\em Ann. of Math. (2)}, 194(2):393--473, 2021.

\bibitem{MR4518477}
Sung-Jin Oh and Daniel Tataru.
\newblock The {Y}ang-{M}ills heat flow and the caloric gauge.
\newblock {\em Ast\'{e}risque}, (436):viii+128, 2022.

\bibitem{2022arXiv220902286O}
Matthias {Ostermann}.
\newblock {A characterisation of the subspace of radially symmetric functions
  in Sobolev spaces}.
\newblock {\em arXiv e-prints}, page arXiv:2209.02286, September 2022.

\bibitem{MR4778061}
Matthias Ostermann.
\newblock Stable blowup for focusing semilinear wave equations in all
  dimensions.
\newblock {\em Trans. Amer. Math. Soc.}, 377(7):4727--4778, 2024.

\bibitem{MR710486}
A.~Pazy.
\newblock {\em Semigroups of linear operators and applications to partial
  differential equations}, volume~44 of {\em Applied Mathematical Sciences}.
\newblock Springer-Verlag, New York, 1983.

\bibitem{MR2929728}
Pierre Rapha\"{e}l and Igor Rodnianski.
\newblock Stable blow up dynamics for the critical co-rotational wave maps and
  equivariant {Y}ang-{M}ills problems.
\newblock {\em Publ. Math. Inst. Hautes \'{E}tudes Sci.}, 115:1--122, 2012.

\bibitem{MR2918544}
Jeffrey Rauch.
\newblock {\em Hyperbolic partial differential equations and geometric optics},
  volume 133 of {\em Graduate Studies in Mathematics}.
\newblock American Mathematical Society, Providence, RI, 2012.

\bibitem{MR2680419}
Igor Rodnianski and Jacob Sterbenz.
\newblock On the formation of singularities in the critical {${\rm O}(3)$}
  {$\sigma$}-model.
\newblock {\em Ann. of Math. (2)}, 172(1):187--242, 2010.

\bibitem{MR4353567}
Casey Rodriguez.
\newblock Threshold dynamics for corotational wave maps.
\newblock {\em Anal. PDE}, 14(7):2123--2161, 2021.

\bibitem{MR385023}
Walter Rudin.
\newblock {\em Principles of mathematical analysis}.
\newblock International Series in Pure and Applied Mathematics. McGraw-Hill
  Book Co., New York-Auckland-D\"{u}sseldorf, third edition, 1976.

\bibitem{MR546505}
Irving Segal.
\newblock The {C}auchy problem for the {Y}ang-{M}ills equations.
\newblock {\em J. Functional Analysis}, 33(2):175--194, 1979.

\bibitem{MR3519539}
Sigmund Selberg and Achenef Tesfahun.
\newblock Null structure and local well-posedness in the energy class for the
  {Y}ang-{M}ills equations in {L}orenz gauge.
\newblock {\em J. Eur. Math. Soc. (JEMS)}, 18(8):1729--1752, 2016.

\bibitem{MR1168115}
J.~Shatah and A.~Tahvildar-Zadeh.
\newblock Regularity of harmonic maps from the {M}inkowski space into
  rotationally symmetric manifolds.
\newblock {\em Comm. Pure Appl. Math.}, 45(8):947--971, 1992.

\bibitem{MR933231}
Jalal Shatah.
\newblock Weak solutions and development of singularities of the {${\rm
  SU}(2)$} {$\sigma$}-model.
\newblock {\em Comm. Pure Appl. Math.}, 41(4):459--469, 1988.

\bibitem{MR1674843}
Jalal Shatah and Michael Struwe.
\newblock {\em Geometric wave equations}, volume~2 of {\em Courant Lecture
  Notes in Mathematics}.
\newblock New York University, Courant Institute of Mathematical Sciences, New
  York; American Mathematical Society, Providence, RI, 1998.

\bibitem{MR1890048}
Jalal Shatah and Michael Struwe.
\newblock The {C}auchy problem for wave maps.
\newblock {\em Int. Math. Res. Not.}, (11):555--571, 2002.

\bibitem{MR1278351}
Jalal Shatah and A.~Shadi Tahvildar-Zadeh.
\newblock On the {C}auchy problem for equivariant wave maps.
\newblock {\em Comm. Pure Appl. Math.}, 47(5):719--754, 1994.

\bibitem{MR2325100}
Jacob Sterbenz.
\newblock Global regularity and scattering for general non-linear wave
  equations. {II}. {$(4+1)$} dimensional {Y}ang-{M}ills equations in the
  {L}orentz gauge.
\newblock {\em Amer. J. Math.}, 129(3):611--664, 2007.

\bibitem{MR2657817}
Jacob Sterbenz and Daniel Tataru.
\newblock Energy dispersed large data wave maps in {$2+1$} dimensions.
\newblock {\em Comm. Math. Phys.}, 298(1):139--230, 2010.

\bibitem{MR2657818}
Jacob Sterbenz and Daniel Tataru.
\newblock Regularity of wave-maps in dimension {$2+1$}.
\newblock {\em Comm. Math. Phys.}, 298(1):231--264, 2010.

\bibitem{MR1985457}
Michael Struwe.
\newblock Radially symmetric wave maps from {$(1+2)$}-dimensional {M}inkowski
  space to the sphere.
\newblock {\em Math. Z.}, 242(3):407--414, 2002.

\bibitem{MR1990477}
Michael Struwe.
\newblock Equivariant wave maps in two space dimensions.
\newblock volume~56, pages 815--823. 2003.
\newblock Dedicated to the memory of J\"{u}rgen K. Moser.

\bibitem{MR1971037}
Michael Struwe.
\newblock Radially symmetric wave maps from {$(1+2)$}-dimensional {M}inkowski
  space to general targets.
\newblock {\em Calc. Var. Partial Differential Equations}, 16(4):431--437,
  2003.

\bibitem{MR1759884}
Terence Tao.
\newblock Ill-posedness for one-dimensional wave maps at the critical
  regularity.
\newblock {\em Amer. J. Math.}, 122(3):451--463, 2000.

\bibitem{MR1820329}
Terence Tao.
\newblock Global regularity of wave maps. {I}. {S}mall critical {S}obolev norm
  in high dimension.
\newblock {\em Internat. Math. Res. Notices}, (6):299--328, 2001.

\bibitem{MR1869874}
Terence Tao.
\newblock Global regularity of wave maps. {II}. {S}mall energy in two
  dimensions.
\newblock {\em Comm. Math. Phys.}, 224(2):443--544, 2001.

\bibitem{MR1964470}
Terence Tao.
\newblock Local well-posedness of the {Y}ang-{M}ills equation in the temporal
  gauge below the energy norm.
\newblock {\em J. Differential Equations}, 189(2):366--382, 2003.

\bibitem{MR2459308}
Terence Tao.
\newblock Global behaviour of nonlinear dispersive and wave equations.
\newblock In {\em Current developments in mathematics, 2006}, pages 255--340.
  Int. Press, Somerville, MA, 2008.

\bibitem{2008arXiv0805.4666T}
Terence {Tao}.
\newblock {Global regularity of wave maps III. Large energy from
  $\mathbb{R}^{1+2}$ to hyperbolic spaces}.
\newblock {\em arXiv e-prints}, page arXiv:0805.4666, May 2008.

\bibitem{2008arXiv0806.3592T}
Terence {Tao}.
\newblock {Global regularity of wave maps IV. Absence of stationary or
  self-similar solutions in the energy class}.
\newblock {\em arXiv e-prints}, page arXiv:0806.3592, June 2008.

\bibitem{2008arXiv0808.0368T}
Terence {Tao}.
\newblock {Global regularity of wave maps V. Large data local wellposedness and
  perturbation theory in the energy class}.
\newblock {\em arXiv e-prints}, page arXiv:0808.0368, August 2008.

\bibitem{2009arXiv0906.2833T}
Terence {Tao}.
\newblock {Global regularity of wave maps VI. Abstract theory of minimal-energy
  blowup solutions}.
\newblock {\em arXiv e-prints}, page arXiv:0906.2833, June 2009.

\bibitem{2009arXiv0908.0776T}
Terence {Tao}.
\newblock {Global regularity of wave maps VII. Control of delocalised or
  dispersed solutions}.
\newblock {\em arXiv e-prints}, page arXiv:0908.0776, August 2009.

\bibitem{MR1641721}
Daniel Tataru.
\newblock Local and global results for wave maps. {I}.
\newblock {\em Comm. Partial Differential Equations}, 23(9-10):1781--1793,
  1998.

\bibitem{MR1827277}
Daniel Tataru.
\newblock On global existence and scattering for the wave maps equation.
\newblock {\em Amer. J. Math.}, 123(1):37--77, 2001.

\bibitem{MR2043751}
Daniel Tataru.
\newblock The wave maps equation.
\newblock {\em Bull. Amer. Math. Soc. (N.S.)}, 41(2):185--204, 2004.

\bibitem{MR2130618}
Daniel Tataru.
\newblock Rough solutions for the wave maps equation.
\newblock {\em Amer. J. Math.}, 127(2):293--377, 2005.

\bibitem{MR2961944}
Gerald Teschl.
\newblock {\em Ordinary differential equations and dynamical systems}, volume
  140 of {\em Graduate Studies in Mathematics}.
\newblock American Mathematical Society, Providence, RI, 2012.

\bibitem{MR3385623}
Achenef Tesfahun.
\newblock Local well-posedness of {Y}ang-{M}ills equations in {L}orenz gauge
  below the energy norm.
\newblock {\em NoDEA Nonlinear Differential Equations Appl.}, 22(4):849--875,
  2015.

\bibitem{turok1990global}
Neil Turok and David Spergel.
\newblock Global texture and the microwave background.
\newblock {\em Physical Review Letters}, 64(23):2736, 1990.

\bibitem{MR65437}
C.~N. Yang and R.~L. Mills.
\newblock Conservation of isotopic spin and isotopic gauge invariance.
\newblock {\em Phys. Rev. (2)}, 96:191--195, 1954.

\end{thebibliography}
\end{document}